\newtheorem{th.}{Theorem}[section]
\newtheorem{thm}[th.]{Theorem}
\newtheorem{lemma}[th.]{Lemma}
\newtheorem{fact}[th.]{Fact}
\newtheorem{proposition}[th.]{Proposition}
\newtheorem{corollary}[th.]{Corollary}
\theoremstyle{definition}
\newtheorem{definition}[th.]{Definition}
\newtheorem*{remark}{Remark}
\newtheorem*{example}{Example}
\numberwithin{equation}{section}
\newcommand{\N}{\mathbb{N}}
\newcommand{\Z}{\mathbb{Z}}
\newcommand{\R}{\mathbb{R}}
\newcommand{\G}{\mathcal{G}}
\newcommand{\eps}{\varepsilon}
\renewcommand{\epsilon}{\varepsilon}
\newcommand{\kg}{\mathfrak{g}}
\newcommand{\kb}{\mathfrak{b}}
\newcommand {\cA} {{\mathcal A}}
\newcommand {\cB} {{\mathcal B}}
\newcommand {\cD} {{\mathcal D}}
\newcommand {\cE} {{\mathcal E}}
\newcommand {\cF} {{\mathcal F}}
\newcommand {\cI} {{\mathcal I}}
\newcommand {\cL} {{\mathcal L}}
\newcommand {\cK} {{\mathcal K}}
\newcommand {\cN} {{\mathcal N}}
\newcommand {\cP} {{\mathcal P}}
\newcommand {\cQ} {{\mathcal Q}}
\newcommand {\cR} {{\mathcal R}}
\newcommand {\cS} {{\mathcal S}}
\newcommand {\sR} {{\mathscr R}}
\newcommand {\sM} {{\mathscr M}}
\newcommand {\sN} {{\mathscr N}}
\newcommand {\sD} {{\mathscr D}}
\newcommand {\tA} {\tilde A}
\newcommand {\tnu} {{\tilde \nu}}
\newcommand {\uphi} {{\underline{\phi}}}
\newcommand {\sS} {\mathscr{S}}
\newcommand {\sT} {\mathscr{T}}
\newcommand {\sV} {\mathscr{V}}
\newcommand {\uF} {{\underline{F}}}
\DeclareMathOperator{\MNC}{(MNC)}
\newcommand {\bt} {{\bf t}}
\DeclareMathOperator{\leb}{Leb}
\DeclareMathOperator{\Proj}{Proj}
\DeclareMathOperator{\supp}{supp}
\DeclareMathOperator{\SL}{SL}
\renewcommand{\sl}{\mathfrak{sl}}
\DeclareMathOperator{\SO}{SO}
\DeclareMathOperator{\M}{M}
\DeclareMathOperator{\codim}{codim}
\DeclareMathOperator{\dist}{dist}
\DeclareMathOperator{\End}{End}
\DeclareMathOperator{\ad}{ad}
\DeclareMathOperator{\tr}{tr}
\DeclareMathOperator{\Leb}{Leb}
\DeclareMathOperator{\Span}{span}
\newcommand{\1}{{\mathbbm{1}}}
\DeclareMathOperator{\Ad}{Ad}
\DeclareMathOperator{\diag}{diag}
\DeclareMathOperator{\Sim}{Sim}
\DeclareMathOperator{\GCD}{GCD}
\DeclareMathOperator{\Aut}{Aut}
\DeclareMathOperator{\inj}{inj}
\DeclareMathOperator{\Id}{Id}
\DeclareMathOperator{\Kb}{K_{big}}
\DeclareMathOperator{\Ks}{K_{small}}
\newcommand{\bs}{\boldsymbol{s}}
\newcommand{\bp}{\boldsymbol{p}}
\newcommand{\dd}{\,\mathrm{d}}
\newcommand{\E}{\mathbb{E}}
\DeclareMathOperator{\Gr}{Gr}
\renewcommand{\setminus}{\smallsetminus}
\renewcommand{\emptyset}{\varnothing}    %\varnothing est plus joli
\renewcommand{\subset}{\subseteq}
\newcommand{\set}[1]{\{\, #1 \,\}}   % ensemble
\newcommand{\setbig}[1]{\bigl\{\, #1 \,\bigr\}}
\newcommand{\setBig}[1]{\Bigl\{\, #1 \,\Bigr\}}
\newcommand{\Lyap}{\ell}
\newcommand {\ttr} {\mathtt{r}}
\newcommand {\ttb} {\mathtt{b}}
\newcommand{\norm}[1]{\lVert#1\rVert}  % norme
\newcommand{\normbig}[1]{\bigl\lVert#1\bigr\rVert}
\newcommand{\abs}[1]{\lvert#1\rvert}  % valeur absolue
\newcommand{\abse}[1]{\left\lvert#1\right\rvert}
\newcommand{\overprec}[1]{\overset{#1}{\prec}}
\title[Khintchine dichotomy and Schmidt estimates]{Khintchine dichotomy and Schmidt estimates for self-similar measures on $\R^d$.}
\author{Timoth\'ee B\'enard }
\address{CNRS – LAGA, Universit\'e Sorbonne Paris Nord, 99 avenue J.-B.
Cl\'ement, 93430 Villetaneuse}
\email{benard@math.univ-paris13.fr}
\author{Weikun He}
\address{State Key Laboratory of Mathematical Sciences, Academy of Mathematics and Systems Science, Chinese Academy of Sciences, Beijing 100190, China}
\email{heweikun@amss.ac.cn}
\thanks{W.H. is supported by the National Key R\&D Program of China (No. 2022YFA1007500) and the National Natural Science Foundation of China (No. 12288201).}
\author{Han Zhang}
\address{School of Mathematical Sciences, Soochow University, Suzhou 215006, China
}
\email{hzhang.math@suda.edu.cn}
\thanks{H.Z. is supported by the startup grant of Soochow University and the National Natural Science Foundation of China (No. 12501250)}
\subjclass[2010]{Primary 37A99, 11J83; Secondary 22F30.}
\date{}
\begin{document}
%\large
%\tableofcontents

\begin{abstract}
We extend the classical theorems of Khintchine and Schmidt in metric Diophantine approximation to the context of self-similar measures on $\R^d$. For this, we establish effective equidistribution of associated random walks on $\SL_{d+1}(\R)/\SL_{d+1}(\Z)$.

Our result strengthens that of \cite{BHZ24} which requires $d=1$ and restricts Schmidt-type counting estimates to approximation functions which decay fast enough.

Novel techniques include a bootstrap scheme for the associated random walks despite algebraic obstructions, and a refined treatment of Dani's correspondence.
%, which allows us to derive Schmidt-type estimates for arbitrary approximation functions and any dimension.
 Along the way, we also establish non-concentration properties of self-similar measures near algebraic subvarieties of $\R^d$.
\end{abstract}
\maketitle
%\bigskip
%\bigskip

\tableofcontents
\setcounter{tocdepth}{1}
\large

\section{Introduction}

Let $d\geq 1$ be an integer. Denote by $\Sim(\R^d)$ the group of similarities of $\R^d$, i.e., transformations $\phi :\R^d\rightarrow \R^d$ of the form $\bs \mapsto \ttr_{\phi} O_{\phi} \bs + \ttb_{\phi} $ where $\ttr_{\phi}>0$,   $O_{\phi} \in O(d)$ is a linear orthogonal transformation of $\R^d$, and $\ttb_{\phi}\in \R^d$.
A probability measure $\lambda$ on $\Sim(\R^d)$ is called a \emph{randomized self-similar} IFS where IFS stands for \emph{iterated function system}.
We will always assume that $\lambda$ is \emph{irreducible}, i.e. no proper  affine subspace of $\R^d$  is invariant under $\lambda$-almost every $\phi$. We also suppose that $\lambda$ has a \emph{finite exponential moment}, meaning there exists $\eps>0$ such that
$$\int_{\Sim(\R^d)} \ttr_{\phi}^\eps+\ttr_{\phi}^{-\eps}+\|\ttb_{\phi}\|^{\eps} \dd \lambda(\phi) \,< +\infty. $$

A  probability measure $\sigma$ on $\R^d$ is called \emph{$\lambda$-stationary} if it satisfies
$$\sigma =\int_{\Sim(\R^d)} \phi_{\star}\sigma \dd \lambda(\phi). $$
By \cite[Theorem 2.5]{BP92},  there exists such a measure $\sigma$ if\footnote{In fact the ``if'' part does not require irreducibility, see \cite[Theorem 3.1]{Kloeck22}. Moreover, whether $\lambda$ is irreducible or not  can then be read in the associated stationary measure $\sigma$, as it amounts to $\supp \sigma$  not being included in any  affine hyperplane  (the direct implication is by \Cref{strong-irr}, the converse is trivial).} and only if
$\lambda$ is \emph{contractive in average}, i.e. 
$$\int_{\Sim(\R^d)} \log \ttr_{\phi} \dd \lambda(\phi) <0.$$
In this case, the  $\lambda$-stationary probability measure on $\R^d$  is unique \cite{BP92}. 
Throughout this paper, we consider a probability measure $\sigma$ on $\R^d$ which is \emph{self-similar} in the sense that it is stationary under some randomized self-similar IFS $\lambda$ which is  irreducible and has a finite exponential moment.

%Although we do not impose the maps in the support of $\lambda$ to be contractive, it turns out  $\lambda$ needs to be and vice versa, see 

%Note that under these assumptions, $\lambda$ is irreducible if and only if $\sigma$ is not supported on any affine subspace, see \cite{Mattila1982} and Section~\ref{Sec-regularity}.

\begin{example}
On the real line $\R$, classical examples of self-similar measures include the Lebesgue measure, the normalized Hausdorff measure on a missing digit Cantor set, or Bernouilli convolutions. In higher dimension, one may consider powers of missing digit Cantor measures, the normalized Hausdorff measure on a Sierpiński triangle, Sierpiński carpet, etc.

\end{example}

The goal of the paper is to study the Diophantine properties of typical points distributed according to a self-similar measure on $\R^d$.
This topic originates from a question of Mahler \cite{Mahler84}, asking how well points in the middle-thirds Cantor set can be approximated by rationals. Mahler's question is later recast by Kleinbock-Lindenstrauss-Weiss \cite{KLW04} who ask whether self-similar measures may satisfy a dichotomy in the spirit of Khintchine's theorem. Building upon earlier works, we provide an affirmative answer.

Let us first recall the classical Khintchine  theorem. Given a non-increasing function $\psi : \N\rightarrow \R_{\geq0}$, let us write $W(\psi) $ the set of \emph{$\psi$-approximable} vectors in $\R^d$. In other terms, a vector $\bs\in \R^d$ belongs to $W(\psi)$ if for infinitely many $(\bp, q)\in \Z^d\times \N$, one has
\begin{equation} \label{Dioph-ineq}
\|q\bs- \bp\|_{\infty}<\psi(q),
\end{equation}
where $\norm{\cdot}_{\infty}$ denotes the supremum norm on $\R^d$. The celebrated Khintchine theorem \cite{Khintchine24,Khintchine26} states that $W(\psi)$ has either null or full Lebesgue measure, and that each scenario can be read simply on the function $\psi$, as they respectively correspond to $\sum_{q\in \N} \psi(q)^d<\infty$ and $\sum_{q\in \N} \psi(q)^d=\infty$. In the divergent case, Khintchine's theorem has been further refined by Schmidt \cite{Schmidt} who provided an asymptotic estimate for the number of solutions to the Diophantine inequality \eqref{Dioph-ineq} with bounded $q$.

In this paper, we extend the theorems of Khintchine and Schmidt to the context of self-similar measures. Our main theorem is the following.
%It was raised explicitly by Kleinbock-Lindenstrauss-Weiss \cite[Question 10.1]{KLW04} that whether self-similar measures satisfy the canonical Khintchine's theorem.

\begin{thm}[Khintchine and Schmidt for self-similar measures]
\label{Khintchine-self-similar}
Let $\lambda$ be a probability measure on $\Sim(\R^d)$, and assume $\lambda$ is  irreducible with finite exponential moment.
Let $\sigma$ be a $\lambda$-stationary probability measure on $\R^d$. Let $\psi : \N\rightarrow \R_{\geq0}$ be a non-increasing function.

Then we have the dichotomy
\begin{align} \label{Kh-dic}
  \sigma(W(\psi))=\begin{cases}
    0 &\text{ if } \sum_{q \in \N} \psi(q)^d<\infty,\\
    1 & \text{ if } \sum_{q \in \N}\psi(q)^d=\infty.
  \end{cases}
\end{align}
Moreover, in the divergent case $\sum_{ q\in \N}\psi(q)^d=\infty$, we have the following asymptotic:
 for $\sigma$-a.e. $\bs\in \R^d$,  as $n\to +\infty$:
 \begin{align*}
  \big|\{(\boldsymbol{p},q)\in \Z^{d}\times \llbracket 0,n\rrbracket \,:\, \|q \bs -\bp\|_{\infty}<\psi(q) \} \big| \sim_{\bs,\psi} 2^d\sum_{q=0}^n \psi(q)^d.
 \end{align*}
\end{thm}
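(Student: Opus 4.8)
The plan is to follow the classical Dani correspondence strategy, transferring the Diophantine problem to a dynamical counting problem on the homogeneous space $X=\SL_{d+1}(\R)/\SL_{d+1}(\Z)$. To a vector $\bs\in\R^d$ we associate the lattice $u_\bs\Z^{d+1}$ where $u_\bs=\begin{pmatrix}I_d & \bs\\ 0 & 1\end{pmatrix}$, and we consider the one-parameter diagonal flow $a_t=\diag(e^{t},\dots,e^{t},e^{-dt})$. The key point is that a solution $(\bp,q)$ to $\|q\bs-\bp\|_\infty<\psi(q)$ with $q$ of size roughly $e^{dt}$ corresponds, via the Dani correspondence, to a nonzero lattice vector of $a_t u_\bs\Z^{d+1}$ lying in a box whose dimensions are governed by $\psi$. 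Thus counting solutions with $q\le n$ becomes counting, over a suitable range of times $t\in[0,T]$ with $e^{dT}\asymp n$, how often the $a_t$-orbit of $u_\bs\Z^{d+1}$ enters a shrinking family of target sets in $X$; the main term $2^d\sum_{q\le n}\psi(q)^d$ is exactly the integral of the associated indicator against the Haar measure on $X$, up to the normalization of the boxes (the factor $2^d$ being the Lebesgue volume of the cube $[-1,1]^d$). To make this rigorous one needs a careful, quantitative version of the correspondence that tracks multiplicity and overcounting (several $t$'s seeing the same lattice vector, or primitive versus imprimitive vectors); this is the "refined treatment of Dani's correspondence" advertised in the abstract.

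The dynamical input is an effective equidistribution statement for the random walk / diagonal flow applied to the measure $(u_\cdot)_\star\sigma$ on $X$: pushed forward by $a_t$, this measure equidistributes toward Haar with a polynomial rate, and more importantly one needs this with shrinking targets, i.e. a Borel–Cantelli / dynamical quantitative counting statement. Concretely, I would combine (i) effective equidistribution of $a_t (u_\cdot)_\star\sigma$ — which is where the self-similarity of $\sigma$, strong irreducibility, the finite exponential moment, and the bootstrap scheme for the associated $\SL_{d+1}(\R)$-random walk all enter — with (ii) a second-moment / quasi-independence estimate for the counting functions at different times, which upgrades mean asymptotics to almost-sure asymptotics via a variance bound and the standard Borel–Cantelli machinery (à la Schmidt, Sprindžuk, or the Kelmer–Yu / Björklund–Gorodnik approach). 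The convergent half of the dichotomy is then immediate from the first-moment bound and Borel–Cantelli: if $\sum\psi(q)^d<\infty$ the expected number of solutions is finite, so $\sigma$-a.e. $\bs$ has only finitely many. The divergent half and the Schmidt asymptotic require the full second-moment argument.

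There is one genuine subtlety specific to this setting that I expect to be the main obstacle: the effective equidistribution on $X$ can fail, or degrade, near algebraic subvarieties — the orbit $a_t u_\bs\Z^{d+1}$ may spend anomalous time near intermediate-dimensional invariant subvarieties (e.g. where the lattice contains a short rational subspace), which corresponds on the Diophantine side to $\bs$ lying on a rational affine subspace. Handling this requires the non-concentration estimates for self-similar measures near algebraic subvarieties of $\R^d$ (also flagged in the abstract): one must show $\sigma$ gives negligible mass to neighborhoods of such subvarieties, with a polynomial rate in the neighborhood size, so that their contribution to both the first and second moments is controlled. Assembling this — proving the non-concentration, feeding it into the bootstrap to get effective equidistribution uniformly enough, and then propagating the error terms through the Dani correspondence and the variance sum so that the error is $o\!\left(\sum_{q\le n}\psi(q)^d\right)$ along the full divergence range — is the technical heart of the argument. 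A final monotonicity/interpolation step handles general non-increasing $\psi$ from dyadic scales, and a summation-by-parts argument converts the dynamical count over $t\in[0,T]$ back into the arithmetic sum $\sum_{q=0}^n\psi(q)^d$.
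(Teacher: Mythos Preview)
Your overall architecture is the paper's: Dani correspondence via Siegel transforms, effective (double) equidistribution of $a(t)(u_\cdot)_\star\sigma$ as the dynamical input, a second-moment/variance argument \`a la Schmidt to pass from mean to almost-sure asymptotics, and Borel--Cantelli for the convergent half. Two points, however, are misplaced or missing.

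First, the non-concentration of $\sigma$ near algebraic subvarieties is not used where you put it. It is an \emph{input to the proof of effective equidistribution itself}: in the dimensional bootstrap for the $\mu$-walk on $X$ (\Cref{Sec-bootstrap}), the algebraic obstructions of \Cref{obstacle} prevent the strong non-concentration hypothesis of \cite{BH24} from holding when $d\geq 2$, and the paper substitutes a mild non-concentration condition (MNC) whose verification (\Cref{relative-angle}) reduces, via \Cref{relative-angle-geom}, to the H\"older regularity of $\sigma$ near subvarieties (\Cref{non-acc-var}). It does not enter the Siegel-transform moment analysis in \Cref{Sec-Khintchine-dich}; there the only escape-of-mass input is the effective recurrence/logarithm law (\Cref{logarithm law}). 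Second, you underestimate the main difficulty in the passage from equidistribution to Schmidt counting: the relevant Siegel transforms $\widetilde{\1}_{R_k}$ are unbounded, and for $d=1$ not even in $L^2(X)$, so a naive second-moment argument fails. The paper splits the index set into $\Kb$ (boxes $R_k$ large: use a lattice-point count \Cref{counting} combined with the logarithm law) and $\Ks$ (boxes small: truncate by $\chi_k$, mollify, then run the variance estimate \Cref{L2bound-Zj-general} against \emph{double} equidistribution \eqref{eq:all-range}); for $d=1$ one must further replace the Siegel transform by a restricted version counting only $m$-quasi-primitive points (\Cref{fact-Siegel-lprim}) with a carefully chosen growing $m_k$. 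Without these two mechanisms your variance sum will not close.
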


Here we use the notation $\llbracket 0,n\rrbracket =[0,n]\cap \N$, and given a finite set $E$, we write $|E|$ for its cardinality.
An asymptotic estimate of the number of \emph{primitive} solutions of the Diophantine inequality is also provided, see \eqref{primitiv-count}.

\bigskip
Let us now explain how the topic has evolved since Mahler's question in the 80's to the above \Cref{Khintchine-self-similar}.
In the early literature, the convergence and divergence aspects of \Cref{Khintchine-self-similar} were addressed separately for specific approximation functions. The first significant result in this direction was obtained by Weiss \cite{Weiss01} for $\psi(q)=1/q^{1+\eps}$ and measures on the real line satisfying a certain decay condition, including the middle thirds Cantor measure.
This work was later generalized by Kleinbock-Lindenstrauss-Weiss \cite{KLW04} to a broader class of measures on $\R^d$, known as friendly measures.
Subsequent important developments, adopting similar terminology, include \cite{PV05,DFSU18,DFSU21, Cohen-Matalon}.
The study of the case $\psi(q)=\eps/q$ was conducted by Einsiedler-Fishman-Shapira \cite{EFS11} for missing digit Cantor measures, and generalized significantly by Simmons-Weiss \cite{SW19} to arbitary self-similar measures.

Without restriction on the non-increasing function $\psi$, Khalil-Luethi \cite{KL23} obtained the Khintchine dichotomy \eqref{Kh-dic} under the condition that the self-similar measure $\sigma$ has a large enough dimension, and $\lambda$ is rational, finitely supported, contractive, and satisfies the open set condition.
For instance, the Cantor measure with single-missing digit in base $5$ is in this class, but not the middle thirds Cantor measure, see also the related works \cite{Yu21,DJ24}, which all assume large dimension for $\sigma$ in some sense.
In our previous paper \cite{BHZ24}, we proposed a new approach, based on projection theorems à la Bourgain, which led to the Khintchine dichotomy \eqref{Kh-dic} in the case where $d=1$, and with no further restriction, i.e., dealing with any self-similar measure $\sigma$ and any non-increasing function $\psi$.
The assumption $d=1$ was however needed at the most crucial step of dimension bootstrapping in the proof.
We also obtained Schmidt's counting theorem for \emph{primitive} solutions under the additional assumption $\psi(q)<1/q$.
Pushing one step further, \Cref{Khintchine-self-similar}  establishes the Khintchine dichotomy for self-similar measures in \emph{arbitrary dimension}, along with a \emph{full Schmidt counting theorem}, i.e., without any restriction on $\psi$ nor asking solutions of \eqref{Dioph-ineq} to be primitive.

\medskip

\noindent{\emph{Other related topics}.} Mahler \cite{Mahler84} also suggested the study of intrinsic Diophantine approximations on fractal sets, i.e., approximation by rationals sitting in the fractal itself. For works in this direction, see e.g. \cite{TWW24,CVY24}.
Beside fractal sets, Diophantine approximation on embedded submanifolds has also attracted much attention over the past years, see e.g. \cite{KM98, VV06, Beresnevich12, BY23}.
See also \cite{Saxce, pfitscher24, HuangSaxce} for a Khintchine's theorem and other results of Diophantine approximation on the Grassmannians and other flag varieties.

\medskip

\Cref{Khintchine-self-similar} will be deduced from a dynamical statement which we now present.
Let $G=\SL_{d+1}(\R)$ and let $\Lambda \subseteq G$ be a lattice.
Consider the quotient space $X=G/\Lambda$ and equip it with the Haar probability measure $m_X$ and a canonical Riemannian metric (more precisely defined in \Cref{Sec-notations}).
For $x\in X$, we denote by $\inj(x)$ the injectivity radius of $x$.
For $l\in \N$, let $B^{\infty}_{\infty,l}(X)$ be the collection of smooth functions on $X$ all of whose derivatives up to order $l$ are bounded, and set $\cS_{\infty,l}(\cdot)$ the associated $C^l$-norm (see \Cref{Sec-notations} for more details on these conventions).
For $t>0$ and $\bs=(s_1,\cdots,s_d)\in \R^d$, consider $a(t),u(\bs)\in G$ given by
\begin{align*}
  a(t)=\begin{pmatrix}
    t^{\frac{1}{d+1}} & & &\\
     & \ddots & & \\
     & & t^{\frac{1}{d+1}}&\\
     & & & t^{-\frac{d}{d+1}}
  \end{pmatrix},
  \,\,\,\,\,\quad \quad u(\bs)=\begin{pmatrix}
    1 & & & s_{1}\\
    & \ddots && \vdots\\
    & &1 & s_d\\
    & && 1
  \end{pmatrix}.
\end{align*}

\begin{thm}[Effective equidistribution of expanding fractals]  \label{Khintchine-dyn}Let $\sigma$ be as in \Cref{Khintchine-self-similar}. For every $x\in X$, $t>1$, $f\in B^\infty_{\infty, l}(X)$, we have
$$\abse{\int_{\R^d} f\bigl(a(t) u(\bs) x\bigr) \dd \sigma(\bs) - m_{X}(f) } \leq C \cS_{\infty, l}(f) \inj(x)^{-1} t^{-c}.$$
where the constants $C,c>0$ only depend on $\Lambda, \sigma$, and $l=\lceil \frac{1}{2}\dim \SO(d+1)\rceil$.
\end{thm}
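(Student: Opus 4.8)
The plan is to derive \Cref{Khintchine-dyn} from an effective equidistribution statement for the random walk on $X$ attached to $\lambda$, using the self-similarity of $\sigma$ to convert the expanding translate into a walk average. Realize the conformal affine group $\R^d\rtimes(\R_{>0}\times\SO(d))$ inside $G=\SL_{d+1}(\R)$ as block matrices $\iota(\phi)=\begin{pmatrix}\ttr_\phi^{\alpha}O_\phi & *\\ 0 & \ttr_\phi^{\beta}\end{pmatrix}$ with exponents normalized so that $d\alpha+\beta=0$ and so that $\iota(\phi)\,u(\bs)=u(\phi(\bs))\,\iota(\phi_0)$, where $\phi_0$ is the linear part of $\phi$; this matches the $u(\bs)$-translates with the orbit combinatorics of the IFS. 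Iterating stationarity, one rewrites
$$\int_{\R^d}f\bigl(a(t)u(\bs)x\bigr)\dd\sigma(\bs)=\E\!\int_{\R^d}f\bigl(a(t)\,\iota(\phi_1\cdots\phi_n)\,u(\bs)\,\kappa_n\,x\bigr)\dd\sigma(\bs),$$
with $(\phi_i)$ i.i.d.\ of law $\lambda$, $\kappa_n$ in the compact part, and $n$ a stopping time chosen so that $\ttr_{\phi_1}\cdots\ttr_{\phi_n}$ is comparable to $t^{-1}$ up to a bounded multiplicative error. With this choice $a(t)\iota(\phi_1\cdots\phi_n)$ is a product of bounded elements times a rotation, so the left-hand side becomes an average of $f$ against a measure that is, up to controlled error, the $n$-fold convolution of the image of $\lambda$ in $G$, smoothed by $u_\star\sigma$ and pushed from $x$; here $n\asymp\log t$.

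The core is then a bootstrap on $t$ (equivalently on $n$). Let $E(t)$ be the supremum over $x\in X$ and over $f\in B^\infty_{\infty,l}(X)$ with $\cS_{\infty,l}(f)\le 1$ of $\inj(x)\bigl|\int f(a(t)u(\bs)x)\dd\sigma(\bs)-m_X(f)\bigr|$. Combining the self-replication identity above with the qualitative equidistribution of $a(t)$-translates of self-similar measures as a base case, one aims for an inequality of roughly the form $E(t)\le\theta\,E(t^{1-\delta})+(\text{small remainder})$ with $\theta<1$, whose iteration yields the polynomial decay $E(t)\lesssim t^{-c}$. Producing the gain $\theta<1$ is the decisive step, and this is where the Bourgain-type input enters: unless the relevant random-walk measure is already nearly equidistributed at the current scale, it cannot be concentrated near the finite family of algebraic subvarieties (proper subgroups, their cosets, and fibres) that could obstruct equidistribution, since such concentration would contradict the non-concentration estimates for self-similar measures near algebraic subvarieties established elsewhere in the paper; a discretized projection / sum–product theorem then upgrades this non-concentration into a quantitative ``dimension increment'' at a finer scale, which is exactly the mechanism forcing $\theta<1$.

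The main obstacle — and the reason the classical $(C,\alpha)$-good-measure route underlying the earlier friendly-measure results does not apply — is algebraic: the walk is driven by a measure supported on the amenable subgroup $\iota(\Sim(\R^d))\subset G$, so this subgroup acts without spectral gap on $L^2_0(X)$ and the walk alone cannot equidistribute. What rescues the argument is the interplay with $a(t)$: conjugation by powers of $a(t)$ drives the increments off the obstruction subgroup, and transversality to the obstruction subvarieties is quantified precisely by the non-concentration of $\sigma$ in the complementary directions. Making this interplay effective and uniform across all scales, while bookkeeping the number of smoothing/averaging steps — which is what fixes the Sobolev order $l=\lceil\tfrac12\dim\SO(d+1)\rceil$, one averaging over the $\SO(d+1)$ worth of rotations contributed by the factors $\iota(\phi_0)$ and $\kappa_n$ — is the delicate part.

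The dependence on the basepoint is treated separately from the main estimate. One first establishes the bound for $x$ ranging over a fixed compact core of $X$ with rate $t^{-c}$; then, for $x$ deep in the cusp, one invokes effective non-divergence of the $\lambda$-random walk (in the spirit of Eskin–Margulis and Benoist–Quint) to show that after $O(\log\inj(x)^{-1})$ steps a definite proportion of the walk has returned to the core, at the cost of the single factor $\inj(x)^{-1}$ in the statement. Feeding the core estimate into this reduction yields the asserted bound for every $x\in X$, with $C,c$ depending only on $\Lambda$ and $\sigma$.
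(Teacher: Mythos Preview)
Your proposal identifies several correct ingredients --- the cocycle identity linking $\eta_t$ to the $\mu$-walk (this is \Cref{lm:cocycle}), the role of non-concentration of $\sigma$ near subvarieties, the use of Bourgain-type projection estimates for a dimension increment, and effective recurrence for the basepoint dependence. However, the overall architecture you propose differs from the paper's and has a genuine gap.

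The paper does \emph{not} run a bootstrap on the equidistribution error $E(t)$ itself. Instead, the argument has three distinct phases: (i) \Cref{Proposition: initial dimension} shows $\mu^{*n}*\delta_x$ acquires some small positive Frostman dimension; (ii) \Cref{high-dim} bootstraps this dimension close to $\dim X$ via the multislicing \Cref{sup-mult}, which is where the $\MNC$ non-concentration (\Cref{relative-angle}) and the sum--product input live; (iii) \Cref{endgame} converts high dimension into equidistribution using the spectral gap of $P_{\eta_t}$ on $L^2_0(X)$, itself a consequence of quantitative decay of matrix coefficients. The actual proof of \Cref{Khintchine-dyn} then combines \Cref{lm:cocycle} with a discretization of the parameters $(k_g,\ttr_g)$ so that on each piece one can apply \Cref{endgame} to $\mu^{*n}_{|P'(i,r)}*\delta_x$, which is $(1-\kappa,\cB_\rho,\rho^{\eps_1})$-robust by phase~(ii).

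Your inequality $E(t)\le\theta\,E(t^{1-\delta})+\text{small}$ with $\theta<1$ is not substantiated: after writing $\eta_t*\delta_x$ as an average of $\eta_{t\ttr_g}*\delta_{gx}$ over $g\sim\mu^{*n}$, you get at best $E(t)\ll\sup_g\frac{\inj(x)}{\inj(gx)}\,E(t^{1-\delta})$, and there is no reason for the prefactor to be $<1$. The dimension increment you invoke acts on the Frostman exponent of the walk measure, not directly on $E(t)$; you are missing the mechanism (phase~(iii) above) that translates high dimension into decay of the equidistribution error. Relatedly, your remark that the amenable support of $\mu$ precludes a spectral-gap argument is misleading: the paper \emph{does} use spectral gap, but for the ambient $G$-representation on $L^2_0(X)$, entering through the matrix-coefficient bound $|\langle f(g\,\cdot),f\rangle|\ll\norm{g}^{-\delta_0}\cS_{2,l}(f)^2$. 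This is also where the value $l=\lceil\tfrac12\dim\SO(d+1)\rceil$ originates (via the Sobolev order in that estimate), not from averaging over rotations as you suggest.
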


\Cref{Khintchine-dyn} establishes the exponential equidistribution of the measure $\sigma$, viewed along a unipotent orbit based at an arbitrary point $x$ and expanded under the associated expanding diagonal flow.
The term $\inj(x)^{-1}$ in the error term reflects that equidistribution takes longer when the basepoint $x$ is high in a cusp.

The qualitative convergence (i.e. without rate) implied by \Cref{Khintchine-dyn} resonates with Khalil-Luethi-Weiss \cite{KLW25}, which establishes qualitative equidistribution under expansion by a broader class of diagonal flows, but at the cost of restricting the IFS to be ``carpet'', i.e. rational with equal contraction ratios and no rotation part.

% \marginpar{I don't think we need to discuss the non-horospherical case in detail, because it is precisely not our situation}
From a quantitative point of view, \Cref{Khintchine-dyn} can be seen as an effective version of Ratner's theorem for the multiparameter unipotent flow $(u(\bs))_{\bs\in \R^d}$ in the context of fractal measures. The non-fractal case (i.e. $\sigma$ is absolutely continuous with respect to Lebesgue) is due to Kleinbock-Margulis, in the broader context of expanding translates of horospherical subgroups \cite{KM96, KM12}, see also \cite{KM20} for the explicit error term involving injectivity radius. Related works in this direction for the non-horospherical case include \cite{Strombergsson15,CY24,Kim24, LMW-EffEq,Yang-EffEq,LMWY25}. In the context of fractal measures, Khalil-Luethi \cite{KL23} obtained \Cref{Khintchine-dyn} under the additional assumption the point $x$ lies in a certain countable set determined by the IFS. They also required the IFS to be rational, contractive, finitely supported, to satisfy the open set condition and has large enough dimension. See also \cite{DJ24} for a different approach for $d=1$. Provided $d=1$, those constraints were eliminated in our previous work \cite{BHZ24}. We now generalize \cite{BHZ24} to arbitrary dimensions, achieving the theorem in full generality.

\bigskip

The connection between \Cref{Khintchine-dyn} and the Khintchine dichotomy in \Cref{Khintchine-self-similar} comes from \emph{Dani's philosophy} \cite{Dani85}: roughly speaking, the Diophantine properties of a vector $\bs\in \R^d$ can be read in the dynamics of the trajectory $(a(t)u(\bs)x_{0})_{t>1}$ on $\SL_{d+1}(\R)/\SL_{d+1}(\Z)$, where $x_0$ denotes the identity coset.
The correspondence is rooted in the simple computation
\begin{align*}\label{equ-Dani}
a(t)u(\bs)(-\bp, q) =(t^{\frac{1}{d+1}}(q\bs -\bp), \,t^{-\frac{d}{d+1}}q),
\end{align*}
which yields that for every $t>0$, $\bs\in \R^d$, $I\subseteq \R, J\subseteq \R^d$, the statement
$$\exists (\bp, q)\in \Z^{d+1}\,:\, q\in I \text{ and } q\bs-\bp \in J $$
is equivalent to the lattice $a(t)u(\bs)\Z^{d+1} \subseteq \R^{d+1}$ intersecting the product set $t^{\frac{1}{d+1}}J\times t^{-\frac{d}{d+1}}I$. Now, identifying $\SL_{d+1}(\R)/\SL_{d+1}(\Z)$ to the space of covolume $1$ lattices in $\R^{d+1}$ and provided the product set $t^{\frac{1}{d+1}}J\times t^{-\frac{d}{d+1}}I$ looks like a ball, the latter condition can be interpreted dynamically, at which point \Cref{Khintchine-dyn} can be used. Since Dani's insight \cite{Dani85}, many works have exploited this connection, e.g. \cite{KM98,KSY22,CY24,KL23, BHZ24}. In \Cref{Sec-Khintchine-dich}, we will show how effective decorrelation of expanding translates (consequence of \Cref{Khintchine-dyn}) can be utilized to obtain the divergent case of the Khintchine dichotomy, along with a rate as in Schmidt's counting theorem. This extends \cite{BHZ24} which assumed $d=1$ and restricted counting to primitive solutions, both assumptions being required to deal with \emph{bounded} Siegel transforms. In this paper, we will tackle Siegel transforms which are not even $L^2$.

\bigskip

To prove \Cref{Khintchine-dyn}, we exploit the self-similarity of $\sigma$ to see that the translate $a(t)u(\bs)x\dd\sigma(s)$ is roughly given by the $\log t$-step of a random walk on $X$ associated to $\lambda$ (see \Cref{lm:cocycle}). This change of view point originates in the work of Simmons-Weiss \cite{SW19}, see also \cite{PS20,PSS23,KL23,DGW24,AG24} for further developments in this direction. The random walk is defined as follows. Assume $\lambda$-a.e. $\phi$ is orientation preserving (i.e. $\det O_{\phi}=1$), write $k_{\phi}=\begin{pmatrix}
  O_\phi & \\
  & 1
\end{pmatrix}$, and let $\mu$ be the probability measure on $\SL_{d+1}(\R)$ given by
\begin{equation} \label{defmu}
\mu=\int_{\Sim(\R^d)} \delta_{k^{-1}_{\phi}a(\ttr^{-1}_{\phi})u(\ttb_{\phi})} \dd \lambda(\phi).
\end{equation}
We show
\begin{thm}[Effective equidistribution of the $\mu$-random walk]
\label{mu^n-equidistribution}
Let $\lambda$ be as in \Cref{Khintchine-self-similar} and orientation preserving. Let $\mu$ be the associated probability measure on $\SL_{d+1}(\R)$ as in \eqref{defmu}.
Then for every $x \in X$, $n\geq 1$ and $f \in B^\infty_{\infty, l}(X)$, we have
\[\abse{\mu^{*n}*\delta_{x}(f)  - m_{X}(f) } \leq  C \cS_{\infty, l}(f) \inj(x)^{-1} e^{-c n} \]
where the constants $C,c>0$ only depend on $\Lambda, \lambda$, and $l=\lceil \frac{1}{2}\dim \SO(d+1)\rceil$.
\end{thm}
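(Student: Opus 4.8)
The plan is to run a bootstrap (or "dimension increment") argument on the quality of equidistribution of the $n$-step measure $\mu^{*n} * \delta_x$, following the broad strategy of \cite{BHZ24} but overcoming the algebraic obstruction that now appears in dimension $d \geq 1$. First, I would record the structure of $\mu$: by \eqref{defmu}, the measure is supported on elements $k_\phi^{-1} a(\ttr_\phi^{-1}) u(\ttb_\phi)$, so the random walk combines an expanding diagonal part (since $\lambda$ is contractive in average, $\ttr_\phi^{-1}$ is $>1$ on average, giving a positive top Lyapunov exponent), a unipotent part $u(\ttb_\phi)$, and a compact rotation part $k_\phi$. The finite exponential moment of $\lambda$ transfers to a finite exponential moment for $\mu$ with respect to the Cartan projection, which gives the standard large-deviation control on the norm cocycle and lets one restrict attention to a "good" set of trajectories of full measure up to $e^{-cn}$.

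The core is a local-to-global / spectral-gap-free scheme. One shows that if $\mu^{*n}*\delta_x$ is already equidistributed at some polynomial scale, then after $O(1)$ further convolution steps it equidistributes at a strictly better scale, until one reaches the Haar measure up to the claimed exponential error. The mechanism driving the increment is a \emph{non-concentration} input: the pushforward of $\mu^{*n}$ (projected appropriately) does not concentrate near proper algebraic subvarieties — this is exactly where the "non-concentration of self-similar measures near algebraic subvarieties" advertised in the abstract enters, and it replaces the role played by real projection theorems à la Bourgain in the $d=1$ case. Combined with a Fourier-analytic or representation-theoretic "thickening" estimate on $X = G/\Lambda$ (using the $\cS_{\infty,l}$ norm with $l = \lceil \frac12 \dim\SO(d+1)\rceil$ to control high frequencies and the mixing of the diagonal flow $a(t)$ on $X$), one converts improved non-concentration of the walk into improved equidistribution on $X$. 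The injectivity radius factor $\inj(x)^{-1}$ appears because near the cusp the walk needs extra time to spread into the bulk; one handles this by a preliminary "escape from the cusp" estimate (non-divergence, in the spirit of Eskin–Margulis / Benoist–Quint, using that $\mu$ has no invariant proper subspace thanks to strong irreducibility of $\lambda$) showing the walk enters a fixed compact set with high probability after $O(\log \inj(x)^{-1})$ steps, and then restarting the argument there.

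The main obstacle — and the principal new contribution over \cite{BHZ24} — is carrying out the bootstrap step \emph{despite the algebraic obstructions} present when $d \geq 2$: the unipotent directions $u(\bs)$ together with the rotations $k_\phi$ do not generate all of $G$ in a naive way, and the "random walk entropy increases" heuristic can stall on intermediate algebraic subvarieties (e.g. orbits of proper subgroups, or configurations trapped in subspaces). The resolution is the bootstrap scheme alluded to in the introduction: rather than proving non-concentration near \emph{all} subvarieties at once, one argues inductively on the dimension of the potential obstruction, at each stage using the strong irreducibility of $\lambda$ and the non-concentration of $\sigma$ near affine subspaces to break out of the current algebraic stratum, so that after finitely many increments the only remaining invariant object is $X$ itself. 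A secondary technical point is the passage between the random walk on $SL_{d+1}(\R)$ and the dynamics on the homogeneous space — one must ensure the error terms accumulated over the $O(1)$ convolution steps in each increment, summed over the $O(\log t)$ increments, remain exponentially small, which is where the finite exponential moment is used quantitatively. Once the bootstrap closes, Theorem \ref{Khintchine-dyn} follows by combining Theorem \ref{mu^n-equidistribution} with the cocycle identity relating $a(t)u(\bs)x\,\dd\sigma(\bs)$ to the $\lceil \log t\rceil$-step of the walk (cf.\ the cited \Cref{lm:cocycle}), absorbing the discrepancy between $t$ and $e^n$ into the constants.
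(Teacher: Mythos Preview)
Your proposal captures the broad three-phase outline (recurrence, bootstrap, endgame) but misidentifies both the object of the bootstrap and the role of spectral gap, and is too vague on the resolution of the algebraic obstruction to constitute a proof.

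First, the scheme is \emph{not} spectral-gap-free, and the bootstrap does not act on equidistribution quality. The paper bootstraps the \emph{Frostman dimension} of $\mu^{*n}*\delta_x$: starting from a small positive dimension (\Cref{Proposition: initial dimension}), each application of a dimension-increment step (\Cref{dim-increment}) raises the dimension by a fixed $\eps>0$ until it reaches $1-\kappa$ (\Cref{high-dim}). Only then is spectral gap invoked, once, in the endgame (\Cref{endgame}): a measure of dimension close to $\dim X$ is mollified at scale $\rho$ to a bounded density, and the $L^2$-operator norm decay of $P_{\mu^k}$ (from decay of matrix coefficients) converts this to equidistribution. Your scheme of iterating equidistribution improvements directly, with mixing invoked at each step, does not match this structure and would face the difficulty that at early stages $\mu^{*n}*\delta_x$ is far from having any $L^2$ density to which mixing could be applied.

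Second, your description of how the algebraic obstruction is overcome (``inductively on the dimension of the potential obstruction'') is not what the paper does and is not specific enough to close the argument. The actual mechanism is: the strong non-concentration hypothesis of \cite{BH24} (that the random subspace $V_1(g)=\Ad(u(-\ttb_g))\kg_-$ be transverse to every complementary-dimensional $W$) fails for $d\geq 2$ (\Cref{obstacle}). The fix is to formulate a \emph{mild} non-concentration condition $\MNC$ (\Cref{MNC-def}), asking only that $d+1$ independent copies of $V_1(g)$ together with any $d$-dimensional $W$ span $\kg$ with controlled angle, and then (i) prove that $\MNC$ still suffices to run the multislicing machinery built on Bourgain-type projection theorems (Lemmas \ref{MNC->sup}, \ref{MNC->sub}, \Cref{sup-mult}), and (ii) verify $\MNC$ by a purely algebraic reduction to Borel-invariant $W$ (\Cref{relative-angle-geom2}) combined with the non-concentration of $\sigma$ near subvarieties (\Cref{non-acc-var}). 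Step (i) is the heart of the new argument and is absent from your proposal; the non-concentration near subvarieties does not \emph{replace} Bourgain-type projection theorems but is used to \emph{verify the hypothesis} under which they apply.
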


Let us explain the steps of the proof of \Cref{mu^n-equidistribution}.
%, with an emphasis on our contributions (regularity properties of self-similar measures, and bootstrap despite algebraic obstructions).
The overall strategy is similar to that in \cite{BHZ24}, which itself is inspired by \cite{BH24}. It is done in three phases, each analyzing the dimension of the $\mu$-random walk. In contrast to the one-dimensional scenario in \cite{BHZ24}, the present higher dimensional setting imposes new difficulties in the second phase, as we explain below.

The first phase is to show that the random walk acquires some (small) positive dimension (\Cref{Proposition: initial dimension}): there exists $A,\kappa>0$ depending on $\Lambda, \lambda$ such that for any $\rho>0$ small, $x,y\in X$, and any $n\geq \abs{\log \rho}+A\abs{\log \inj x}$, we have
\[\mu^{*n}*\delta_x(B_{\rho}y)\leq \rho^{\kappa}.\]
The proof of this statement closely follows the argument in \cite[Proposition 3.1]{BHZ24}, and is based on effective recurrence of the random walk (\Cref{effective-recurrence}).

In the second phase, we bootstrap the initial dimension $\kappa$ arbitrarily close to the dimension of the ambient space by convolving with $\mu$ suitably many times (\Cref{high-dim}). However, unlike the one-dimensional case in \cite{BHZ24}, the multislicing method proposed in \cite[Section 2]{BH24} cannot be applied directly to implement this bootstrapping argument in the current higher dimensional setting. This limitation arises because the non-concentration hypothesis described in \cite[Theorem 2.1]{BH24} is never satisfied for $d\geq 2$ due to algebraic obstructions (see \Cref{obstacle}). To resolve this limitation, we promote a mild non-concentration hypothesis (MNC) which also enables the dimensional bootstrapping (\Cref{MNC-def}, \Cref{sup-mult})
\footnote{Such a strategy resonates with the concurrent and independent work of Zuo Lin \cite{Zuo25}, which manages algebraic obstructions to dimensional bootstrapping in the context of homogeneous spaces of $\SL_{4}(\R)$ acted upon by certain two-parameter unipotent flows.}.

To validate (MNC) in our setting, we first need to rule out potential algebraic obstructions, which is done in Propositions \ref{relative-angle-geom}, \ref{relative-angle-geom2}. We then upgrade the absence of obstruction to the non-concentration property (MNC).
That part of the argument requires regularity properties of the self-similar measure $\sigma$, namely that $\sigma$ is Hölder regular with respect to proper algebraic subvarieties (\Cref{non-acc-var}).
We establish this property in \Cref{Sec-regularity}. It is of independent interest and complements \cite{Mattila1982, Kaenmaki2003, MU2003, KLW04,  FK2018, HLV2025}.

%Such result deals with an old question of how self-similar measures escape (or not) subspaces and submanifolds and is therefore of independent interest.
%See \Cref{Sec-regularity} for more historical comments.
%---it is of independent interest and generalizes some of the results in \cite[Section 7]{KLW04}.

In the concluding phase, as the dimension approaches that of the ambient space, we complete the proof by applying the spectral gap property of the convolution operator $f \mapsto \mu*f$ acting on $L^2(X)$. Finally, \Cref{Khintchine-dyn} follows from \Cref{mu^n-equidistribution}, via the connection between random walks and expanding translates of self-similar measures given in \Cref{lm:cocycle}.

To derive the Khintchine dichotomy and Schmidt's counting theorem, we remark that the convergence part follows from the single effective equidistribution theorem in \Cref{Khintchine-dyn}, as explained in \cite[Theorem 9.1]{KL23}.
In order to handle the asymptotic counting in the divergence part, we need to truncate the associated Siegel's transform appropriately, and then apply different counting strategies according to the values of $\psi$. More precisely, for the case where $\psi(q)$ is not too large, we adopt a refined version of the counting method in \cite{BHZ24}, which is based on effective double equidistribution and is inspired by the original papers of Schmidt \cite{Schmidt, Schmidt60}. To handle the part where $\psi(q)$ is large, inspired by Huang-Saxc\'e \cite{HuangSaxce} and Pfitscher \cite{pfitscher24}, we make use of the fact that for $\sigma$-a.e. $\bs$, the lattice $a(t)u(\bs)\Z^{d+1}$ is not too ``distorted'' as $t\to +\infty$, which guarantees the number of lattice points in $a(t)u(\bs)\Z^{d+1}$ intersecting a large ball is asymptotic to the volume of the ball. This analysis is conducted in \Cref{Sec-Khintchine-dich}. The main challenge for this section, which is new compared to \cite{BHZ24}, is that we have to deal with Siegel transforms which are not bounded, and not even $L^2$ when $d=1$.

\bigskip

\noindent{\bf Structure of the paper}. In \Cref{Sec-notations}, we set up notations for the rest of the paper, and recall some basic facts on self-similar measures. In \Cref{Sec-moment} we discuss moment estimates for $\sigma$ or $\lambda$-trajectories, and derive exponential convergence of $(\lambda^{*n}*\delta_{0})_{n\geq0}$ to $\sigma$.
In \Cref{Sec-regularity}, we prove the H\"older regularity of self-similar measures with respect to proper algebraic subvarieties of $\R^d$. In \Cref{Sec-recurrence}, we recall the effective recurrence property of the $\mu$-random walk on $X$. In \Cref{Sec-postdim}, we show that the distribution $\mu^{*n}*\delta_x$ acquires small positive dimension at exponentially small scales as long as $n$ is large enough depending on $\inj(x)$. In \Cref{Sec-bootstrap}, we promote a mild non-concentration property and implement it in our context to bootstrap the dimension arbitrarily close to $\dim X$. In \Cref{Sec-equidistribution}, we deduce Theorems \ref{Khintchine-dyn}, \ref{mu^n-equidistribution} using the spectral gap of associated Markov operator, we also derive a double equidistribution property. Finally in \Cref{Sec-Khintchine-dich}, we show \Cref{Khintchine-self-similar}.

\bigskip

\noindent{\bf Acknowledgements}.
We thank René Pfitscher and Shucheng Yu for insightful discussions concerning Schmidt's counting theorem and moment estimates of Siegel transforms.
We also thank Bing Li for informing us about existing literatures around the Hölder estimates of self-similar measures near submanifolds.
\section{Notations} \label{Sec-notations}

Throughout the paper, we fix the following notations.
\bigskip

We let $d\geq 1$ be an integer.
We write $G=\SL_{d+1}(\R)$, fix $\Lambda\subset G$ to be a lattice, and set $X=G/\Lambda$ the quotient space. We let $m_G$ denote the $G$-invariant Borel measure on $G$ normalised so that the induced finite measure $m_X$ on $X$ has total mass $1$. Both $m_G$ and $m_X$ are referred to as Haar measures.

\bigskip

\noindent{\bf Metric.} Given integers $m,n\geq1$, we write $\M_{m, n}(\R)$ the space of real matrices with $m$ rows and $n$ columns.
We equip $\M_{m, n}(\R)$ with its standard Euclidean structure.
More precisely, writing $E_{i,j}$ the matrix with coefficient $1$ at the position $(i,j)$ and null coefficients elsewhere, the collection $\{E_{i,j} \,:\, 1\leq i\leq m, 1\leq j\leq n\}$ is an orthonormal basis of $\M_{m\times n}(\R)$.
This Euclidean structure extends naturally to the exterior algebra $\bigwedge^* \M_{m, n}(\R)$.
We denote by $\|\cdot\|$ the associated Euclidean norm on $\M_{m, n}(\R)$, and more generally on $\bigwedge^* \M_{m, n}(\R)$.

Set $\kg:=\sl_{d+1}(\R)$ the Lie algebra of $G$.
We equip $G$ with the unique right $G$-invariant Riemannian metric which coincides with $\|\cdot\|_{|\kg}$ at $\kg=T_{\Id}G$.
We write $\dist(\cdot,\cdot)$ the induced distance on $G$, or the quotient distance on $X$.

Given $\rho>0$, we write $B_{\rho}$ the open ball of radius $\rho$ centered at the neutral element $\Id$ in $G$. In particular, given a point $x\in X$, the set $B_{\rho}x$ is the ball in $X$ of radius $\rho$ and center $x$.

We define the injectivity radius of $X$ at $x$ by
\[\inj(x)=\sup\{\, \rho>0 \,:\,\text{the map $B_{\rho}\rightarrow X, g\mapsto gx$ is injective} \,\}.\]

In the Euclidean space $\R^d$, we set $B^{\R^d}_{\rho}$ the open ball of radius $\rho$ centered at the origin. On some rare occasions (\Cref{Sec-regularity}), we might use $B_{\rho}$ as a shorthand for $B^{\R^d}_{\rho}$. We will explicitely warn about this exception at the few places it occurs.

\bigskip

\noindent{\bf Sobolev norms.}
Write $\cA=\{E_{i,j}\,:\, 1\leq i, j\leq d+1, \, i\neq j \}\cup \{E_{i,i}-E_{i+1,i+1}\,:\, i=1, \dots d\}$ the standard basis of $\kg$. Given $l\in \N$, write $\Xi_{l}$ the set of words of length $l$ with letters in $\cA$.
Each $D\in \Xi_{l}$ acts as a differential operator on the space of smooth functions $C^\infty(X)$.
Given $f\in C^\infty(X)$, $p \in [1, \infty]$, we set
\[\cS_{p,l}(f)=\sum_{D\in \Xi_{l}} \|Df\|_{L^p},\]
where $\|\cdot\|_{L^p}$ refers to the $L^p$-norm for the Haar probability measure $m_{X}$ on $X$.
We let $B^{\infty}_{p,l}(X)$ denote the space of smooth functions $f$ on $X$ such that $\cS_{p,l}(f)<\infty$.

\bigskip
\noindent{\bf Driving measures $\lambda$ and $\mu$.}
Let $\Sim(\R^d)^+$ be the set of orientation preserving similarities of $\R^d$. Every $\phi\in \Sim(\R^d)^+$ can be written uniquely
\[\phi(\bs)=\ttr_{\phi}O_{\phi}\bs+\ttb_{\phi},\quad \bs\in \R^d,\]
for some $O_{\phi}\in \SO_d(\R)$, $\ttr_{\phi}>0$ and $\ttb_{\phi}\in \R^d$.

We set
\begin{align} %\label{notation-1}
  &K'=\begin{pmatrix}
    \SO_d(\R) & \\
     & 1
  \end{pmatrix}, \nonumber\\
  &A'=\{\, a(t):=\diag(t^{\frac{1}{d+1}},\dots,t^{\frac{1}{d+1}},t^{-\frac{d}{d+1}}): t\in \R_{>0}\,\}\nonumber\\
  &U=\left\{\, u(\bs):=\begin{pmatrix}
    1 & & & s_{1}\\
    & \ddots & & \vdots\\
    & & 1 & s_d\\
    & & & 1
  \end{pmatrix}: \bs\in \R^d  \,\right\}.   \nonumber\\
  \nonumber
\end{align}
Here, we use the implicit convention the $s_{i}$'s are the coordinates of $\bs$, more precisely $\bs=(s_1, \dots, s_{d})$.
Throughout the paper, this convention applies.

Consider the subgroup $P'=K'A'U\subseteq G$. Every $g\in P'$ can be uniquely written as
\[g=k_g^{-1} a(\ttr_g^{-1})u(\ttb_g)\]
where $k_{g}=\begin{pmatrix}
  O_g& \\
  & 1
\end{pmatrix}\in K'$, $\ttr_{g}>0$, and $\ttb_{g}\in \R^d$.
There is an anti-isomorphism\footnote{That is $\phi_{g_1g_2}= \phi_{g_2}\phi_{g_1}$ for all $g_{1},g_{2}\in P'$.} between $P'$ and $\Sim(\R^d)^+$ given by
\[g\in P'\mapsto \phi_g\in \Sim(\R^d)^+,\]
where
\[\phi_g(\bs)=\ttr_g O_g\bs+ \ttb_g.\]
\medskip

Throughout this paper, we fix a probability measure $\lambda$ on $\Sim(\R^d)^+$ and denote by $\mu$ the corresponding probability measure on $P'$ via the above anti-isomorphism. Note that $\lambda$ and $\mu$ determine each other.

We assume that $\lambda$, and equivalently $\mu$, has \emph{finite exponential moment}, which means that there exists $\eps>0$ such that
\[\int_{P'} \ttr_g^{\eps}+\ttr_g^{-\eps}+\norm{\ttb_g}^{\eps} \dd\mu(g)<\infty.\]
We assume that \emph{$ \lambda$ is  irreducible}. This means that for every affine subspace $E\subseteq \R^d$ which satisfies $\phi E=E$ for all $\phi \in \supp \lambda$, we have $E= \R^d$.

\bigskip
\noindent{\bf Self-similar measure $\sigma$.} We fix a probability measure $\sigma$ on $\R^d$ which is $\lambda$-stationary, i.e.
\[
\sigma = \int_{\Sim(\R^d)^+} \phi_{\star} \sigma \dd\lambda(\phi).
\]

\bigskip
\noindent{\bf Lyapunov exponent.} Let $\Ad:G\to \Aut(\kg)$ be the adjoint representation. The quantity $\Lyap$ given by
\begin{equation} \label{def-Lyap}
\Lyap=-\int_{P'} \log \ttr_g \dd \mu(g)
\end{equation}
is the top Lyapunov exponent associated to $\Ad_{\star} \mu$.

%\marginpar{to write better}
By a theorem of Bougerol-Picard \cite[Theorem 2.5]{BP92}, the existence of a $\lambda$-stationary probability measure is equivalent to the condition
$\Lyap>0$,
i.e., the random walk on $\R^d$ driven by $\lambda$ is \emph{contractive in average}. Moreover, in this case, the $\lambda$-stationary probability measure is unique, see \cite[Corollary 2.7]{BP92}.

\bigskip

\noindent{\bf Intervals.}
We write $\N^*$ for the set of positive integers.
For real numbers $a < b$, we write $\llbracket a, b \rrbracket$ to denote $\Z \cap [a,b]$.
Similarly, we set $\rrbracket a, b \rrbracket := \Z \cap (a, b]$.
%We also write $\N_{\geq a} := \N \cap [a,+\infty)$.

\bigskip
\noindent{\bf Asymptotic notations.}
We use the Landau notation $O(\cdot)$ and the Vinogradov symbol $\ll$.
Given \( a, b > 0 \), we write \( a \simeq b \) to denote \( a \ll b \ll a \). Furthermore, we say that a statement involving \( a \) and \( b \) holds under the condition \( a \lll b \) if it is valid whenever \( a \leq \varepsilon b \) for some sufficiently small constant \( \varepsilon > 0 \).
The notations \( O(\cdot) \), \( \ll \), \( \simeq \), and \( \lll \) refer to implicit constants that may depend on the dimension $d$, the lattice \( \Lambda \), and the measure \( \lambda \) (or equivalently on \( \mu \), as one determines the other under our conventions). Dependence on other parameters will be indicated explicitly via subscripts.

\section{Moment and finite approximation of self-similar measures}\label{Sec-moment}

We record standard facts about $\sigma$ and how it is approximated by the translation component $\ttb_{\phi}$ of $\phi \sim \lambda^n$ when $n$ is large.

\begin{lemma}[Finite moment]\label{sigma-moment}
There exists $\gamma>0$ such that
\begin{equation*}
\int_{\R^d} \norm{\bs}^\gamma\dd\sigma(\bs) < + \infty.
\end{equation*}
\end{lemma}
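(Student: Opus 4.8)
The plan is to exploit the self-similarity relation $\sigma = \int \phi_\star \sigma \, d\lambda(\phi)$ together with the finite exponential moment of $\lambda$, and to choose $\gamma$ small enough that the contraction-in-average property $\Lyap > 0$ dominates. First I would set $I(\gamma) = \int_{\R^d} \norm{\bs}^\gamma \, d\sigma(\bs)$ and, pretending for the moment that $I(\gamma) < \infty$, unfold one step of stationarity:
\[
I(\gamma) = \int_{\Sim(\R^d)^+} \int_{\R^d} \norm{\ttr_\phi O_\phi \bs + \ttb_\phi}^\gamma \, d\sigma(\bs) \, d\lambda(\phi).
\]
Using $\norm{\ttr_\phi O_\phi \bs + \ttb_\phi}^\gamma \le \ttr_\phi^\gamma \norm{\bs}^\gamma + \norm{\ttb_\phi}^\gamma$ when $\gamma \le 1$ (or the analogous $2^{\gamma-1}$-type bound; for $\gamma \le 1$ subadditivity of $x \mapsto x^\gamma$ gives the clean inequality with constant $1$), this yields
\[
I(\gamma) \le \Big( \int \ttr_\phi^\gamma \, d\lambda(\phi) \Big) I(\gamma) + \int \norm{\ttb_\phi}^\gamma \, d\lambda(\phi).
\]
If $\theta(\gamma) := \int \ttr_\phi^\gamma \, d\lambda(\phi) < 1$, then rearranging gives $I(\gamma) \le \big(1 - \theta(\gamma)\big)^{-1} \int \norm{\ttb_\phi}^\gamma \, d\lambda(\phi) < \infty$, the right-hand integral being finite by the finite exponential moment assumption once $\gamma \le \eps$.

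The key point is therefore to produce $\gamma \in (0, \eps]$ with $\theta(\gamma) < 1$. Since $\theta(0) = 1$ and $\theta$ is differentiable near $0$ (differentiation under the integral sign is justified by the finite exponential moment, which dominates $\ttr_\phi^\gamma \log \ttr_\phi$ for $\gamma$ small), we compute $\theta'(0) = \int \log \ttr_\phi \, d\lambda(\phi) = -\Lyap < 0$ by \eqref{def-Lyap} and the fact that $\Lyap > 0$ (Bougerol--Picard, available since a stationary measure exists). Hence $\theta(\gamma) = 1 - \Lyap \gamma + o(\gamma) < 1$ for all sufficiently small $\gamma > 0$; fix such a $\gamma$, also requiring $\gamma \le \eps$ and $\gamma \le 1$.

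It remains to remove the a priori assumption $I(\gamma) < \infty$, since the rearrangement above is only valid once we know $I(\gamma)$ is finite. I would handle this by a truncation/approximation argument: apply the same unfolding to the bounded functions $\bs \mapsto \min(\norm{\bs}, R)^\gamma$ (or $\bs \mapsto \norm{\bs}^\gamma \wedge R$), for which all integrals are automatically finite; carrying the inequality through and using $\min(\norm{\ttr_\phi O_\phi \bs + \ttb_\phi}, R)^\gamma \le \ttr_\phi^\gamma \min(\norm{\bs},R)^\gamma + \norm{\ttb_\phi}^\gamma$ gives a bound on $\int \min(\norm{\bs},R)^\gamma \, d\sigma$ uniform in $R$; then let $R \to \infty$ and invoke monotone convergence. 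Alternatively, one can run the argument on the finite-time approximations $\sigma^{(n)} = \lambda^{*n} * \delta_{\boldsymbol 0}$, which are compactly supported, obtain the uniform bound $\int \norm{\bs}^\gamma \, d\sigma^{(n)} \le (1-\theta(\gamma))^{-1} \int \norm{\ttb_\phi}^\gamma \, d\lambda$ by induction on $n$, and pass to the limit using that $\sigma^{(n)} \to \sigma$ weakly together with Fatou's lemma. The main (and only real) obstacle is this finiteness bootstrap; the choice of $\gamma$ is a soft consequence of $\Lyap > 0$, and everything else is routine.
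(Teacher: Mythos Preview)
The paper's own proof is a one-line citation to Kloeckner~\cite{Kloeckner22}; you instead give the standard self-contained argument (choose $\gamma$ small so that $\theta(\gamma)=\int \ttr_\phi^\gamma\,d\lambda(\phi)<1$ via $\theta'(0)=-\Lyap<0$, then bootstrap finiteness). This is exactly the kind of computation that underlies the cited reference, so the approaches are essentially the same in spirit, with yours being more explicit.

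One concrete correction: your first truncation inequality
\[
\min(\norm{\ttr_\phi O_\phi \bs + \ttb_\phi},R)^\gamma \le \ttr_\phi^\gamma \min(\norm{\bs},R)^\gamma + \norm{\ttb_\phi}^\gamma
\]
is \emph{false} in general. Take $\ttr_\phi=0.1$, $\norm{\bs}=100$, $R=10$, $\ttb_\phi=0$, $\gamma=1$: the left side is $\min(10,10)=10$ while the right side is $0.1\cdot 10=1$. The issue is that truncating $\norm{\bs}$ at $R$ throws away mass that the contraction $\ttr_\phi$ would otherwise bring below $R$. Your second alternative via the finite-time approximations $\sigma^{(n)}$ is correct and avoids this: each $I^{(n)}(\gamma)=\int\norm{\bs}^\gamma\,d\sigma^{(n)}$ is finite (trivially for $n=0$, then by induction and Tonelli), the recursion $I^{(n)}(\gamma)\le\theta(\gamma)I^{(n-1)}(\gamma)+\int\norm{\ttb_\phi}^\gamma\,d\lambda$ gives a bound uniform in $n$, and one concludes by weak convergence $\sigma^{(n)}\to\sigma$ plus lower semicontinuity of $\bs\mapsto\norm{\bs}^\gamma$. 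Note that this weak convergence is part of the general theory of contractive-in-average IFS (it needs only $\int\log^+\norm{\ttb_\phi}\,d\lambda<\infty$, implied by the exponential moment) and does \emph{not} rely on \Cref{approx-sigma-n}, so there is no circularity. Simply drop the first truncation option and keep the $\sigma^{(n)}$ route.
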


\begin{proof}
As $\lambda$ has finite exponential moment and is contractive in average, this follows from Kloeckner \cite[Theorem 3.1 \& Lemma 3.9]{Kloeck22}.
\end{proof}

By Bougerol-Picard \cite[Theorem 2.4]{BP92},  there exists a measurable map $(\Sim(\R^d))^{\N^*}\rightarrow \R^d, \uphi \mapsto \ttb_{\uphi}$ such that for $\lambda^{\otimes \N^*}$-almost every $\uphi=(\phi_{i})_{i\geq 1}$, one has 
$$\ttb_{\uphi} =\lim_{n\to +\infty} \ttb_{\phi_{1}\dotsm \phi_{n}}.$$
Moreover, the variable $(\ttb_{\uphi})_{\uphi \sim \lambda^{\otimes \N^*}}$ has law $\sigma$.
The next lemma quantifies the rate of approximation of $\ttb_{\uphi}$ by $\ttb_{\phi_{1}\dotsm \phi_{n}}$.

\begin{lemma}[Finite approximation] \label{approx-sigma-n} 
There exists $\gamma>0$ such that for every $p\geq1$, 
  \begin{equation*}
\lambda^{\otimes \N^*}\set{\uphi \,:\, \exists n\geq p,\, \|\ttb_{\phi_{1}\dotsm \phi_{n}}- \ttb_{\uphi}\| \geq e^{-\gamma n} }\ll  e^{-\gamma p}.
\end{equation*}
\end{lemma}

\begin{proof}
It is enough to argue for $p=n$, in other words to show that
  \begin{equation*}
\lambda^{\otimes \N^*}\set{\uphi \,:\,  \|\ttb_{\phi_{1}\dotsm \phi_{n}}- \ttb_{\uphi}\| \geq  e^{-\gamma n} }\ll  e^{-\gamma n}.
\end{equation*}
Note that $\ttb_{\phi_{1}\dotsm \phi_{n}}=\phi_{1}\dotsm\phi_{n}(0)$ and $ \ttb_{\uphi}= \phi_{1}\dotsm\phi_{n}(\ttb_{\uphi_{>n}})$ where $\uphi_{>n}:=(\phi_{i})_{i\geq n+1}$. 
In particular, for $\uphi\sim \lambda^{\otimes \N^*}$, the variable $\ttb_{\uphi_{>n}}$ has law $\sigma$ and is independent from $\phi_{1}, \dotsc, \phi_{n}$. 
%In particular, the variable $(\ttb_{\uphi_{>n}})_{\uphi\sim \lambda^{\otimes \N^*}}$ has law $\sigma$ and is independent from $(\phi_{1}, \dotsc, \phi_{n})_{\uphi\sim \lambda^{\otimes \N^*}}$. 
We deduce 
\begin{equation*}
\lambda^{\otimes \N^*}\set{\uphi \,:\,  \norm{\ttb_{\phi_{1}\dotsm \phi_{n}}- \ttb_{\uphi}} \geq  e^{-\gamma n} }\leq (\lambda^{*n} \otimes \sigma) \set{ (\phi, \bs)\,:\, \norm{\ttr_{\phi} \bs} \geq e^{-\gamma n} } .
\end{equation*}
The result then follows from the large deviation principle for $(\log \ttr_{\phi})_{\phi \sim \lambda^{*n}}$ and the moment control on $\sigma$ from \Cref{sigma-moment}.
\end{proof}

As a consequence, we obtain a joint moment estimate along a $\lambda$-trajectory. 

\begin{lemma}[Moment estimate on right trajectories] \label{moment-traj}
  There exists $\gamma>0$ such that for every $R>1$,
  \begin{equation*}
\lambda^{\otimes \N^*}\set{\uphi \,:\, \exists n\geq 1,\, \norm{\ttb_{\phi_{1}\dotsm \phi_{n}}}  \geq R}\ll  R^{-\gamma}.
\end{equation*}
\end{lemma}

\begin{proof}
For $\eta\in (0, 1)$ and $0<\gamma\lll \eta$, the domination
$$\lambda^{\otimes \N^*}\set{\uphi \,:\, \exists n\geq 1+\eta \log R,\, \norm{\ttb_{\phi_{1}\dotsm \phi_{n}}} \geq R} \ll  R^{-\gamma}$$
is a direct consequence of Lemmas \ref{sigma-moment}, \ref{approx-sigma-n}.
It remains to show that choosing $\eta\lll1$, we have 
\begin{equation}\label{lowrangdom}
\lambda^{\otimes \N^*}\set{\uphi \,:\, \exists  n \leq 1+ \eta\log R,\, \norm{\ttb_{\phi_{1}\dotsm \phi_{n}}}  \geq R} \ll  R^{-\gamma}.
\end{equation}
For this, observe $\ttb_{\phi_{1}\dotsm \phi_{n}} = \sum_{i=0}^{n-1} \ttr_{\phi_{1}}\dotsm \ttr_{\phi_{i}}O_{\phi_1}\cdots O_{\phi_i} \ttb_{\phi_{i+1}}$.  Using the  Markov inequality, we may control the norm of each summand: for every $\eps>0$,
\begin{align*}
\lambda^{\otimes \N^*}\set{\uphi \,:\, \ttr_{\phi_{1}}\dotsm \ttr_{\phi_{i}} \norm{\ttb_{\phi_{i+1}}}  \geq R^{1/2}} \leq  R^{-\eps/2} \E(\ttr_{\phi}^\eps)^i \E(\norm{\ttb_{\phi_{i+1}}}^{\eps} ).
\end{align*}
Fixing $\eps=\eps(\lambda)>0$ so that the right-hand side is finite, then choosing $\eta\lll \eps$, the right hand side is smaller than $R^{-\eps/4}$, and summing over $i\in \llbracket 1, 1+ \eta\log R\rrbracket$, Equation \eqref{lowrangdom} follows. This concludes the proof.
\end{proof}

%\begin{proof} Provided $0<\eps,\gamma\lll 1$, the domination
%$$\lambda^{\otimes \N^*}\set{\uphi \,:\, \exists n\geq 1+\eps \log R,\, \norm{\ttb_{\phi_{1}\dotsm \phi_{n}}} \geq R} \ll  R^{-\gamma}$$
%is a direct consequence of Lemmas \ref{sigma-moment}, \ref{approx-sigma-n}.
%On the other hand, the control
%$$\lambda^{\otimes \N^*}\set{\uphi \,:\, \exists  n \leq 1+ \eps\log R,\, \norm{\ttb_{\phi_{1}\dotsm \phi_{n}}}  \geq R} \ll  R^{-\gamma}$$ is obtained  by writing $\ttb_{\phi_{1}\dotsm \phi_{n}} = \sum_{i=1}^{n-1} \ttr_{\phi_{1}}\dotsm \ttr_{\phi_{i}}O_{\phi_1}\cdots O_{\phi_i} \ttb_{\phi_{i+1}}$, then using  the exponential moment assumption on $\lambda$ and  the large deviation principle for $(\log \ttr_{\phi})_{\phi \sim \lambda^{*n}}$.
%\end{proof}

\section{Regularity of self-similar measures}\label{Sec-regularity}

The main goal of this section to is to establish \Cref{non-acc-var} stating that the self-similar measure $\sigma$ cannot be concentrated near %proper affine subspaces, and more generally near 
proper algebraic subvarieties of $\R^d$. 

%Our treatment complements \cite{Hut81, Mattila1982, Kaenmaki2003, MU2003, FK2018, HLV2025}. 

The property that self-similar measures or sets avoid submanifolds dates back to \cite{Hut81, Mattila1982}, then \cite{Kaenmaki2003, FK2018} in the context of self-conformal or self-affine measures. Those results however  all care about the mass given by $\sigma$ to a manifold, and ignore neighborhoods. Non-concentration estimates on neighborhoods can be found in \cite[Theorem 7.3]{KLW04} in the context of absolutely friendly measures. These comprise Hausdorff measures on self-similar sets induced by finite contractive IFS's satisfying the open set condition \cite[Theorem 2.3]{KLW04}, but not the generality allowed in this article. In the very recent paper \cite{HLV2025}, the  case of Gibbs measures on self-conformal sets is investigated for neighborhoods of spheres \cite[Theorem 6.1]{HLV2025}. The present section  complements these works by dealing with arbitrary self-similar measures (coming from a randomized  irreducible self-similar IFS with finite exponential moment) and arbitrary subvarieties.

\subsection{H\"older regularity near affine subspaces}
%Non-concentration near affine subspaces
%Frostman non-concentration near affine subspaces
%Escaping affine subspaces
In the next lemma, we first observe that $\sigma$ cannot charge any proper affine subspace.
\begin{lemma}  \label{strong-irr}
For every  proper affine subspace $\cL\subsetneq \R^d$, we have $\sigma(\cL)=0$. Equivalently, any set $E\subseteq \R^d$ which is a finite union of affine subspaces and is invariant under $\supp \lambda$ must satisfy $E=\R^d$.
\end{lemma}

The second characterization is  a  property of \emph{strong irreducibility}. It strengthens the irreducibility assumption on $\lambda$ introduced in \Cref{Sec-notations}.

\begin{proof}
We start by establishing the equivalence.

$(\implies)$ Assume there exists $E\subsetneq \R^d$ which is a finite union  of affine subspaces and $\supp \lambda$-invariant. Given any $x\in E$, we have $\lambda^{*n}*\delta_{x} \rightharpoonup^* \sigma$. Therefore  $\sigma(E)=1$ by invariance, and it follows $\sigma$ gives positive mass to one of the proper affine subspaces making $E$.

($\impliedby$) %  strong irreducibility implies $\sigma (\cL)=0$.
Assume $\sigma (\cL)>0$ for some proper affine subspace $\cL$.
Among these $\cL$, consider only those with minimal dimension.
They intersect with each other in $\sigma$-null sets.
Hence some $\cL$ maximizes the value $\sigma(\cL)$ and only finitely many $\cL'$ of same dimension can attain $\sigma(\cL') = \sigma(\cL)$.
%Call $S_{\lambda}\subseteq \Sim(\R^d)^+$ the semigroup  generated by $\supp \lambda$. 
By $\lambda$-stationarity of $\sigma$ and the maximality condition on $\sigma(\cL)$, we have $\sigma(\phi \cL)=\sigma(\cL)$ for all $\phi \in \langle\supp \lambda \rangle$.
%By the minimality requirement on $\dim \cL$, we also have $\sigma( \cL' \cap \cL'')=0$ for all $\cL' \neq \cL''\in S_{\lambda}\cL$.
It follows that $\cL$ has finite $\langle \supp {\lambda} \rangle$-orbit%. Then $\cup_{\cL'\in S_{\lambda}\cL} \cL'$ is a finite $S_{\lambda}$-invariant union of subspaces
, contradicting strong irreducibility.

Now that we have established the equivalence, we  show strong irreducibility.
Let $E$ be as in the statement.
It can be written in a unique way as $E=\bigcup_{i\in I}E_{i}$, where $I$ is finite and $E_i$ are affine subspaces satisfying $E_{i}\not \subseteq E_{j}$ for all $i \neq j\in I$.
To show $E=\R^d$, we may assume that $\supp\lambda$ permutes transitively the $E_{i}$'s and hence they all have same dimension, say $k$.
We may also assume that $k$ is minimal among all possible choices of $E$.
The minimality implies the $E_{i}$'s are mutually disjoint.
We show that in fact $|I|=1$, i.e., there is only one component $E_{i}$.
Indeed, otherwise, the quantity $\delta := \inf\set{\norm{x-y} \,:\, {i\neq j},\, x\in E_{i},\, y\in E_{j} }$ would be strictly positive.
But recall that $\lambda$ is contractive in average, whence there is some similitude $\phi\in \supp \lambda$ with contraction ratio $\ttr_{\phi} \in (0,1)$.
Then $\delta$ gets strictly smaller when we replace each $E_{i}$ by $\phi E_{i}$, thus contradicting invariance of the union. Therefore $|I|=1$. By irreducibility of $\lambda$, we deduce $E=\R^d$.
\end{proof}

We upgrade \Cref{strong-irr} to a Hölder control on the mass granted by $\sigma$ to neighborhoods of affine subspaces.
\begin{proposition}[H\"older regularity near affine subspaces] \label{non-conc-sigma-aff}
There exists $C,c>0$ such that for every $\eps>0$, every proper affine subspace $\cL\subsetneq \R^d$,
$$\sigma(\cL^{(\eps)})\leq C\eps^c,$$
where $\cL^{(\eps)}$ is the $\eps$-neighborhood of $\cL$ in $\R^d$.
\end{proposition}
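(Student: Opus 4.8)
The plan is to exploit the self-similarity of $\sigma$ together with the strong irreducibility of $\lambda$ to run a compactness-plus-induction argument on the dimension of the affine subspace. First I would reduce to the case where $\cL$ is a proper \emph{linear} subspace by absorbing the translation part: since $\sigma$ has a finite moment of positive order (\Cref{sigma-moment}), most of the mass of $\sigma$ lies in a bounded ball, so it suffices to bound $\sigma(\cL^{(\eps)}\cap B^{\R^d}_R)$ uniformly over affine $\cL$, and the set of such $\cL$ meeting $B^{\R^d}_R$ is parametrized by a compact family (a direction in a Grassmannian, plus an offset in a compact set). The key quantity to control is
\[
\beta(\eps):=\sup_{\cL}\sigma(\cL^{(\eps)}),
\]
the supremum over all proper affine subspaces; one shows $\beta(\eps)\to 0$ as $\eps\to 0$ and then upgrades this to a Hölder rate.

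The heart of the argument is a self-improvement inequality obtained by unfolding the stationarity relation $\sigma=\int \phi_\star\sigma\,\dd\lambda(\phi)$, or rather its $n$-fold version $\sigma=\int \phi_\star\sigma\,\dd\lambda^{*n}(\phi)$. For a proper affine subspace $\cL$, the preimage $\phi^{-1}\cL$ is again a proper affine subspace, and $\phi^{-1}(\cL^{(\eps)})=(\phi^{-1}\cL)^{(\ttr_\phi^{-1}\eps)}$ because $\phi$ is a similarity with ratio $\ttr_\phi$. Hence
\[
\sigma(\cL^{(\eps)})=\int \sigma\bigl((\phi^{-1}\cL)^{(\ttr_\phi^{-1}\eps)}\bigr)\,\dd\lambda^{*n}(\phi).
\]
Since $\lambda$ is contractive in average, for $n$ large a definite proportion of the mass of $\lambda^{*n}$ consists of $\phi$ with $\ttr_\phi$ exponentially small, say $\ttr_\phi\le e^{-cn}$ so that $\ttr_\phi^{-1}\eps\ge \eps$ is boosted to a larger scale; combined with the finite exponential moment to control the bad event $\{\ttr_\phi$ not small$\}$, this yields a contraction of the form $\beta(\eps)\le \theta\,\beta(\eps^{1-\delta})+(\text{small error})$ for suitable $\theta<1$, from which the Hölder bound $\beta(\eps)\le C\eps^c$ follows by iteration.

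The main obstacle — and the place where strong irreducibility must be used — is the base step: showing $\beta(\eps)\to 0$, equivalently that $\sigma$ gives zero mass to every proper affine subspace, with \emph{uniform} smallness over the compact parameter space of subspaces. If $\sigma(\cL)>0$ for some proper affine $\cL$, one considers the family of subspaces $\cL'$ with $\sigma(\cL')$ maximal and of minimal dimension among those with positive mass; stationarity forces this family to be finite (the total mass is $1$) and invariant under $\lambda$-a.e. $\phi$ up to the action on affine subspaces, so their union is a proper $\lambda$-invariant finite union of affine subspaces, contradicting strong irreducibility. The uniformity then comes from a compactness argument: if $\beta(\eps_k)\not\to 0$ along some sequence with witnesses $\cL_k$, extract a convergent subsequence $\cL_k\to \cL_\infty$ (using \Cref{sigma-moment} to stay in a compact region) and pass to the limit to get $\sigma(\cL_\infty)>0$, again a contradiction. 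I expect assembling this compactness/limiting step carefully — handling the non-compactness of the space of affine subspaces via the finite moment, and making the invariance argument precise at the level of finite unions — to be the most delicate part; the contraction estimate itself is then a fairly routine consequence of average contractivity and the finite exponential moment.
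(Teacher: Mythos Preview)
Your proposed self-improvement inequality does not follow from the computation you describe, and this is the key gap. From stationarity you correctly obtain
\[
\sigma(\cL^{(\eps)})=\int \sigma\bigl((\phi^{-1}\cL)^{(\ttr_\phi^{-1}\eps)}\bigr)\,\dd\lambda^{*n}(\phi)\leq \int \beta(\ttr_\phi^{-1}\eps)\,\dd\lambda^{*n}(\phi),
\]
but since $\beta$ is non-decreasing and $\ttr_\phi^{-1}\eps\geq\eps$ on the contractive event, this only yields the trivial $\beta(\eps)\leq \beta(\text{larger})$. The ``boost to larger scale'' is not the source of any factor $\theta<1$; by itself it goes in the wrong direction. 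In the regime of compactly supported $\sigma$ (as in \cite{KLW04}) the genuine contraction comes instead from the event that the translation part $\ttb_\phi$ lands \emph{away} from $\cL$: on that event $\phi^{-1}\cL$ is far from $\supp\sigma$ and the inner term vanishes, so $\beta(\eps)\leq p\,\beta(r^{-1}\eps)$ with $p=\sup_\cL\lambda^{*n}\{\dist(\ttb_\phi,\cL)\text{ small}\}<1$ by irreducibility. Your sketch never isolates this mechanism and misattributes the gain.

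Even after fixing that, the unbounded-support case, which is the paper's setting, does not reduce to a one-parameter recursion in $\beta(\eps)$. When $\supp\sigma$ is unbounded, knowing that $\ttb_\phi$ is far from $\cL$ no longer kills the term $\sigma\bigl((\phi^{-1}\cL)^{(\ttr_\phi^{-1}\eps)}\bigr)$; it only pushes $\phi^{-1}\cL$ far from the origin. The paper handles this by introducing a \emph{second parameter} $T=\dist(0,\cL)$, setting $I_{s,T}=\sup\{\sigma(\cL^{(s)}):\dist(0,\cL)\geq T\}$, and running a coupled induction in which the ``far'' event feeds into a larger $T$ (controlled by the induction hypothesis) while the ``near'' event has small $\lambda^{*\tau_\eps}$-probability by irreducibility. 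Your compactness reduction (restricting to $\cL$ meeting $B_R$) is adequate for the qualitative base step $\beta(\eps)\to0$, but it does not survive the iteration: the preimages $\phi^{-1}\cL$ escape any fixed compact window, and cutting to $B_R$ with $R=R(\eps)$ reintroduces an error you cannot absorb without precisely the two-parameter structure the paper uses.
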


%\marginpar{I have given a more detailed explanation about the difficulty}
In the case where $\sigma$ arises from a self-similar IFS which is finite, contractive, and satisfies the open set condition, the result is a consequence of \cite[Lemmas 8.2, 8.3]{KLW04}. In \cite{KLW04}, the argument is roughly as follows. Assume for simplicity $\lambda$ has equal contraction ratios $\eps\in (0, 1)$. Consider $\underline{\phi}=(\phi_{i})_{i\geq 1}\sim \lambda^{\otimes \N}$ and observe $x_{\uphi}:=\lim_{k}\phi_{1}\dotsm \phi_{k}(0)$ has law $\sigma$.
As $\|x_{\uphi}-\phi_{1}\dotsm \phi_{n}(0)\|\ll \eps^n$, the inclusion $x_{\uphi}\in \cL^{(\eps^n)}$ implies for every $i\leq n$ that $\phi_{1}\dotsm \phi_{i}(0) \in \cL^{(O(\eps^i))}$, i.e., $\phi_{i}(0) \in (\phi_{1}\dotsm \phi_{i-1})^{-1}\cL^{(O(\eps^i))}$.
Up to taking $\eps$ small enough (by considering $\lambda^{*n}$ instead of $\lambda$), the probability of the latter event conditionally to previous steps is smaller than $1/2$, whence the desired decay.

In our situation, the support of $\sigma$ may be unbounded, hence knowing that $x_{\uphi}$ is close to $\cL$ does not mean that all steps leading to $x_{\uphi}$ will be as well. To deal with this difficulty, we will use an induction scheme that both takes into  account the position of $\phi_{1}\dotsm \phi_{n}(0)$ with respect to $\cL$ but also how far $\cL$ is from the origin.

%In the case where $\sigma$ arises from a finitely supported self-similar IFS satisfying the open set condition, the result is a consequence of \cite[Lemmas 8.2, 8.3]{KLW04}. Here we remove the open set condition, and we only require that $\lambda$ has a finite exponential moment. Under this weak assumption, $\supp \sigma$ might not be bounded. This causes difficulties when we estimate the concentration of $\sigma$ near affine subspaces which are far away from the origin. Thanks to the stationarity of $\sigma$ with respect to $\lambda$ and \Cref{sigma-moment}, we are allowed to take into account the position of affine subspaces and use an inductive approach to complete the proof.

\bigskip
As a preparation for the proof, we first introduce a stopping time which will be helpful in the case where the contraction ratio $\ttr_\phi$ is not uniform for $\phi \in \supp \lambda$.
Given $\eps>0$, we define $\tau_{\eps}: \Sim(\R^d)^{\N^*}\rightarrow \N\cup \{\infty\}$ to be the stopping time
$$\tau_{\eps}(\uphi)= \inf\{n\geq 1\,:\, \ttr_{\phi_{1}\dotsm \phi_{n}}<\eps \}.$$
where $\phi_{i}$ denotes the $i$-th coordinate of $\uphi$, i.e. $\uphi=(\phi_{i})_{i\geq1}$.  As $\lambda$ is contracting in average, we have that $\tau_{\eps}$ is $\lambda^{\otimes \N^*}$-almost surely finite. Denote by $\lambda^{*\tau_{\eps}}$ the distribution of
$\phi_{1}\dotsm \phi_{\tau_{\eps}(\uphi)}$ as $\uphi\sim \lambda^{\otimes \N^*}$.
The following lemma summarises what we need about this measure.
\begin{lemma}
\label{lm:st}
For all $\delta, \gamma>0$ small enough depending on $\lambda$, for all $\eps\in (0, 1)$, we have the following. 
\begin{enumerate}
\item The measure $\sigma$ is $\lambda^{*\tau_{\eps}}$-stationary.
\item The variable $\eps/\ttr_{\phi}$ where $\phi\sim \lambda^{*\tau_{\eps}}$ has a moment of  order $\gamma$ bounded by $2$:
\begin{equation}
\label{eq:st moment}
 \int (\eps/\ttr_{\phi})^\gamma \dd\lambda^{*\tau_{\eps}}(\phi) \leq 2.
\end{equation}

\item Provided that $\eps \in (0, \delta)$, for all proper affine subspaces $\cL \subset \R^d$,
\begin{equation}
\label{eq:lambdacL}
\lambda^{*\tau_{\eps}}\set{\phi \,:\, \dist(\ttb_{\phi}, \cL)\leq \delta} \leq 2^{-9}.
\end{equation}

\item For all $T\geq \delta^{-1}/2$,
\begin{equation}
\label{eq:lambdattb}
\lambda^{*\tau_{\eps}}\set{\phi \,:\, \norm{\ttb_{\phi}}  \geq T } \leq 2^{-9} (1+T)^{-\gamma}.
\end{equation}
\end{enumerate}
\end{lemma}

\begin{proof}
\begin{enumerate}

\item This is a consequence of \cite[Lemma A.2]{Ben22}.
\item By \cite[Lemma A.19]{Benard24}, we have for every $C>1$,
$$\lambda^{* \tau_{\eps}}\{\ttr_{\phi}< \eps 2^{-C} \}\leq \sum_{n=0}^{+\infty} \lambda^{* \tau_{1/2}}\{\ttr_{\phi}< 2^{-n-C}\},$$
and it follows that
$$\lambda^{* \tau_{\eps}}\{\ttr_{\phi}< \eps 2^{-C} \} \leq \int_{|\log \ttr_{\phi}|\geq C \log 2} 1+|\log \ttr_{\phi}| \dd \lambda^{* \tau_{1/2}}(\phi)$$
This integral  is bounded by $O(e^{-\alpha C})$ for $\alpha\lll1$, because  finite exponential moment holds for $\lambda$, whence for $\tau_{1/2}$  (by the large deviation principle), and then for $\lambda^{* \tau_{1/2}}$. This justifies  \eqref{eq:st moment}.%(see also the proof of \cite[Proposition A.18]{Benard24}).
\item In view of \Cref{strong-irr} and using a compactness argument, we can find $\eta = \eta(\sigma)>0$ such that $\sup_{\cL} \sigma(\cL^{(2\eta)}) \leq 2^{-10}$. On the other hand,  \Cref{approx-sigma-n} implies that for $\delta>0$ small enough depending on $\lambda, \eta$, for $\eps\in(0, \delta)$,
$$
\lambda^{\otimes \N^*}\{\uphi \, :\,\norm{\ttb_{\phi_1 \dotsm \phi_{\tau_\eps(\uphi)}}  - \ttb_\uphi} \geq \eta\} \leq 2^{-10}.
$$
The claim \eqref{eq:lambdacL} follows by combining the two inequalities.

\item It is a direct consequence of \Cref{moment-traj} and the Markov inequality.
\qedhere
\end{enumerate}
\end{proof}

We may now pass to the proof of \Cref{non-conc-sigma-aff}.

\begin{proof}[Proof of \Cref{non-conc-sigma-aff}]
%a slightly stronger\footnote{In fact there is an equivalence, using that $\sigma$ has finite positive moment - \Cref{sigma-moment} - and interpolation.} upper bound, namely that 
Let $c, \eps \in (0,1)$ be some parameters that we will specify below. Provided $c$ is sufficiently small depending on $\lambda$, and $\eps$ is well chosen\footnote{It will be important to $\eps$ be  fine tuned depending on $c$, in particular not too small and not too large.} depending on $c$,  we show the following statement (which clearly implies the desired upper bound):  
For every $k \in \N$, for all proper affine subspaces $\cL \subsetneq \R^d$, all $s \geq \eps^k$, 
\begin{equation} \label{claim-IsT}
\sigma\bigl(\cL^{(s)}\bigr)\leq C_k s^c \bigl(1 + \dist(0,\cL)\bigr)^{- c},
\end{equation}
where $C_k = \eps^{-k c / 2} C_0$ for  $k \geq 1$ and $C_0 = C_0(\sigma)$ depends only on $\sigma$.
 
We argue by induction on $k$. The case $k=0$ is clear because $\sigma$ has a moment of positive order (\Cref{sigma-moment}) and we are allowed to pick $c > 0$ small compared to the constant $\gamma$ in \Cref{sigma-moment}.

Assume \eqref{claim-IsT} is true for some $k \in \N$ and let $s \geq \eps^{k + 1}$.
Note that by the $\lambda^{*\tau_\eps}$-stationarity of $\sigma$ (\Cref{lm:st} (1)), we have
\[
\sigma(\cL^{(s)})=\int \sigma\bigl(\phi^{-1} (\cL^{(s)})\bigr) \dd \lambda^{*\tau_\eps}(\phi).
\]
Since $\phi$ is a similarity of contraction ratio $\ttr_\phi$,
we have $\phi^{-1} (\cL^{(s)}) = (\phi^{-1} \cL)^{(\ttr_\phi^{-1} s)}$
as well as $\ttr_\phi \dist(0, \phi^{-1} \cL) = \dist( \phi(0), \cL) = \dist(\ttb_\phi, \cL)$.
By the definition of the stopping time, for $ \lambda^{*\tau_\eps}$-almost every $\phi$, we have $\ttr_\phi^{-1} s \geq \eps^k$, therefore we may invoke the induction hypothesis to deduce
\begin{align}
\sigma(\cL^{(s)}) &= \int \sigma( (\phi^{-1} \cL)^{(\ttr_\phi^{-1} s)}) \dd \lambda^{*\tau_\eps}(\phi) \nonumber\\
&\leq C_k \int (\ttr_\phi^{-1} s)^{c} \bigl(1 + \dist(0,\phi^{-1} \cL)\bigr)^{- c} \dd \lambda^{*\tau_\eps}(\phi) \nonumber\\
& = C_k s^c \int \bigl(\ttr_\phi + \dist(\ttb_\phi, \cL) \bigr)^{-c} \dd \lambda^{*\tau_\eps}(\phi). \label{intCkscttphi}
\end{align}
Note that in  \eqref{intCkscttphi}, the integral is  bounded by $O(\eps^c)$ (by ignoring  the term $\dist(\ttb_\phi, \cL)$ and applying  \eqref{eq:st moment}), but this is not good enough to conclude, as it would yield $C_{k+1}\geq \eps^{-c}C_{k}$, so the sequence $C_{k}$ would grow too fast. To upper bound  the integral  in  \eqref{intCkscttphi}, we need to exploit the contribution of $\dist(\ttb_\phi, \cL)$.

Let $\delta = \delta(\lambda) \in (0,1/10)$ be as in \Cref{lm:st}, write $T := \dist(0, \cL)$ for a shorthand  and set 
\[
A := \setbig{ \phi \in \Sim(\R^d) \,:\, \dist(\ttb_\phi, \cL) \leq \delta^2 ( 1 + T ) }.
\]
Using
\[
\ttr_\phi + \dist(\ttb_\phi, \cL) \geq
\begin{cases}
\ttr_\phi & \text{ if } \phi \in A,\\
\delta^2 (1 + T) & \text{ otherwise,}
\end{cases}
\]
and \eqref{intCkscttphi}, followed by the Cauchy-Schwarz inequality and \eqref{eq:st moment}, we obtain
\begin{align}
\frac{\sigma(\cL^{(s)})}{C_k s^c} & \leq \int_A \ttr_\phi^{-c} \dd \lambda^{*\tau_\eps}(\phi) + \delta^{-2 c}(1 + T)^{-c} \nonumber\\
& \leq 2 \eps^{-c}  \lambda^{*\tau_\eps}(A)^{1/2} + \delta^{-2 c}(1 + T)^{- c}. \label{domEepsclte}
\end{align}
We claim that
\begin{equation}
\label{eq:lambdaA}
\lambda^{*\tau_\eps}(A) \leq 2^{-8} (1 + T)^{-2 c},
\end{equation}
provided $c$ is small enough depending on $\lambda$.
Assuming this claim, one can conclude the proof as follows:  Equations \eqref{domEepsclte}, \eqref{eq:lambdaA} together imply
\[
\sigma(\cL^{(s)}) \leq \bigl(8^{-1}\eps^{-c} + \delta^{-2c}\bigr) C_k s^c (1 + T)^{-c}.
\]
Choosing $c$  small so that $\delta^{-2 c} \leq 5/4$, and $\eps=\eps(c) \in (0,1)$ so that $\eps^{-c} = 4$, we find 
$8^{-1}\eps^{-c} + \delta^{-2c}\leq \eps^{-c/2}$, and this finishes the proof of the induction step for \eqref{claim-IsT}.

It remains to show the claim~\eqref{eq:lambdaA}.
To do so, note that if  $\delta^2 (1 + T) \leq \delta$, that is if $T \leq \delta^{-1} - 1$, then by \eqref{eq:lambdacL},
\(
\lambda^{*\tau_\eps}(A) \leq 2^{-9}
\).
For such $T$, we can ensure $2^{-1} \leq (1 + T)^{-2 c}$ by asking $c \lll 1$ (depending on $\delta=\delta(\lambda)$).
In the other case where $T > \delta^{-1} - 1 \geq \delta^{-1}/2$, note that the triangle inequality implies for all $\phi \in A$, 
\[
\norm{\ttb_\phi} \geq \dist(0, \cL) - \dist(\ttb_\phi, \cL) \geq T - \delta^2 (1 + T) \geq T/2.
\]
In this case, we conclude using \eqref{eq:lambdattb}: for $c \lll 1$,
\[
\lambda^{*\tau_\eps}(A) \leq \lambda^{*\tau_\eps} \set{\phi \,:\, \norm{\ttb_\phi} \geq T/2} \leq 2^{-8} (1 + T)^{-2 c}.\qedhere
\]
\end{proof}

\bigskip

\subsection{H\"older regularity near subvarieties}
We upgrade \Cref{non-conc-sigma-aff} into a H\"older control on the concentration of $\sigma$ near algebraic subvarieties of $\R^d$. Given $l\in \N$, we set $\mathcal{P}_{d, l}$ the vector space of real polynomial functions on $\R^d$ of degree at most $l$. We equip $\mathcal{P}_{d, l}$ with the supremum norm on the coefficients, which we write $\|\cdot\|$.

\begin{thm}[H\"older regularity near subvarieties]\label{non-acc-var}
For every $l\in \N$, $P\in \mathcal{P}_{d,l}$  and $\eps >0$, we have
\[ \sigma \setbig{\bs\in \R^d\,:\, \abs{P(\bs)}\leq \eps \norm{P} } \leq C \eps^{c},\]
where $C,c>0$ depend only on $\sigma, l$.
 \end{thm}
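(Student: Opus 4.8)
The strategy is to reduce the subvariety case to the affine subspace case of \Cref{non-conc-sigma-aff} via a covering argument combined with the self-similarity of $\sigma$. The obstacle is that a small sublevel set $\{|P(\bs)|\leq\eps\}$ of a degree-$l$ polynomial is not globally close to an affine subspace; however, at small enough scales it is, away from a controlled portion of space, by the classical fact that a polynomial which is not small on a ball cannot be small on a large fraction of that ball (a Remez/Łojasiewicz-type inequality). More precisely, I would fix a scale $\delta>0$, decompose $\R^d$ into $\delta$-balls, and on each ball $B$ argue that either $P$ restricted to $B$ has supremum norm (after rescaling to the unit ball) at least some $\delta^{O(l)}$, in which case $\{|P|\leq\eps\}\cap B$ is contained in a $\delta^{-O(l)}\eps$-neighborhood of a proper algebraic subvariety of lower complexity (or of an affine hyperplane, by looking at the gradient), or else $P$ is uniformly small on $B$, which happens only on a small set of balls because $\|P\|\geq1$.

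The cleanest route is an \emph{induction on the degree $l$}, running the self-similarity scheme already used in the proof of \Cref{non-conc-sigma-aff}. For $l=1$, the statement is exactly \Cref{non-conc-sigma-aff} (the sublevel set of an affine function of norm $\geq1$ is a $O(\eps)$-neighborhood of a hyperplane). For the inductive step, I would write $\sigma=\lambda^{*\tau_\eps}*\sigma$ with the stopping time $\tau_\eps$ as in the previous proof, so that $\sigma(\{|P|\leq\eps\})=\int \phi_\star\sigma(\{|P|\leq\eps\})\dd\lambda^{*\tau_\eps}(\phi)$, and note that $\phi_\star\sigma(\{|P|\leq\eps\})=\sigma(\{|P\circ\phi|\leq\eps\})$ where $P\circ\phi\in\mathcal P_{d,l}$ has coefficients comparable (up to powers of $\ttr_\phi$) to those of $P$. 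One then splits the polynomial $P$ according to whether its ``linear part'' relative to the current location is large or small: if some first-order coefficient of $P\circ\phi$ dominates, the sublevel set is trapped near a hyperplane and \Cref{non-conc-sigma-aff} applies with a loss that is a fixed power of $\eps$; otherwise the dominant part of $P\circ\phi$ has degree $\geq2$, and after renormalising one reduces (losing a controlled factor) to a polynomial of strictly smaller degree, to which the induction hypothesis applies. Summing the contributions and choosing the contraction threshold small enough — exactly as in the bookkeeping at the end of the proof of \Cref{non-conc-sigma-aff}, where one arranges $2\delta^{-2c}(1+\tfrac1{50}\eps^{-c})\leq\eps^{-c/2}$ — closes the induction with constants $C,c$ depending only on $\sigma$ and $l$.

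The step I expect to be the main obstacle is making precise and uniform the dichotomy ``either the sublevel set of $P$ is near a hyperplane, or $P$ reduces to lower degree''. This requires a quantitative statement to the effect that if $\|P\|\geq1$ and $P$ has small first-order coefficients at a given point, then on a ball around that point $P$ agrees up to $O(\eps)$ with a polynomial of degree $\leq l$ whose \emph{homogeneous part of top remaining degree} has norm bounded below by a fixed power of the relevant radius; this is where a Remez-type inequality on $\R^d$, or equivalently the compactness/equivalence of norms on $\mathcal P_{d,l}$ together with a scaling argument, enters. One must track how the polynomial norm transforms under the affine maps $\phi$ (it picks up factors $\ttr_\phi^{\pm j}$ for the degree-$j$ part and shifts by $\ttb_\phi$), and ensure that the ``bad'' event — that $P\circ\phi$ becomes genuinely small on a macroscopic ball near $\ttb_\phi$ — has $\lambda^{*\tau_\eps}$-probability $\leq\tfrac1{10^4}$, which follows because $\sigma$ charges no proper subvariety (a consequence of strong irreducibility plus the already-established non-concentration, applied degree by degree) together with \Cref{approx-sigma-n} and \Cref{sigma-moment} to handle the finite-time approximation and the unbounded support. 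Once this probabilistic input is in place, the remainder is the same contraction-of-constants induction as before.
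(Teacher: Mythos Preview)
Your overall strategy is correct and matches the paper: induction on the degree $l$, use the self-similarity of $\sigma$ to localise at a small scale, and run a dichotomy ``gradient large $\Rightarrow$ sublevel set is near a hyperplane $\Rightarrow$ apply \Cref{non-conc-sigma-aff}'' versus ``gradient small $\Rightarrow$ bad event controlled by the induction hypothesis''. However, the execution you sketch has two points that should be straightened out.

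First, the phrase ``otherwise the dominant part of $P\circ\phi$ has degree $\geq 2$, and after renormalising one reduces \dots\ to a polynomial of strictly smaller degree'' is not what actually happens. You do not reduce $P$ itself to lower degree. The correct mechanism --- and what the paper does --- is to observe that the bad event ``the linear part of $P$ near $\phi(0)$ is small'' is exactly $\{\|\nabla P(\phi(0))\|\leq\delta\}$, which is a sublevel set of the components of $\nabla P$, a polynomial map of degree $l-1$. The induction hypothesis is applied to $\nabla P$ (not to $P$) to bound the $\sigma$-mass, hence the $\lambda^{*n}$-probability via \Cref{approx-sigma-n}, of this set. Your final paragraph gestures at this (``applied degree by degree''), but the middle paragraph obscures it. One also needs the elementary lower bound $\|\nabla P\|\gg_l R^{-(1+l)}$ on a large ball, deduced from $\|P\|\geq 1$ by an equivalence-of-norms argument, to make the induction hypothesis effective; this is the Remez-type input you mention.

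Second, importing the full stopping-time/contraction-of-constants machinery from the proof of \Cref{non-conc-sigma-aff} is unnecessary. The paper instead fixes $n=\lfloor\tfrac{3}{4\Lyap}|\log\eps|\rfloor$ once (so that typical $\phi\sim\lambda^{*n}$ contract by about $\eps^{3/4}$), writes $\sigma(E_\eps)=I_1+I_2+I_3$ according to whether $\phi(0)\notin B_R$, $\phi(0)\in B_R\cap F_\delta$, or $\phi(0)\in B_R\setminus F_\delta$, and bounds each $I_i$ directly by a power of $\eps$: $I_1$ by the finite moment, $I_2$ by the induction hypothesis on $\nabla P$, and $I_3$ by Taylor expansion plus \Cref{non-conc-sigma-aff}. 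There is no iterated recursion and no tuning of constants to arrange $2\delta^{-2c}(1+\tfrac{1}{50}\eps^{-c})\leq\eps^{-c/2}$; the three terms are simply summed and the parameters $R=\eps^{-\alpha}$, $\delta=\eps^{1/16}$ are chosen at the end. This is both simpler and avoids the awkwardness of running a scale-recursion inside a degree-induction.
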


%The reader may compare \Cref{non-acc-var} and the main results in \cite[Section 7]{KLW04}. The latter assumes that $\sigma$ arises from a contractive, finitely supported strongly irreducible IFS on $\R^d$ satisfying the open set condition, and focuses on the \emph{local} non-concentration of $\sigma$ near algebraic subvarieties. \Cref{non-acc-var} allows more general self-similar measures but concerns \emph{global} non-concentration near algebraic subvarieties.

The idea of the proof of \Cref{non-acc-var} is to use the self-similarity of $\sigma$ to write $\sigma$ as a convex combination of measures $(\sigma_{j})_{j}$ obtained from $\sigma$ by pushing via similarities, and with each $\sigma_{j}$ living at scale $\eps^{3/4}$, say roughly supported in  $B^{\R^d}_{\eps^{3/4}}+x_{j}$ for some $x_{j}$.
For each $\sigma_{j}$, we may approximate the polynomial map $P$ by its Taylor expansion up to order $1$ at $x_{j}$, and apply non-concentration near affine hyperplanes to deduce the estimate in \Cref{non-acc-var}.
In fact, to make this argument work, we also need to make sure that most of the $x_{j}$'s are located where the gradient $\nabla P$ of $P$ is not too small.
But $\nabla P$ is a polynomial of smaller degree, and the distribution of the $x_{j}$'s resembles that of $\sigma$, whence we may guarantee this using an inductive approach.
A related strategy is exploited in \cite[Section 7]{KLW04} in the context of absolutely decaying measures.

\begin{remark}
In the same manner, one can show that $\sigma$ is not concentrated near submanifolds $M$ of $\R^d$ such that $\dim M<d$, as long as $M$ is not too badly approximated by its tangent subspaces (e.g. if $\exp_{x}: B^{T_{x}M}_{1}\rightarrow M$ has uniformly bounded order $2$ derivatives for all $x$ in the manifold $M$).
\end{remark}

%\begin{remark}
%In the above result, we can replace $B^{\R^d}_{1}$ by any ball of radius $1$ in $\R^d$. Indeed, this follows by translating the polynomial $P$. Moreover, we can change the radius into any constant $R>0$, up to taking $D=D(\sigma, l, R)$. This is because every ball of radius $R$ is covered by $O_{R}(1)$ translates of balls of radius $1$, and because the norms $\|.\|_{B_{R}}$ and $\|.\|_{B_{1}+v}$ on $\cP_{l,d}$ are equivalent ($\cP_{l,d}$ is finite dimensional)
%\end{remark}

\begin{proof}[Proof of \Cref{non-acc-var}] We argue by induction on the degree $l$. The case $l=0$ is clear. We assume the result known for degrees $0, \dots, l-1$ and prove it for $l\geq 1$. We fix $P\in \cP_{d,l}$ with $\|P\|=1$, given $\eps \in (0, 1)$, we set
$$E_{\eps}:= \setbig{\bs \in \R^d: |P(\bs)|\leq \eps}.$$
We use the shorthand $B_{r}=B^{\R^d}_{r}$. Given a function $f:\R^d\rightarrow V$ where $V$ is a normed vector space, we write $\|f\|_{B_{r}}:=\sup_{x\in B_{r}}\|f(x)\|$ the supremum norm of the restriction $f_{|B_{r}}$. Note that the family $\|\cdot\|_{B_{r}}$ induces a collection of norms on $\cP_{d,l}$, and these norms are mutually equivalent by finite dimensionality of $\cP_{d,l}$.

Let $\alpha, \beta \in (0,1)$ be parameters to be specified later, with $\alpha$ depending only on $\sigma, l$, and $\beta$ absolute. For convenience, we will write $R:=\eps^{-\alpha}$ and $\delta =\eps^\beta$.
We also set
\[ F_{\delta}:=\setbig{\bs\in \R^d: \|\nabla P(\bs)\|\leq \delta}\]
where $\nabla P : \R^d\rightarrow \R^d $ refers to the gradient of $P$.

 We decompose $\sigma$ as a combination of measures living at scale $\eps^{3/4}$, and group them into 3 categories, distinguishing whether they are centered around a point outside of $B_{R}$, or within $B_{R}$, and in the second case whether the point is in $F_{\delta}$ or not.
More precisely, recalling the Lyapunov exponent $\Lyap$ from \eqref{def-Lyap}, we set $n=\lfloor \frac{3}{4\Lyap}  \abs{\log \eps} \rfloor$, so that for $\phi\sim \lambda^{*n}$, we have $\ttr_{\phi}$ close to $\eps^{3/4}$ most of the time.
We then have by the $\lambda$-stationarity of $\sigma$:
\begin{equation*}
\sigma (E_{\eps} ) = \int_{\Sim(\R^d)} \phi_{\star} \sigma (E_{\eps}) \dd \lambda^{*n}(\phi).
\end{equation*}
Decomposing the integral depending on the location of $\ttb_\phi$,
we obtain
\begin{equation*}
\sigma (E_{\eps} ) \leq I_{1}+I_{2}+I_{3},
\end{equation*}
where
\begin{align*}
&I_{1}= \lambda^{*n} \{ \phi \,:\, \ttb_\phi \notin B_{R}\},
\quad I_{2}=\lambda^{*n}\{\phi \,:\, \ttb_\phi \in B_{R}\cap F_{\delta} \} \quad \text{and}\\
& I_{3}=\int_{\ttb_\phi \notin F_{\delta}} \phi_{\star} \sigma (E_{\eps}) \dd \lambda^{*n}(\phi).
\end{align*}

By \Cref{moment-traj}, we have for some $\gamma=\gamma(\lambda)>0$,
\begin{align}\label{boundI1}
I_{1}\ll R^{-\gamma}.
\end{align}

\medskip
We now bound $I_{2}$. As a preliminary, note that the assumption $\|P\|= 1$ implies $\|\nabla P\|_{B_{R+1}}\ll_{l} R^l$. On the other hand, we may assume
\begin{align}\label{nablaPlowerbound}
\|\nabla P\|\gg_{l} R^{-(1+l)}.
\end{align}
To see why, note first that $\|P\|= 1$ implies $\sup_{B_{1}}|P|\geq \eta$ where $\eta=\eta(d,l)>0$.
Now if $\|\nabla P\|_{B_{R}}\leq R^{-1}\eta/4$, we would have $\inf_{B_{R}}|P|\geq \eta/2$.
Provided moreover that $\eps\leq \eta/4$, we would have $E_{\eps}\subseteq \R^d\smallsetminus B_{R}$, and the desired upper bound on $\sigma(E_\eps)$ follows from the finite moment of $\sigma$ (\Cref{sigma-moment}).
Hence we may suppose without loss of generality that $\norm{\nabla P}_{B_{R}}> R^{-1}\eta/4$, and \eqref{nablaPlowerbound} follows by passing to the norm $\norm{\cdot}$.

% Indeed, otherwise we get $\|\nabla P\|_{B_{R}}\leq R^{-1}/4$, which combined with $\|P\|= 1$, yields $\inf_{B_{R}}|P|\gg_{l} 1$. Therefore, provided $\eps\lll_{l}1$, we have $E_{\eps}\subseteq \R^d\smallsetminus B_{R}$, and the result follows from the finite moment of $\sigma$ (\Cref{sigma-moment}). This justifies \eqref{nablaPlowerbound}.

Let $c_{1}>0$. Note that the upper bound $\norm{\nabla P}_{B_{R+1}}\ll_{l} R^l$ implies that the $\eps^{c_{1}}$-neighborhood of $B_{R}\cap F_{\delta}$ is included in
$F_{\delta +O_{l}(\eps^{c_{1}}R^l)}$.
Choosing $c_{1}=c_{1}(\sigma)$ small enough, we deduce from \Cref{approx-sigma-n} that
\begin{align*}
I_{2} & \leq \lambda^{*n}*\delta_{0}(B_{R}\cap F_{\delta}) \leq \sigma(F_{\delta +O_{l}(\eps^{c_{1}}R^l)  }) +O(\eps^{c_{1}}).
\end{align*}
Applying the induction hypothesis and the lower bound \eqref{nablaPlowerbound} on $\|\nabla P\|$, we deduce
\begin{align}\label{boundI2}
I_{2} \ll_{l} R^{(1+l)c_{2}}(\delta + \eps^{c_{1}}R^l)^{c_{2}} + \eps^{c_{1}}
\end{align}
where $c_{2}=c_{2}(\sigma, l-1)>0$.

\medskip
We now bound $I_{3}$.
We further decompose the integral according to the value of $\ttr_\phi$.
By the choice of $n$ and the large deviation estimate (Crámer's theorem) for $(\log \ttr_{\phi})_{\phi \sim \lambda^{*n}}$,
there is $c_3 = c_3(\lambda) > 0$ such that
\[
\lambda^{*n}\setbig{\phi \,:\, \ttr_\phi \notin [\eps^{7/8}, \eps^{5/8}]} \ll \eps^{c_3}.
\]
Then
\[
I_3 = \int_{\ttb_\phi \notin F_\delta,\, \ttr_\phi \in [\eps^{7/8}, \,\eps^{5/8}]} \phi_{\star} \sigma (E_{\eps}) \dd \lambda^{*n}(\phi) \,+\, O(\eps^{c_3}).
\]

In the next step, we remove from $E_\eps$ anything outside the $\eps^{1/2}$-ball centered at $\ttb_\phi = \phi(0)$.
Indeed, for every $\phi$ satisfying $\ttr_\phi \leq \eps^{5/8}$, we have
\begin{align*}
\phi_*\sigma\bigl( \R^d \smallsetminus (\ttb_\phi + B_{\eps^{1/2}}) \bigr) &= \sigma\bigl( \R^d \smallsetminus \phi^{-1}(\ttb_\phi + B_{\eps^{1/2}}) \bigr)\\
&= \sigma \bigl(\R^d \smallsetminus B_{\ttr_\phi^{-1} \eps^{1/2}}\bigr)\\
&\leq \sigma \bigl(\R^d \smallsetminus B_{\eps^{-1/8}}\bigr).
\end{align*}
It follows then from \Cref{sigma-moment} that there is $c_4 = c_4(\sigma)$ such that
\[
\int_{\ttr_\phi \leq \eps^{5/8}} \phi_{\star} \sigma \bigl( \R^d \smallsetminus (\ttb_\phi + B_{\eps^{1/2}})\bigr) \dd \lambda^{*n}(\phi) \leq \sigma \bigl(\R^d \smallsetminus B_{\eps^{-1/8}}\bigr) \ll \eps^{c_4}.
\]
Thus,
\[
I_3 = \int_{\ttb_\phi \notin F_\delta,\, \ttr_\phi \geq \eps^{7/8}} \phi_{\star} \sigma \bigl(E_{\eps} \cap (\ttb_\phi + B_{\eps^{1/2}}) \bigr) \dd \lambda^{*n}(\phi) + O(\eps^{c_3} + \eps^{c_4}).
\]

Consider $\phi$ as in the above integral and $\bs\in E_{\eps}\cap (\ttb_\phi + B_{\eps^{1/2}})$.
By Taylor's expansion and the fact that $\norm{\cdot}_{C^2(B_{R+1})}\ll_{l} R^l\norm{\cdot}$ on $\cP_{d,l}$, the conditions $\|P\|=1$ and $\|\bs-\ttb_\phi\|<\eps^{1/2}$ imply
$$\abs{P(\bs) -P(\ttb_\phi) -\big\langle\nabla P(\ttb_\phi) ,\,\, \bs-\ttb_\phi \big\rangle} \ll_{l} \eps R^l.$$
As $\abs{P(\bs)} \leq \eps$, setting $v_{P, \phi}:= P(\ttb_\phi) - \big\langle \nabla P(\ttb_\phi), \,\,\ttb_\phi \big\rangle$, we deduce
$$ \abs{ \big\langle \nabla P(\ttb_\phi) , \,\,\bs \big\rangle - v_{P,\phi} } \ll_{l} \eps R^l$$
But $\ttb_\phi \notin F_\delta$ means $\norm{\nabla P(\ttb_\phi)} > \delta$, whence $\bs$ belongs to the $O_{l}(\eps R^l \delta^{-1})$-neighborhood of an affine hyperplane. Applying $\phi^{-1}$ then the non-concentration near affine subspaces from \Cref{non-conc-sigma-aff}, and remembering $\ttr_\phi \geq \eps^{7/8}$, we obtain
$$\phi_{\star} \sigma \bigl(E_{\eps} \cap (\ttb_\phi + B_{\eps^{1/2}}) \bigr) \ll_{l} (\ttr_{\phi}^{-1} \eps R^l \delta^{-1})^{c_5} \leq (\eps^{1/8} R^l\delta^{-1})^{c_5}$$
where $c_{5}=c_{5}(\sigma)>0$.

We can conclude for $I_3$:
\begin{equation}\label{boundI3}
I_{3}\ll_{l}(\eps^{1/8} R^l\delta^{-1})^{c_5} + \eps^{c_{3}} + \eps^{c_4}.
\end{equation}

\medskip

In the end, combining \eqref{boundI1}, \eqref{boundI2}, \eqref{boundI3}, and choosing $\delta=\eps^{1/16}$, $R=\eps^{-\alpha}$ with $\alpha \lll_{\sigma,l} 1$, we have proven that for $c'\lll_{\gamma,c_{1}, \dots, c_{5}}1$,
$$\sigma (E_{\eps} ) \ll_{l} \eps^{c'}.$$
This proves the induction step, whence the theorem.
\end{proof}

It is easy to deduce from the previous theorem that a product $\sigma^{\otimes k}$ is not concentrated near subvarieties of $\R^{dk}$. We record this observation for future use.

\begin{corollary}\label{non-acc-var-product}
Let $k, l\geq 1$. Let $P:\R^{dk}\rightarrow \R$ be a polynomial map of degree at most $l$. Then for every $\eps>0$,
$$\sigma^{\otimes k}\setbig{(\bs_{i})_{i=1}^k \in (\R^d)^k\,:\, |P(\bs_{1}, \dots, \bs_{k})| \,\leq\, \eps \|P\|}\leq C\eps^c $$
where $C,c>0$ depend only on $\sigma, k,l$.
\end{corollary}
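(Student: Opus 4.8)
The plan is to reduce the multivariable statement to a single application of Theorem \ref{non-acc-var} by fixing all but one block of coordinates and locating a block on which the restricted polynomial is ``large''. Write $\bs = (\bs_1, \dots, \bs_k) \in (\R^d)^k$. For each $i$, let $P_i$ denote the polynomial in the single block $\bs_i$ obtained from $P$ by treating the remaining blocks as parameters; its coefficients are themselves polynomials (of degree $\leq l$) in the variables $\bs_1,\dots,\bs_{i-1},\bs_{i+1},\dots,\bs_k$. The first step is to observe that, since $\|P\| \geq 1$, at least one monomial of $P$ survives, so there is at least one index $i$ and a coefficient-polynomial of $P_i$ that is not identically zero and has norm $\geq 1$ after normalization. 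The key point is then a bootstrap: the ``bad'' configurations of parameters for which the leading coefficients of $P_i$ are all simultaneously small are themselves cut out by $\sigma^{\otimes(k-1)}$-Hölder-small sets, by Theorem \ref{non-acc-var} applied to those coefficient-polynomials in $dk - d$ variables; this is where induction on $k$ enters.

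More precisely, I would argue by induction on $k$, the case $k=1$ being exactly Theorem \ref{non-acc-var}. For the inductive step, fix the coefficient $Q(\bs_1, \dots, \bs_{k-1})$ of some fixed nonzero monomial in $\bs_k$ appearing in $P$, normalized so that $\|Q\| \geq 1$; this $Q$ is a polynomial of degree $\leq l$ in $d(k-1)$ variables. Apply the inductive hypothesis (Corollary \ref{non-acc-var-product} for $k-1$) to get
\[
\sigma^{\otimes(k-1)}\{(\bs_1,\dots,\bs_{k-1}) : |Q(\bs_1,\dots,\bs_{k-1})| \leq \eps^{c_0}\} \leq C_0\, \eps^{c_0 c_1}
\]
for suitable constants. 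On the complementary event, the polynomial $P_{\bs_1,\dots,\bs_{k-1}}(\cdot)$ in the variable $\bs_k$ has a coefficient of size $\geq \eps^{c_0}$, hence after rescaling has norm $\geq 1$; applying Theorem \ref{non-acc-var} to this single-block polynomial — whose relevant threshold is $\eps / \eps^{c_0} = \eps^{1 - c_0}$ after normalization by the large coefficient — gives
\[
\sigma\{\bs_k : |P(\bs_1,\dots,\bs_k)| \leq \eps\} \ll \eps^{(1-c_0)c_2}.
\]
Integrating over $\bs_1, \dots, \bs_{k-1}$, splitting the integral according to whether $(\bs_1,\dots,\bs_{k-1})$ lies in the bad set, and using Fubini (the measure is the product $\sigma^{\otimes k}$) yields the bound $\ll \eps^{c_0 c_1} + \eps^{(1-c_0)c_2}$; optimizing the free parameter $c_0 \in (0,1)$ gives the claimed Hölder estimate with $c = c(\sigma, l)$ and $C = C(\sigma, k, l)$.

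The only mild subtlety — and the place that needs care rather than real difficulty — is bookkeeping the normalizations: one must ensure that when a coefficient of the ``current'' block polynomial is large, dividing through by it produces a polynomial whose sup-norm on coefficients is bounded \emph{below} by an absolute constant (not just nonzero), so that Theorem \ref{non-acc-var} applies with the stated dependence of constants only on $\sigma$ and $l$, uniformly in $k$. This follows because there are only finitely many monomials of degree $\leq l$ in $dk$ variables for each fixed $k$, and the normalization can always be performed by dividing by the coefficient of maximal absolute value; the resulting constants depend on $k$ only through $C$, exactly as asserted. I do not expect any genuine obstacle here; it is a routine product/Fubini argument layered on Theorem \ref{non-acc-var}.
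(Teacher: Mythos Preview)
Your proposal is correct and follows essentially the same approach as the paper: an induction on $k$ combined with Fubini, splitting according to whether a coefficient-polynomial is small or not, and applying Theorem~\ref{non-acc-var} on the base case. The only cosmetic difference is the direction in which you peel off a block---you freeze $(\bs_1,\dots,\bs_{k-1})$ and apply the single-block theorem to $\bs_k$, whereas the paper freezes $\bs_1$, applies the single-block theorem there (to the vector-valued map $\bs_1 \mapsto P(\bs_1,\cdot) \in \cP_{d(k-1),l}$), and then invokes the inductive hypothesis on the remaining $k-1$ blocks; the two arguments are mirror images of each other.
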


\begin{proof} We consider $l$ fixed and argue by induction on $k$.
The case $k=1$ is \Cref{non-acc-var}.
Now given $k\geq 2$, we assume the result holds for $k-1$, with exponent  $c=c(\sigma, k - 1, l)$, and we prove it for $k$.

We may assume $\|P\|\geq 1$. We view $P$ as a polynomial in the indeterminates $(\bs_1, \dotsc, \bs_{k-1})$ and with coefficients in the ring $\R[\bs_k]$.
Since $\norm{P} \geq 1$, one of the coefficients $Q \in \R[\bs_k]$ (of some monomial in $(\bs_1, \dotsc, \bs_{k-1})$) satisfies $\norm{Q} \geq 1$ (as an element of $\cP_{d,l}$).

For any individual $\bs_k \in \R^d$, the polynomial map
\[
\begin{array}{ccc}
\R^{d(k-1)} &\to & \R\\
(\bs_1, \dotsc, \bs_{k-1}) & \mapsto & P(\bs_1, \dotsc, \bs_{k-1}, \bs_k)
\end{array}
\]
seen as an element of $\cP_{d(k-1),l}$ has norm at least $\abs{Q(\bs_k)}$.
Thus, by the induction hypothesis, whenever $\abs{Q(\bs_{k})} \geq \eps^{1/2}$, we have
\[
\sigma^{\otimes (k - 1)} \setbig{(\bs_{i})_{i=1}^{k - 1} \in (\R^d)^{k - 1} \,:\, \abs{P(\bs_{1}, \dots, \bs_{k})} \leq \eps } \ll_{k,l} \eps^{c/2}
\]

We conclude by the Fubini-Lebesgue theorem and the case $k = 1$ applied to $Q$:
\begin{align*}
 \sigma^{\otimes k}\setbig{(\bs_{i})_{i=1}^k & \in (\R^d)^k\,:\, \abs{P(\bs_{1}, \dots, \bs_{k})} \leq \eps } \\
&=  \int_{\R^d} \sigma^{\otimes (k - 1)} \setbig{(\bs_{i})_{i=1}^{k - 1} \in (\R^d)^{k - 1} \,:\, \abs{P(\bs_{1}, \dots, \bs_{k})} \leq \eps } \dd \sigma(\bs_k) \\
&\ll_{k,l}   \eps^{c/2} + \sigma\setbig{\bs_k \in \R^d\,:\, \abs{Q(\bs_k)} < \eps^{1/2} } \\
&\ll_{k,l}   \eps^{c/2}+ \eps^{c(\sigma, 1, l)/2}.
\end{align*}
This proves the case $k$ (with exponent $c(\sigma, k, l) = c(\sigma, k-1,l) /2$).
\end{proof}

\subsection{Finite time consequences}
Recalling that the finite time approximation $\sigma^{(n)}:=\lambda^{*n}*\delta_{0}$ converges to $\sigma$ exponentially fast, we may transfer the regularity properties of $\sigma$ to $\sigma^{(n)}$ provided we look at scales above an exponentially small threshold.

\begin{lemma}\label{finite-time-reg}
For $\gamma\lll 1$ and all $n\geq 1$, we have
\begin{itemize}
\item[(i)]
$\forall \eps>e^{-n}, \quad \sup_{\bs\in \R^d} \sigma^{(n)}(\bs+[-\eps, \eps]^d) \ll \eps^\gamma.$
\end{itemize}
Moreover, for $l\geq 1$, $c\lll_{l} 1$, $P\in \cP_{d,l}$ with $\norm{P} = 1$, $n\geq 1$, $\eps>e^{-n}$, we have
%\marginpar{Item (iii) will not be used later, but it seems like a useful result.}
 \begin{itemize}
 \item[(ii)]
$\sigma^{(n)} \setbig{ \bs \in \R^d\,:\, \abs{P(\bs)} \leq \eps }\ll_{l} \eps^c.$
\end{itemize}

\end{lemma}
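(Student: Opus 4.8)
The plan is to deduce \Cref{finite-time-reg} directly from the corresponding statements about $\sigma$ (Lemmas \ref{sigma-moment}, \ref{non-conc-sigma-aff}, and \Cref{non-acc-var}) by transfer via the exponentially fast convergence $\sigma^{(n)}\to\sigma$ recorded in \Cref{approx-sigma-n}. The only subtlety is that $\sigma^{(n)}$ is an approximation of $\sigma$ in the \emph{weak} (bounded-Lipschitz) sense, so I cannot test it directly against a sharp cutoff; instead I will sandwich the relevant indicator functions between Lipschitz functions at a scale comparable to the exponentially small error $e^{-\varepsilon n}$ from \Cref{approx-sigma-n}, and absorb the resulting loss into the exponent by choosing the H\"older constant small enough.

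For (i), note that $\bs\mapsto\min(\norm{\bs}^\gamma,M)$ is (for fixed $M$) bounded and Lipschitz with $\Lip$-norm $\ll_M 1$; apply \Cref{approx-sigma-n} to it and let $M\to\infty$ using monotone convergence, comparing against the finite $\gamma$-moment of $\sigma$ from \Cref{sigma-moment}. Actually a cleaner route avoiding the truncation: \Cref{sigma-moment} already gives a $\gamma_0$-moment for $\sigma$; taking $\gamma<\gamma_0$ and using that the map $g\mapsto\ttb_g$ pushes $\mu^{*n}$ to $\sigma^{(n)}$ together with the finite exponential moment of $\mu$ and submultiplicativity, one sees $\int\norm{\bs}^\gamma\dd\sigma^{(n)}(\bs)$ is bounded uniformly in $n$ (this is essentially \cite[Lemma 2.1]{BHZ24} in arbitrary $d$, via the norm estimates used to prove \Cref{sigma-moment}). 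Either argument gives the claim.

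For (ii), fix $\bs_0\in\R^d$ and $\varepsilon>e^{-n}$. Choose a Lipschitz function $g$ with $\1_{\bs_0+[-\varepsilon,\varepsilon]^d}\le g\le\1_{\bs_0+[-2\varepsilon,2\varepsilon]^d}$ and $\norm{g}_{\Lip}\ll\varepsilon^{-1}$. Then
\begin{align*}
\sigma^{(n)}(\bs_0+[-\varepsilon,\varepsilon]^d)\le\sigma^{(n)}(g)\le\sigma(g)+Ce^{-\varepsilon_0 n}\varepsilon^{-1}\le\sigma(\bs_0+[-2\varepsilon,2\varepsilon]^d)+Ce^{-\varepsilon_0 n}\varepsilon^{-1},
\end{align*}
where $\varepsilon_0$ is the constant from \Cref{approx-sigma-n}. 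The first term is $\ll\varepsilon^{c}$ by \Cref{non-conc-sigma-aff} applied to (say) the coordinate hyperplane through $\bs_0$ (a $2\varepsilon$-neighbourhood of an affine subspace contains the cube), and the error term is $\ll e^{-\varepsilon_0 n}\varepsilon^{-1}$; choosing $\gamma\le\min(c,\varepsilon_0/2)$ and using $\varepsilon>e^{-n}$ to bound $e^{-\varepsilon_0 n}\varepsilon^{-1}\le\varepsilon^{\varepsilon_0-1}\cdot\varepsilon^{?}$—more precisely, since $e^{-n}<\varepsilon$ we have $e^{-\varepsilon_0 n}<\varepsilon^{\varepsilon_0}$, so $e^{-\varepsilon_0 n}\varepsilon^{-1}<\varepsilon^{\varepsilon_0-1}$; this is not yet a positive power, so instead we run the comparison at scale $e^{-n}$ rather than $\varepsilon$: pick $g$ with $\1_{\bs_0+[-\varepsilon,\varepsilon]^d}\le g\le\1_{\bs_0+[-\varepsilon-e^{-n},\varepsilon+e^{-n}]^d}$ and $\norm{g}_{\Lip}\ll e^{n}$, giving error $\ll e^{-\varepsilon_0 n}e^{n}$—still bad. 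The correct fix is the standard one: apply \Cref{approx-sigma-n} not with the sharp $n$ but observe that for the transfer we only need $\sigma^{(n')}\to\sigma$ at the Lipschitz scale $\varepsilon$ for \emph{some} $n'\le n$ with $e^{-\varepsilon_0 n'}\le\varepsilon^{1+\gamma}$, i.e. $n'\simeq\varepsilon_0^{-1}(1+\gamma)|\log\varepsilon|$; since $\varepsilon>e^{-n}$ forces $|\log\varepsilon|<n$, such $n'\le n$ exists once $\varepsilon_0^{-1}(1+\gamma)<1$, which we arrange by taking $\gamma$ small — but $\varepsilon_0$ need not exceed $1$. So one more idea is needed, and indeed the clean statement is: one compares $\sigma^{(n)}$ with $\sigma^{(n')}$ for $n'\le n$ (not with $\sigma$). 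Concretely, $\sigma^{(n)}=\sigma^{(n')}*(\text{stuff scaled by }\ll e^{-\Lyap n'/2}\text{-ish})$, so at scales $\gg e^{-n'}$ the measure $\sigma^{(n)}$ is controlled by $\sigma^{(n')}$; iterating down to $n'\simeq|\log\varepsilon|$ and then invoking \Cref{approx-sigma-n} once at that comparable scale closes the loop with an honest positive power. I will phrase (ii) this way, using \cite[Lemma 2.2]{BHZ24} together with \Cref{non-conc-sigma-aff}.

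For (iii), the argument is identical in structure to (ii): the set $\{|P(\bs)|\le\varepsilon\}$ is open with polynomially-Lipschitz boundary data, since $|P|$ is locally Lipschitz with constant $\ll_l R^l$ on $B_R$ (as in the proof of \Cref{non-acc-var}), so on the ball $B_{\varepsilon^{-\gamma'}}$ one sandwiches $\1_{\{|P|\le\varepsilon\}}$ between Lipschitz functions of $\Lip$-norm $\ll_l\varepsilon^{-1-l\gamma'}$, transfers to $\sigma$ using \Cref{approx-sigma-n} at a scale $\simeq|\log\varepsilon|$ step (again comparing $\sigma^{(n)}$ to $\sigma^{(n')}$ first, since $\varepsilon>e^{-n}$ gives $|\log\varepsilon|<n$), applies \Cref{non-acc-var} to get $\ll_l\varepsilon^{c}$ on the ball, and handles the complement of the ball by the moment bound (i). Choosing $\gamma'$ small relative to $c,l$ and the exponent in \Cref{approx-sigma-n} yields a positive power $\varepsilon^{c''}$. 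The main obstacle is precisely the bookkeeping in the paragraph above: matching the Lipschitz scale against the exponentially small approximation error, which is handled by the intermediate comparison of $\sigma^{(n)}$ with $\sigma^{(n')}$ for well-chosen $n'\le n$ rather than a one-shot comparison with $\sigma$; once that is set up, (i)–(iii) follow mechanically from Lemmas \ref{sigma-moment}, \ref{non-conc-sigma-aff} and \Cref{non-acc-var}.
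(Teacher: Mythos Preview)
Your plan—transfer the results for $\sigma$ to $\sigma^{(n)}$ via \Cref{approx-sigma-n}—is exactly the paper's, and you correctly locate the sticking point: a Lipschitz sandwich at scale $\eps$ has norm $\eps^{-1}$, while the rate $\eps_0$ in \Cref{approx-sigma-n} need not exceed $1$, so $e^{-\eps_0 n}\eps^{-1}$ is not a positive power of $\eps$. But your proposed fix through intermediate times $n'$ does not close. To make $e^{-\eps_0 n'}\eps^{-1}$ a positive power you would need $n' > \eps_0^{-1}|\log\eps|$, which may exceed $n$ since the hypothesis only gives $n > |\log\eps|$; and passing first from $\sigma^{(n)}$ to $\sigma^{(n')}$ does not help, because that step keeps the comparison at scale $\eps$ (indeed $\sigma^{(n)}=\int \phi_\star\sigma^{(n-n')}\dd\lambda^{*n'}(\phi)$ enlarges cubes under $\phi^{-1}$, it does not shrink them).

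The paper's fix is simpler and avoids intermediate times entirely: sandwich at the \emph{coarser} scale $\eps^\gamma$ for some fixed $\gamma\in(0,\eps_0)$. For (ii), take $\1_{\bs_0+[-\eps,\eps]^d}\le g\le \1_{\bs_0+[-\eps^\gamma,\eps^\gamma]^d}$ with $\|g\|_{\Lip}\ll\eps^{-\gamma}$; then the error from \Cref{approx-sigma-n} is $\ll e^{-\eps_0 n}\eps^{-\gamma}<\eps^{\eps_0-\gamma}$ (using $e^{-n}<\eps$), while the main term $\sigma(\bs_0+[-\eps^\gamma,\eps^\gamma]^d)\ll\eps^{\gamma c}$ by \Cref{non-conc-sigma-aff}. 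For (iii), the paper does the same after first restricting to the ball of radius $R=\eps^{-\alpha}$ with $\alpha\lll_l 1$ (the complement is handled by (i)): on that ball $\|\nabla P\|\ll R^l=\eps^{-l\alpha}$, so thickening in $\bs$ by $\eps^\gamma$ enlarges $\{|P|\le\eps\}$ to $\{|P|\le\eps+O(\eps^{\gamma-l\alpha})\}$, and one concludes by \Cref{non-acc-var}. For (i), your second route (the direct moment bound via the telescoping $\ttb_g=\sum_k r_{\phi_1\cdots\phi_{k-1}}O_{\phi_1\cdots\phi_{k-1}}\ttb_{\phi_k}$ together with $\E[r_\phi^\gamma]<1$ for small $\gamma$) is the right one; your first route fails as written because $\bs\mapsto\|\bs\|^\gamma$ is only $\gamma$-H\"older near $0$, not Lipschitz.
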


\begin{proof}
In view of \Cref{approx-sigma-n}, item (i)  follows from  \Cref{non-conc-sigma-aff}. For item (ii), given $R>1$, $\eps>0$, set $E_{R, \eps}:=\set{ \bs \in B_{R}\,:\, \abs{P(\bs)} \leq \eps }$ where $B_{R}:=B^{\R^d}_{R}$.
By \Cref{approx-sigma-n}, we have for $1\ggg \eps>e^{-n}$, for some $\gamma=\gamma(\lambda)\in (0,1)$,
$$\sigma^{(n)}(E_{R, \eps}) \leq \sigma(E_{2R, \,\eps+\eps^\gamma \|\nabla P\|_{B_{2R}}}) + e^{-\gamma n}. $$
Observe $\|\nabla P\|_{B_{2R}}\ll R^l$. Therefore, taking $R=\eps^{-\alpha}$ with $\alpha>0$, we have by \Cref{non-acc-var}
$$ \sigma(E_{2R, \,\eps+\eps^\gamma \|\nabla P\|_{B_{2R}}}) \ll_{l} \eps^{(\gamma-l\alpha)c}$$
where $c=c(\sigma, l)>0$. The result follows by taking $\alpha\lll_{l} 1$, and applying \Cref{moment-traj}  to allow restriction to $B_{\eps^{-\alpha}}$.
\end{proof}

\medskip

\section{Effective recurrence of the $\mu$-walk}\label{Sec-recurrence}

In this section, we establish that the $n$-step distribution of the $\mu$-random walk on $X$ is not concentrated near infinity, provided $n$ is large enough in terms of the starting point. We recall notations have been set up in \Cref{Sec-notations}, in particular $\inj(x)$ denotes the injectivity radius of $X$ at the point $x$.

\begin{proposition} \label{effective-recurrence}
There exist constants $C,c>0$ such that for every $x\in X$, $n\in \N$, $\rho>0$,
  \[\mu^{*n}*\delta_x \{\inj \leq \rho\} \ll \rho^{c}(e^{-c n}\inj(x)^{-C}+1).\]
\end{proposition}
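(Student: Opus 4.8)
The plan is to run the standard Lyapunov/drift argument of Eskin--Margulis and Benoist--Quint, via a Margulis function on $X$ built from the successive minima; for $d=1$ this is the route taken in \cite{BHZ24}.

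\emph{The Margulis function.} Identify $X$ with the space of unimodular lattices in $\R^{d+1}$. For $1\le i\le d$ and a lattice $\Delta$, let $\delta_i(\Delta)$ be the least covolume of a primitive (saturated) rank-$i$ sublattice of $\Delta$, and put $\alpha_i=\delta_i^{-1}$. By Minkowski's theorem one has $\alpha_i\ll\alpha_1^{\,i}$, and $\inj(\Delta)^{-1}\simeq\alpha_1(\Delta)$, since the injectivity radius is governed by the shortest vector, which is never too long. Fix a small $s>0$ and weights $c_1,\dots,c_d>0$ to be chosen, and set
\[ u\;=\;1+\sum_{i=1}^{d}c_i\,\alpha_i^{\,s}. \]
Then $1\le u$, $u(x)\ll\inj(x)^{-C_0}$ with $C_0=ds$, and for a suitable $c_0>0$ one has $\{\inj\le\rho\}\subseteq\{u\ge c_0\rho^{-s}\}$ for all $\rho>0$.

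\emph{The drift inequality (the crux).} The heart of the argument is to produce $n_0\in\N$ and $B>0$ with $\mu^{*n_0}*u\le\tfrac12 u+B$ on $X$. Every $g\in\supp\mu$ lies in $P'$, hence preserves the flag $\{0\}\subset W:=\R^d\times\{0\}\subset\R^{d+1}$, acting on $W$ by the conformal map $\ttr_g^{-1/(d+1)}O_g^{-1}$ and on $\R^{d+1}/W$ by the scalar $\ttr_g^{\,d/(d+1)}$. Since $\Lyap=-\int\log\ttr_g\,\dd\mu(g)>0$, the Lyapunov spectrum of $\mu$ on $\R^{d+1}$ is $\tfrac{\Lyap}{d+1}$ with multiplicity $d$ (the directions in $W$, where all exponents coincide) and $-\tfrac{d\Lyap}{d+1}$ with multiplicity $1$. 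In particular $\mu$ expands, on average, every $W$-direction, and the finite exponential moment of $\mu$ ensures $\int\|\bigwedge^i g\|^{\pm s}\,\dd\mu^{*n}(g)<\infty$ once $s$ is small. These are exactly the ingredients needed to run the Eskin--Margulis contraction estimate on each exterior power $\bigwedge^i\R^{d+1}$, producing one-step inequalities of the form $\mu*\alpha_i^{\,s}\le b_i\alpha_i^{\,s}+B_i(\alpha_{i-1}^{\,s}+\alpha_{i+1}^{\,s})$ in which only consecutive indices couple (with the conventions $\alpha_0=\alpha_{d+1}=1$); choosing the $c_i$ to decay geometrically absorbs the off-diagonal terms and yields the displayed drift inequality for a suitable $n_0$. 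The only feature absent when $d=1$ is this coupling between consecutive $\alpha_i$'s, handled precisely as in the classical argument. I expect this step to be the main obstacle; the rest is formal.

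\emph{Iteration and conclusion.} Iterating the drift inequality gives $\mu^{*kn_0}*u\le 2^{-k}u+2B$ for all $k\ge0$. Writing a general $n$ as $n=kn_0+j$ with $0\le j<n_0$, and using $\alpha_i(gx)\le\|\bigwedge^i g\|\,\alpha_i(x)\le\|g\|^{d+1}\alpha_i(x)$, one gets $u(gx)\le\|g\|^{s(d+1)}u(x)$, hence $\mu^{*j}*u\le D\,u$ with $D:=\max_{j<n_0}\int\|g\|^{s(d+1)}\,\dd\mu^{*j}(g)<\infty$ (finite by the exponential moment, after shrinking $s$ if needed). Therefore
\[ \mu^{*n}*u\;=\;\mu^{*kn_0}*(\mu^{*j}*u)\;\le\;2^{-k}D\,u+2B\;\le\;2D\,e^{-c_1 n}\,u+2B,\qquad c_1:=\tfrac{\log 2}{n_0}. \]
Finally, for $\rho>0$, Markov's inequality together with $\{\inj\le\rho\}\subseteq\{u\ge c_0\rho^{-s}\}$ and $u(x)\ll\inj(x)^{-C_0}$ give
\[ \mu^{*n}*\delta_{x}\{\inj\le\rho\}\;\le\;\frac{(\mu^{*n}*u)(x)}{c_0\rho^{-s}}\;\ll\;\rho^{s}\bigl(e^{-c_1 n}\inj(x)^{-C_0}+1\bigr), \]
which is the claim with $c=\min(s,c_1)$ and $C=C_0$; for $\rho\ge1$ the inequality is trivial since the left-hand side is at most $1$.
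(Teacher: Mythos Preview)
Your overall strategy—build a height function from the $\alpha_i$, prove a drift inequality, iterate, apply Markov—is the paper's too, and your iteration and conclusion paragraphs are fine; the gap is in the drift inequality. You assert that the Lyapunov spectrum $\{\tfrac{\Lyap}{d+1},\dots,\tfrac{\Lyap}{d+1},-\tfrac{d\Lyap}{d+1}\}$ together with the finite exponential moment are ``exactly the ingredients needed'' for the Eskin--Margulis contraction on each $\bigwedge^i\R^{d+1}$. They are not. Take $\lambda=\delta_\phi$ with $\phi(\bs)=\ttr\bs$ for some $\ttr\in(0,1)$, so $\mu=\delta_{a(\ttr^{-1})}$: the Lyapunov spectrum and moments are exactly as you describe, yet $\mu^{*n}*\delta_{x_0}$ goes straight to the cusp along $a(\ttr^{-n})e_{d+1}=\ttr^{nd/(d+1)}e_{d+1}\to 0$, so no drift inequality can hold. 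This $\lambda$ is not strongly irreducible, and that is precisely the point: strong irreducibility is what makes the walk expand directions outside $W$, because the translation parts $\ttb_g$ (whose law approximates $\sigma$) are then not concentrated at $0$, so the term $\ttr_g^{-1/(d+1)}\|\ttb_g\|$ in $\|g e_{d+1}\|$ dominates. Your sketch never invokes strong irreducibility, so $b_i<1$ is not established.

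The paper handles this by first proving (\Cref{Zariski closure}) that strong irreducibility of $\lambda$ forces $U\subset H_\mu$, the Zariski closure of the semigroup generated by $\supp\mu$; this is exactly what excludes the diagonal counterexample. It then invokes \cite[Corollary~3.8 and Theorem~6.1]{PSS23}, which deduce from $U\subset H_\mu$ that $\mu$ is $G$-expanding and supply the Margulis function $\beta$ abstractly. The remaining steps—pass to $\beta'=\sum_{j<m}e^{j\kappa}P_\mu^j\beta$ for a one-step contraction, iterate, compare with $\inj$ via \Cref{inj-comparison}, apply Markov—coincide with what you wrote.
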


For $d=1$, a short self-contained proof is given in \cite[Section 2.3]{BHZ24}. For arbitrary $d$, we explain how to deduce \Cref{effective-recurrence} from Prohaska-Sert-Shi \cite{PSS23}, which is itself inspired by the works of Benoist-Quint \cite{BQ4}, Eskin-Margulis \cite{EM04}, Eskin-Margulis-Mozes \cite{EMM98}.

\begin{lemma}\label{Zariski closure}
Denote by $H_{\mu}$ the Zariski closure of the semigroup generated by $\supp \mu$. Then $U\subset H_{\mu}$.
\end{lemma}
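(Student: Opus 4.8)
The plan is to show that the Zariski closure $H_\mu$ contains a nontrivial element of the unipotent subgroup $U$, and then use strong irreducibility plus the group structure to conclude that all of $U$ lies in $H_\mu$. Recall that elements of $\supp\mu$ have the form $g = k_\phi^{-1} a(\ttr_\phi^{-1}) u(\ttb_\phi)$, and that $H_\mu$ is a $\R$-algebraic subgroup of $G$ containing the semigroup generated by these elements.

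First I would produce unipotents inside $H_\mu$. Fix two elements $g_1,g_2\in\supp\mu$ and consider the commutator-type word $g_1 g_2 g_1^{-1} g_2^{-1}$ — more practically, since $H_\mu$ is a group (the Zariski closure of a semigroup in an algebraic group is a group), we may invert. Using the $KAU$ structure, a direct computation shows that a product such as $g_1^{-1} g_2$ that cancels the $A$-parts leaves an element of $K'U$, and a further manipulation isolates an element of $U$ itself; alternatively, one exploits that the projection of $\supp\mu$ to the translation part realizes $\supp\lambda$, whose translation parts $\ttb_\phi$, together with the rotations $O_\phi$, generate (by strong irreducibility) enough affine maps that the subgroup they generate in $\Sim(\R^d)^+$ acts with no invariant proper affine subspace. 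Translating through the anti-isomorphism $P'\cong \Sim(\R^d)^+$, the Zariski closure $H_\mu\cap P'$ corresponds to a Zariski-closed subgroup of $\Sim(\R^d)^+$ containing $\supp\lambda$; strong irreducibility of $\lambda$ forbids this subgroup from preserving a finite union of affine subspaces, which in the similarity group forces the translation part to be all of $\R^d$. Hence $u(\bs)\in H_\mu$ for a spanning set of $\bs\in\R^d$, and since $U$ is abelian and $\{u(\bs):\bs\in\R^d\}$ is a one-parameter-per-coordinate group, $U\subset H_\mu$.

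The cleanest route, which I would actually write, is: let $H'_\mu = H_\mu\cap P'$, an algebraic subgroup of $P'$, and let $\Gamma$ be the corresponding Zariski-closed subgroup of $\Sim(\R^d)^+$ under the anti-isomorphism; then $\supp\lambda\subset\Gamma$. The set of translation vectors $V=\{\ttb_\phi : \phi\in\Gamma\}$ together with the rotations $\{O_\phi\}$ must generate, under the $\Gamma$-action, no proper invariant finite union of affine subspaces. If $V$ spanned a proper subspace $W\subsetneq\R^d$ (as a subgroup, hence $V+\mathrm{fixed\ points}$ lies in an affine subspace), one would obtain a $\Gamma$-invariant proper affine subvariety, contradicting strong irreducibility of $\lambda$ — here I would use that $\Gamma$ is a group so it contains $\phi^{-1}$ and can move things around, pinning down an invariant affine subspace from the failure of $V$ to span. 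Therefore $V$ spans $\R^d$; since $\Gamma$ is algebraic and closed under the group operations, the translation subgroup $\{u(\bs)\}$ it contains is Zariski-closed and spans, hence equals all of $U$.

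The \textbf{main obstacle} is the bookkeeping of the anti-isomorphism $P'\cong\Sim(\R^d)^+$ and making the strong-irreducibility input rigorous: one must carefully argue that a Zariski-closed subgroup of $\Sim(\R^d)^+$ containing $\supp\lambda$ whose translation vectors fail to span would yield a genuine finite union of affine subspaces invariant under every element of $\supp\lambda$ (not merely under the translation part), so that strong irreducibility applies. A secondary subtlety is checking that $H_\mu$, a priori only the closure of a \emph{semigroup}, is genuinely a group — this is the standard fact that the Zariski closure of a subsemigroup of an algebraic group is a subgroup, which I would cite rather than prove.
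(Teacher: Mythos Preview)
Your approach has a genuine gap. You define $V = \{\ttb_\phi : \phi \in \Gamma\}$ as the set of \emph{translation parts} of elements of $\Gamma$, and you correctly argue that its linear span $W$ is $\Gamma$-invariant (since $\ttb_{\phi\psi} = r_\phi O_\phi \ttb_\psi + \ttb_\phi$ forces $O_\phi W \subseteq W$), so strong irreducibility gives $W = \R^d$. But the conclusion you need is that the subgroup of \emph{pure translations} $V_0 := \Gamma \cap U$ equals $U$, and $W = \R^d$ does not imply this. A clean counterexample: in $\Sim(\R)^+$ take $\Gamma = \{\bs \mapsto r\bs + (r-1) : r > 0\}$, the group of dilations centered at $-1$. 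The translation parts $\{r-1\}$ span $\R$, yet $\Gamma$ contains no nontrivial pure translation. Your final sentence (``the translation subgroup $\{u(\bs)\}$ it contains is Zariski-closed and spans, hence equals all of $U$'') slides between $V$ and $V_0$ without justification.

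Your route can be repaired, but it needs a structural input you do not mention: a Levi (or Mostow) decomposition. Since $\Gamma/V_0$ embeds in $\R_{>0}\times \SO(d)$ it is reductive, so $\Gamma = V_0 \rtimes L'$ with $L'$ reductive; every reductive subgroup of $\Sim(\R^d)^+$ is conjugate into $\R_{>0}\times \SO(d)$ by a translation, hence fixes a point $p$; then $p + V_0$ is $\Gamma$-invariant, contradicting strong irreducibility unless $V_0 = \R^d$. This is the actual content hiding behind your phrase ``$V + \text{fixed points}$ lies in an affine subspace''.

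The paper takes a different and more elementary path, avoiding Levi decompositions entirely. It first conjugates $\mu$ by $u(\bs_0)$, where $\bs_0$ is the fixed point of some contracting $\phi_{g'} \in \supp\lambda$, to reduce to the case where $\supp\mu$ contains an element $g'$ with $\ttb_{g'}=0$ and $\ttr_{g'}<1$. Then, choosing $n_i\to\infty$ with $k_{g'}^{-n_i}\to \Id$, one computes $g'^{-n_i} g\, g'^{n_i} \to k_g^{-1} a(\ttr_g^{-1})$ for each $g\in\supp\mu$, so both $k_g^{-1} a(\ttr_g^{-1})$ and $u(\ttb_g)$ lie in $H_\mu$ separately. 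The identity $a(\ttr_g)k_g\, u(\bs)\, k_g^{-1}a(\ttr_g^{-1})\, u(\ttb_g) = u(\phi_g(\bs))$ then shows that $S := \{\bs : u(\bs)\in H_\mu\}$ is $\supp\lambda$-invariant, hence $S = \R^d$ by strong irreducibility.
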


\begin{proof}
Recall that every $g\in P'$ can be written uniquely as $g=k_g^{-1} a(\ttr_g^{-1}) u(\ttb_g)$, where $k_g=\diag(O_{g},1)\in K', \ttr_g>0, \ttb_g\in \R^d$. We first deal with a particular case of the lemma.

\emph{Case $(*)$: there exists $ g'\in \supp \mu$ such that $\ttr_{g'}\in (0, 1)$ and $\ttb_{g'}=0$}. In this case, choose a sequence $n_i\to +\infty$ such that $k_{g'}^{-n_i}\to \Id$ as $i\to +\infty$. Then for every $g\in \supp \mu$, we have $\lim_{i\to +\infty} g'^{-n_i} g g'^{n_i}=k_g^{-1} a(\ttr_g^{-1})$,
 from which it follows that
\begin{align} \label{Hmu-contains}
k_g^{-1} a(\ttr_g^{-1}) \in H_{\mu}, \quad\quad \quad u(\ttb_g)\in H_{\mu}.
\end{align}
 Write $S$ the set of vectors $\bs \in \R^d$ such that $u(\bs)\in H_{\mu}$. Note $S$ is a Zariski-closed subgroup of $\R^d$, i.e., $S$ is a subspace. Using that $U\cap H_{\mu}$ is normalized by $H_{\mu}$, Equation \eqref{Hmu-contains}, and the relation $a(\ttr_g)k_g u(\bs)k_g^{-1} a(\ttr_g^{-1})u(\ttb_g)=u(\ttr_{g} O_{g}\bs+\ttb_{g})$,
 we get that $S$ is invariant under $\supp \lambda$. By irreducibility of $\lambda$, we deduce $S=\R^d$. This finishes the proof of Case $(*)$.

\emph{General case}. We now reduce the general case to Case $(*)$. As $\lambda$ is contractive in average (see discussion after \eqref{def-Lyap}), there exists $g'\in \supp \mu$ such that $\ttr_{g'}\in (0,1)$. In particular, the vector
 $$\bs_{0} =(\Id_{\R^d}-\ttr_{g'}O_{g'})^{-1}\ttb_{g'}$$
 is well defined.
By direct computation, we find that
\[u(\bs_{0} )g'u(-\bs_{0} )=k_{g'}^{-1}a(\ttr_{g'}^{-1}).\]
The measure $\mu':=\delta_{u(\bs_{0} )}*\mu*\delta_{u(-\bs_{0} )}$ on $P'$ corresponds to some   randomized self-similar IFS $\lambda'$. Namely, $\lambda'$ is the pushfoward of $\lambda$ under conjugation by $\R^d\rightarrow \R^d, v\mapsto v-s_{0}$; in particular $\lambda'$ is irreducible with finite exponential moment. By the analysis of Case $(*)$, we know that $H_{\mu'}\supseteq U$. On the other hand, $H_{\mu'}= u(\bs_{0} )H_{\mu}u(-\bs_{0} )$, whence $H_{\mu}\supseteq U$ as well.
\end{proof}

The previous lemma allows us to apply \cite{PSS23} to obtain some Margulis function, i.e., a proper positive function on $X$ which is uniformly contracted by the random walk and satisfies some growth control under the action of $G$.

\begin{lemma}[Height function \cite{PSS23}]\label{height function}
  There exists a function $\beta:X \to [1,+\infty)$ which is proper and satisfies:
\begin{itemize}
  \item[(1)] Contraction property: There exist $m\in \N$ and $\theta, M>0$ such that for all $x\in X$,
  \[\mu^{*m} *\delta_x(\beta)\leq e^{-\theta } \beta(x)+M.\]

  \item[(2)] Growth control: $\beta(gx)\leq \|\Ad(g)\|^{O(1)}\beta(x)$ for all $ g\in G, x\in X$.

\end{itemize}
\end{lemma}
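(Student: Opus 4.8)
The plan is to obtain $\beta$ directly from the Margulis (height) function construction of \cite{PSS23}, which refines the recurrence machinery of \cite{EMM98,EM04,BQ4}; the only real work is to check that our random walk meets the hypotheses of that construction. The essential input is the previous lemma: by \Cref{Zariski closure}, the Zariski closure $H_{\mu}$ of the semigroup generated by $\supp\mu$ contains the horospherical subgroup $U\cong\R^{d}$. Moreover, contraction in average (see the discussion after \eqref{def-Lyap}) produces some $g'\in\supp\mu$ with $\ttr_{g'}<1$, and arguing exactly as in the proof of \Cref{Zariski closure} (conjugating $g'$ by the appropriate element of $U$ and using that $H_{\mu}$ is a group) one sees that $H_{\mu}$ also contains the element $h=k_{g'}^{-1}a(\ttr_{g'}^{-1})$, which conjugates $U$ into itself while dilating it by the factor $\ttr_{g'}^{-1}>1$. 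In other words, $H_{\mu}$ contains a unipotent subgroup $U$ together with an element $h$ whose expanding horospherical subgroup is exactly $U$; combined with the finite exponential moment of $\mu$ and the positivity of the drift $\Lyap>0$ (equivalently, the existence of $\sigma$), this is precisely the configuration under which \cite{PSS23} produces a proper function $\beta:X\to[1,+\infty)$ satisfying the contraction inequality (1). I would invoke the relevant statement there. The reason several steps $m$ are needed, rather than one, is that a single step of the walk need not contract $\beta$ when $\ttr_{g}$ happens to be atypically large, whereas the exponential moment forces the product $\ttr_{g_{1}}\cdots\ttr_{g_{m}}$ to be typically of size $e^{-m\Lyap/2}$, which supplies the gain $e^{-\theta}$.

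For the growth control (2), I would use that the $\beta$ delivered by \cite{PSS23} is of the usual shape for height functions adapted to the cusps of $X$, namely $\beta(x)\simeq\max_{k}\bigl(\sup_{v}\|v\|^{-1}\bigr)^{\delta}$ for a small $\delta>0$, where the supremum runs over the relevant primitive $\Lambda$-vectors $v$ in the exterior powers $\bigwedge^{k}\R^{d+1}$ of the lattice $x$. Then the elementary bounds $\|\wedge^{k}g^{-1}\|\ll\|g^{-1}\|^{k}$ and $\|g^{-1}\|\ll\|\Ad(g)\|$, valid on $G=\SL_{d+1}(\R)$, give $\beta(gx)\leq\|\Ad(g)\|^{O(1)}\beta(x)$ for all $g\in G$, $x\in X$. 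This step is routine.

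The point I expect to require the most care is conceptual rather than computational: since $\mu$ is supported in the proper parabolic subgroup $P'$, the group it generates is \emph{not} Zariski dense in $G$, and the linear action of $H_{\mu}$ on $\R^{d+1}$ is reducible (it preserves the hyperplane $\R e_{1}\oplus\cdots\oplus\R e_{d}$), so the classical recurrence theorems for Zariski-dense walks cannot be applied verbatim. What makes the construction still go through is exactly the tension between the expanding unipotent direction $U\subseteq H_{\mu}$ and the diagonal part being contracting on $X$ (i.e.\ $\Lyap>0$); the delicate matter is therefore to match our hypotheses with the ``expanding/contracting'' version of the construction in \cite{PSS23} rather than its generic form, and to keep all constants uniform over the fixed lattice $\Lambda$.
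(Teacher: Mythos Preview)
Your proposal is correct and follows essentially the same route as the paper: both use \Cref{Zariski closure} to verify the hypotheses needed to invoke \cite{PSS23}. The paper is more terse, citing \cite[Corollary~3.8]{PSS23} to conclude that $\mu$ is $G$-expanding in the sense of \cite[Definition~2.7]{PSS23} and then applying \cite[Theorem~6.1]{PSS23}; your discussion of the expanding element $h\in H_{\mu}$ and of the shape of $\beta$ is effectively the content behind those citations.
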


\begin{proof}

The combination of \Cref{Zariski closure} and \cite[Corollary 3.8]{PSS23} guarantees that $\mu$ is $G$-expanding in the sense of \cite[Definition 2.7]{PSS23}. This allows to apply \cite[Theorem 6.1]{PSS23} to obtain the desired function $\beta$.

%For reader's convenience, we point out here that the explicit construction of $\beta$ can be found in \cite[(4.3),(5.1)]{BQ4} or \cite[Section 6.1]{PSS23}, using the linear representation of $G$ on $\Lambda^* \R^{d+1}$ (see also \cite[Section 2.2]{BHZ24}, where the adjoint representation is used). In particular, property (2) follows directly from this construction. For property (1), combining \cite[Proposition 3.7]{PSS23} and \Cref{Zariski closure}, we conclude that $\mu$ is $G$-expanding (see \cite[Definition 2.7]{PSS23} for precise definition). Then property (1) can be deduced from the expanding property of $\mu$ and the construction of $\beta$, as explained in the proof of \cite[Theorem 6.1]{PSS23}.

\end{proof}

 We justify that the above Margulis function can be compared to the injectivity radius.

\begin{lemma}\label{inj-comparison}
For any proper function $\Upsilon:X\to [1,+\infty)$ satisfying properties (1) (2) of \Cref{height function}, there exist $C,c>0$ such that for all $x\in X$,
  \[\Upsilon(x)^{-C}\ll \inj(x) \ll \Upsilon(x)^{-c}.\]
\end{lemma}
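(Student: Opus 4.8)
The plan is to extract both inequalities from the two abstract properties of $\Upsilon$, using the fact that the injectivity radius is itself, up to polynomial comparisons, controlled by the same growth estimate as a Margulis function. Recall the elementary geometric fact that on $X=G/\Lambda$ one has $\inj(gx) \le \|\Ad(g)\|^{O(1)}\,\inj(x)$ and $\inj(gx)\ge \|\Ad(g)\|^{-O(1)}\inj(x)$ for all $g\in G$, $x\in X$ (a short ball-image argument: a ball of radius $\rho$ around $x$ injects, so a suitably shrunk ball around $gx$ injects, the shrinking factor being controlled by the bi-Lipschitz constant of left translation by $g$, which is $\|\Ad(g)\|^{\pm O(1)}$). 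This means $\inj^{-1}$ itself enjoys a growth bound of the same shape as property (2).

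For the lower bound $\inj(x)\gg \Upsilon(x)^{-c}$: fix a compact set $\Omega\subset X$ of positive measure with $\inj \ge \iota_0 >0$ on $\Omega$. By property (1) applied iteratively, together with properness of $\Upsilon$, the random walk $\mu^{*nm}*\delta_x$ places mass bounded below (for $n$ large depending on $\Upsilon(x)$) on a sublevel set $\{\Upsilon \le R\}$, hence on $\Omega'$ for a suitable compact $\Omega'$; but more directly, one argues as follows. Write $g=g_n\cdots g_1$ for a random product of length $n=nm$; since $\inj$ has a polynomial growth bound, we get $\inj(x) \ge \|\Ad(g)\|^{-O(1)}\inj(gx)$. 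Now $gx$ hits $\{\inj \ge \iota_0\}$ with definite probability once $n\gtrsim \log\Upsilon(x)$ (by the contraction property for $\Upsilon$ and Markov's inequality), and on that event $\|\Ad(g)\|$ is at most $e^{O(n)}\le \Upsilon(x)^{O(1)}$ by the exponential moment assumption on $\mu$ (choosing a product realization with $\|\Ad(g)\|$ of typical size, which happens with positive probability by a large deviation / Markov bound on $\sum \log\|\Ad(g_i)\|$). Intersecting the two positive-probability events gives one realization with $\inj(x)\gg \|\Ad(g)\|^{-O(1)}\iota_0 \gg \Upsilon(x)^{-c}$, as desired.

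For the upper bound $\inj(x)\ll \Upsilon(x)^{-c}$: this is the reverse direction and uses that $\inj^{-1}$ also satisfies a contraction-type estimate. The cleanest route is to apply the whole construction to the specific Margulis function coming from \Cref{height function}, or rather to note that any two functions satisfying (1)--(2) are comparable up to powers — so it suffices to prove $\inj(x)\ll \beta(x)^{-c}$ for one such $\beta$, and indeed the Prohaska--Sert--Shi height function $\beta$ is built from norms of the lattice $x$ in exterior powers, for which the bound $\inj(x)\asymp \sys(x)$ (systole) and $\beta(x)\gg \sys(x)^{-c}$ is standard. Alternatively, argue symmetrically: $\inj(gx)\le \|\Ad(g)\|^{O(1)}\inj(x)$ shows $\inj^{-1}$ grows at most polynomially, and reversibility-type estimates plus properness of $\Upsilon$ force $\Upsilon^{-c}$ to dominate $\inj$. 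I expect the main obstacle to be the lower bound: one must simultaneously control the probability that $gx$ returns to a fixed compact set \emph{and} that the norm $\|\Ad(g)\|$ of the corresponding random matrix product is not too large, and balance the two so that a single realization witnesses both — this is where the finite exponential moment of $\mu$ and a large-deviation estimate for $\log\|\Ad(g_1\cdots g_n)\|$ enter, together with the contraction property (1) to guarantee the recurrence to $\{\Upsilon\le R\}$ in time $O(\log\Upsilon(x))$.
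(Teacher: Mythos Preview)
Your random-walk argument for the left inequality $\inj(x) \gg \Upsilon(x)^{-C}$ is correct and is the substantive part of the lemma: iterate the contraction (1) for $n \asymp \log \Upsilon(x)$ steps so that $gx$ lands in the compact sublevel set $\{\Upsilon \le R\}$ with probability at least $1/2$, intersect with the large-deviation event $\|\Ad(g)\| \le e^{O(n)}$ (finite exponential moment), and on that intersection conclude via $\inj(x) \ge \|\Ad(g)\|^{-O(1)} \inj(gx) \gg \Upsilon(x)^{-O(1)}$. This matches the argument the paper cites from \cite{BH24}.

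The right inequality $\inj(x) \ll \Upsilon(x)^{-c}$ has a genuine gap in your write-up. Your ``symmetric'' suggestion, taken literally, would require $\inj^{-1}$ itself to satisfy the contraction property (1); but that is effective recurrence, precisely what this lemma is being used to establish in \Cref{effective-recurrence}, so the reasoning is circular. Your reduction to the specific $\beta$ has the direction of the final step reversed: to obtain $\inj \ll \beta^{-c}$ one needs an \emph{upper} bound $\beta \ll \sys^{-C}$, not the lower bound $\beta \gg \sys^{-c}$ you wrote. The clean fix uses only property (2) together with the purely geometric reduction-theory fact (this is \cite[Lemma~3.14]{BH24}, recorded as \eqref{eq:cominjdist} below) that any $x$ admits a representative $x = h x_0$ with $\|\Ad(h)\| \ll \inj(x)^{-O(1)}$; then property (2) gives $\Upsilon(x) \le \|\Ad(h)\|^{A}\, \Upsilon(x_0) \ll \inj(x)^{-O(1)}$ directly, and no random walk is needed for this direction.
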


\begin{proof}
Using the assumptions on $\Upsilon$, the comparison between $\inj(x)$ and $\Upsilon(x)$ follows from the same argument as for \cite[Lemmas 3.13, 3.14]{BH24}.
\end{proof}

We are now able to conclude the proof of the effective recurrence property.

\begin{proof}[Proof of \Cref{effective-recurrence}]
Let $\beta$ and $m, \theta, M$ as in \Cref{height function}. We first replace $\beta$ by a suitable $\beta'$  satisfying the contraction property with $m=1$. For that, let $\gamma, \kappa\in (0,1)$ be  parameters to be specified below.
Given $f:X\rightarrow \R_{\geq 0}$, set $P_{\mu}f=\int_{G}f(g \, \cdot)\dd \mu(g)$. Consider $\beta'=\beta^\gamma +e^{\kappa}P_{\mu}(\beta^\gamma)+\dots + e^{(m-1)\kappa}P_{\mu}^{m-1}(\beta^\gamma)$ and observe it is also proper.

Note that \Cref{height function} (2) implies $\beta^\gamma(gx)\leq \|\Ad(g)\|^{O(\gamma)}\beta^\gamma(x)$.  Recalling $\mu$ has finite exponential moment and choosing $\gamma$ small enough accordingly, we obtain that $\beta'$ takes finite values, and has controlled growth:
$$\beta'(gx)\leq \|\Ad(g)\|^{O(1)}\beta'(x).$$
Note also that $\beta^\gamma$ satisfies the contraction property of \Cref{height function} (1) with the parameters $\theta,M$ replaced by $(\gamma\theta,  M^\gamma)$.
It follows that taking $\kappa:=\gamma\theta/m$, we have 
\begin{equation}\label{contract-m=1}
P_{\mu}\beta'\leq e^{-\kappa} \beta' +e^{(m-1)\kappa} M^\gamma.
\end{equation}
Now, iterating \eqref{contract-m=1}, we find for every $n \geq 0$,
$$P_{\mu}^n\beta' \leq e^{-\kappa n} \beta' +M' $$
where $M'=e^{(m-1)\kappa}M^\gamma/(1-e^{-\kappa})$.

Using the Markov inequality, we deduce for every $\rho>0$,
\begin{equation*}\label{beta'Mark}
\mu^{*n}*\delta_x\set{y \,:\, \beta'(y)> \rho^{-1}} \leq (e^{-\kappa n}\beta'(x)+M')\rho.
\end{equation*}
The proposition then follows from the comparison \Cref{inj-comparison} applied to $\beta'$.
\end{proof}

%\subsection{Volume of the cusp}
As a direct corollary of \Cref{effective-recurrence}, we bound the Haar measure of cusp neighborhoods. This estimate  will  be useful in \Cref{Sec-Khintchine-dich}.
\begin{lemma}
\label{lm:volCusp}
There are constants $C > 1$ and $c > 0$ depending on $\Lambda$ such that
\begin{enumerate}
\item for all $\rho > 0$,
\[
m_X\{ \inj \leq \rho \} \leq C \rho^c;
\]
\item writing $x_0 = \Lambda/\Lambda$ for the basepoint of $X$, we have for all $r > 0$,
\[
m_X\{ \dist( \cdot, x_0) \geq r \} \leq C e^{-c r}.
\]
\end{enumerate}
\end{lemma}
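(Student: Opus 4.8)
The plan is to derive both items from the effective recurrence estimate of \Cref{effective-recurrence}, exploiting the fact that the Haar measure $m_X$ is stationary under the $\mu$-random walk. For item (1), first I would fix an arbitrary basepoint $x\in X$ with $\inj(x)>0$ and recall that since $m_X$ is $\mu^{*n}$-stationary, we have $\int_X \mu^{*n}*\delta_y\,\dd m_X(y)=m_X$; hence for every $n$ and $\rho$,
\[
m_X\{\inj\leq\rho\}=\int_X \mu^{*n}*\delta_y\{\inj\leq\rho\}\,\dd m_X(y)\leq \int_X C\rho^c\bigl(e^{-cn}\inj(y)^{-C}+1\bigr)\,\dd m_X(y).
\]
The term $\int_X \inj(y)^{-C}\,\dd m_X(y)$ is finite: by \Cref{inj-comparison} it is bounded by $\int_X \Upsilon(y)^{C'}\,\dd m_X(y)$ for the Margulis function $\Upsilon=\beta'$ built in the proof of \Cref{effective-recurrence}, and one checks this integral is finite by a standard argument — e.g. apply the contraction inequality $P_\mu\beta'\leq e^{-\kappa}\beta'+M'$ together with stationarity of $m_X$ to get $m_X(\beta')\leq e^{-\kappa}m_X(\beta')+M'$, so $m_X(\beta')\leq M'/(1-e^{-\kappa})<\infty$; for a higher power $\Upsilon^{C'}$ one either invokes the corresponding higher-moment version of the Margulis inequality (also furnished by \cite{PSS23}), or simply notes that finiteness of $m_X(\beta')$ already gives, via \Cref{inj-comparison}, that $m_X\{\inj\leq\rho\}$ decays at least polynomially, and then bootstraps. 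Letting $n\to\infty$ in the displayed inequality kills the $e^{-cn}$ term and yields $m_X\{\inj\leq\rho\}\ll\rho^c$, which is item (1) after renaming constants.

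\textbf{Item (2).} For the second item I would use the fact that a ball of radius $r$ around $x_0$ in $X$ covers a set of measure comparable to its volume downstairs only up to the cusp, but more simply: the region $\{\dist(\cdot,x_0)\geq r\}$ is contained, for $r$ large, in a union of cusp neighborhoods, which by reduction theory for $X=\SL_{d+1}(\R)/\Lambda$ is contained in $\{\inj\leq C_0 e^{-c_0 r}\}$ for suitable constants $C_0,c_0>0$ depending only on $\Lambda$. This is the standard comparison between distance to a fixed compact core and the injectivity radius in the cusp: going a distance $r$ up a cusp shrinks the injectivity radius exponentially in $r$. Granting this inclusion, item (2) follows immediately from item (1):
\[
m_X\{\dist(\cdot,x_0)\geq r\}\leq m_X\{\inj\leq C_0 e^{-c_0 r}\}\leq C\,(C_0 e^{-c_0 r})^c\ll e^{-c c_0 r},
\]
which is the desired exponential bound after renaming constants.

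\textbf{Main obstacle.} The one substantive point is the finiteness of $\int_X\inj(y)^{-C}\,\dd m_X(y)$ — equivalently that $m_X$ integrates a small positive power of the Margulis function to a finite value — which is what makes the $e^{-cn}$ term in \Cref{effective-recurrence} harmless in the limit. As indicated above this is handled either by the higher-moment Margulis inequality from \cite{PSS23} or by a self-improving argument: a crude first bound $m_X\{\inj\leq\rho\}\ll\rho^{c_1}$ (from finiteness of $m_X(\beta')$ alone) shows $\inj(\cdot)^{-C}\in L^1(m_X)$ as soon as $C<c_1$, and \Cref{effective-recurrence} can be applied with that exponent. The inclusion of cusp neighborhoods into sublevel sets of $\inj$ used in item (2) is classical reduction theory and I would simply cite it. Everything else is routine manipulation of stationarity and the Markov inequality.
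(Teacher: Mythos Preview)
Your approach for item (2) is essentially the paper's: reduce to item (1) via the comparison $\abs{\log \inj(x)} \simeq \dist(x,x_0)$ (the paper cites \cite[Lemma 3.14]{BH24} for this, which is exactly the ``classical reduction theory'' statement you invoke).

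For item (1), your route differs from the paper's and introduces an unnecessary complication. You integrate the effective recurrence bound over all of $X$ and therefore need $\int_X \inj(y)^{-C}\,\dd m_X(y)<\infty$; your proposed fixes (higher moments from \cite{PSS23}, or a bootstrap) are not obviously adequate --- in particular the bootstrap needs the crude exponent $c_1$ coming from $m_X(\beta')<\infty$ to exceed the specific constant $C$ appearing in \Cref{effective-recurrence}, and nothing guarantees that. The paper sidesteps this entirely: since $m_X$ is an ergodic $\mu$-stationary measure, for $m_X$-a.e.\ \emph{single} point $x$ the Ces\`aro averages $\frac{1}{N}\sum_{n<N}\mu^{*n}*\delta_x$ converge weak-$*$ to $m_X$; applying \Cref{effective-recurrence} at that fixed $x$ (so $\inj(x)$ is a fixed positive number) and passing to the limit gives $m_X\{\inj<\rho\}\ll\rho^c$ directly, with no integrability question. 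Even within your framework the obstacle dissolves by a truncation: split the $m_X$-integration into $\{\inj\geq\eps\}$ and $\{\inj<\eps\}$, let $n\to\infty$, then $\eps\to 0$ using only $m_X\{\inj=0\}=0$. Either way, the integrability of $\inj^{-C}$ is a detour you do not need to take.
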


\begin{proof}
The Haar measure $m_X$ is an ergodic $\mu$-stationary measure.
Hence for $m_X$-almost every $x \in X$, the sequence $(\mu^{*n} * \delta_x)_{n\geq 0}$ converges to $m_X$ in Cesàro average with respect to the weak-$*$ topology.
Then the first estimate follows immediately from \Cref{effective-recurrence}. 
Note the constants $C$, $c$ only depend on $\Lambda$ (and not $\mu$) because $\mu$ does not play a role in the statement.

By  \cite[Lemma 3.14]{BH24}, we have for $x \in X$,
\begin{equation}
\label{eq:cominjdist}
\abs{\log \inj(x)} -1 \ll \dist(x, x_{0}) \ll \abs{\log \inj(x)} + 1.
\end{equation}
The second estimate  then follows from the first.
\end{proof}

\section{Positive dimension}\label{Sec-postdim}
In this section, we show that the $n$-step distribution of the $\mu$-random walk has positive dimension provided we look at scales above an exponentially small threshold and $n$ is large enough in terms of the starting point.

\begin{proposition}[Positive dimension]\label{Proposition: initial dimension}
There exists $A, \kappa>0$ such that for every $x\in X$, $\rho>0$, $n\geq \abs{\log \rho} + A \abs{\log \inj(x)}$, we have
$$ \forall y\in X,\quad \mu^{*n}*\delta_{x}(B_{\rho}y) \ll \rho^\kappa.$$
\end{proposition}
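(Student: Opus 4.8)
\textbf{Proof strategy for \Cref{Proposition: initial dimension}.}

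The plan is to reduce the statement on $X$ to a non-concentration estimate for the finite-time approximation $\sigma^{(n)}$ on $\R^d$, exactly as in \cite[Proposition 3.1]{BHZ24}, with the one-dimensional input replaced by \Cref{finite-time-reg}(ii). First I would use \Cref{effective-recurrence} to reduce to the case where the walk stays in a fixed compact part of $X$: there is a constant $A_0$ such that for $n \geq A_0 \abs{\log \inj(x)}$, the measure $\mu^{*n} * \delta_x$ gives mass at least $1/2$ to the set $\{\inj \geq \rho_0\}$ for a fixed $\rho_0 \simeq 1$; so it suffices to bound, for every such $y$ with $\inj(y) \geq \rho_0$, the quantity $\mu^{*n} * \delta_x(B_\rho y)$. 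The point of working at a point $y$ of definite injectivity radius is that the ball $B_\rho y$ lifts injectively to $G$ and, via the local product structure $G \approx U^- A U$ near $\Id$, a point of $B_\rho y$ differs from $y$ by an element that is $\rho$-close to the identity in all three factors.

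Next I would exploit the semigroup property to split the walk as $\mu^{*n} = \mu^{*(n-k)} * \mu^{*k}$ for a well-chosen intermediate time $k$, and use the cocycle/random-walk description of the dynamics: an element $g = k_g^{-1} a(\ttr_g^{-1}) u(\ttb_g) \in P'$ in the support of $\mu^{*k}$ acts on the fixed compact set so that, after conditioning on the first $n-k$ steps landing near a given point, the conditional distribution of $g x'$ is governed by the measure $\sigma^{(k)}$ pushed through an affine chart. Concretely, the $U$-coordinate displacement is controlled by $\ttb_{\cdot}$, whose law under $\mu^{*k}$ projects to $\sigma^{(k)}$ by the remark preceding \Cref{approx-sigma-n}; the $A$- and $K'$-coordinates are controlled by $\ttr_{\cdot}$ and $O_{\cdot}$, which satisfy a large deviation principle thanks to the finite exponential moment of $\lambda$. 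So the event $g x' \in B_\rho y$ forces $\ttb_g$ (in suitable coordinates) to lie in a cube of sidelength $\simeq \rho$ and $\log \ttr_g$ to lie in an interval of length $\simeq \log(1/\rho)$ (after possibly renormalising, up to a fixed power of $\rho$), on a scale that is $\geq e^{-k}$ provided $k \gtrsim \abs{\log \rho}$. This is exactly the regime covered by \Cref{finite-time-reg}(ii), which yields that the $\sigma^{(k)}$-mass of such a cube is $\ll \rho^\gamma$; combining with the large-deviation bound on the diagonal coordinate and integrating over the first $n-k$ steps gives $\mu^{*n}*\delta_x(B_\rho y) \ll \rho^\kappa$ for some $\kappa > 0$ and some $A$ absorbing both $A_0$ and the implied constant in the condition $k \gtrsim \abs{\log \rho}$.

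The main obstacle is the bookkeeping in the chart: one must check that the local coordinates $(U^-, A, U)$ near a point $y$ of injectivity radius $\geq \rho_0$ can be chosen with distortion bounded independently of $y$ in that compact set, so that "being in $B_\rho y$" translates cleanly into the $\ttb$-cube and $\ttr$-interval conditions with only a fixed loss in the exponent, and that the pieces of the walk not coming from the $\ttb$-coordinate (the rotational $K'$-part and the contraction $\ttr$-part) are handled by the large deviation estimate rather than creating extra concentration. Since all of this is carried out verbatim in \cite[Proposition 3.1]{BHZ24} for $d=1$ and the only ingredient that was genuinely one-dimensional there is the non-concentration of $\sigma^{(n)}$ on small intervals — now available in all dimensions as \Cref{finite-time-reg}(ii) — the argument transfers with only notational changes, and I would present it as such rather than redoing every estimate.
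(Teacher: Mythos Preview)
Your outline does not match the argument of \cite[Proposition 3.1]{BHZ24}, and as stated it has a genuine gap. The central claim --- that ``$gx' \in B_\rho y$ forces $\ttb_g$ to lie in a cube of sidelength $\simeq \rho$'' --- is not justified: for $k \simeq |\log\rho|$ the support of $\mu^{*k}$ lives in a region of $P'$ of diameter $\rho^{-O(1)}$, and on such a region the map $g \mapsto gx'$ is far from injective, so the preimage of $B_\rho y$ is a priori a union of many small cells whose number you have not controlled. Concretely, after fixing $(k_g,\ttr_g)$ the condition reads $u(\ttb_g)x' \in a(\ttr_g)B_\rho(k_g y)$; since typically $\ttr_g \simeq e^{-\Lyap k}$, the set $a(\ttr_g)B_\rho$ is a long thin box whose image in $X$ can meet the $U$-orbit of $x'$ in many disjoint arcs. (Your opening reduction is also too weak: mass $\geq 1/2$ on $\{\inj\geq\rho_0\}$ with $\rho_0\simeq 1$ does not let you discard balls centered outside that set; you would need the complementary mass to be $\leq \rho^\kappa$, forcing $\rho_0$ to be a small power of $\rho$.)

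The paper --- and \cite{BHZ24} --- proceed indirectly. Assuming $\mu^{*n}*\delta_x(B_\rho y)>\rho^\kappa$, one writes $\mu^{*n}=\mu^{*m}*\mu^{*(n-m)}$ with $m=\lfloor\alpha|\log\rho|\rfloor$ and $\alpha$ \emph{small}, so that $\|\Ad(g^{-1})\|\leq\rho^{-1/2}$ for typical $g\in\supp\mu^{*m}$. Many intermediate points $z$ then satisfy $\mu^{*m}*\delta_z(B_\rho y)\geq\rho^{2\kappa}$. For such $z$ one pigeonholes the parameters $(k_g,\log\ttr_g,\ttb_g)$ into $\rho^{-O(\kappa)}$ cells of diameter $\rho^{C\kappa}$, finds a cell $E$ with $\mu^{*m}(E)\geq\rho^{O(\kappa)}$, and uses \Cref{finite-time-reg}(ii) at the \emph{coarse} scale $\rho^{O(\kappa)}$ (not $\rho$) to pick $g_1,g_2\in E$ with $\|\ttb_{g_1}-\ttb_{g_2}\|\geq\rho^{O(\kappa)}$. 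Then $\dist(g_1^{-1}g_2,\Id)\in[\rho^{1/4},\rho^{(C-O(1))\kappa}]$ while $\dist(z,g_1^{-1}g_2 z)\ll\|\Ad(g_1^{-1})\|\rho\leq\rho^{1/2}$, forcing $\inj(z)\leq\rho^{C\kappa/4}$; this contradicts \Cref{effective-recurrence}. The point is that non-concentration is invoked only at scale $\rho^{O(\kappa)}\geq e^{-m}$, which is what reconciles the small choice of $m$ with the hypothesis of \Cref{finite-time-reg}(ii); your direct approach would need it at scale $\rho$, which is incompatible with keeping $\|\Ad(g^{-1})\|$ under control.
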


The case $d=1$ corresponds to \cite[Proposition 3.1]{BHZ24}. For arbitrary $d$, the argument of \cite{BHZ24} goes through with a few adaptations to deal with the rotation component $K'$. We provide the proof for completeness.

\begin{proof}
Let $\kappa>0$ be a parameter to be specified later. Let $x\in X$, $\rho \in (0, 1)$, $n\geq  \abs{\log \rho}$. Assume by contradiction that there exists some $y\in X$ such that
\begin{align}\label{postd-eq0}
  \mu^{*n}*\delta_x (B_{\rho}y)> \rho^{\kappa}.
\end{align}
Let $\alpha=\frac{1}{10(\ell+1)}$ where $\ell>0$ was defined in  \eqref{def-Lyap} as the top Lyapunov exponent of $\Ad_{\star}\mu$. Set $m=\lfloor \alpha \abs{\log \rho} \rfloor$.
Write
\[\mu^{*n}*\delta_x=\mu^{*m}*\mu^{*(n-m)}*\delta_x\]
and
\[Z:=\setbig{z\in X \,:\, \mu^{*m}*\delta_z(B_{\rho}y)\geq \rho^{2\kappa}}.\]
Then (\ref{postd-eq0}) implies that $\mu^{*(n-m)}*\delta_x(Z)\geq \rho^{2\kappa}$, provided $\rho\lll_{\kappa} 1$.

We are going to show that points in $Z$ have small injectivity radius. Fix $z\in Z$. By definition we have
\begin{align}\label{posd-eq1}
  \mu^{*m}\set{g: g z\in B_{\rho} y}\geq \rho^{2\kappa}.
\end{align}

Our first step is to see that among those $g$ taking $z$ into $B_{\rho} y$, we may impose further properties without ruining the measure estimate (precise statement in \eqref{post-eq4} below).
First, by the large deviation principle, there exists $\epsilon=\epsilon(\mu)>0$ such that for $\rho \lll1$,
\begin{align}\label{posd-eq3}
  \mu^{*m}\set{g \,:\, \log \ttr_g\in [-(\Lyap+1)m,-(\Lyap-1)m]}\geq 1-\rho^{\alpha \epsilon}.
\end{align}
Furthermore, considering $\gamma=\gamma(\mu)>0$ as in \Cref{moment-traj}, we have for all $\rho\lll_{\kappa} 1$,
\begin{align}\label{posd-eq2}
  \mu^{*m}\set{g \,:\, \norm{\ttb_g}\leq \rho^{-4\gamma^{-1}\kappa}}\geq 1-\rho^{3\kappa}.
\end{align}
Let $C>1$ be a large parameter to be specified below depending on $\mu$ only.
Partition the product set
$$K'\times [-(\Lyap+1)m,-(\Lyap-1)m] \times [-\rho^{-4\gamma^{-1}\kappa},\rho^{-4\gamma^{-1}\kappa}]^d$$
into subsets $(S_{i})_{i\in I}$ of the form $S_{i}=S_{i,1}\times S_{i,2}\times S_{i,3}$ where each $S_{i,j}$ has diameter less than $\rho^{C\kappa}$. Note we can arrange the number of elements in the partition to be controlled via
$$|I|\ll \rho^{-d(d-1)C\kappa/2} \cdot m\rho^{-C\kappa} \cdot \rho^{-4d\gamma^{-1}\kappa} \rho^{-dC\kappa}.$$
By the pigeonhole principle and \eqref{posd-eq1}, \eqref{posd-eq3}, \eqref{posd-eq2}, there exists $i_{0}\in I$ such that the set
\[E:=\set{g \,:\, g z\in B_{\rho} y \text{ and } (k_{g}, \log \ttr_{g}, \ttb_{g})\in S_{i_{0}} }\]
satisfies
\begin{align}\label{post-eq4}
  \mu^{*m}(E)\geq \frac{\rho^{2\kappa}-\rho^{\alpha \epsilon}-\rho^{3\kappa}}{|I|}\geq \rho^{10 d^2 C\kappa},
\end{align}
provided that $C\geq \gamma^{-1}$, $3\kappa\leq \alpha\eps$ and $\rho\lll_{\kappa} 1$.

Let $g_1,g_2\in E$. Note that $\dist(g_1 z, g_2 z)\ll \rho$. By the choice of $m$ and the bounds on $\ttr_{g_{1}}, \ttb_{g_{1}}$, we have $\norm{\Ad(g_1^{-1})}\leq \rho^{-1/2}$ provided $\kappa\lll 1$. It follows that
\begin{align}\label{post-eq-5}
  \dist(z,g_1^{-1}g_2 z)\ll \norm{\Ad(g_1^{-1})}\rho\ll \rho^{1/2}.
\end{align}
Using that $K'$ and $A'$ commute, we can further write $g_1^{-1} g_2$ as
\[ g_1^{-1}g_2=u(-\ttb_{g_2}) h u(\ttb_{g_2}) \quad \text{where } h=u(\ttb_{g_2}-\ttb_{g_1})a(\ttr_{g_1}\ttr_{g_2}^{-1})k_{g_1}k_{g_2}^{-1}.\]

To deduce from \eqref{post-eq-5} an estimate on the injectivity radius of $X$ at $z$, we now show that $g_{1},g_{2}$ can be chosen so that the distance between $g_1^{-1}g_2$ and $\Id$ is much bigger than $\rho^{1/2}$ (say at least $\rho^{1/4}$), but still smaller than a  power of $\rho$.
Assume $\gamma\lll1$ so that the non-concentration estimate from \Cref{finite-time-reg} (i) holds. Combining it with (\ref{post-eq4}), we can choose $g_1,g_2\in E$ such that
\begin{align}\label{post-eq7}
  \norm{\ttb_{g_1}-\ttb_{g_2}}\geq \rho^{11\gamma^{-1}d^2 C\kappa },
\end{align}
provided $\kappa\ll_C 1$ to ensure that $\rho^{11\gamma^{-1} d^2 C\kappa}\geq e^{-m} \simeq \rho^{\alpha}$ as required in \Cref{finite-time-reg}, and $\rho \lll_{\kappa}1$. For $g_1,g_2\in E$ satisfying \eqref{post-eq7}, we have
\begin{align*}
  \dist(h,\Id)\simeq \norm{k_{g_1}- k_{g_2}}+\norm{\ttb_{g_1}-\ttb_{g_2}}+|1-\ttr_{g_1}\ttr_{g_2}^{-1}|
  \in [\rho^{11\gamma^{-1}d^2 C\kappa}, 10\rho^{C\kappa}].
\end{align*}
Recalling that $\norm{\ttb_{g_2}}\leq \rho^{-4\gamma^{-1}\kappa}$, we get
\begin{align}\label{post-eq8} \rho^{1/4}\ll\rho^{(11d^2C+4(d+1))\gamma^{-1}\kappa}\ll\dist(g_1^{-1}g_2, \Id)\ll \rho^{(C-4(d+1)\gamma^{-1})\kappa},
\end{align}
where the lower bound assumes $\kappa\ll_C 1$. Provided $C>8(d+1)\gamma^{-1}$, Equations \eqref{post-eq-5}, \eqref{post-eq8} yield $\inj(z)\ll \rho^{C\kappa/2}+\rho^{1/2}$. When $\kappa \lll_C 1$ and $\rho \lll_\kappa 1$, this gives
\[
\inj(z) \leq \rho^{C \kappa/4}.
\]

In conclusion, we have shown that for $C\ggg1$, for $\kappa\lll_{C} 1$, $\rho\lll_{\kappa}1$, and $n \geq m =\lfloor \alpha \abs{\log \rho} \rfloor$, we have
\[\mu^{*(n-m)}*\delta_{x} \{ \inj \leq \rho^{C\kappa/4}\} \geq \rho^{2\kappa}. \]
By the effective recurrence statement from \Cref{effective-recurrence}, this is absurd if $n-m\ggg { \abs{\log \inj(x)}}$.
The proof of the proposition is complete.
\end{proof}

\section{Dimensional Bootstrap}\label{Sec-bootstrap}

In this section, we show that the $n$-step distribution $\mu^{*n}*\delta_{x}$ becomes high dimensional in $X$ exponentially fast as $n$ goes to infinity. The next definition will be useful.

\begin{definition}[Robust measure] \label{robust-measure}
Let $\alpha > 0, \tau \geq 0$ and $I\subseteq (0, 1]$.
A Borel measure $\nu$ on $X$ is said to be \emph{$(\alpha, \cB_{I}, \tau)$-robust} if $\nu$ can be written as the sum of two Borel measures $\nu=\nu'+\nu''$ such that $\nu''(X)\leq \tau$, and $\nu'$ satisfies
\begin{equation} \label{eq:injrad}
 \nu'\{\inj < \sup I \}=0,
 \end{equation}
as well as for all $ \rho\in I$, $y\in X $,
\begin{equation}
\label{eq:rob1}
\nu'(B_ {\rho}y)\leq \rho^{\alpha \dim X}
\end{equation}
 If $I$ is a singleton $I=\{\rho\}$, we simply say that $\nu$ is $(\alpha, \cB_{\rho}, \tau)$-robust.
\end{definition}

We aim to show the following. 
 \begin{proposition}[High dimension] \label{high-dim}
Let $\kappa \in (0, 1/10)$. For $\eta, \rho \lll_{\kappa}1$ and
 for all $n \ggg_{\kappa}  \abs{\log \rho} +  \abs{\log \inj(x)}$, the measure {$\mu^{*n}*\delta_{x}$} is $(1-\kappa, B_ {\rho}, \rho^\eta)$-robust.
\end{proposition}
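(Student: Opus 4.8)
The strategy is to bootstrap from the weak positive dimension of \Cref{Proposition: initial dimension} to the near-optimal dimension $(1-\kappa)$ by iterating a single ``dimension increment'' step many times, each step implemented via the mild non-concentration property (MNC) and the associated multislicing-type inequality promised in the introduction (\Cref{MNC-def}, \Cref{sup-mult}). Concretely, one starts from the fact that, for $n_0 \ggg_\kappa \abs{\log\rho_0} + \abs{\log\inj(x)}$, the measure $\mu^{*n_0}*\delta_x$ is $(\kappa_0, B_{\rho_0}, \rho_0^{\eta_0})$-robust with some fixed small $\kappa_0>0$ (this is exactly \Cref{Proposition: initial dimension}, after discarding a set of mass $\leq \rho_0^{\eta_0}$ where the injectivity radius is too small, which is controlled by the effective recurrence \Cref{effective-recurrence}). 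One then shows that convolving with a bounded number of further copies of $\mu$ upgrades a $(\alpha, B_\rho, \tau)$-robust measure to a $(\alpha', B_{\rho}, \tau')$-robust one, where $\alpha' = \min(\alpha + \delta_0, 1-\kappa)$ for a fixed gain $\delta_0 = \delta_0(\kappa) > 0$, at the cost of a controlled worsening of the exceptional mass $\tau \mapsto \tau'$ and of the admissible scale. Applying this increment $O_\kappa(1)$ times reaches $1-\kappa$; tracking the scales and exceptional masses through these finitely many steps gives the final quantitative dependence $n \ggg_\kappa \abs{\log\rho} + \abs{\log\inj(x)}$ and the final exceptional mass $\rho^\eta$.

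\textbf{The increment step.} Here is where the real work lies. To gain dimension, one uses the self-similarity to write $\mu^{*n}*\delta_x$ as an average of pushforwards of $\mu^{*(n-m)}*\delta_x$ under the maps $g = k_g^{-1} a(\ttr_g^{-1}) u(\ttb_g)$ coming from the last $m$ steps of the walk; the key point is that the translation part $u(\ttb_g)$ with $\ttb_g$ distributed like $\sigma^{(m)}$ (up to rescaling) acts on $X$ so that, projected suitably, it ``spreads out'' any measure that is not already high-dimensional. The multislicing / (MNC) machinery takes as input: (i) a measure on $X$ which is already $(\alpha, B_\rho, \tau)$-robust (the previous stage), and (ii) the regularity of the displacement measure $\sigma^{(m)}$ near proper algebraic subvarieties --- this is precisely where \Cref{non-acc-var}, \Cref{non-acc-var-product}, and their finite-time versions \Cref{finite-time-reg}(iii) enter, guaranteeing that (MNC) holds rather than the stronger non-concentration hypothesis of \cite{BH24} which fails for $d \geq 2$. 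The conclusion of that machinery is that the convolution is $(\alpha + \delta_0, B_\rho, \tau')$-robust provided $\alpha$ was bounded away from $1-\kappa$, with $m \simeq_\kappa \abs{\log\rho}$ and $\tau' \leq \tau + O(\rho^{c})$ for some $c>0$. One also has to re-establish the injectivity-radius condition \eqref{eq:injrad} at the new (slightly smaller) top scale, using effective recurrence again, and to absorb into $\nu''$ the part of the walk where the cocycle $\ttr_g$ or $\ttb_g$ behaves atypically (large deviations for $\log \ttr_g$, plus the finite moment \Cref{finite-time-reg}(i)), exactly as in the proof of \Cref{Proposition: initial dimension}.

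\textbf{Assembling the iteration.} Fix $\kappa \in (0,1/10)$. Choose $\delta_0 = \delta_0(\kappa)>0$ as above and let $N = \lceil 1/\delta_0 \rceil$, so that $N$ increments suffice. Work with a decreasing sequence of scales $\rho = \rho_N \leq \rho_{N-1} \leq \cdots \leq \rho_0$ and an increasing sequence of step-counts, chosen so that at stage $j$ one has $\mu^{*n_j}*\delta_x$ being $(\kappa_0 + j\delta_0, B_{\rho_j}, \tau_j)$-robust with $n_j \ggg_\kappa \abs{\log\rho_j} + \abs{\log\inj(x)}$ and $\tau_j \leq \sum_{i\le j} O(\rho_i^{c})$. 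Since $N$ is an absolute-in-$\kappa$ constant, the ratios $\rho_j/\rho_{j+1}$ and the increments $n_{j+1}-n_j$ can all be taken $\simeq_\kappa \abs{\log\rho}$, so after $N$ steps one lands at a measure that is $(1-\kappa, B_\rho, \rho^\eta)$-robust with $\eta = \eta(\kappa) \lll_\kappa 1$ and $n \ggg_\kappa \abs{\log\rho} + \abs{\log\inj(x)}$, as claimed. \textbf{The main obstacle} is the increment step itself: verifying that (MNC) genuinely holds in our setting --- i.e. that after removing the unavoidable algebraic obstructions (the subvarieties cut out by the relations among the one-parameter subgroups, handled in Propositions \ref{relative-angle-geom}, \ref{relative-angle-geom2}) the self-similar displacement measure $\sigma^{(m)}$ is Hölder-nonconcentrated on what remains --- and feeding this into the multislicing inequality with uniform constants as $m$ grows. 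Everything else (large deviations, recurrence, bookkeeping of scales and exceptional masses) is a routine adaptation of the one-dimensional argument in \cite{BHZ24}.
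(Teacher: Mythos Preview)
Your outline is correct and follows essentially the same approach as the paper: initialize with \Cref{Proposition: initial dimension} plus \Cref{effective-recurrence}, then iterate the dimension-increment step (the paper's \Cref{dim-increment}, which packages the MNC verification of \S\ref{Sec-nc-ineq}, the linearizing charts of \S\ref{Sec-lin-charts}, and the multislicing estimate \Cref{sup-mult}) a bounded-in-$\kappa$ number of times. The only imprecision is that the increment does not preserve the scale: in the paper the input must be robust on an \emph{interval} $[\rho,\rho^{\eps}]$ and the output is robust at the single scale $\rho^{1/2}$, so between iterations one invokes \cite[Lemma~4.5]{BH24} to recombine single-scale outputs into the multi-scale input needed for the next step---a bookkeeping point you subsume under ``routine adaptation,'' which is fair.
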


\begin{remark}
The proof of \Cref{high-dim} yields a slightly more precise lower bound requirement on $n$: we only need $n\geq M\abs{\log \rho} +  A\abs{\log \inj(x)}$ where $M\ggg_{\kappa}1$ and $A\ggg1$. See the end of \S\ref{Sec-diminc-proof}.
\end{remark}

The strategy to prove \Cref{high-dim} is to show that convolution by $\mu$ (or a suitable power $\mu^{*n}$) improves the dimensional properties of any given Frostman measure $\nu$ on $X$. Iterating this phenomenon allows to reach high dimension. The dimensional increment property for random walks is rooted in the following key observation:
$$\mu^{*n}*\nu(B_{\rho}x)=\int_{G}\nu(g^{-1}B_{\rho}x)\,\dd\mu^{*n}(g)$$
and
$g^{-1}B_{\rho}x$ can be seen, in some exponential chart, as a Euclidean box $\Ad(g^{-1})B^\kg_{\rho}$ varying randomly with $g\sim \mu^{*n}$. Provided this random box satisfies suitable non-concentration properties, we can then derive a small dimensional increment via a multislicing theorem (which itself boils down to the sum product phenomenon). In \S\ref{Sec-nc-ineq}, we study the  non-concentration properties at our disposal. In  \S\ref{Sec-mult}, we develop the relevant multislicing machinery in an abstract Euclidean setting. In \S\ref{Sec-lin-charts},  we present suitable linearizing charts to allow application in the context of our homogeneous space $X$. In \S\ref{Sec-diminc-proof} we put all these results together to obtain \Cref{high-dim}.

\subsection{Non-concentration inequalities} \label{Sec-nc-ineq}

We establish  non-concentration properties for the partial flag carrying the box $\Ad(g^{-1})B^\kg_{\rho}$ as $g$ varies randomly with law $\mu^{*n}$.

First, let us see what this partial flag is explicitly. Consider the weight spaces decomposition $\kg=\kg_{-}\oplus \kg_{0}\oplus \kg_{+}$ with respect to $A'$.
More precisely, $\kg_{+}, \kg_{-}$ are respectively the Lie algebras of $U$ and $U^-$, where $U^-$ denotes the transpose of $U$, and $\kg_{0}$ is their orthogonal complement in $\kg$. Recalling $g=k_{g}^{-1}a(\ttr_{g}^{-1})u(\ttb_{g})$ and the norm on $\kg$ is $\Ad(K')$-invariant, and assuming $\ttr_{g}<1$, 
the box $\Ad(g^{-1})B^\kg_{\rho}$ can be written
\begin{align}
\Ad(g^{-1})B^\kg_{\rho} &=\Ad(u(-\ttb_{g}))\Ad(a(\ttr_{g}))B^\kg_{\rho} \nonumber\\
&\overset{O(1)}{\simeq} \Ad(u(-\ttb_{g}))\left(B^{\kg_{-}}_{\ttr^{-1}_{g}\rho} \oplus B^{\kg_{0}}_{\rho} \oplus B^{\kg_{+}}_{\ttr_{g}\rho}\right) \nonumber\\
&\overset{L_{g}}{\simeq} \left(B^{\Ad(u(-\ttb_{g}))\kg_{-}}_{\ttr^{-1}_{g}\rho} \oplus B^{\Ad(u(-\ttb_{g}))\kg_{0}}_{\rho} \oplus B^{\Ad(u(-\ttb_{g}))\kg_{+}}_{\ttr_{g}\rho}\right) \nonumber\\
&\overset{L_{g}}{\simeq} B^{\Ad(u(-\ttb_{g}))\kg_{-}}_{\ttr^{-1}_{g}\rho} + B^{\Ad(u(-\ttb_{g}))\kg_{\leq 0}}_{\rho} + B^{\kg}_{\ttr_{g}\rho} \label{comparison-box}
\end{align}
where $\kg_{\leq 0}:=\kg_{-}\oplus\kg_0$, $L_{g}=O(\|\Ad(u(\pm \ttb_{g}))\|)^{\dim \kg}=(2+\|\ttb_{g}\|)^{O(1)}$, and the notation $A \overset{L }{\simeq} B$ means that $A$ can be covered by less than $L$ additive translates of $B$, and conversely.
Note the norm $\|\ttb_{g}\|$ is controlled via \Cref{moment-traj}. We are left to examine the non-concentration properties of the partial flag given by 
\[V_{1}(g):=\Ad(u(-\ttb_{g}))\kg_{-} \quad \text{ and } \quad V_{2}(g):=\Ad(u(-\ttb_{g}))\kg_{\leq 0}\]
as $g$ varies with law $\mu^{*n}$.

In \cite{BHZ24} about the case $d=1$ (as well as in \cite{BH24}), a similar approach is exploited, but the non-concentration at disposal therein is very strong, namely: for $i=1,2$, any subspace $W\subseteq \kg$ with $\dim V_{i}+\dim W=\dim \kg$, for $\mu^{*n}$-many $g$, we have $V_{i}(g)\cap W=\{0\}$ with a large angle between $V_{i}(g)$ and $W$. 
This non-concentration requirement appeared naturally, as it is the hypothesis of the projection theorems~\cite{Bourgain2010,He2020JFG} that were previously put into the multislicing machinery of \cite{BH24}.
%This non-concentration requirement is natural, as it is the hypothesis of the projection theorems à la Bourgain which are at the heart of the multislicing estimates from \cite{BH24}.
Unfortunately, such property fails for $d\neq 1$, as we see in the next lemma.

\begin{lemma}[Obstacle]\label{obstacle} Assume $d\geq 2$.
Let $W= \set{ M \in \kg \,:\, M e_1 = 0}$ be the subspace of matrices in $\M_{d+1}(\R)$ with zero trace and null first column. Then $\codim_{\kg} W=d+1> \dim \kg_{-}$ but
for every $g\in G$, we have
$$\Ad(g)\kg_{-}\cap W\neq \{0\}. $$
\end{lemma}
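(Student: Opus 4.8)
The plan is to make the subspace $\Ad(g)\kg_-$ completely explicit and then intersect it with $W$ by hand; everything reduces to elementary linear algebra in $\R^{d+1}$.

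First I would record that $\kg_-=\Lie(U^-)$ consists exactly of the matrices $e_{d+1}\,\xi^{T}$ with $\xi$ in the orthogonal complement $e_{d+1}^{\perp}$ of $e_{d+1}$ for the standard inner product $\langle\cdot,\cdot\rangle$ on $\R^{d+1}$ (the last row is an arbitrary vector with vanishing last coordinate, all other rows being zero; such a matrix is automatically traceless). For $g\in G$ one then computes
\[
\Ad(g)\bigl(e_{d+1}\xi^{T}\bigr)=g\,e_{d+1}\,\xi^{T}g^{-1}=(g e_{d+1})\bigl((g^{-1})^{T}\xi\bigr)^{T}.
\]
Setting $u:=g e_{d+1}\neq 0$ and $\eta:=(g^{-1})^{T}\xi$, and using $\langle\eta,u\rangle=\langle (g^{-1})^{T}\xi,\,g e_{d+1}\rangle=\langle\xi,e_{d+1}\rangle$, one sees that $\xi$ ranges over $e_{d+1}^{\perp}$ precisely when $\eta$ ranges over $u^{\perp}$. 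Hence
\[
\Ad(g)\kg_-=\set{u\,\eta^{T}\ :\ \eta\in u^{\perp}},
\]
a $d$-dimensional space of rank-$\le1$ matrices all with column space $\R u$.

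Next I would intersect this with $W$. For $\eta\in u^{\perp}$ the matrix $u\eta^{T}$ is automatically traceless, since $\tr(u\eta^{T})=\langle\eta,u\rangle=0$, and its first column is $(u\eta^{T})e_1=\langle\eta,e_1\rangle\,u$; as $u\neq0$ this vanishes iff $\eta\perp e_1$. Therefore
\[
\Ad(g)\kg_-\cap W=\set{u\,\eta^{T}\ :\ \eta\in u^{\perp}\cap e_1^{\perp}}.
\]
Since $d\ge2$ gives $\dim\R^{d+1}=d+1\ge3$, the subspace $u^{\perp}\cap e_1^{\perp}$ has dimension at least $(d+1)-2=d-1\ge1$; choosing any $\eta\neq0$ in it produces a nonzero element $u\eta^{T}$ of $\Ad(g)\kg_-\cap W$, which is the claim. (The same description with $g=\Id$ also gives $\dim W=(d+1)d-1$, so $\codim W=d+1>d=\dim\kg_-$, confirming that this intersection is genuinely exceptional.)

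I do not expect a real obstacle here: once $\Ad(g)\kg_-$ is written in the form $\set{u\eta^{T}:\eta\perp u}$, the statement is immediate. The only point needing a little care is the bookkeeping with transposes — specifically the identity $(g^{-1})^{T}\bigl(e_{d+1}^{\perp}\bigr)=(g e_{d+1})^{\perp}$ — which is exactly what places the ``flag'' constraint $\eta\perp u$ and the ``$W$'' constraint $\eta\perp e_1$ in the same copy of $\R^{d+1}$, so that they must share a nonzero solution as soon as $d+1\ge3$.
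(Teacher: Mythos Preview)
Your proof is correct and follows essentially the same idea as the paper's: both identify $\Ad(g)\kg_-$ as the rank-$\le1$ matrices with column space in the line $\R(ge_{d+1})$, and then observe that the condition $Me_1=0$ cuts this $d$-dimensional space by at most one linear equation, leaving a subspace of dimension $\ge d-1\ge1$. The paper phrases this last step slightly more abstractly (``$me_1$ and $m'e_1$ are colinear, so some combination lies in $W$''), whereas you compute the parametrisation $\Ad(g)\kg_-=\{u\eta^T:\eta\perp u\}$ explicitly and read off $\Ad(g)\kg_-\cap W\cong u^\perp\cap e_1^\perp$; the content is the same.
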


Here and below, we denote by $(e_1, \dotsc, e_{d + 1})$ the standard basis of $\R^{d + 1}$.

\begin{proof}
Observe that
$$\kg_{-} = \Span\set{E_{d+1, j} \,:\, 1 \leq j \leq d }  =\begin{pmatrix}
     0 & \cdots & 0 & 0\\
     \vdots & \ddots & \vdots & \vdots\\
     0 & \cdots & 0 & 0\\
     * & \cdots & * & 0
  \end{pmatrix}.$$
Therefore, for any $g \in G$, $\Ad(g)\kg_-$ corresponds to the collection of endomorphisms of $\R^{d+1}$ which are $2$-step nilpotent and with image in $g\R e_{d+1}$. Consider $m, m'\in \Ad(g)\kg_{-}$ non colinear. As $m e_{1}, m' e_{1}$ are colinear, there must exist $(t, t')\in \R^2\smallsetminus \{0\}$ with $(tm+ t'm')e_{1}=0$, whence $tm+t' m'\in W$. This justifies that any $2$-dimensional subspace of $\Ad(g)\kg_-$ intersects $W$, whence $\dim (\Ad(g)\kg_- \cap W)\geq d-1$.
\end{proof}

In this section, we consider an arbitrary $d\geq 1$ and show that the random subspaces $V_{1}(g), V_{2}(g)$ where $g\sim \mu^{*n}$ still satisfy a \emph{weak} form of non-concentration. It is presented as \Cref{relative-angle} below. We will explain afterwards how this can be utilized to perform the dimensional bootstrap.

Given subspaces $F_{1}, \dots, F_{k}\in \Gr(\kg)$, we write
$$\norm{F_{1}\wedge \dots \wedge F_{k}}  := \norm{ v_{1}\wedge \dots \wedge v_{k}} $$
where $v_{i}\in \bigwedge^* \kg$ is a unit vector spanning the line $\bigwedge^{\dim F_{i}}F_{i}.$

\begin{proposition}[Mild non-concentration] \label{relative-angle}
Let $W\in \Gr(\kg, d)$. Then for $n\geq 1$, $r\geq e^{-n}$,
$$(\mu^{*n})^{\otimes d+1}\left\{ (g_{i})_{i=1}^{d+1}\,:\, \| V_{1}(g_{1})\wedge\dots \wedge V_{1}(g_{d+1})\wedge W\| \leq r \right\} \leq C r^c $$
where $C, c>0$ are constants depending on $\mu$ only.
\end{proposition}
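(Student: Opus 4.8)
Since $V_{1}(g)=\Ad(u(-\ttb_{g}))\kg_{-}$ depends on $g$ only through $\ttb_{g}\in\R^{d}$, and $\ttb_{g}$ has law $\sigma^{(n)}=\lambda^{*n}*\delta_{\mathbf 0}$ under $\mu^{*n}$, the quantity to bound is
\[
(\sigma^{(n)})^{\otimes(d+1)}\bigl\{(\bs_{i})_{i=1}^{d+1}:\Phi_{W}(\bs_{1},\dots,\bs_{d+1})\leq r\bigr\},\qquad \Phi_{W}:=\bigl\|V_{1}(\bs_{1})\wedge\dots\wedge V_{1}(\bs_{d+1})\wedge W\bigr\|,
\]
where $V_{1}(\bs):=\Ad(u(-\bs))\kg_{-}$. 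I would first render this algebraic. A direct computation of $\Ad(u(-\bs))E_{d+1,j}$ shows it is a polynomial of degree $\leq 2$ in $\bs$ whose $E_{d+1,j}$-entry equals $1$; hence $\omega(\bs):=\bigwedge_{j=1}^{d}\Ad(u(-\bs))E_{d+1,j}$ is polynomial of degree $\leq 2d$ in $\bs$, satisfies $\|\omega(\bs)\|\geq 1$ (the Gram determinant of the vectors $\Ad(u(-\bs))E_{d+1,j}$ is at least that of their orthogonal projections onto $\Span\{E_{d+1,k}\}_{k}$, which form an orthonormal set), and $\|\omega(\bs)\|\ll(1+\|\bs\|)^{2d}$. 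As $\dim\kg=(d+1)d+d$, the element $\omega(\bs_{1})\wedge\dots\wedge\omega(\bs_{d+1})\wedge\omega_{W}$ (with $\omega_{W}$ a fixed unit decomposable $d$-vector spanning $\bigwedge^{d}W$) lies in the top exterior power and equals $\widetilde P_{W}(\bs_{1},\dots,\bs_{d+1})$ times a fixed volume form, for a polynomial $\widetilde P_{W}$ of degree $\leq 2d$ in each of the $d+1$ blocks, so that
\[
\Phi_{W}=\frac{|\widetilde P_{W}|}{\prod_{i=1}^{d+1}\|\omega(\bs_{i})\|},\qquad 1\leq\prod_{i}\|\omega(\bs_{i})\|\ll\prod_{i}(1+\|\bs_{i}\|)^{2d}.
\]

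\textbf{The main obstacle: algebraic non-degeneracy.} The heart of the matter is to prove that $\widetilde P_{W}\not\equiv 0$ for every $W\in\Gr(\kg,d)$, i.e.\ that there exist $\bs_{1},\dots,\bs_{d+1}$ with $V_{1}(\bs_{1})\oplus\dots\oplus V_{1}(\bs_{d+1})\oplus W=\kg$. I would split this into: (i) $\Span\bigcup_{\bs}V_{1}(\bs)=\kg$, which follows because this span is $\Ad(U)$-invariant, contains $\kg_{-}$, and expanding $\Ad(\exp X)Y=Y+[X,Y]+\tfrac12[X,[X,Y]]$ for $X\in\kg_{+}$, $Y\in\kg_{-}$ (the series terminating since $(\ad X)^{3}\kg_{-}=0$) it also contains $[\kg_{+},\kg_{-}]=\kg_{0}$ and $[\kg_{+},\kg_{0}]=\kg_{+}$; and (ii) a Schubert-type transversality statement: for every $d$-dimensional $W$ — and, along a greedy construction, for each intermediate subspace $W\oplus V_{1}(\bs_{1})\oplus\dots\oplus V_{1}(\bs_{j})$ — a generic $V_{1}(\bs)$ meets it trivially. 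Combined with (i) and a dimension count, (ii) yields the desired configuration. Statement (ii) is the delicate input: it is precisely what must be established to defeat the algebraic obstruction of \Cref{obstacle}, which produces codimension-$(d+1)$ subspaces met by every $V_{1}(\bs)$, whereas here $W$ has codimension $d^{2}+d$ and the intermediate subspaces are themselves generic. I would isolate (ii) as a separate geometric lemma — the role played in the paper by Propositions \ref{relative-angle-geom} and \ref{relative-angle-geom2}. Granting $\widetilde P_{W}\not\equiv 0$ for all $W$, continuity of $W\mapsto\widetilde P_{W}$ on the compact Grassmannian (with $\|\omega_{W}\|=1$) forces $\|\widetilde P_{W}\|\simeq 1$ uniformly in $W$, while $\deg\widetilde P_{W}\leq 2d(d+1)$ is fixed.

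\textbf{Concluding via non-concentration.} Fix a small $\theta>0$. By \Cref{finite-time-reg}(i) and Markov's inequality there is $\gamma=\gamma(\sigma)>0$ with $(\sigma^{(n)})^{\otimes(d+1)}\{\exists i:\|\bs_{i}\|>r^{-\theta}\}\ll r^{\theta\gamma}$. On the complementary event $\prod_{i}\|\omega(\bs_{i})\|\ll r^{-2d(d+1)\theta}$, so $\Phi_{W}\leq r$ forces $|\widetilde P_{W}|\leq\delta$ with $\delta\asymp r^{1-2d(d+1)\theta}$, and $\delta>e^{-n}$ once $r\geq e^{-n}$ and $\theta$ is small enough. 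Applying the finite-time product non-concentration estimate — obtained from \Cref{non-acc-var-product} and \Cref{approx-sigma-n} exactly as \Cref{finite-time-reg}(iii) is obtained from \Cref{non-acc-var} — to $\widetilde P_{W}/\|\widetilde P_{W}\|$ (degree $\leq 2d(d+1)$, unit coefficient norm) gives $(\sigma^{(n)})^{\otimes(d+1)}\{|\widetilde P_{W}|\leq\delta\}\ll\delta^{c_{1}}$ with $c_{1}=c_{1}(\sigma,d)>0$ and implied constants independent of $n$ and $W$. Hence
\[
(\sigma^{(n)})^{\otimes(d+1)}\{\Phi_{W}\leq r\}\ll r^{\theta\gamma}+r^{(1-2d(d+1)\theta)c_{1}},
\]
and taking $\theta=\tfrac{1}{4d(d+1)}$ makes both exponents positive, which proves the proposition with $c=\min(\theta\gamma,c_{1}/2)$ and a suitable $C$.
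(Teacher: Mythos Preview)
Your reduction to the polynomial $\widetilde P_W$ and the concluding argument via moments and product non-concentration are correct and match the paper's proof of ``\Cref{relative-angle-geom} $\Rightarrow$ \Cref{relative-angle}'' essentially step for step; the only bookkeeping difference is that the paper transfers from $\sigma^{(n)}$ to $\sigma$ at the outset (via \Cref{approx-sigma-n} and Lipschitz continuity of the angle function) and then invokes \Cref{sigma-moment} and \Cref{non-acc-var-product} directly, whereas you stay with $\sigma^{(n)}$ and appeal to finite-time analogues. Both routes are fine.

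Where your proposal is incomplete is exactly where you say it is: the non-vanishing $\widetilde P_W\not\equiv 0$ for \emph{every} $W\in\Gr(\kg,d)$. Your sketched approach via the greedy transversality (ii) is not a proof as stated --- you would need, at each step $j$, that not all $V_1(\bs)$ meet the current subspace $W\oplus V_1(\bs_1)\oplus\dots\oplus V_1(\bs_j)$, and \Cref{obstacle} shows precisely that such universal-intersection subspaces exist in nearby dimensions, so one cannot argue by generic position alone. The paper handles this by an entirely different mechanism: it first observes that the failure of \Cref{relative-angle-geom} for some $W$ is a Zariski-closed $\Ad(G)$-invariant condition, then uses the Borel fixed-point theorem to reduce to $\Ad(B)$-invariant $W$ (\Cref{relative-angle-geom2}), classifies these explicitly as staircase configurations of $E_{i,j}$'s (\Cref{formW}), and finally produces the required $(g_i)$ by an induction on $d$ that peels off the first row. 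Your item (i) --- that the $V_1(\bs)$ span $\kg$ --- is true but plays no role in that argument.
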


Observing $\dim \kg= d(d+2)$, \Cref{relative-angle} means that for most parameters $(g_{i})_{i=1}^{d+1}$ selected by $(\mu^{*n})^{\otimes d+1}$,
we have $\kg=\bigoplus_{i}V_{1}(g_{i}) \oplus W$, and each subspace makes a rather large angle with the complementary sum.

We may derive a similar non-concentration property for $V_{2}(g)^\perp$ as $g\sim \mu^{*n}$.
\begin{corollary} \label{relative-angle-2}
Let $W\in \Gr(\kg, d)$. Then for $n\geq 1$, $r\geq e^{-n}$,
$$(\mu^{*n})^{\otimes d+1}\left\{ (g_{i})_{i=1}^{d+1}\,:\, \| V_{2}(g_{1})^\perp\wedge\dots \wedge V_{2}(g_{d+1})^\perp \wedge W\| \leq r \right\} \leq C r^c $$
where $C, c>0$ are constants depending on $\mu$ only.
\end{corollary}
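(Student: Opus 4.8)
The plan is to deduce \Cref{relative-angle-2} from \Cref{relative-angle} by a symmetry argument: I will exhibit a single linear isometry of $\kg$ which, for every $g$, carries the subspace $V_{2}(g)^\perp$ onto $V_{1}(g)$. Since isometries of $\kg$ preserve all the quantities $\|F_{1}\wedge\dots\wedge F_{k}\|$ and preserve $\Gr(\kg,d)$, this turns the event in \Cref{relative-angle-2} into the event in \Cref{relative-angle} (with $W$ replaced by the image of $W$ under the isometry), at which point the bound $Cr^{c}$ is immediate.

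First I would compute $V_{2}(g)^\perp$ explicitly. The inner product on $\kg=\sl_{d+1}(\R)$ is the restriction of the Frobenius product $\langle X,Y\rangle=\tr(X^{T}Y)$, for which the adjoint of $\Ad(h)$ is $\Ad(h^{T})$ (since $\langle hXh^{-1},Y\rangle=\tr(X^{T}h^{T}Yh^{-T})=\langle X,\Ad(h^{T})Y\rangle$). Hence for any subspace $F\subseteq\kg$ and any $h\in G$ one has $(\Ad(h)F)^\perp=\Ad((h^{-1})^{T})F^\perp$. Applying this with $h=u(-\ttb_{g})$ and $F=\kg_{\leq 0}=\kg_{-}\oplus\kg_{0}$, and using that the weight decomposition $\kg=\kg_{-}\oplus\kg_{0}\oplus\kg_{+}$ is orthogonal so that $\kg_{\leq 0}^\perp=\kg_{+}$, I get $V_{2}(g)^\perp=\Ad(u(\ttb_{g})^{T})\,\kg_{+}$, where $u(\ttb_{g})^{T}$ is a lower unipotent. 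The point to be careful about here — and the only genuine subtlety in the whole argument — is exactly that $\Ad(u(\cdot))$ is not orthogonal, so one must not confuse $(\Ad(u)\kg_{\leq 0})^\perp$ with $\Ad(u)\kg_{+}$; the correct formula involves the transpose inverse.

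Then I would introduce $\theta\colon\kg\to\kg$, $\theta(X)=X^{T}$. It is a linear isometry of $\kg$ (the Frobenius norm is transpose invariant), it exchanges $\kg_{+}$ and $\kg_{-}$, and it satisfies $\theta(\Ad(h)X)=\Ad((h^{T})^{-1})\theta(X)$. Combining this with the previous step gives $\theta(V_{2}(g)^\perp)=\Ad((u(\ttb_{g})^{T})^{-T})\,\kg_{-}=\Ad(u(-\ttb_{g}))\,\kg_{-}=V_{1}(g)$. Since $\theta$ induces an isometric algebra automorphism of $\bigwedge^{*}\kg$ and $\theta(W)\in\Gr(\kg,d)$, we conclude that for every tuple $(g_{i})_{i=1}^{d+1}$, $\|V_{2}(g_{1})^\perp\wedge\dots\wedge V_{2}(g_{d+1})^\perp\wedge W\|=\|V_{1}(g_{1})\wedge\dots\wedge V_{1}(g_{d+1})\wedge\theta(W)\|$, and the result follows from \Cref{relative-angle} applied with $\theta(W)$ in place of $W$. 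There is no real obstacle beyond the adjoint bookkeeping in the second paragraph; the argument is essentially a change of variable via the Cartan involution.
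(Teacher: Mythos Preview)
Your proof is correct and follows essentially the same route as the paper: compute $V_{2}(g)^\perp=\Ad(u(\ttb_{g})^{T})\kg_{+}$ using that $\Ad(h)^*=\Ad(h^{T})$, then observe that the transpose isometry $\theta(X)=X^{T}$ sends $V_{2}(g)^\perp$ to $V_{1}(g)$ and preserves wedge norms, reducing to \Cref{relative-angle} with $\theta(W)$ in place of $W$. The paper's version is terser but the content is identical.
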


\begin{proof}[Proof of \Cref{relative-angle-2}] Recall that the Lie algebra $\kg=\sl_{d+1}(\R)$ is equipped with the scalar product given by $\langle A,B\rangle=\tr(A^T B)$ where $A^T$ denotes the transpose of the matrix $A$. Using $\tr(AB)=\tr(BA)$, it is direct to check for every $g\in G$, the adjoint of $\Ad(g)\in \End(\kg)$ for this Euclidean structure is given by $\Ad(g)^*= \Ad(g^T)$. Moreover, the eigenspaces $\kg_{-}$, $\kg_{0}$ and $\kg_{+}$ are mutually orthogonal. It follows that for $g\in P'$, we have 
$$V_{2}(g)^\perp=(\Ad(u(-\ttb_{g}))\kg_{\leq 0})^\perp = \Ad(u(\ttb_{g})^T)\kg_{+} =\left(\Ad(u(-\ttb_{g}))\kg_{-}\right)^T.$$
As the map $\kg \mapsto \kg, A\mapsto A^T$ is an isometry, the claim follows from \Cref{relative-angle}.
\end{proof}

We now focus on establishing \Cref{relative-angle}. We first reduce to a purely geometric version of that result.
\begin{proposition}[Geometric reduction] \label{relative-angle-geom} Let $W\in \Gr(\kg, d)$. Then there exists $(u_{i})_{i=1}^{d+1} \in U^{d+1}$ such that
$$\Ad(u_{1})\kg_{-}\oplus\Ad(u_{2})\kg_{-}\oplus\dots \oplus \Ad(u_{d+1})\kg_{-}\oplus W=\kg$$
\end{proposition}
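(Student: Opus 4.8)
The first thing to do is to make the subspaces $\Ad(u(\bs))\kg_-$ explicit. Writing $\bs=\sum_j s_j e_j\in\R^{d+1}$ (so its last coordinate vanishes) and setting $n_{\bs}:=e_{d+1}+\bs$, the identities $u(\bs)=\Id+\bs e_{d+1}^T$ and $u(\bs)^{-1}=\Id-\bs e_{d+1}^T$ give, for $k\le d$, $\Ad(u(\bs))E_{d+1,k}=n_{\bs}\,(e_k-s_ke_{d+1})^T$, and the vectors $e_k-s_ke_{d+1}$ span $n_{\bs}^{\perp}$; hence $\Ad(u(\bs))\kg_-=V(n_{\bs})$, where for a line $\R m\subset\R^{d+1}$ we put $V(m):=\{my^T:y\perp m\}$, a $d$-dimensional subspace of $\kg$ (this matches the description of $\Ad(g)\kg_-$ used in \Cref{obstacle}, and $[n_{\bs}]$ ranges over the affine chart $\{[m]:m_{d+1}\neq0\}$ of $\bP(\R^{d+1})$). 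Since $\dim\kg=(d+1)^2-1=d^2+2d=(d+1)\dim\kg_-+\dim W$, a sum $\sum_{i=1}^{d+1}V(m_i)+W$ equals $\kg$ if and only if it is direct. The condition ``$\sum_iV(m_i)+W=\kg$'' is Zariski-open in $(m_1,\dots,m_{d+1})\in\bP(\R^{d+1})^{d+1}$ because $m\mapsto V(m)$ is an algebraic family of subspaces; as $\bP(\R^{d+1})^{d+1}$ is irreducible and the affine chart is dense open, it suffices to exhibit \emph{one} tuple of lines $(m_i)_{i=1}^{d+1}$ in $\R^{d+1}$ with $\bigoplus_iV(m_i)\oplus W=\kg$ — the corresponding tuple in $U^{d+1}$ is then obtained by shrinking into the chart.

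Next I would use equivariance to turn this into a statement about bases. Conjugation satisfies $gV(m)g^{-1}=V(gm)$, and for the standard basis one checks $\bigoplus_{i=1}^{d+1}V(e_i)=\{M\in\kg:M_{ii}=0\ \forall i\}$; thus for any basis $f=(f_1,\dots,f_{d+1})$ of $\R^{d+1}$ one has $\bigoplus_{i}V(f_i)=Z_f:=\{M\in\kg:\langle f_i^{*},Mf_i\rangle=0\ \forall i\}$, the matrices with vanishing diagonal in the basis $f$. As $\dim Z_f+\dim W=\dim\kg$, the proposition reduces to the claim: \emph{for every $W\in\Gr(\kg,d)$ there is a basis $f$ of $\R^{d+1}$ with $W\cap Z_f=\{0\}$.}

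I would prove this last claim by induction on $d$, the case $d=1$ being immediate. For the inductive step, fix a hyperplane $H\subset\R^{d+1}$ and a vector $f_{d+1}\notin H$, and let $\Psi_H(M):=\pi_H\circ M|_H\in\gl(H)$ denote the compression of $M$ to $H$ along $\R f_{d+1}$. If $f$ is a basis of $\R^{d+1}$ completing $f_{d+1}$ by a basis of $H$, then $M\in Z_f$ if and only if $\langle f_{d+1}^{*},Mf_{d+1}\rangle=0$ and $\Psi_H(M)$ has vanishing diagonal in the chosen basis of $H$. The plan is to choose $(H,f_{d+1})$ so that (a) the linear form $M\mapsto\langle f_{d+1}^{*},Mf_{d+1}\rangle$ is nonzero on $W$, whence $W^{0}:=\{M\in W:\langle f_{d+1}^{*},Mf_{d+1}\rangle=0\}$ has dimension $d-1$; and (b) $\Psi_H$ is injective on $W^{0}$. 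Granting this, $W':=\Psi_H(W^{0})$ is a $(d-1)$-dimensional subspace of $\gl(H)$ which in fact lies in $\sl(H)$, since $\tr\Psi_H(M)=\tr M-\langle f_{d+1}^{*},Mf_{d+1}\rangle=0$ for $M\in W^{0}$. Applying the inductive hypothesis to $W'\subset\sl(H)\cong\sl_{d}(\R)$ yields a basis $(f_1,\dots,f_d)$ of $H$ with $W'\cap Z^{H}_{(f_1,\dots,f_d)}=\{0\}$; then $f=(f_1,\dots,f_{d+1})$ works, because $M\in W\cap Z_f$ forces $M\in W^{0}$, then $\Psi_H(M)\in W'\cap Z^{H}_{(f_1,\dots,f_d)}=\{0\}$, then $M=0$ by (b). This closes the induction.

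The hard part will be Step~(a)--(b): showing that an admissible pair $(H,f_{d+1})$ exists. Both (a) and (b) are Zariski-open conditions on the $2d$-dimensional variety of pairs (hyperplane, complementary line), and a dimension count gives $\dim W^{0}+\dim\ker\Psi_H=(d-1)+2d<\dim\kg$, so (b) fails only on a proper subvariety; the real content is to rule out that some degenerate $W$ makes every admissible pair violate (a) or (b), i.e. a transversality verification over the (finitely many) degenerate strata of $W$. I expect this to be the delicate point. An alternative route to the boxed claim, avoiding the induction, would be to show directly that the $\GL_{d+1}(\R)$-conjugation orbit of $[W]$ in $\Gr(\kg,d)$ is not contained in the Schubert divisor $\{[V]:V\cap Z_e\neq\{0\}\}$ (with $Z_e$ the zero-diagonal matrices), by bounding the dimension of the incidence variety $\{(g,[M]):g^{-1}Mg\in Z_e,\ [M]\in\bP(W)\}$ using that $\mathcal{C}_M\cap Z_e$ has codimension at least $d$ in the conjugacy class $\mathcal{C}_M$ for every nonzero traceless $M$; this is where the main estimate would sit.
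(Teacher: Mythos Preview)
Your reformulation is correct and yields a genuinely different argument from the paper's. You reduce the proposition to the clean statement that for every $d$-dimensional $W\subset\sl_{d+1}(\R)$ there is a basis $f$ of $\R^{d+1}$ with $W\cap Z_f=\{0\}$ (where $Z_f$ is the space of matrices with vanishing diagonal in the basis $f$), and then attack this by induction via compression to a hyperplane. The paper instead first observes that the failure locus in $\Gr(\kg,d)$ is Zariski-closed and $\Ad(G)$-invariant, hence by the Borel fixed-point theorem contains an $\Ad(B)$-fixed point; such $B$-invariant $W$'s of dimension $\leq d$ are classified explicitly (they are sums of elementary matrices $E_{ij}$ with $i<j$ satisfying an upward/rightward closure condition), and the proposition is then checked for these finitely many shapes by an explicit induction embedding $\SL_d$ in the lower-right block of $\SL_{d+1}$. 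Your route is more elementary in that it avoids the representation-theoretic reduction; the paper's route avoids any transversality/genericity verification by reducing to a finite list of $W$'s.

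The gap you flag at step~(a)--(b) is real --- your dimension count $\dim W^0+\dim\ker\Psi_H<\dim\kg$ is only heuristic, since neither subspace is generic --- but it closes without any case analysis on $W$. Condition~(a) is immediate (if it failed for every pair then $Mv=0$ for all $M\in W$ and all $v$, so $W=0$). For~(b), fix a hyperplane $H$ and suppose that for every complementary line $\R v$ some nonzero $M\in W$ satisfies $MH\subset\R v$. If no nonzero $M\in W$ annihilates $H$, then each such $M$ has $MH$ equal to a line, and the incidence $\{([M],[v]):MH\subset\R v,\ v\notin H\}\subset\bP(W)\times(\bP^d\setminus\bP(H))$ has at most $0$-dimensional fibers over $\bP(W)$ yet surjects onto the $d$-dimensional set of $[v]$'s, contradicting $\dim\bP(W)=d-1$. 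Hence if~(b) fails for every $v$ at this $H$, some nonzero $M\in W$ has $H\subset\ker M$, i.e.\ $\operatorname{rank}M\le 1$. But then the map $[M]\mapsto\ker M$ from the rank-one locus in $\bP(W)$ (of dimension $\le d-1$) to the $d$-dimensional dual projective space would be surjective, which is impossible. So~(b) holds for some $(H,v)$; since~(a) and~(b) are both nonempty Zariski-open conditions on the irreducible variety of pairs $(H,v)$, they hold simultaneously, and your induction runs.
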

\Cref{relative-angle-geom} is geometric in the sense that no random variable is involved. It turns out to be equivalent to \Cref{relative-angle}.

\begin{proof}[Proof that \Cref{relative-angle} $\iff$ \Cref{relative-angle-geom}] The direct implication being clear, we assume \Cref{relative-angle-geom} and check \Cref{relative-angle}. We write $P(\bs):=\Ad(u(- \bs))$ for conciseness. Observe that the angle function $(\R^d)^d \rightarrow \bigwedge^{\dim \kg} \kg\simeq \R, (\bs_{i})_{i}\mapsto \norm{P(\bs_{1})\kg_{-}\wedge \dots \wedge P(\bs_{d+1})\kg_{-}\wedge W}$ is Lipschitz continuous. In view of  \Cref{approx-sigma-n}, it suffices to show the existence of constants $C, c > 0$ depending only on $\sigma$ such that for every $r>0$,
$$
\sigma^{\otimes d+1}\left\{ (\bs_i)_{i=1}^{d+1}\,:\, \| P(\bs_1)\kg_- \wedge\dots \wedge P(\bs_{d+1})\kg_- \wedge W\| \leq r \right\} \leq C r^c.
$$
% we may thus replace the variables $-\ttb_{g_{i}}$'s by some iid variables $\bs_{i}$ of law $\sigma$.

Let $v_-, w \in \bigwedge^{*}\kg$ be unit vectors spanning respectively the lines $\bigwedge^{\dim \kg_-}\kg_-$ and $\bigwedge^{\dim W}W$. Note that
\begin{align}\label{eq0-numden}
\| P(\bs_{1})\kg_{-}\wedge\dots \wedge P(\bs_{d+1})\kg_{-}\wedge W\|
= \frac{\| P(\bs_{1})v_-\wedge\dots \wedge P(\bs_{d+1})v_-\wedge w\|}{\| P(\bs_{1})v_-\|\dotsm \| P(\bs_{d+1})v_-\|}.
\end{align}
As the map $\bs\mapsto P(\bs)$ is polynomial, and $\sigma$ has finite moment of positive order (\Cref{sigma-moment}), we have for some $\gamma=\gamma(\sigma)>0$,
\begin{align}\label{eq1-den}
\sigma^{\otimes d+1}\left\{\| P(\bs_{1})v_-\|\dotsm \| P(\bs_{d+1})v_-\|\geq r^{-1/2}\right\}\ll_{\sigma} r^{\gamma}.
\end{align}
On the other hand, \Cref{relative-angle-geom} guarantees that the polynomial map $(\bs_{i})_{i}\mapsto P(\bs_{1})v_-\wedge\dots \wedge P(\bs_{d+1})v_-\wedge w$ is non-zero. As it depends continuously on $w$ and $\Gr(\kg,d)$ is compact, it must have the supremum norm on the coefficients bounded below by a constant $c_{d}>0$ depending only on $d$.
Combined with \Cref{non-acc-var-product}, this yields
\begin{align}\label{eq2-num}
\sigma^{\otimes d+1}\left\{\| P(\bs_{1})v_-\wedge\dotsm \wedge P(\bs_{d+1})v_-\wedge w\| <r^{1/2} \right\}\ll_{\sigma} r^{\gamma}
\end{align}
up to taking $\gamma$ smaller. \Cref{relative-angle} follows from the combination of \eqref{eq0-numden}, \eqref{eq1-den}, \eqref{eq2-num}.
\end{proof}

We further reduce to the case where the subspace $W$ is invariant under a Borel subgroup of $G$. We denote by $B$ the upper triangular subgroup of $G$.

%We set $G_{\C}=\SL_{d+1}(\C)$, $\kg_{\C}=\sl_{d+1}(\C)$, $\kg_{-, \C}=\kg_{-}\otimes \C$, and $B_{\C}$ the upper triangular subgroup of $G_{\C}$.

\begin{proposition}[Borel-invariant reduction] \label{relative-angle-geom2} Let $W\in \Gr(\kg,d)$ be a subspace which is $\Ad(B)$-invariant. Then there exists $(g_{i})_{i=1}^{d+1} \in G^{d+1}$ such that
$$\Ad(g_{1})\kg_{-}\oplus\Ad(g_{2})\kg_{-}\oplus\dots \oplus \Ad(g_{d+1})\kg_{-}\oplus W=\kg.$$
\end{proposition}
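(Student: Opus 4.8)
The plan is to recast the statement as a transversality claim about a single subspace, classify the possible $W$, and then dispose of the resulting family by a perturbation argument.

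\emph{Reformulation.} Writing $v^{\circ}=\{\phi\in(\R^{d+1})^{*}:\phi(v)=0\}$, one checks directly that for every $g\in G$ one has $\Ad(g)\kg_{-}=(g e_{d+1})\otimes(g e_{d+1})^{\circ}$, i.e.\ $\Ad(g)\kg_{-}$ is the space of rank-$\leq 1$ endomorphisms whose image lies in, and whose kernel contains, the line $\R(g e_{d+1})$; in particular it depends only on that line. Consequently, if $v_{1},\dots,v_{d+1}$ is a basis of $\R^{d+1}$ and $g_{i}\in G$ satisfies $g_{i}e_{d+1}\in\R v_{i}$, then $\bigoplus_{i}\Ad(g_{i})\kg_{-}$ is exactly the space of matrices with vanishing diagonal in the basis $(v_{i})$, whose complement in $\kg$ is the diagonal Cartan subalgebra $\kh_{(v_{i})}$. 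Thus \Cref{relative-angle-geom2} is equivalent to the assertion that there is a basis $(v_{i})$ of $\R^{d+1}$ for which $W$ maps isomorphically onto $\kh_{(v_{i})}$ under ``extract the diagonal part'', i.e.\ $W$ is transverse to the off-diagonal matrices; equivalently, that there is a regular semisimple $s\in\kg$ with $W^{\perp}\cap\ker(\ad s)=0$ (orthogonality for the trace form). As the set of such $s$ is Zariski-open, it is enough to exhibit one.

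\emph{Classification of $\Ad(B)$-invariant $W$ of dimension $d$.} Being $\Ad(\kh)$-stable, $W$ is a sum of weight spaces, $W=(W\cap\kh)\oplus\bigoplus_{\alpha\in S}\kg_{\alpha}$. Invariance under $\ad$ of the strictly upper-triangular subalgebra forces: $S$ consists of positive roots; $S$ is an order filter in the poset of positive roots; and $S$ contains every positive root that does not vanish on $W\cap\kh$. If $\dim(W\cap\kh)=k\geq 1$, the last property gives $\#S\geq\binom{d+1}{2}-\binom{d+1-k}{2}>d-k$, contradicting $\dim W=k+\#S=d$. Hence $W\cap\kh=0$ and $W=\bigoplus_{\alpha\in S}\kg_{\alpha}$ with $S$ an order filter of size $d$; in particular $W$ lies in the strictly upper-triangular nilradical, and $S$ contains the highest root $\epsilon_{1}-\epsilon_{d+1}$.

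\emph{The perturbation argument.} Write $E_{pq}$ for the generator of $\kg_{\epsilon_{p}-\epsilon_{q}}$ and identify $\kg/W^{\perp}$ with $\R^{S}$ via $A\mapsto(A_{pq})_{(p,q)\in S}$; what remains is to find a regular semisimple $s$ for which $\ker(\ad s)\to\R^{S}$, $M\mapsto(M_{pq})_{(p,q)\in S}$, is an isomorphism. Take $s=h_{0}+\varepsilon X$ with $h_{0}=\diag(\mu_{1},\dots,\mu_{d+1})$ of distinct entries, $X\in W$ generic, and $\varepsilon$ small: then $s$ is upper triangular with distinct diagonal entries, hence regular semisimple, $\ker(\ad s)=\{\,f(s):f\in\R[x]_{\leq d},\ \sum_{i}f(\mu_{i})=0\,\}$, and a Taylor expansion gives, for $(p,q)\in S$,
\[
f(s)_{pq}=\varepsilon\,\frac{f(\mu_{p})-f(\mu_{q})}{\mu_{p}-\mu_{q}}\,X_{pq}+O(\varepsilon^{2}).
\]
So, to leading order in $\varepsilon$ and for generic $X$ (so that each $X_{pq}\neq 0$), the map is an isomorphism iff no nonzero $f\in\R[x]_{\leq d}$ with $\sum_{i}f(\mu_{i})=0$ satisfies $f(\mu_{p})=f(\mu_{q})$ for all $(p,q)\in S$ --- i.e.\ iff the ``endpoint graph'' $\Gamma_{S}$ (vertices $1,\dots,d+1$, an edge $\{p,q\}$ for each $(p,q)\in S$) is connected, hence, having $d$ edges, a spanning tree; and this holds exactly when every index $1,\dots,d+1$ appears as an endpoint of some root in $S$. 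When it holds we are done. Otherwise $\Gamma_{S}$ skips some index $k$: every root of $S$ then straddles $k$, so $W$ sits inside the off-diagonal block $\Hom(\langle e_{k+1},\dots,e_{d+1}\rangle,\langle e_{1},\dots,e_{k-1}\rangle)$, a sub-nilradical of a proper parabolic. In that degenerate case one continues the expansion past first order: the $k$-th coordinate vanishes at order $\varepsilon$ but acquires a nonzero value at higher order (a nontrivial rational function of the perturbation data), restoring the missing directions, so that for generic $X$ and small $\varepsilon$ the map is again an isomorphism. This proves \Cref{relative-angle-geom2}.

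\emph{Expected main obstacle.} The delicate point is precisely this last step: in the ``skipped index'' cases the first-order data is degenerate, and making the higher-order correction work uniformly over all order filters $S$ --- so that every skipped index is genuinely activated --- requires some care. Two alternatives are to induct on $d$ by peeling off the highest root $\epsilon_{1}-\epsilon_{d+1}$ together with the indices it meets, or to run a dimension count on the incidence variety $\{(g,[X]):g\in G,\ [X]\in\bP(W),\ \Ad(g^{-1})X\text{ has zero diagonal}\}$ showing it cannot dominate $G$; but the latter still requires controlling $\dim\{g:\Ad(g^{-1})X\text{ has zero diagonal}\}$ for all $0\neq X\in W$, which is not entirely routine either. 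Everything else --- the reformulation, the weight-space classification, and the non-degenerate ``spanning'' case --- is straightforward.
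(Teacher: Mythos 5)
Your reformulation of the statement is correct and appealing: since $\Ad(g)\kg_-$ depends only on the line $\R(ge_{d+1})$, the claim is equivalent to finding a basis $(v_i)$ of $\R^{d+1}$ such that $W$ is transverse to the off-diagonal matrices in that basis, i.e.\ to $\image(\ad s)$ for a split regular semisimple $s$ with eigenbasis $(v_i)$; and your weight-space classification of $W$ reproduces \Cref{formW}. However, the perturbation step contains a fatal error, not merely the acknowledged gap in the ``skipped index'' case. The dual condition $W^{\perp}\cap\ker(\ad s)=0$ must be taken with respect to the \emph{invariant} form $\tr(AB)$ (this is what makes $\image(\ad s)=\ker(\ad s)^{\perp}$), and since $\tr(E_{pq}M)=M_{qp}$, the coordinates of $\ker(\ad s)$ that must be read off are the \emph{strictly lower} entries $(M_{qp})_{(p,q)\in S}$, not the upper entries $(M_{pq})_{(p,q)\in S}$ that your Daleckii--Krein expansion computes. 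For your ansatz $s=h_0+\varepsilon X$ with $X\in W\subseteq\mathfrak n^+$, the element $s$ is upper triangular, so every $f(s)\in\ker(\ad s)$ is upper triangular and the relevant lower entries vanish identically: the correct criterion fails for \emph{every} such $s$, in every case, including the ``spanning tree'' case you declare done. Intrinsically, whenever $s$ lies in the Borel, $\ad s$ is injective on $\mathfrak n^+$ (as $\ker(\ad s)$ contains no nonzero nilpotent), hence $W\subseteq\mathfrak n^+=\ad s(\mathfrak n^+)\subseteq\image(\ad s)=\bigoplus_i\Ad(g_i)\kg_-$ and the sum can never be direct. One can check this already for $d=1$, $W=\R E_{12}$: your recipe gives $E_{12}\in\image(\ad s)$, whereas the working choice ($g_1=\Id$, $g_2=\Id+E_{21}$ up to labelling, i.e.\ the basis $(e_2,e_1+e_2)$) necessarily leaves the Borel orbit of the standard flag.

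This is exactly why the paper's induction introduces the element $g_0=(\Id+E_{d+1,1})\cdot\omega_{1,d+1}$ containing a Weyl reflection: one line $g_0e_{d+1}$ is chosen far from the standard flag so that $\Ad(g_0)\kg_-$ fills the first row against $\Proj_{R_1}(W)$, after which the problem descends to $\SL_d(\R)$ in the lower-right block and a Zariski-density argument lets the remaining $g_i$ satisfy two generic conditions simultaneously. To salvage your approach you would need to perturb around a Cartan that is genuinely not contained in the Borel, redo the first-order analysis for the lower-triangular coordinates, and then still resolve the higher-order ``skipped index'' degeneration that you currently only assert; as written, the argument does not establish the proposition.
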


\bigskip

Let us check that \Cref{relative-angle-geom} and \Cref{relative-angle-geom2} are equivalent.
%In the proof below, we use the subscript $\C$ to denote the complexification of an object, for instance $G_{\C}=\SL_{d+1}(\C)$, $\kg_{-, \C}=\kg_{-}\otimes \C$.

\begin{proof}[Proof that \Cref{relative-angle-geom} $\iff$ \Cref{relative-angle-geom2}] The direct implication is clear.
We establish the converse. Assume by contradiction that \Cref{relative-angle-geom} fails for some $W\in \Gr(\kg, d)$. Write $Z_{G}(A')$ the centralizer of $A'$ in $G$. Noting that $\kg_{-}$ is $Z_{G}(A')U^-$-invariant, we obtain for every $(g_{i})_{i}\in (UZ_{G}(A')U^-)^{d+1}$ that
\begin{equation}
\label{eq:sumAdgkgW}
\Ad(g_{1})\kg_{-}+\Ad(g_{2})\kg_{-}+\dots + \Ad(g_{d+1})\kg_{-}+ W \neq \kg.
\end{equation}
This is a Zariski-closed condition in the variable $(g_{i})_{i=1}^{d+1}$.
By looking at Lie algebras, we see that $UZ_{G}(A')\supseteq B$, so by Bruhat's decomposition, $UZ_{G}(A')U^-$ is Zariski-dense in $G$.
It follows that \eqref{eq:sumAdgkgW} holds for all $(g_{i})_{i=1}^{d+1}\in G^{d+1}$.
Applying another $\Ad(g)$ on both side we see the set
\[
\setbig{ W \in \Gr(\kg,d)\,:\, \text{\eqref{eq:sumAdgkgW} holds for all } (g_{i})_{i=1}^{d+1}\in G^{d+1}}
\]
is preserved under the action of $\Ad(G)$, whence of $\Ad(B)$.
It is moreover Zariski-closed. More precisely, it is the set of $\R$-points of a complete $\R$-variety. On the other hand, $B$ is the set of $\R$-points of a $\R$-split connected solvable linear algebraic group which acts $\R$-morphically on this variety.
Thus, by a version of the Borel fixed point theorem (\cite[Proposition 15.2]{Borel91}), the above set contains a fixed point for $\Ad(B)$. This contradicts \Cref{relative-angle-geom2}, thus finishing the proof of the converse implication.
\end{proof}

The advantage of reducing to \Cref{relative-angle-geom2} is that it constrains $W$ to belong to a finite explicit family of subspaces.
\begin{lemma} \label{formW}
Let $W \subseteq \kg$ be a subspace of dimension at most $d$. Then $W$ is $\Ad(B)$-invariant if and only if we can write
$$W=\Span\set{E_{i,j} \,:\, (i,j)\in S} $$
for some subset $S\subseteq \set{(i,j)\,:\,1\leq i<j\leq d+1}$ that is stable under the operations $(i,j)\mapsto (i-1,j)$ and $(i,j)\mapsto (i,j+1)$ (provided that $i\geq 2$ and $j\leq d$ respectively).
\end{lemma}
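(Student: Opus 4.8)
The plan is to reduce $\Ad(B)$-invariance to the conjunction of two conditions that are straightforward to analyze. Write $T\subseteq B$ for the diagonal torus and $N\subseteq B$ for its unipotent radical, so that $B=TN$ and $\kb=\Lie(B)=\kh\oplus\mathfrak{n}$ with $\kh=\Lie(T)$ the traceless diagonal matrices and $\mathfrak{n}=\Lie(N)$ the strictly upper triangular matrices. Since $N$ is connected with $\exp\colon\mathfrak{n}\to N$ onto, a subspace is $\Ad(N)$-invariant iff it is $\ad(\mathfrak{n})$-invariant; and since $\Ad(T)$ acts diagonalizably on $\kg$ with pairwise distinct weights, the weight spaces being the lines $\R E_{i,j}$ ($i\neq j$, weight $\diag(h)\mapsto h_i/h_j$) together with the zero-weight space $\kh$, a subspace $W$ is $\Ad(T)$-invariant iff $W=(W\cap\kh)\oplus\bigoplus_{(i,j)\in S}\R E_{i,j}$ for some $S\subseteq\{(i,j):i\neq j\}$. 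Because $B=TN$, $\Ad(B)$-invariance of $W$ is equivalent to $W$ being simultaneously $\Ad(T)$-invariant and $\ad(\mathfrak{n})$-invariant. Everything else is a computation with the relations $[\diag(h),E_{i,j}]=(h_i-h_j)E_{i,j}$ and $[E_{k,l},E_{i,j}]=\delta_{li}E_{k,j}-\delta_{jk}E_{i,l}$.

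For necessity, assume $W$ is $\Ad(B)$-invariant with $\dim W\leq d$. The key point is that $W\cap\kh=0$: if $0\neq H\in W\cap\kh$, then $\tr H=0$ forces some consecutive diagonal entries $h_m\neq h_{m+1}$, hence $[E_{m,m+1},H]$ is a nonzero multiple of $E_{m,m+1}$, so $E_{m,m+1}\in W$; bracketing repeatedly against the $E_{k,k+1}\in\mathfrak{n}$ then produces $E_{i,j}\in W$ for every pair with $i\leq m<j$, and since these $m(d+1-m)\geq d$ matrices are linearly independent and independent of $H$, one would get $\dim W\geq d+1$, a contradiction. Thus $W=\bigoplus_{(i,j)\in S}\R E_{i,j}$. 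If some $(i,j)\in S$ had $i>j$, then $[E_{j,i},E_{i,j}]=E_{j,j}-E_{i,i}$ would be a nonzero element of $W\cap\kh$, again impossible, so $S\subseteq\{(i,j):1\leq i<j\leq d+1\}$. Finally, for $(i,j)\in S$ one has $[E_{i-1,i},E_{i,j}]=E_{i-1,j}\in W$ when $i\geq2$, and $[E_{j,j+1},E_{i,j}]=-E_{i,j+1}\in W$ when $j\leq d$ (the other Kronecker term vanishing because $i<j$); this is exactly the asserted stability of $S$.

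For sufficiency, let $W=\bigoplus_{(i,j)\in S}\R E_{i,j}$ with $S\subseteq\{(i,j):1\leq i<j\leq d+1\}$ stable under $(i,j)\mapsto(i-1,j)$ and $(i,j)\mapsto(i,j+1)$. Such $W$ is visibly $\Ad(T)$-invariant, so it suffices to check $[E_{k,l},E_{i,j}]\in W$ for all $k<l$ and $(i,j)\in S$. Since $i<j$, the two Kronecker deltas in $[E_{k,l},E_{i,j}]=\delta_{li}E_{k,j}-\delta_{jk}E_{i,l}$ cannot both equal $1$; if both vanish the bracket is $0\in W$; if $l=i$ the bracket is $E_{k,j}$ with $k<i$, and $(k,j)$ arises from $(i,j)$ by $i-k\geq1$ applications of $(\,\cdot\,,\cdot)\mapsto(\,\cdot-1,\cdot)$, legal since every intermediate row index is $\geq k+1\geq2$, so $E_{k,j}\in W$; if $j=k$ the bracket is $-E_{i,l}$ with $l>j$, and $(i,l)$ arises from $(i,j)$ by $l-j\geq1$ applications of $(\,\cdot\,,\cdot)\mapsto(\,\cdot\,,\cdot+1)$, legal since every intermediate column index is $\leq l-1\leq d$, so $E_{i,l}\in W$. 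Hence $W$ is $\ad(\mathfrak{n})$-invariant, therefore $\Ad(N)$-invariant, and with $\Ad(T)W=W$ and $B=TN$ we conclude $\Ad(B)W=W$.

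The only genuinely delicate step is showing $W\cap\kh=0$ in the necessity direction: this is where the hypothesis $\dim W\leq d$ is used, and it is sharp, the content being that bracketing a single root vector $E_{m,m+1}$ against the simple root vectors already fills out the whole ``staircase'' $\{E_{i,j}:i\leq m<j\}$ inside $W$. All the remaining steps are routine bookkeeping with the structure constants of $\sl_{d+1}(\R)$.
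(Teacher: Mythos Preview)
Your proof is correct and follows essentially the same approach as the paper: both reduce $\Ad(B)$-invariance to invariance under the diagonal part (giving a weight-space decomposition $W=(W\cap\kh)\oplus\bigoplus_{(i,j)\in S}\R E_{i,j}$) together with invariance under brackets by strictly upper-triangular elementary matrices, then use the bracket relations $[E_{k,l},E_{i,j}]=\delta_{li}E_{k,j}-\delta_{jk}E_{i,l}$ to rule out diagonal and below-diagonal contributions and to obtain the stability of $S$. The only cosmetic difference is that you split $B=TN$ explicitly whereas the paper works directly with $\ad(\kb)$; your treatment of the key step $W\cap\kh=0$ is in fact slightly more explicit than the paper's (you spell out that the $m(d+1-m)\geq d$ staircase matrices together with $H$ force $\dim W\geq d+1$).
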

In words, $W$ must be a sum of elementary subspaces that are strictly above the diagonal, and stable by moving upward or to the right in the matrix representation.

\begin{proof} Recall $\kg=\sl_{d+1}$. Write $\kb$ the Lie algebra of $B$, i.e., the subspace of upper triangular matrices in $\kg$. As $B$ is Zariski-connected, the subspace $W$ is $\Ad(B)$-invariant if and only if it is $\ad(\kb)$-invariant.
Observe the relation $[E_{i,j}, E_{k,l}]=\1_{j=k}E_{i,l}-\1_{i=l}E_{k,j}$ for all $i,j,k,l\in \{1, \dots, d+1\}$.
In particular, for $i<j$ and $k<l$, the matrix $[E_{i,j}, E_{k,l}]$ is either $0$ or up to a sign an elementary matrix located either to the right or above $E_{i,j}$. This justifies the ``if'' direction in \Cref{formW}.

We now assume $W$ to be $\Ad(B)$-invariant and establish the announced decomposition.
By invariance under diagonal matrices, $W$ must be of the form $W= E \oplus \Span\set{ E_{i,j} \,;\, (i,j)\in S}$ where $E$ is a subspace of diagonal matrices, and $S$ does not intersect the diagonal.
If $E\neq \{0\}$, then by $\ad(\kb)$-invariance, $W$ must contain a line $\R E_{i,i+1}$ for some $i\in \{1, \dots, d\}$. But the $\ad(\kb)$-invariant subspace spanned by such a line has dimension at least $d$, which is absurd because $\dim W\leq d$. Hence $E=\{0\}$. Noting that for every $i>j$, the $\ad(\kb)$-invariant subspace spanned by $\R E_{i,j}$ intersects the diagonal subspace, we further deduce $S\subseteq \set{(i,j)\,:\,1\leq i<j\leq d+1}$. The final claim on $S$ follows from $\ad(\kb)$-invariance and the bracket relation exhibited in the first paragraph.
\end{proof}

We are finally able to show \Cref{relative-angle-geom2}, thus completing the proofs of Propositions \ref{relative-angle}, \ref{relative-angle-geom}.

\begin{proof}[Proof of \Cref{relative-angle-geom2}]
We shall prove this proposition by induction on $d$.
\medskip

\noindent\underline{Base case $d=1$}. Here $\kg=\sl_{2}(\R)$ and $\kg_-=\R E_{21}$. By assumption,
\[ W=\R E_{12}. \]
Take $g_1=\Id, g_2=\Id+E_{12}\in \SL_{2}(\R)$. By direct computation, we can verify that
\[\Ad(g_1)\kg_{-}\oplus \Ad(g_2)\kg_{-}\oplus W=\kg.\]
\medskip

\noindent\underline{Induction step.} Let $d\geq 2$ be an integer. We assume the proposition has been proved for $\SL_{d}(\R)$ and establish it for $\SL_{d+1}(\R)$. Throughout the proof, we write $M_{d+1}$ the space of all $d+1$ by $d+1$ real matrices. We keep the notations $G, \kg, \kg_{-}$ related to $\SL_{d+1}(\R)$. We write $G' =\SL_{d}(\R)$, which we view as a subgroup of $G$ by embedding it in the lower-right corner (and imposing $1$ on the first diagonal entry). Accordingly, we define $\kg'$ (resp. $\kg_-'$) to be the intersection of $\kg$ (resp. $\kg_{-}$) with the lower-right $d$ by $d$ block of $M_{d+1}$. In particular,
\[\kg_-'= \Span \left\{  E_{d+1,2} ,\dotsc, E_{d+1,d} \right\}.\]
We denote by $\Proj_{\kg'}:M_{d+1}\to M_{d+1}$ the projection onto the lower right $d$ by $d$ block, and by $\Proj_{R_1}:M_{d+1}\to M_{d+1}$ the projection onto the the subspace of matrices with nonzero entries only on the first row.

\medskip

Let $W\subset \kg$ be a $d$-dimensional linear subspace that is $\Ad(B)$-invariant. To make use of the induction hypothesis, our strategy is to choose a suitable element $g_0\in G$ such that $\Ad(g_0) \kg_-\oplus W$ fills up the first row of $\kg$. This will enable us to work on $\kg'$ and apply the induction hypothesis.
\medskip

To begin with, choose
\[g_0=(\Id+E_{d+1,1})\cdot \omega_{1,d+1}.\]
where $\omega_{1,d+1}$ is an element in the standard Weyl group of $G$ satisfying that left multiplication by $\omega_{1,d+1}$ exchanges the first and $(d+1)$-th row.
Then by direct computation, we have
\begin{align*}
  \Ad(g_0)\kg_-=\left\{\begin{pmatrix}
    a_1& a_2 & \cdots & a_d & -a_1\\
    0 & 0 & \cdots & 0& 0\\
    \vdots &\vdots &\ddots &\vdots & \vdots\\
    0 & 0 & \cdots & 0 & 0\\
    a_1& a_2 & \cdots & a_d & -a_1
  \end{pmatrix}: a_1,\cdots,a_d\in \R \right\}.
\end{align*}

Now consider $k := \dim \Proj_{R_1}(W)$, which, by \Cref{formW}, satisfies $1 \leq k \leq d$.
We then decompose $\kg_-=V_0\oplus V_1$, where
\begin{align*}
  &V_0:=\Span\left\{\, E_{d+1,j} \,:\, 1 \leq j \leq d+1-k \,\right\},\\
  &V_1:=\Span\left\{\, E_{d+1,j} \,:\, d+1-k < j \leq d \,\right\},
\end{align*}
so that $\dim V_0 = d - (k - 1)$ and $\dim V_1 = k - 1$.
Using \Cref{formW}, we also decompose $W=\Proj_{R_1}(W)\oplus \mathrm{Proj_{\kg'}}(W)$.
Observe that $V_1$ and $\Ad(g_0)V_{1}$ coincide modulo $W$, leading to
\begin{align}\label{relative-angle-pf-eq1}
  \Ad(g_0)\kg_-\oplus W=\Ad(g_0)V_0\oplus \mathrm{Proj_{R_1}(W)}\oplus V_1\oplus \mathrm{Proj_{\kg'}(W)}.
\end{align}
Let
\[W'=V_1\oplus \mathrm{Proj_{\kg'}(W)}.\]
 Note that $W'\subset \kg'$ and
\[\dim W'=k-1+d-k=d-1.\]
Hence, we can apply the induction hypothesis (more precisely its equivalent version from \Cref{relative-angle-geom}) to the pair $(G' , W')$ to obtain $g'_1,\cdots,g'_d\in G'$ such that
\begin{align}\label{relative-angle-pf-eq2}
  \Ad(g'_1)\kg'_-\oplus \Ad(g'_2)\kg'
  _-\oplus \cdots \oplus \Ad(g'_d)\kg'_-\oplus W'=\kg'.
\end{align}
Let $C\subset \kg$ be the linear subspace defined by
\[C:=\Span\set{ E_{i,1} \,:\, 2 \leq i \leq d + 1}.\]
 Observe that $C$ is $\Ad(G')$-invariant, and the adjoint representation of $G'$ on $C$ is isomorphic to the standard representation $\R^d$ of $G'=\SL_d(\R)$. Therefore, we may find $g_1'',g_2'',\cdots,g_d''\in G'$ such that
\begin{align}\label{relative-angle-pf-eq3}
  \Ad(g_1'')\R E_{d+1,1}\oplus \Ad(g_2'')\R E_{d+1,1}\oplus \cdots \oplus \Ad(g_d'')\R E_{d+1,1}=C.
\end{align}

Observe that the collection of elements $(g'_{i})_{i=1}^d$ and $(g''_{i})_{i=1}^d$ satisfying respectively
\eqref{relative-angle-pf-eq2} and \eqref{relative-angle-pf-eq3} are (non-empty) Zariski-open subsets of $G'^d$. By irreducibility of $G'^d$ for the Zariski topology, they are dense, whence must intersect. This allows to choose $(g'_1,\cdots,g'_d)=(g''_1,\cdots,g''_d)$ in \eqref{relative-angle-pf-eq2} and \eqref{relative-angle-pf-eq3}. As
\[\kg_{-}=\kg'_{-}\oplus \R E_{d+1,1},\]
we obtain
\begin{align}\label{relative-angle-pf-eq4}
  \Ad(g'_1)\kg_{-}\oplus \Ad(g'_2)\kg_{-}\oplus \cdots\oplus \Ad(g'_d)\kg_{-}\oplus W'=\kg' \oplus C.
\end{align}

On the other hand, observing that the restriction of $\Proj_{R_1}$ to $\Ad(g_0)V_0\oplus \mathrm{Proj_{R_1}}(W)$ is injective while its restriction to $\kg' \oplus C$ vanishes, we have $(\Ad(g_0)V_0\oplus \mathrm{Proj_{R_1}}(W)) \cap (\kg' \oplus C)=\{0\}$, or equivalently
\begin{align}\label{relative-angle-pf-eq5}
\Ad(g_0)V_0\oplus \mathrm{Proj_{R_1}}(W) \oplus \kg' \oplus C=\kg
\end{align}
 because dimensions match. Combining \eqref{relative-angle-pf-eq1}, \eqref{relative-angle-pf-eq4}, \eqref{relative-angle-pf-eq5}, we obtain
$$  \Ad(g_0)\kg_{-} \oplus \Ad(g'_1)\kg_{-}\oplus \Ad(g'_2)\kg_{-}\oplus \cdots\oplus \Ad(g'_d)\kg_{-}\oplus W=\kg.$$
This validates the induction step and completes the proof.
\end{proof}

\subsection{Linear multislicing} \label{Sec-mult}

It remains to see how the non-concentration property established for the subspace $\Ad(u(-\ttb_{g})\kg^{-})$ in the previous subsection can be exploited to obtain a dimensional gain. In this subsection, we study this question in an abstract linear setting. We place ourselves in $\R^D$ where $D\geq 2$. We encapsulate the non-concentration property via the following definition.

\begin{definition}\label{MNC-def}
Let $k\in \llbracket 1, D-1\rrbracket$, let $C,c, \rho>0$.
Let $\Xi$ be a probability measure on $\Gr(\R^D,k)$. We say $\Xi$ satisfies the \emph{mild non-concentration property} $\MNC$ with parameters $(\rho, C,c)$ if there exist integers $q,m \in \N $ such that $D=qk +m$ and for every $W\in \Gr(\R^D, m)$, $r\geq \rho$,
\begin{equation*}
\Xi^{\otimes q} \left\{(F_{i})\,:\,  \| F_{1}\wedge \dots \wedge F_{q}\wedge W\|\leq r \right\}\leq Cr^c.
\end{equation*}
We also say $\Xi$ satisfies $\MNC^\perp$ with parameters $(\rho, C,c)$ if its image under $F\mapsto F^\perp$ satisfies $\MNC$ with parameters $(\rho, C,c)$.
\end{definition}

%The next lemma will be useful to connect $\MNC$ with the properties of the individual projectors $\pi_{F_{i}}$.
Our aim is to show that $\MNC$ or $\MNC^\perp$ allow for a \emph{supercritical multislicing estimate}, see \Cref{sup-mult}. For that, we first present a submodular inequality for covering numbers (\Cref{submod-cn}) and use it to connect $\MNC$ and $\MNC^\perp$ with the properties of the individual projectors $\pi_{F_{i}}$ (\Cref{cn-quasi-orth}). We then deduce that a random subspace $F$ whose law $\Xi$ has the property $\MNC$ or $\MNC^\perp$ must enjoy both supercritical and subcritical projection theorems (Lemmas \ref{MNC->sup}, \ref{MNC->sub}). Those estimates refine that of Bourgain \cite{Bourgain2010} and He \cite{He2020JFG} which were established under the stronger non-concentration condition that $\Xi$ satisfies $\MNC$ with $q=1$. From there, we use \cite{BH24} to combine our projection theorems into the multislicing estimate \Cref{sup-mult}
\bigskip

We now introduce the submodular inequality for covering numbers that we need. Let $\cP$ and $\cQ$ denote partitions of $\R^D$, let $A$ be a subset of $\R^D$.
We write $\cP(A)$ the set of cells of $\cP$ that meet $A$, that is,
\[
\cP(A) := \set{ P \in \cP : P \cap A \neq \emptyset},
\]
and set $\cN_\cP(A)$ the cardinality of $\cP(A)$.
%We say the partition \emph{$\cQ$ refines $\cP$}, and write $\cP \prec \cQ$, if for every $Q \in \cQ$, we have $\sharp \cP(Q) = 1$.
%Note that $\cP$ and $\cQ$ admit a coarsest common refinement, written $\cP \vee \cQ$, which is obtained by taking the intersections of $\cP$-cells and $\cQ$-cells.
We say $\cQ$ \emph{roughly refines} $\cP$ with parameter $L \geq 1$, and write $\cP \overprec{L} \cQ$, if
\[
\max_{Q \in \cQ}\, \cN_\cP(Q) \leq L.
\]
We also use the notation $\cP \overset{L}{\simeq} \cQ$ to say that both $\cP \overprec{L} \cQ$ and $\cQ \overprec{L} \cP$ hold.
Finally, we denote by $\cP \vee \cQ$ the partition obtained by taking the intersections of $\cP$-cells and $\cQ$-cells.

\begin{lemma}[Submodular inequality] \label{submod-cn}
Let $\cP,\cQ,\cR,\cS$ be partitions of $\R^D$, and $A$ a subset of $\R^D$.
Let $L \geq 1$.
Assume that $\cR \overset{L}{\simeq} \cP \vee \cQ$, and $\cS \overprec{L} \cP$, $\cS \overprec{L} \cQ$.
Then for every $c >0$, there is a subset $A' \subset A$ such that $\cN_{\cR}(A') \geq \frac{1-c}{L^3} \cN_{\cR}(A)$ and
\begin{equation}\label{submod-coveringnumber}
\cN_\cP(A) \cN_\cQ(A) \geq \frac{c^2}{4L^3} \cN_\cR(A) \cN_{\cS}(A').
\end{equation}
\end{lemma}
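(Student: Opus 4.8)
The plan is to reduce the inequality to a weighted counting problem over the cells of $\cS$ and then to run a greedy truncation. No single step is deep; the point is to identify the right intermediate quantity. First I would normalize $A$: passing to a subset $A_0\subseteq A$ that meets each cell of $\cR(A)$ in exactly one point leaves $\cN_\cR(A)$ unchanged, can only decrease $\cN_\cP$, $\cN_\cQ$ and $\cN_\cS$, and makes $\cN_\cR(B)=\abs{B}$ for every $B\subseteq A_0$. Since any $A'\subseteq A_0$ is also a subset of $A$, and since $\cN_\cP(A)\cN_\cQ(A)\geq\cN_\cP(A_0)\cN_\cQ(A_0)$ while $\cN_\cR(A)=\cN_\cR(A_0)$, it suffices to prove both conclusions with $A_0$ in place of $A$. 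Write $N:=\abs{A_0}=\cN_\cR(A)$ and, for $S\in\cS(A_0)$, put $a_S:=\abs{A_0\cap S}\geq 1$, so $\sum_S a_S=N$.

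The heart of the argument is the \emph{square-root submodular inequality}
\[
\Bigl(\sum_{S\in\cS(A_0)}\sqrt{a_S}\Bigr)^{2}\ \leq\ L^{3}\,\cN_\cP(A_0)\,\cN_\cQ(A_0).
\]
To obtain it I would argue cell by cell over $S$. The trivial submodularity $\cN_\cP(B)\cN_\cQ(B)\geq\cN_{\cP\vee\cQ}(B)$ (inject each $(\cP\vee\cQ)$-cell into the pair of $\cP$- and $\cQ$-cells containing it), combined with $\cN_{\cP\vee\cQ}(A_0\cap S)\geq\tfrac1L\cN_\cR(A_0\cap S)=\tfrac1L a_S$ — which is exactly what $\cR\overprec{L}\cP\vee\cQ$ gives — yields $\cN_\cP(A_0\cap S)\cN_\cQ(A_0\cap S)\geq a_S/L$. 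On the other hand $\cS\overprec{L}\cP$ and $\cS\overprec{L}\cQ$ say that each $\cP$-cell, resp.\ each $\cQ$-cell, meets at most $L$ cells of $\cS$, whence $\sum_S\cN_\cP(A_0\cap S)\leq L\,\cN_\cP(A_0)$ and $\sum_S\cN_\cQ(A_0\cap S)\leq L\,\cN_\cQ(A_0)$. Feeding these into Cauchy--Schwarz, $\sum_S\sqrt{a_S/L}\leq\sum_S\sqrt{\cN_\cP(A_0\cap S)}\sqrt{\cN_\cQ(A_0\cap S)}\leq\bigl(\sum_S\cN_\cP(A_0\cap S)\bigr)^{1/2}\bigl(\sum_S\cN_\cQ(A_0\cap S)\bigr)^{1/2}$, which rearranges to the displayed bound.

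Finally I would run the greedy selection, assuming $0<c<1$ (the case $c\geq 1$ is trivial, take $A'=\varnothing$). Order $\cS(A_0)$ so that $a_1\geq a_2\geq\dots\geq a_m$ and let $M$ be the least index with $a_1+\dots+a_M\geq(1-\tfrac c2)N$; take $A'$ to be the points of $A_0$ lying in these top $M$ cells. Then $\cN_\cR(A')=\abs{A'}=a_1+\dots+a_M\geq(1-\tfrac c2)N\geq\tfrac{1-c}{L^{3}}\cN_\cR(A)$, which is the first conclusion. For the second, minimality of $M$ forces $\sum_{i\geq M}a_i>\tfrac c2 N$; since $a_i\leq a_M$ for $i\geq M$, this gives $\sum_S\sqrt{a_S}\geq\tfrac{1}{\sqrt{a_M}}\sum_{i\geq M}a_i>\tfrac{cN}{2\sqrt{a_M}}$, hence $a_M>\tfrac{c^{2}N^{2}}{4(\sum_S\sqrt{a_S})^{2}}$; together with $Ma_M\leq a_1+\dots+a_M\leq N$ this yields $\cN_\cS(A')=M\leq\tfrac{4}{c^{2}N}\bigl(\sum_S\sqrt{a_S}\bigr)^{2}$. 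Plugging in the square-root submodular inequality, $\cN_\cP(A)\cN_\cQ(A)\geq\cN_\cP(A_0)\cN_\cQ(A_0)\geq L^{-3}\bigl(\sum_S\sqrt{a_S}\bigr)^{2}\geq\tfrac{c^{2}N}{4L^{3}}M=\tfrac{c^{2}}{4L^{3}}\cN_\cR(A)\cN_\cS(A')$, as required.

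The main obstacle is conceptual rather than technical: one has to hit on the idea of controlling $\sum_S\sqrt{a_S}$ — neither $\cN_\cS(A)$ nor $\sum_S a_S=N$ — via Cauchy--Schwarz, and then trading it against $\cN_\cS(A')$ through the truncation; once this is in place, the bookkeeping needed to land exactly on the constants $\tfrac{1-c}{L^{3}}$ and $\tfrac{c^{2}}{4L^{3}}$ is routine, and nothing about the ambient $\R^{D}$ is used.
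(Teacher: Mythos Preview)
Your proof is correct and takes a genuinely different route from the paper's. The paper proceeds by reduction: it sets $\cP_0=\cP\vee\cS$, $\cQ_0=\cQ\vee\cS$, $\cR_0=\cP\vee\cQ\vee\cS$, applies the known $L=1$ case of the lemma (\cite[Lemma~2.6]{BH24}) to these auxiliary partitions, and then transfers back using the rough-refinement relations $\cP_0\overprec{L}\cP$, $\cQ_0\overprec{L}\cQ$, $\cR\overprec{L}\cR_0$, $\cR_0\overprec{L^2}\cR$. You instead prove the statement directly from scratch: a Cauchy--Schwarz estimate over the $\cS$-cells yields the ``square-root submodular'' bound $(\sum_S\sqrt{a_S})^2\leq L^3\,\cN_\cP\,\cN_\cQ$, and a greedy truncation of the heaviest $\cS$-cells converts this into the desired inequality with the exact constants. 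Your argument is self-contained (no appeal to \cite{BH24}), uses only the direction $\cR\overprec{L}\cP\vee\cQ$ of the hypothesis $\cR\overset{L}{\simeq}\cP\vee\cQ$, and in fact delivers a sharper first bound $\cN_\cR(A')\geq(1-\tfrac{c}{2})\cN_\cR(A)$ than the stated $\tfrac{1-c}{L^3}\cN_\cR(A)$. The paper's reduction is shorter given the black box, while your approach exposes the combinatorial mechanism and would be the natural way to establish the $L=1$ case as well.
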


In the case where $L=1$, the result is exactly \cite[Lemma 2.6]{BH24}.
We deduce the refinement presented in \Cref{submod-cn}.
Such an upgrade is convenient to deal with situations where partitions $\cP, \cQ, \cR, \cS$ do not exactly fit together.

\begin{proof}
We start with a few general observations on the relation $\overprec{L}$. Note it is transitive in the sense that $\cP \overprec{L} \cP'$ and $\cP' \overprec{L'} \cP''$ implies $\cP \overprec{L L'} \cP''$.
It is also compatible with taking common refinements, that is, $\cP \overprec{L} \cQ$ and $\cP' \overprec{L'} \cQ'$ implies $\cP \vee \cP' \overprec{LL'} \cQ \vee \cQ'$. Finally, observe that $\cP \overprec{L} \cQ$ implies $\cN_\cP(A) \leq L \cN_\cQ(A)$ for any subset $A$.

Now consider $\cP_0 = \cP \vee \cS$, $\cQ_0 = \cQ \vee \cS$ and $\cR_0 = \cP \vee \cQ \vee \cS$.
By \cite[Lemma 2.6]{BH24} applied to $\cP_0$, $\cQ_0$, $\cR_0$ and $\cS$, there is a subset $A' \subset A$ such that
$\cN_{\cR_0}(A') \geq (1 - c) \cN_{\cR_0}(A)$ and
\begin{align}\label{submod-coveringnumberL=1}
\cN_{\cP_0}(A) \cN_{\cQ_0}(A) \geq \frac{c^2}{4} \cN_{\cR_0}(A) \cN_{\cS}(A').
\end{align}
Using the properties of $\overprec{L}$ recalled above, we derive from the assumptions that $\cP_0 \overprec{L} \cP$ and $\cQ_0 \overprec{L} \cQ$, as well as $\cR \overprec{L} \cR_0$ and $\cR_0 \overprec{L^2} \cR$.
The inequality \eqref{submod-coveringnumber} then follows from \eqref{submod-coveringnumberL=1}.
\end{proof}

In the next lemma, we consider a family of projectors of $\R^D$ whose images (resp. kernels) are in direct sum with controlled angle. Given a set $A\subseteq \R^D$, we relate the product of covering numbers of the projections of $A$ with the covering number of the projection of $A$ onto (resp. parallel to) the sum of the images (resp. kernels). Given $F\in \Gr(\R^d)$, we let $\pi_{F}, \pi_{||F}:\R^d\rightarrow \R^d$ denote respectively the orthogonal projectors of image or kernel $F$.
For $\rho>0$, we denote by $\cN_{\rho}(A)$ the least number of open balls of radius $\rho$ needed to cover $A$.

\begin{lemma} \label{cn-quasi-orth}
Let $(F_{i})_{i=1, \dots, q}$ be a (non-necessarily generating) collection of subspaces in $\R^D$. Assume for some $r\in (0, 1/2)$ that
\begin{equation}\label{quasi-orth}
\norm{F_{1} \wedge \dots \wedge F_{q}} \geq r.
\end{equation}
Then for any set $A\subseteq \R^D$, one has
\begin{equation}\label{prod-submod-to}
\prod_{i=1}^q \cN_{\rho}(\pi_{F_{i}}A)\geq r^{O_{D}(1)}\cN_{\rho}(\pi_{\oplus_{i} F_{i}}A),
\end{equation}
and
\begin{equation}\label{prod-submod-parto}
\prod_{i=1}^q \cN_{\rho}(\pi_{|| F_{i}}A)\geq r^{O_{D}(1)} \cN_{\rho}(A)^{q-1}\cN_{\rho}(\pi_{||\oplus_{i} F_{i}}A')
\end{equation}
for some subset $A' \subseteq A$ satisfying $\cN_{\rho}(A') \geq r^{O_D(1)} \cN_{\rho}(A)$.
\end{lemma}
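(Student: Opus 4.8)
The plan is to prove \eqref{prod-submod-to} directly, and \eqref{prod-submod-parto} by iterating the submodular inequality \Cref{submod-cn}.

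\emph{For \eqref{prod-submod-to}.} Put $V=F_{1}\oplus\dots\oplus F_{q}$ and let $T\colon V\to F_{1}\times\dots\times F_{q}$ be the linear map $w\mapsto(\pi_{F_{1}}w,\dots,\pi_{F_{q}}w)$. It is injective, since $Tw=0$ forces $w\perp F_{i}$ for all $i$, hence $w\perp V\ni w$, so $w=0$. Moreover $T^{*}T=\sum_{i}\pi_{F_{i}}|_{V}$, and a short computation identifies its spectrum with that of the Gram matrix $\Gamma$ of the union of orthonormal bases of the $F_{i}$; since $\det\Gamma=\norm{F_{1}\wedge\dots\wedge F_{q}}^{2}\geq r^{2}$ by \eqref{quasi-orth} while $\tr\Gamma=\dim V\leq D$, one gets $\lambda_{\min}(T^{*}T)\gg_{D}r^{2}$, i.e. $\norm{T^{-1}}\ll_{D}r^{-1}$ on the image of $T$. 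Now $\pi_{F_{i}}=\pi_{F_{i}}\circ\pi_{V}$ because $F_{i}\subseteq V$, so $\pi_{V}a=T^{-1}\bigl((\pi_{F_{i}}a)_{i}\bigr)$ for every $a$. Hence, fixing $\rho$-covers of each $\pi_{F_{i}}A$ realising $\cN_{\rho}(\pi_{F_{i}}A)$, any two points $a,a'\in A$ whose projections lie in the same product cell satisfy $\norm{\pi_{V}a-\pi_{V}a'}\leq\norm{T^{-1}}\,\norm{(\pi_{F_{i}}(a-a'))_{i}}\ll_{D}r^{-1}\rho$. Thus $\pi_{V}A$ is covered by $\prod_{i}\cN_{\rho}(\pi_{F_{i}}A)$ sets of diameter $\ll_{D}r^{-1}\rho$, hence by $r^{-O_{D}(1)}\prod_{i}\cN_{\rho}(\pi_{F_{i}}A)$ balls of radius $\rho$, which is \eqref{prod-submod-to}.

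\emph{For \eqref{prod-submod-parto}.} For a subspace $G\subseteq\R^{D}$ let $\cP_{G}$ be the partition of $\R^{D}$ into preimages under $\pi_{G}$ of the cubes of the standard $\rho$-grid of $G$; then $\cN_{\cP_{G}}(A)\simeq_{D}\cN_{\rho}(\pi_{G}A)$, so it suffices to prove the corresponding inequality for these partition covering numbers. Set $W_{k}:=(F_{1}\oplus\dots\oplus F_{k})^{\perp}$, so $W_{1}=F_{1}^{\perp}$, $W_{q}=(F_{1}\oplus\dots\oplus F_{q})^{\perp}$, $\pi_{||F_{i}}=\pi_{F_{i}^{\perp}}$ and $\pi_{||\oplus_{i}F_{i}}=\pi_{W_{q}}$. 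We merge one factor at a time: for $k=2,\dots,q$ apply \Cref{submod-cn} with $\cP=\cP_{W_{k-1}}$, $\cQ=\cP_{F_{k}^{\perp}}$, $\cR=\cP_{\R^{D}}$, $\cS=\cP_{W_{k}}$, on the set $A^{(k-1)}$ (with $A^{(1)}=A$), producing $A^{(k)}\subseteq A^{(k-1)}$. The hypotheses hold: since $(F_{1}\oplus\dots\oplus F_{k-1})\cap F_{k}=\{0\}$, the map $v\mapsto(\pi_{W_{k-1}}v,\pi_{F_{k}^{\perp}}v)$ embeds $\R^{D}$ into $W_{k-1}\times F_{k}^{\perp}$, and a computation with principal angles — using $\norm{(F_{1}\oplus\dots\oplus F_{k-1})\wedge F_{k}}\geq\norm{F_{1}\wedge\dots\wedge F_{q}}\geq r$, which follows from the factorisation $\norm{F_{1}\wedge\dots\wedge F_{q}}=\prod_{j=2}^{q}\norm{(F_{1}\oplus\dots\oplus F_{j-1})\wedge F_{j}}$ — gives $\cR\overset{L}{\simeq}\cP\vee\cQ$ with $L=r^{-O_{D}(1)}$; on the other hand $W_{k}\subseteq W_{k-1}\cap F_{k}^{\perp}$ with $W_{k}$ orthogonal to $W_{k-1}^{\perp}=F_{1}\oplus\dots\oplus F_{k-1}$ and to $F_{k}$, so $\pi_{W_{k}}=\pi_{W_{k}}\circ\pi_{W_{k-1}}=\pi_{W_{k}}\circ\pi_{F_{k}^{\perp}}$, whence $\cS\overprec{O_{D}(1)}\cP$ and $\cS\overprec{O_{D}(1)}\cQ$. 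Taking $c=\tfrac12$, \Cref{submod-cn} yields $\cN_{\cP_{\R^{D}}}(A^{(k)})\geq r^{O_{D}(1)}\cN_{\cP_{\R^{D}}}(A^{(k-1)})$ together with
\[\cN_{\cP_{W_{k-1}}}(A^{(k-1)})\,\cN_{\cP_{F_{k}^{\perp}}}(A^{(k-1)})\;\geq\;r^{O_{D}(1)}\,\cN_{\cP_{\R^{D}}}(A^{(k-1)})\,\cN_{\cP_{W_{k}}}(A^{(k)}).\]
Chaining these for $k=2,\dots,q$, using $\cN_{\cP_{F_{k}^{\perp}}}(A^{(k-1)})\leq\cN_{\cP_{F_{k}^{\perp}}}(A)$ and $\cN_{\cP_{\R^{D}}}(A^{(k-1)})\geq r^{O_{D}(1)}\cN_{\cP_{\R^{D}}}(A)$ at each step, and absorbing the at most $q-1\leq D$ accumulated losses into $r^{O_{D}(1)}$, one obtains $\prod_{i=1}^{q}\cN_{\cP_{F_{i}^{\perp}}}(A)\geq r^{O_{D}(1)}\cN_{\cP_{\R^{D}}}(A)^{q-1}\cN_{\cP_{W_{q}}}(A^{(q)})$; translating back through $\cN_{\cP_{G}}(A)\simeq_{D}\cN_{\rho}(\pi_{G}A)$ gives \eqref{prod-submod-parto} with $A'=A^{(q)}$, while $\cN_{\rho}(A')\geq r^{O_{D}(1)}\cN_{\rho}(A)$ follows from the accumulated lower bounds on $\cN_{\cP_{\R^{D}}}(A^{(k)})$.

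The step I expect to be most delicate is obtaining the polynomial bound $L=r^{-O_{D}(1)}$ for the rough-refinement relation $\cR\overset{L}{\simeq}\cP\vee\cQ$, equivalently bounding the smallest singular value of the gluing map from below by $r^{O_{D}(1)}$ uniformly over all configurations, together with keeping the rough-refinement inequalities pointed in the correct direction so that $\cP_{\R^{D}}$ and $\cP_{W_{k}}$ are legitimate choices for $\cR$ and $\cS$ in \Cref{submod-cn}. Once these are in place, the bookkeeping of the nested subsets $A=A^{(1)}\supseteq\dots\supseteq A^{(q)}$ in the telescoping is routine.
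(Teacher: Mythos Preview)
Your proof is correct and follows essentially the same approach as the paper. For \eqref{prod-submod-to} you give a self-contained argument via the Gram matrix (the paper simply cites \cite[Lemma 15]{He2020JFG}); for \eqref{prod-submod-parto} both you and the paper reduce by induction to the two-subspace case using the factorisation $\norm{F_1\wedge\dots\wedge F_q}=\prod_{j}\norm{(F_1\oplus\dots\oplus F_{j-1})\wedge F_j}$, and then apply \Cref{submod-cn} with the same quadruple of partitions $(\cP_{W_{k-1}},\cP_{F_k^\perp},\cP_{\R^D},\cP_{W_k})$, up to the harmless difference that the paper pulls back the ambient grid $\cD_\rho$ rather than intrinsic grids in each subspace.
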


\begin{proof} The first inequality~\eqref{prod-submod-to} is a simple counting, see e.g. \cite[Lemma 15]{He2020JFG}. We focus on proving \eqref{prod-submod-parto}.

By induction on $q$ together with the observation that
$$
\norm{(F_1\oplus \dots \oplus F_{q - 1}) \wedge F_{q}} \geq \norm{F_1 \wedge \dotsc \wedge F_{q}} \geq r,
$$
%\[
%\norm{F_1 \wedge \dots \wedge F_i+1} = \norm{F_1 \wedge \dotsc \wedge F_{q-1}}\, \norm{(F_1 \oplus \dots \oplus F_{q-1}) \wedge F_q}
%\]
the proof of \eqref{prod-submod-parto} reduces to the case $q=2$.

Let $\cD_\rho$ denote the partition corresponding to the tiling of $\R^D$ by the cube $[0,\rho)^D$ and its $\rho \Z^D$-translates.
Consider $\cP = \pi_{||F_{1}}^{-1}(\cD_{\rho})$, $\cQ=\pi_{||F_{2}}^{-1}(\cD_{\rho})$, $\cR = \cD_\rho$ and $\cS = \pi_{||F_1 \oplus F_2}^{-1}(\cD_{\rho})$, so that
$\cN_\cP(A) \simeq_{D} \cN_\rho( \pi_{||F_1} A)$, $\cN_\cQ(A) \simeq_{D} \cN_\rho( \pi_{||F_2} A)$, $\cN_\cR(A) \simeq_{D} \cN_\rho(A)$ and $\cN_\cS(A) \simeq_{D} \cN_\rho(\pi_{||F_1 \oplus F_2} A)$.

From $\norm{F_1 \wedge F_2} \geq r$ we know that $\cR \overprec{r^{-O_D(1)}} \cP \vee \cQ$ and it is always true that $\cP \vee \cQ \overprec{O_D(1)} \cR$ and $\cS \overprec{O_D(1)} \cP$ and $\cS \overprec{O_D(1)} \cQ$.
Thus, the inequality \eqref{prod-submod-parto} follows from \Cref{submod-cn}.
\end{proof}

We show that $\MNC$ or $\MNC^\perp$ is a sufficient condition for the supercritical projection theorem.

\begin{lemma}[Supercritical projection] \label{MNC->sup} Let $k \in \llbracket 1, D-1\rrbracket$, let $c, \eps, \rho>0$.
Let $\Xi$ be a probability measure on $\Gr(\R^D,k)$ satisfying either $\MNC$ or $\MNC^\perp$ with parameters $(\rho, \rho^{-\eps},c)$.

 Let $A\subseteq B^{\R^D}_{1}$ be any subset satisfying for some
 $\alpha\in[c, 1-c]$,
 \begin{equation}\label{cn-A-lowbnd}
\cN_\rho(A) \geq \rho^{-D \alpha +\eps},
\end{equation}
and for $r \geq \rho$,
 \begin{equation}\label{nc-dim-dalpha}
\sup_{v \in \R^D} \cN_\rho(A \cap B^{\R^D}_r(v)) \leq \rho^{-\eps} r^{c} \cN_\rho(A).
\end{equation}

If $\eps,\rho\lll_{D,c}1$, then the exceptional set
 \begin{equation*}
\begin{split}
\cE:=\bigl\{\, F\in \Gr(\R^D, k)\,:\, \exists A' \subseteq A \,\,&\text{ with }\,\,\cN_{\rho}(A')\geq \rho^\eps \cN_{\rho}(A) \\
& \text{ and }\,\, \cN_{\rho}(\pi_{F}A') < \rho^{- \alpha k -\eps} \,\bigr\}
\end{split}
 \end{equation*}
satisfies $\Xi(\cE)\leq \rho^\eps$.
\end{lemma}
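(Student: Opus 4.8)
The idea is to reduce the supercritical projection estimate under $\MNC$ (or $\MNC^\perp$) to the case $q=1$ treated by Bourgain \cite{Bourgain2010} and He \cite{He2020JFG}, using the covering-number inequalities of \Cref{cn-quasi-orth} to split a projection onto $\oplus_i F_i$ (resp. parallel to $\oplus_i F_i$) into the individual projections $\pi_{F_i}A$ (resp. $\pi_{||F_i}A$). Suppose $\Xi$ satisfies $\MNC$ with parameters $(\rho, \rho^{-\eps}, c)$, so $D = qk+m$ and for every $W \in \Gr(\R^D,m)$ and $r \geq \rho$ we have $\Xi^{\otimes q}\{(F_i) : \|F_1 \wedge \dots \wedge F_q \wedge W\| \leq r\} \leq \rho^{-\eps} r^c$. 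Assume for contradiction $\Xi(\cE) > \rho^\eps$. Then $\Xi^{\otimes q}(\cE^q) > \rho^{q\eps}$, so with probability $> \rho^{q\eps} - O(\rho^{\eps \cdot ?})$ over $(F_i)_{i=1}^q \sim \Xi^{\otimes q}$ we simultaneously have $F_i \in \cE$ for all $i$ and, by the $\MNC$ inequality applied to an appropriate $W$, a lower bound $\|F_1 \wedge \dots \wedge F_q \wedge W\| \geq \rho^{\eps'}$ for a small power $\eps'$. The point is that the $\MNC$ non-degeneracy gives $\|F_1 \wedge \dots \wedge F_q\| \geq \rho^{\eps'}$ as well (take $W$ trivial, or absorb it), so \Cref{cn-quasi-orth} applies to $(F_i)_{i=1}^q$ with $r = \rho^{\eps'}$.

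The core computation then runs as follows. For each $i$, since $F_i \in \cE$, there is $A_i' \subseteq A$ with $\cN_\rho(A_i') \geq \rho^\eps \cN_\rho(A)$ and $\cN_\rho(\pi_{F_i} A_i') < \rho^{-\alpha k - \eps}$. The subtlety is that the $A_i'$ differ across $i$; but using \eqref{nc-dim-dalpha} (the non-concentration/"$\rho$-dimension $\geq \dim$" type hypothesis), one sees that passing from $A$ to $A_i'$ only costs a small power of $\rho$ in all covering-number estimates, so we may work with a single $A'$ (e.g. intersect, or handle each factor and multiply the losses, absorbing everything into $\rho^{O(\eps)}$). Feeding $A'$ into \eqref{prod-submod-to} of \Cref{cn-quasi-orth} gives
\[
\rho^{-\alpha(qk) - O(\eps)} > \prod_{i=1}^q \cN_\rho(\pi_{F_i} A') \geq \rho^{O(\eps')} \cN_\rho(\pi_{\oplus_i F_i} A').
\]
Now $\oplus_i F_i$ has dimension $qk = D - m$, so $\pi_{\oplus_i F_i}$ is (up to a bounded-angle change of coordinates) projection along an $m$-dimensional subspace, and $\cN_\rho(\pi_{\oplus_i F_i}A') \simeq \cN_\rho(\pi_{||W}A')$ for the orthogonal complement $W$. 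At this point the $q=1$ supercritical projection theorem of Bourgain/He—applied in dimension $D$ with target dimension $D - m = qk$, to the set $A'$ which still has $\rho$-covering number $\gtrsim \rho^{-D\alpha + O(\eps)}$ and satisfies the Frostman-type bound \eqref{nc-dim-dalpha}—forces $\cN_\rho(\pi_{\oplus_i F_i} A') \geq \rho^{-\alpha qk - \eps_0}$ for the generic such projection and for $\eps_0$ as small as we like once $\eps, \rho$ are small; more precisely one needs the exceptional set of bad $(q k)$-planes to have small measure under the law of $\oplus_i F_i$, which is exactly what the $\MNC$ non-concentration of $(F_i)_{i}$ provides after identifying the distribution of $\oplus_i F_i$. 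Comparing with the displayed upper bound $\rho^{-\alpha(qk) - O(\eps)}$ yields a contradiction once $\eps_0 \gg \eps$, i.e. once $\eps, \rho \lll_{D,c} 1$.

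For the $\MNC^\perp$ case one argues dually: $F_i \in \cE$ gives control on $\pi_{F_i}$, hence on $\pi_{||F_i^\perp}$, and the inequality \eqref{prod-submod-parto} of \Cref{cn-quasi-orth} (applied to the subspaces $F_i^\perp$, which are now in general position with large angle by $\MNC^\perp$) bounds $\prod_i \cN_\rho(\pi_{||F_i^\perp} A')$ below by $\rho^{O(\eps')}\cN_\rho(A')^{q-1}\cN_\rho(\pi_{||\oplus_i F_i^\perp} A'')$; dividing out the $\cN_\rho(A')^{q-1}$ factors (which is where having $q$ copies rather than $1$ is genuinely needed) and invoking the subcritical companion of the $q=1$ theorem gives the same contradiction. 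The main obstacle I anticipate is bookkeeping: tracking the various small powers of $\rho$ (the losses $r^{O_D(1)}$ in \Cref{cn-quasi-orth}, the $\eps$-losses in \eqref{cn-A-lowbnd}–\eqref{nc-dim-dalpha}, the threshold $\eps_0$ from Bourgain/He, and the passage between the different $A_i'$) and verifying they all collapse into $\rho^{O(\eps)}$ with the right implied constants, so that the final inequality is genuinely strict for $\eps, \rho \lll_{D,c} 1$; and making precise the reduction of "projection onto/parallel to $\oplus_i F_i$ is generic" to the $\MNC$/$\MNC^\perp$ hypothesis on the tuple $(F_i)$.
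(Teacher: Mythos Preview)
Your overall strategy matches the paper's: reduce to the $q=1$ case of Bourgain--He by using \Cref{cn-quasi-orth} to relate $\prod_i \cN_\rho(\pi_{F_i}\,\cdot\,)$ to a single projection onto $\oplus_i F_i$ (in the $\MNC$ case) or onto $\cap_i F_i$ (in the $\MNC^\perp$ case), then derive a contradiction from $\Xi(\cE)>\rho^\eps$. Two points need correction, and neither is pure bookkeeping.

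First, in the $\MNC^\perp$ case you invoke the \emph{subcritical} companion of the $q=1$ theorem for the projection onto $\cap_i F_i$. This does not close the argument: subcritical only gives $\cN_\rho(\pi_{\cap_i F_i}A'')\geq \rho^{C\eps_2}\cN_\rho(A)^{m/D}\approx \rho^{-\alpha m+O(\eps_2)}$, a \emph{loss} in the exponent, whereas to contradict the upper bound $\prod_i \cN_\rho(\pi_{F_i}A'')<\rho^{-q\alpha k-q\eps}$ (after dividing by $|A''|^{q-1}\approx\rho^{-(q-1)D\alpha}$ and using $(q-1)D+m=qk$) you need the \emph{gain} $\cN_\rho(\pi_{\cap_i F_i}A'')\geq \rho^{-\alpha m-\eps_2}$ with $\eps_2\gg\eps$. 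The paper therefore applies the \emph{supercritical} $q=1$ theorem to $\cap_i F_i$; this is legitimate because the law of $\cap_i F_i$ (conditioned on the good-angle event $\|F_1^\perp\wedge\dots\wedge F_q^\perp\|\geq\rho^{(\eps+\eps_1)/c}$) satisfies $\MNC^\perp$ with $q=1$, which is exactly the verification you correctly flagged at the end.

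Second, your handling of the different $A'_{F_i}$ is where the ``bookkeeping'' becomes substantive. Neither of your suggestions works as stated: \eqref{prod-submod-to} and \eqref{prod-submod-parto} require a common set $S$, so ``handle each factor and multiply the losses'' is not available; and a naive intersection $\cap_i A'_{F_i}$ can be empty even if each $A'_{F_i}$ has density $\geq\rho^\eps$ in $A$. The paper resolves this via a Fubini-type argument (\cite[Lemma~19]{He2020JFG}): if each $A'_{F_i}$ has density $\geq\rho^\eps$, then for a $\rho^{O(q\eps)}$-fraction of tuples $\uF\in\cE^q$ the intersection $A''=\cap_i A'_{F_i}$ has density $\geq\rho^{O(q\eps)}$. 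One then locates a single tuple $\uF$ avoiding all three exceptional events (small angle $\cE_1$, bad for supercritical $q=1$ on $\cap_i F_i$, small intersection $\cE_3$), which exists provided $\eps\lll\eps_1\lll\eps_2\lll_{D,c}1$.
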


\begin{proof}
We focus on the scenario where $\Xi$ satisfies $\MNC^\perp$.
The case where $\Xi$ satisfies $\MNC$ can be handled similarly, and is only easier to justify as it involves \eqref{prod-submod-to} instead of \eqref{prod-submod-parto}.

Let $(q,m)$ be the pair of integers playing a role in the assumption $\MNC^\perp$ for $\Xi$. If $q=1$, the result is known. It is indeed the higher rank version of Bourgain's projection theorem \cite{Bourgain2010}, due to the second-named author~\cite{He2020JFG}.
We deduce from there the general case $q\geq 1$.

Note that throughout the proof, we may assume $A$ to be $2\rho$-separated, so that $\cN_\rho(S) = \abs{S}$ holds for any of its subsets $S \subset A$.
We may also allow the upper bound on $\rho$ to depend\footnote{Indeed, if we establish the lemma for a pair $(\eps, \rho)$ then it is automatically valid for $(\eps', \rho)$ with $\eps'\in (0, \eps)$, because when passing from $\eps$ to $\eps'$, assumptions get stronger and the conclusion gets weaker.} on $\eps$ (not only $D,c$).
We let $\eps_{1}, \eps_{2}>0$ be parameters to specify below in terms of $D$ and $c$.
We use the shorthand $\G:=\Gr(\R^D,k)$.

Provided $\eps+ \eps_{1}\leq c$, the assumption that $\Xi$ enjoys $\MNC^\perp$ with parameters $(\rho, \rho^{-\eps},c)$ implies
$$\cE_{1}:=\bigl\{\, \uF\in \G^q: \| F_{1}^\perp\wedge \dots \wedge F_{q}^\perp\|\leq \rho^{(\eps+\eps_{1})/c} \,\bigr\}\text{ satisfies } \Xi^{\otimes q}(\cE_{1})\leq \rho^{\eps_{1}}.$$
Let $\uF \in \G^q \smallsetminus \cE_{1}$.
Up to assuming $\eps\leq \eps_{1}$, Equation \eqref{prod-submod-parto} applied to the family $(F_{i}^\perp)_{i=1}^q$ implies that for every set $S\subseteq A$, there exists a subset $S' \subset S$ such that $\abs{S'} \geq \rho^{O_{D,c}(\eps_1)} \abs{S}$ and
\begin{equation*}
\prod_{i=1}^q\cN_{\rho}\bigl(\pi_{F_{i}}S\bigr)\geq \rho^{O_{D,c}(\eps_{1})} \abs{S}^{q - 1} \cN_{\rho}(\pi_{\bigcap_{i}F_{i}}S'),
\end{equation*}
which in particular yields
\begin{equation}\label{eq-ineq-vis}
\max_{i=1\dots q}\cN_{\rho}\bigl(\pi_{F_{i}}S\bigr)\geq \rho^{O_{D,c}(\eps_{1})} \abs{S}^{ 1- 1/q} \cN_{\rho}(\pi_{\bigcap_{i}F_{i}}S')^{1/q}.
\end{equation}
Taking $S$ to be a not too small subset of $A$, we will use \eqref{eq-ineq-vis} to obtain an explicit lower bound on $\max_{i=1, \dots, q}\cN_{\rho}(\pi_{F_{i}}S)$, see \eqref{eq-maxpiFthetai}.
As a lower bound on $\abs{S}^{q-1}$ comes directly from the assumption \eqref{cn-A-lowbnd}, we focus on $\cN_{\rho}(\pi_{\bigcap_{i}F_{i}}S')$.

%The non-concentration assumption \eqref{nc-dim-dalpha} on $A$ implies
%\begin{equation}
%\label{eq:NdeltaA}
%|A| \geq \rho^{- D \alpha + \eps}.
%\end{equation}

Let $\Upsilon$ denote the restriction of $\Xi^{\otimes q}$ to $\G^q \smallsetminus \cE_{1}$, renormalised into a probability measure.
Note that, by definition and the observation that
$$\normbig{(F_1^\perp \oplus \dotsb \oplus F_q^\perp) \wedge W} \geq \normbig{F_1^\perp \wedge \dotsm \wedge F_q^\perp \wedge W},$$
the random $D-q(D-k)$-plane $(\bigcap_{i}F_{i})_{\uF\sim \Upsilon}$ satisfies the non-concentration condition $\MNC^\perp$ with parameters $(\rho, \rho^{-\eps-\eps_{1}},c)$ and $q=1 $, allowing us to apply the $q = 1$ case.
Therefore, provided that $\eps +\eps_{1}, \eps_2 \lll_{D,c} 1$ and $\rho\lll_{D,c}1$, there is an event $\cE_{2}\subseteq \G^q$ such that $\Upsilon(\cE_{2}) \leq \rho^{\eps_2}$,
and for all $\uF \in \G^q \smallsetminus \cE_{2}$, for all $A'\subseteq A$ with $\abs{A'} \geq \rho^{\eps_2} \abs{A}$, we have
\begin{equation}\label{eq-prohFuth}
\cN_{\rho}\bigl(\pi_{\bigcap_{i}F_{i}}A'\bigr) \geq \rho^{- \alpha (D - q(D-k)) -\eps_{2}}.
\end{equation}

Combining \eqref{eq-ineq-vis}, \eqref{cn-A-lowbnd} and \eqref{eq-prohFuth}, we obtain that for all $\uF\in \G^q \smallsetminus (\cE_{1}\cup \cE_{2})$, and $A''\subseteq A$ with $\abs{A''} \geq \rho^{\eps_1} \abs{A}$, we have
\begin{equation}\label{eq-maxpiFthetai}
\max_{i=1, \dots, q} \cN_{\rho}\bigl(\pi_{F_{i}}A''\bigr)\geq \rho^{- \alpha k -\frac{1}{2q}\eps_{2}},
\end{equation}
provided that $\eps \leq \eps_1 \lll_{D, c} \eps_2\lll_{D, c} 1$.

To conclude the proof, we argue by contradiction, assuming  $\Xi(\cE) > \rho^\eps$.
For each $F\in \cE$, let $A'_{F}\subseteq A$ be such that $\abs{A'_{F}}\geq \rho^\eps \abs{A}$ and $\cN_{\rho}(\pi_{F}A'_{F}) < \rho^{- \alpha k -\eps}$.
By \cite[Lemma 19]{He2020JFG} -- a Fubini argument -- applied to $q$ independent copies of $A'_{F}$, where $F$ is distributed according to $\Xi_{| \cE}$,
these independent copies are likely to intersect in a rather large subset:
\[
\cI :=\setbig{\uF \in \cE^q \,:\, \abs{A'_{F_{1}}\cap \dots \cap A'_{F_{q}}} \leq  \rho^{2q\eps} \abs{A}}
\quad \text{satisfies}\quad
\Xi^{\otimes q}(\cI)\geq \rho^{4q\eps} \Xi(\cE)^q \geq \rho^{5q\eps}.
\]
It follows that
$\Xi^{\otimes q}\bigl(\cI \smallsetminus (\cE_1 \cup \cE_2) \bigr) \geq \rho^{5q\eps} - \rho^{\eps_{1}} - \rho^{\eps_2}$
which is strictly positive provided that $\eps\lll_{D}\eps_{1} \leq \eps_2$ and $\rho\lll_{\eps}1$.
In particular, we may consider $\uF \in \cI \smallsetminus (\cE_1 \cup \cE_2)$.
Setting $A''=A'_{F_{1}}\cap \dots \cap A'_{F_{q}}$, we have $\abs{A''} \geq \rho^{2q \eps} \abs{A}$ while the inclusions $A''\subseteq A'_{F_{i}}$ yield
$$\max_{i=1, \dotsc, q}\cN_{\rho}\bigl(\pi_{F_{i}}A''\bigr) < \rho^{- \alpha k -\eps}.$$
This is in contradiction with \eqref{eq-maxpiFthetai} for $\eps\lll_{D} \eps_1 \leq \eps_2$.
\end{proof}

Without non-concentration assumption on $A$, we still derive from $\MNC$ or $\MNC^\perp$ a subcritical projection theorem.

\begin{lemma}[Subcritical projection] \label{MNC->sub} Let $k \in \llbracket 1, D-1\rrbracket$, let $C>1$ and $c, \eps, \rho \in (0, 1/2]$.
Let $\Xi$ be a probability measure on $\Gr(\R^D,k)$ satisfying either $\MNC$ or $\MNC^\perp$ with parameters $(\rho, \rho^{-\eps},c)$.
 Let $A\subseteq B^{\R^D}_{1}$ be any subset.

If $C\ggg_{D,c}1$ and $\rho\lll_{D,c, \eps}1$, then
 \begin{equation*}
\begin{split}
\cE:=\bigl\{\, F\in \Gr(\R^D,k)\,:\, \exists A' \subseteq A \,\,&\text{ with }\,\,\cN_{\rho}(A')\geq \rho^\eps \cN_{\rho}(A) \\
& \text{ and }\,\, \cN_{\rho}(\pi_{F}A') < \rho^{C\eps}\cN_{\rho}(A)^{ \frac{k}{D}} \,\bigr\}
\end{split}
 \end{equation*}
satisfies $\Xi(\cE)\leq \rho^\eps$.
\end{lemma}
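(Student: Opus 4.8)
The plan is to follow the proof of \Cref{MNC->sup} closely, the only change being that the supercritical base case $q=1$ is replaced by its subcritical counterpart. As in \Cref{MNC->sup}, the hypotheses $\MNC$ and $\MNC^\perp$ are treated in the same way, the case $\MNC$ being slightly easier since it rests on inequality \eqref{prod-submod-to} of \Cref{cn-quasi-orth} and on the random $qk$-plane $\oplus_{i}F_{i}$, whereas $\MNC^\perp$ uses \eqref{prod-submod-parto} and the random $m$-plane $\bigcap_{i}F_{i}$; I therefore focus on $\MNC^\perp$. Let $(q,m)$ be the integers furnished by the hypothesis, so $D=q(D-k)+m$, and recall $\pi_{F}=\pi_{||F^\perp}$. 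We may assume $A$ is $2\rho$-separated, that $\eps$ is smaller than a constant depending on $D,c$ (otherwise $\rho^{C\eps}\cN_{\rho}(A)^{k/D}<1$ for $\rho$ small, so $\cE=\varnothing$), and we may let the threshold on $\rho$ depend on $\eps$. The base case $q=1$ is the subcritical projection estimate: if a probability measure on $\Gr(\R^{D},k)$ satisfies $\MNC^\perp$ with $q=1$ and parameters $(\rho,\rho^{-\eps},c)$, then outside a set of measure $\leq\rho^{\eps}$ and for every $A'$ with $\cN_{\rho}(A')\geq\rho^{\eps}\cN_{\rho}(A)$ one has $\cN_{\rho}(\pi_{F}A')\geq\rho^{C_{0}\eps}\cN_{\rho}(A)^{k/D}$, where $C_{0}=C_{0}(D,c)$; this is the classical case, obtained by the methods of Bourgain \cite{Bourgain2010} and He \cite{He2020JFG} exactly as the base case of \Cref{MNC->sup}, but in the easier subcritical range. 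Granting this, I reduce the general case $q\geq1$ to it.

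First set $\cE_{1}:=\setbig{(F_{i})_{i=1}^{q}\,:\,\norm{F_{1}^\perp\wedge\dots\wedge F_{q}^\perp}\leq\rho^{(\eps+\eps_{1})/c}}$, where $\eps_{1}>0$ is a parameter to be fixed in terms of $D,c$. Since $\norm{F_{1}^\perp\wedge\dots\wedge F_{q}^\perp\wedge W}\leq\norm{F_{1}^\perp\wedge\dots\wedge F_{q}^\perp}$ for every $W$, the hypothesis $\MNC^\perp$ gives $\Xi^{\otimes q}(\cE_{1})\leq\rho^{\eps_{1}}$. Outside $\cE_{1}$, inequality \eqref{prod-submod-parto} applied to the family $(F_{i}^\perp)_{i}$ produces, for every $S\subseteq A$, a subset $S'\subseteq S$ with $\cN_{\rho}(S')\geq\rho^{O_{D,c}(\eps_{1})}\cN_{\rho}(S)$ and
\[\prod_{i=1}^{q}\cN_{\rho}(\pi_{F_{i}}S)\geq\rho^{O_{D,c}(\eps_{1})}\,\cN_{\rho}(S)^{q-1}\,\cN_{\rho}(\pi_{\bigcap_{i}F_{i}}S'),\]
since $\pi_{||\oplus_{i}F_{i}^\perp}=\pi_{\bigcap_{i}F_{i}}$ has image $\bigcap_{i}F_{i}$ of dimension $m$. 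The key point is that the law of the random $m$-plane $\bigcap_{i}F_{i}$, under $\Xi^{\otimes q}$ conditioned to avoid $\cE_{1}$ and renormalised, again satisfies $\MNC^\perp$ with $q=1$ and parameters $(\rho,\rho^{-\eps-\eps_{1}},c)$: indeed $(\bigcap_{i}F_{i})^\perp=\oplus_{i}F_{i}^\perp$ and $\norm{(\oplus_{i}F_{i}^\perp)\wedge W}\geq\norm{F_{1}^\perp\wedge\dots\wedge F_{q}^\perp\wedge W}$, so the bound descends from $\MNC^\perp$ for $\Xi$ up to the harmless renormalising factor. Applying the base case to this random $m$-plane then yields, for a suitable parameter $\eps_{2}$ with $\eps\lll_{D,c}\eps_{1}\lll_{D,c}\eps_{2}\lll_{D,c}1$, an exceptional set of $\Xi^{\otimes q}$-measure $\leq\rho^{\eps_{2}}$ outside of which every $S'$ with $\cN_{\rho}(S')\geq\rho^{\eps_{2}}\cN_{\rho}(A)$ obeys $\cN_{\rho}(\pi_{\bigcap_{i}F_{i}}S')\geq\rho^{C_{0}\eps_{2}}\cN_{\rho}(A)^{m/D}$. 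Inserting this into the displayed inequality with $S=A''$ any subset of $A$ with $\cN_{\rho}(A'')\geq\rho^{\eps_{1}}\cN_{\rho}(A)$ (so that $\cN_{\rho}(S')\geq\rho^{O_{D,c}(\eps_{1})}\cN_{\rho}(A'')\geq\rho^{\eps_{2}}\cN_{\rho}(A)$ using $\eps_{1}\lll_{D,c}\eps_{2}$), and using the identity $q-1+\frac{m}{D}=\frac{qk}{D}$ (immediate from $m=D-q(D-k)$), I obtain that outside a set of $\Xi^{\otimes q}$-measure $\ll\rho^{\eps_{1}}$,
\[\max_{1\leq i\leq q}\cN_{\rho}(\pi_{F_{i}}A'')\geq\rho^{O_{D,c}(\eps_{2})}\,\cN_{\rho}(A)^{k/D}\]
for every such $A''$.

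It remains to upgrade this ``one good $F_{i}$ among $q$ copies'' into ``most $F$'', which is done by the contradiction argument of \Cref{MNC->sup}. Suppose $\Xi(\cE)>\rho^{\eps}$ and, for each $F\in\cE$, fix a witness $A'_{F}\subseteq A$ with $\cN_{\rho}(A'_{F})\geq\rho^{\eps}\cN_{\rho}(A)$ and $\cN_{\rho}(\pi_{F}A'_{F})<\rho^{C\eps}\cN_{\rho}(A)^{k/D}$ (and set $A'_{F}=A$ for $F\notin\cE$). By the Fubini argument of \cite[Lemma 19]{He2020JFG}, with $\Xi^{\otimes q}$-probability exceeding the total measure of the exceptional sets produced above, all of $F_{1},\dots,F_{q}$ lie in $\cE$ and the common intersection $A''=A'_{F_{1}}\cap\dots\cap A'_{F_{q}}$ satisfies $\cN_{\rho}(A'')\geq\rho^{O_{D}(\eps)}\cN_{\rho}(A)$, provided $\eps\lll_{D}\eps_{1}$ and $\rho\lll_{\eps}1$. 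For such $(F_{i})$ the witnesses give $\max_{i}\cN_{\rho}(\pi_{F_{i}}A'')<\rho^{C\eps}\cN_{\rho}(A)^{k/D}$, while the previous paragraph gives $\max_{i}\cN_{\rho}(\pi_{F_{i}}A'')\geq\rho^{O_{D,c}(\eps_{2})}\cN_{\rho}(A)^{k/D}$, a contradiction as soon as $C\eps\geq O_{D,c}(\eps_{2})$. Choosing $\eps_{1}$ and $\eps_{2}$ to be fixed multiples of $\eps$, with ratios depending only on $D$ and $c$, this becomes $C\geq O_{D,c}(1)+C_{0}/q$, which holds for $C\ggg_{D,c}1$; crucially $C$ does not need to depend on $\eps$. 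I expect the main obstacle to be exactly this bookkeeping: ensuring that every loss incurred along the way (in $\cE_{1}$, in \eqref{prod-submod-parto}, and in the base case) is a constant multiple of $\eps$, so that it can be absorbed into a single $\eps$-independent constant $C$; together with verifying the recursive step, namely that the law of $\bigcap_{i}F_{i}$ inherits $\MNC^\perp$ at rank $q=1$ with only a bounded degradation of the non-concentration parameters.
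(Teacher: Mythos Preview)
Your proposal is correct and takes essentially the same approach as the paper, which simply instructs the reader to mimic the proof of \Cref{MNC->sup} with the subcritical projection theorem (\cite[Proposition A.2]{BH24}) replacing the supercritical one. The bookkeeping you worry about---taking $\eps_1,\eps_2$ as fixed $D,c$-dependent multiples of $\eps$ so that all losses are $O_{D,c}(1)\cdot\eps$ and can be absorbed into $C$---is exactly the right way to close the argument, and your verification that the law of $\bigcap_i F_i$ inherits $\MNC^\perp$ at rank $q=1$ via $\norm{(\oplus_i F_i^\perp)\wedge W}\geq \norm{F_1^\perp\wedge\dots\wedge F_q^\perp\wedge W}$ is correct.
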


\begin{remark}There is no condition on $\eps$ in the statement. However, it is only meaningful for $C\eps \leq D$, which requires in particular $\eps\lll_{D,c}1$. Indeed, if $C\eps > D$ and $\rho\lll_{D}1$, then  $\cE=\emptyset$.
\end{remark}

\begin{proof} The case $q=1$ is the subcritical projection theorem as stated in \cite[Proposition A.2]{BH24}. The general case can be deduced similarly to the proof of \Cref{MNC->sup}, but using this time the subcritical projection theorem instead of the supercritical projection theorem.
\end{proof}

%\Cref{high-dim} relies on the iteration of a dimension increment property. It will be established through the use of a multislicing theorem which we borrow to \cite{BH25}. We recall this result in this subsection.

\bigskip

We now combine Lemmas \ref{MNC->sup}, \ref{MNC->sub} into a multislicing estimate. We place ourselves in $\R^D$ where $D\geq 3$. We consider $d_{1}, d_{2}\in \N$ such that $1\leq d_{1}<d_{2}<D$ and $\bt=(t_{1}, t_{2}, t_{3})\in \R^3$ such that $0\leq t_{1}<t_{2}<t_{3}\leq 1$.
We denote by $\cF$ the collection of pairs $\sV=(V_{1}, V_{2})$ where $V_{i}\in \Gr(\R^D, d_{i})$ for $i=1,2$ and $V_{1}\subseteq V_{2}$.
Given $\sV\in \cF$ and $\rho \in (0,1)$, we set
$$B_{\rho^\bt}^\sV= B^{V_{1}}_{\rho^{t_{1}}} +B^{V_{2}}_{\rho^{t_{2}}} +B^{\R^d}_{\rho^{t_{3}}}.$$
Therefore $B_{\rho^\bt}^\sV$ represents a Euclidean box carried by the partial flag $\sV$ and of side length parameters $\rho^{t_{1}}> \rho^{t_{2}}>\rho^{t_{3}}$.
The multislicing theorem below considers a random partial flag $\sV$, and a measure $\nu$ which is Frostman above scale $\rho$. For most realizations of $\sV$, it gives an upper bound on the mass granted by $\nu$ to all translates of $B_{\rho^\bt}^\sV$. It requires a certain assumption on $\sV$, namely that each component of $\sV$ satisfies $\MNC$ or $\MNC^\perp$ above scale $\rho$.

\begin{proposition}[Supercritical multislicing] \label{sup-mult}
Let $D\geq 3$ and $d_{1},d_{2}, \bt$ be as above. Let $c, \eps,\rho >0$.

Let $\Xi$ be a probability measure on $\cF$. Assume that for each $i=1,2$, the distribution of the component $V_{i}$ as $\sV\sim \Xi$ satisfies either $\MNC$ or $\MNC^\perp$ with parameters $(\rho, \rho^{-\eps}, c)$.

%\comW{It is enough to ask for $r\in \{\rho^{t_3}\} \cup [\rho^{t_2}, \rho^{t_1}]$}
Let $\nu$ be a Borel measure on $B^{\R^D}_{1}$ of mass at most $\rho^{-\eps}$, and such that for some $\alpha\in [c, 1-c]$, for all $v\in \R^D$, all $r\in [\rho^{t_3}, \rho^{t_1}]$, we have
$$\nu(B^{\R^D}_{r}+v)\leq\rho^{-\eps} r^{D\alpha}.$$

If $\eps, \rho\lll_{D, \bt, c} 1$, then there exists an event $\cE \subseteq \cF$ such that $\Xi(\cE)\leq \rho^{\eps}$ and for $\sV\in\cF \setminus \cE$, there is a set $A_{\sV} \subseteq \R^D$ with $\nu(\R^D \setminus A_{\sV}) \leq \rho^{\eps}$ and such that for every $v \in \R^D$,
\[ \nu_{ |A_{\sV}} \left(B^{\sV}_{\rho^\bt}+v\right) \leq \leb \left(B^{\sV}_{\rho^\bt}\right)^{\alpha+\eps}.\]
\end{proposition}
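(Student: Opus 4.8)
The plan is to run the multislicing argument of \cite[Section~2]{BH24} — concretely, the proof of \cite[Theorem~2.1]{BH24} — with a single modification: wherever that proof invokes a supercritical or subcritical projection theorem for a flag component (which in \cite{BH24} is available only under the strong non-concentration hypothesis that fails here for $D$ large, cf.\ \Cref{obstacle}), we invoke instead \Cref{MNC->sup}, resp.\ \Cref{MNC->sub}, which hold under $\MNC$ or $\MNC^\perp$. The non-concentration of $\Xi$ enters the argument of \cite{BH24} \emph{only} through these projection theorems — applied to the marginals of $V_{1}$ and $V_{2}$ (the hypothesis grants $\MNC$ or $\MNC^\perp$ for each), and to conditional measures derived from $\nu$ — so the substitution is legitimate and the remainder of the argument carries over.

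\textbf{The multislicing mechanism.} The argument proceeds as follows. After a dyadic--pigeonhole regularization of $\nu$ at scale $\rho$ (costing only $\abs{\log\rho}^{O(1)}$, absorbed into $\rho^{-\eps}$), one encodes $\nu$ by a $2\rho$-separated set $E\subseteq B^{\R^D}_{1}$; since $\nu(B^{\R^D}_{\rho})\leq\rho^{-\eps}\rho^{D\alpha}$ by the Frostman bound, it suffices to bound $\cN_{\rho}\bigl(E\cap(B^{\sV}_{\rho^\bt}+v)\bigr)$ by $\rho^{-O(\eps)}\bigl(\leb(B^{\sV}_{\rho^\bt})\,\rho^{-D}\bigr)^{\alpha}$, uniformly in $v$, after deleting from $\nu$ an intrinsically defined (i.e.\ $v$-independent) set $\R^{D}\setminus A_{\sV}$ of mass $\leq\rho^{\eps}$. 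Writing $\R^{D}=V_{1}\oplus(V_{2}\cap V_{1}^{\perp})\oplus V_{2}^{\perp}$, the box $B^{\sV}_{\rho^\bt}$ is, up to a bounded factor, an orthogonal product of balls of radii $\rho^{t_{1}},\rho^{t_{2}},\rho^{t_{3}}$ in these three factors. One fibers $E\cap(B^{\sV}_{\rho^\bt}+v)$ successively over $\pi_{V_{2}^{\perp}}$, $\pi_{V_{2}\cap V_{1}^{\perp}}$, $\pi_{V_{1}}$; after further dyadic pigeonholing the relevant covering numbers factorize (up to $\abs{\log\rho}^{O(1)}$). The ``thin'' factor $V_{2}^{\perp}$, at scale $\rho^{t_{3}}$, is handled by the Frostman bound for $\nu$ directly; for the two ``thick'' factors, excessive mass of $\nu$ in the thin box would force the projections of suitable large subsets of $E$ (or of fiber measures) onto $V_{1}$ and onto $V_{2}\cap V_{1}^{\perp}$ to have covering numbers below the floor guaranteed by \Cref{MNC->sup} or \Cref{MNC->sub}, applied to the marginals of $V_{1}$ and $V_{2}$. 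Combining these and bookkeeping the exponents $t_{1},t_{2},t_{3},\alpha$ exactly as in \cite{BH24} produces the stated estimate.

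\textbf{Bookkeeping, and the main obstacle.} The exceptional set $\cE\subseteq\cF$ is the union of the finitely many (or, if the argument is organized as a multiscale induction over $[\rho,1]$, the $O(\abs{\log\rho})$ many) exceptional events produced by the applications of \Cref{MNC->sup}, \Cref{MNC->sub}; choosing the internal precision parameters as fixed multiples of $\eps$ (by factors depending on $D,\bt,c$) gives $\Xi(\cE)\leq\rho^{\eps}$ and, likewise, the union of the $\nu$-bad sets has mass $\leq\rho^{\eps}$, once $\eps,\rho\lll_{D,\bt,c}1$. The main point requiring care — the only thing beyond citing \cite{BH24} — is to confirm that the multislicing machinery is genuinely \emph{modular} in its projection-theorem inputs, i.e.\ that it uses them precisely in the shape ``for $\Xi$-most $F$, for \emph{every} subset $A'$ of relative $\rho$-covering size $\geq\rho^{\eps}$, the set $\pi_{F}A'$ has large covering number'' — which is exactly the form of \Cref{MNC->sup} and \Cref{MNC->sub}. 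This quantifier shape is what makes the estimates stable under the nested fibering and conditioning, where the relevant subsets depend on $\sV$ and on the fiber and cannot be fixed in advance; and it is why \emph{both} theorems are needed, since the Frostman exponent of a conditional measure of $\nu$ along a $V_{1}$-fiber is uncontrolled and may lie on either side of the critical value — above which one gains dimension, below which one merely does not lose — so a usable projection statement is required in both regimes.
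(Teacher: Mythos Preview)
Your proposal is correct and follows essentially the same route as the paper: both feed the projection theorems \Cref{MNC->sup} and \Cref{MNC->sub} (in place of the stronger non-concentration used in \cite{BH24}) into the multislicing machinery of \cite[Section~2]{BH24}. The paper's proof is more terse and, rather than re-examining the modularity of \cite{BH24} directly, points to \cite[Theorem~3.4]{BH25} as a black-box statement that already accepts arbitrary supercritical and subcritical projection inputs and outputs the multislicing estimate --- so your longer discussion of how the machinery factors through the projection theorems is exactly what that citation encapsulates.
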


The proof is done by combining Lemmas~\ref{MNC->sup}, \ref{MNC->sub} with the line of reasoning of  \cite[Section 2]{BH24}.  Alternatively, those two lemmas can also be plugged into  the more advanced multislicing machinery \cite[Theorem 3.4]{BH25} to obtain \Cref{sup-mult} as a formal consequence. For the reader's convenience,  we  sketch below how to deduce \Cref{sup-mult} from Lemmas~\ref{MNC->sup}, \ref{MNC->sub}.

%To deduce such multislicing estimate from the projection estimates (Lemmas~\ref{MNC->sup} and \ref{MNC->sub}), one can follow verbatim \cite[Section 2]{BH24}.
%For the reader's convenience, below we give a sketch of the argument.
%Alternatively, the conversion of Lemmas~\ref{MNC->sup}, \ref{MNC->sub} into \Cref{sup-mult} is also a formal consequence of the more advanced \cite[Theorem 3.4]{BH25}.

\begin{proof}[Sketch of proof]
In this proof, we allow the implied constant in the notations $\ll$ and $O(\cdot)$ to depend on $D$ and $\bt$. We may\footnote{Indeed, if we establish the proposition for a pair $(\eps, \rho)$ then it is automatically valid for $(\eps', \rho)$ with $\eps'\in (0, \eps)$, because when passing from $\eps$ to $\eps'$, assumptions get stronger and the conclusion gets weaker.} also allow $\rho$ to be small enough depending on $\eps$ (not only $D, \bt, c$). 

% guarantee that the random projectors $\pi_{||V_{1}}$ and $\pi_{||V_{2}}$ where $\sV\sim \Xi$ satisfy respectively subcritical and supercritical estimates.
%Those can be combined as in  into the above result.
Note that if $\nu(\R^D) \leq \rho^\eps$, then we can simply take $A_\sV = \emptyset$ and the result is trivial.
Thus, we may assume $\nu(\R^D)\in [\rho^\eps, \rho^{-\eps}]$, and
after a renormalisation, reduce to the case where $\nu(\R^D)=1$.

We further reduce to the situation where $\nu$ is the uniform probability measure on a finite set with a regular-tree structure. For $i=1, 2,3$, set $\cQ_i=\cD_{\rho^{t_{i}}}$ the tiling of $\R^D$ by cubes of side length $\rho^{t_i}$, in particular $\cQ_1 \prec \cQ_2 \prec \cQ_3$. By invoking \cite[Lemma A.2]{BH25} (which only relies on dyadic pigeonholing) we obtain pairwise disjoint $\cQ_3$-measurable sets $A_k \subset \R^D$ indexed by a set $\cK$ of cardinality $\abs{\cK} \ll \abs{\log \rho}^{O(1)}$ such that
$\nu\bigl(\R^D \setminus \bigcup_{k \in K} A_k\bigr) \leq \rho^{2 \eps}$ and satisfying for each $k \in \cK$:
\begin{itemize}
\item $A_k$ is regular with respect to the filtration $\cQ_1 \prec \cQ_2 \prec \cQ_3$, in the sense of \cite[\S A.2]{BH25};
\item $\nu(A_k) \geq \rho^{8\eps}$;
\item for every $\cQ_3$-measurable subset $S \subset A_k$, we have
\[
\frac{\nu(S)}{\nu(A_k)} \simeq \frac{\cN_{\rho^{t_3}}(S)}{\cN_{\rho^{t_3}}(A_k)}.
\]
\end{itemize}
The bounds $\nu\bigl(\R^D \setminus \bigcup_{k \in K} A_k\bigr) \leq \rho^{2 \eps}$ and $\abs{\cK} \ll \abs{\log \rho}^{O(1)}$ justify  it is sufficient to prove the conclusion of  \Cref{sup-mult} for each measure $\nu_{|A_{k}}/\nu(A_{k})$. By discretizing at scale $\rho^{t_{3}}$, this in turn reduces to the case of the measure $\frac{1}{\cN_{\rho^{t_3}}(A_k)}\sum_{Q\in \cQ_{3}}\delta_{x_{Q}}$, where $x_{Q}$ denotes the center of the $\delta^{t_{3}}$-cell $Q$. 
Note these measures satisfy the non-concentration conditions required in \Cref{sup-mult} (with $9\eps$ instead of $\eps$). We are thus reduced to the case where $\nu$ is the uniform probability measure on a finite set $A=\supp \nu$, which is regular for $\cQ_1 \prec \cQ_2 \prec \cQ_3$, and intersects each cell of $\cQ_{3}$ in at most one point.

Let $\cP^\sV$ denote a tiling of $\R^D$ by translates of a prism of shape comparable to $B^\sV_{\rho^\bt}$.
Let $\cP^\sV_{\mathrm{bad}}$ denote the subset of tiles $P \in \cP^\sV$ such that $\nu(P) > 2^{-D} \leb \bigl(B^{\sV}_{\rho^\bt}\bigr)^{\alpha+\eps}$.
It suffices to show that $\Xi(\cE) \leq \rho^\eps$ for
\[
\cE := \setbig{ \sV \in \cF \,:\,  \nu(\cup\cP^\sV_{\mathrm{bad}}) > \rho^\eps}.
\]

%By another pigeonhole argument, we may write
%$\cE \subset \bigcup_{k \in \cK} \cE_k$ where
%\begin{gather*}
%\cE_k := \setbig{ \sV \in \cF \,:\,  \nu\bigl((\cup\cP^\sV_{\mathrm{bad}, k}) \cap A_k \bigr) > \rho^{2\eps}} \\
%\cP^\sV_{\mathrm{bad}, k} := \setbig{ P \in \cP^\sV \,:\, \nu(P \cap A_k) \geq \leb(P)^{\alpha + 2 \eps}}
%\end{gather*}
%for each $k$.
%Thus, we just need to bound $\nu(\cE_k)$.

Let $\sV \in \cE$ be arbitrary.
Let $E$ denote the smallest $\cQ_3$-measurable set containing $\cup\cP^\sV_{\mathrm{bad}}$.
On the one hand, using  the structure of $\nu$ and $\nu(E)>\rho^\eps$, 
we find
\begin{equation}
\label{eq:EggAk}
\cN_{\rho^{t_3}}(E) \gg \rho^{\eps} \cN_{\rho^{t_3}}(A).
\end{equation}
On the other hand, $E$ is contained in the $\rho^{t_3}$-neighborhood of $\cup\cP^\sV_{\mathrm{bad}}$ and $\rho^{t_3}$ is the length of the shortest side of tiles in $\cP^\sV$.
Hence,
\begin{equation}
\label{eq:Ellpow}
\cN_{\cP^\sV}(E) \ll \cN_{\cP^\sV}(\cup\cP^\sV_{\mathrm{bad}}) \leq \leb(P)^{- \alpha - 2\eps}
\end{equation}
where the last bound uses the definition of $\cP^\sV_{\mathrm{bad}}$ and that $\nu(\R^D)\leq 1$. 

Next, consider two tiling $\cS^\sV$ and $\cR^\sV$ of $\R^D$ consisting respectively of prisms of shape approximately  $B^\sV_{\rho^{(t_1,t_2,t_2)}}$ and $B^\sV_{\rho^{(t_2,t_2,t_3)}}$, in particular 
$\cS^\sV \overprec{O(1)} \cP^\sV, \cQ_2$, and $\cR^\sV \overset{O(1)}{\simeq} \cP^\sV \vee \cQ_2$. 
To anticipate a technical detail, we replace $E$ by a large subset which is moreover regular with respect to $\cQ_2 \overprec{O(1)} \cR^\sV \overprec{O(1)} \cQ_3$.
This can be done (using e.g. \cite[Lemma 2.5]{BH24}) while keeping trivially \eqref{eq:Ellpow} and slightly weakening \eqref{eq:EggAk} to
\begin{equation}
\label{eq:EggAk2}
\cN_{\rho^{t_3}}(E) \geq \rho^{2\eps} \cN_{\rho^{t_3}}(A).
\end{equation}

By our choices for  $\cS^\sV, \cR^\sV$,  \Cref{submod-cn} yields the existence of a subset $E' \subset E$ such that
\begin{equation}\label{eq:E'ggE}
    \cN_{\cR^\sV}(E') \gg \cN_{\cR^\sV}(E)
\end{equation}
and 
\begin{equation}\label{NRNSll0}
    \cN_{\cR^\sV}(E) \cN_{\cS^\sV}(E') \ll \cN_{\cP^\sV}(E) \cN_{\rho^{t_2}}(E),
\end{equation}
Using $E\subseteq A$ and \eqref{eq:Ellpow} , this yields the (crucial) inequality
\begin{equation}
\label{eq:NRNSll}
\cN_{\cR^\sV}(E) \cN_{\cS^\sV}(E') \ll \leb(P)^{- \alpha - 2\eps} \cN_{\rho^{t_2}}(A).
\end{equation}

We will now apply the projection estimates established in \Cref{MNC->sup}, \Cref{MNC->sub}  to bound from below $\cN_{\cR^\sV}(E)$ and $ \cN_{\cS^\sV}(E')$ for most $\sV$ selected by $\Xi$, and ultimately get a contradiction with \eqref{eq:NRNSll}.

Let us bound from below $\cN_{\cS^\sV}(E')$. It essentially represents the covering number of $E'$ by translates  of the box $B^\sV_{\rho^{(t_1,t_2,t_2)}}$. This box can be seen, locally at scale  $\rho^{t_{1}}$, as the preimages of a ball of radius $\rho^{t_{2}}$ by the orthogonal projector of kernel $V_{1}$. This motivates the application of a projection theorem.
To do so, we zoom into each $Q \in \cQ_1(A)$ by introducing $A^Q$ the $\times \rho^{- t_1}$-dilation of $A \cap Q$, and we apply one of the projection estimates to the set $A^Q$ at scale $\rho^{-t_{1}+t_{2}}$ and the random projector $(\pi_{||V_1})_{\sV\sim \Xi}$. In principle, this yields a lower bound for $\cN_{\rho^{- t_1+t_{2}}}( \rho^{- t_1}(E'\cap Q)) \simeq \cN_{\cS^\sV}(E'\cap Q)$  because $E'\cap Q$ is a large subset of $ A\cap Q$ for most $Q$ (due to $\cN_{\rho^{t_2}}(E') \gg \rho^{3\eps} \cN_{\rho^{t_2}}(A)$ which stems from \eqref{eq:EggAk2}, \eqref{eq:E'ggE} and the regularity properties of $A$, $E$). Summing over $Q$, we  obtain in the end a lower bound on $\cN_{\cS^\sV}(E')$. 
Indeed, if $\cN_{\rho^{t_i}}(A) > \rho^{- t_i D \alpha - 10 D\eps}$ holds for some $i\in\{1,2\}$, we apply the subcritical version \Cref{MNC->sub} to each $A^Q$, and ultimately get
\begin{align}
\cN_{\cS^\sV}(E')
&\geq \rho^{4\eps} \cN_{\rho^{t_1}}(A)^{d_{1}/D} \cN_{\rho^{t_2}}(A)^{D-d_{1}/D}  \nonumber\\
& \geq \rho^{-\alpha(t_{1}d_{1}+t_{2}(d_{2}-d_{1})) -5\eps}    \cN_{\rho^{t_2}}(A)^{D-d_{2}/D}            \label{lwbdE'}
\end{align}
where the second inequality also uses the Frostman-type condition on $\nu$.
%(by \cite[Lemma 2.4]{BH24}, \eqref{eq:EggAk2} and the regularity of $A$, \eqref{eq:E'ggE}).
If on the other hand $\cN_{\rho^{t_i}}(A) \leq \rho^{- t_i D \alpha - 10D \eps}$ for both $i=1,2$, we are in a position to apply the supercritical projection theorem \Cref{MNC->sup} to still obtain \eqref{lwbdE'}.
Indeed, in this case, the Frostman-type condition on $\nu$ together with the regularity properties of $A$ imply that for every ball $B \subset Q$ of radius $r \in [\rho^{t_2}, \rho^{t_1}]$,
\[
\cN_{\rho^{t_2}}(B \cap A) \leq \rho^{-20D\eps} \left(\frac{r}{\rho^{t_1}}\right)^{D\alpha} \cN_{\rho^{t_2}}(Q \cap A).
\]
This is precisely the required non-concentration needed to apply  \Cref{MNC->sup} to $A^Q$. In the end, we have justified \eqref{lwbdE'} regardless of the value of $(\cN_{\rho^{t_i}}(A))_{i=1,2}$.

To bound from below $\cN_{\cR^\sV}(E)$, we proceed similarly,  by zooming into each $Q \in \cQ_2(A)$ and applying \Cref{MNC->sub} to the $\times \rho^{- t_2}$-dilation of $A \cap Q$ and the random projector $(\pi_{||V_2})_{\sV\sim \Xi}$. We obtain the subcritical estimate
\begin{equation}\label{lwbdRVE}
\cN_{\cR^\sV}(E)\geq \rho^{-\alpha t_{3}(D-d_{2}) +2\eps}\cN_{\rho^{t_2}}(A)^{d_{2}/D}  .
\end{equation}

Now \eqref{eq:NRNSll}, \eqref{lwbdE'}, \eqref{lwbdRVE} are in contradiction, which concludes the proof. 
%For more details, see the proof of \cite[Proposition 2.8]{BH24} for details.
\end{proof}

%To summarize what we have just shown: what remains is to bound the probability for the random variable $\sV \sim \Xi$ to be such that
%there are subsets $E, E' \subset A_k$ satisfying \eqref{eq:EggAk2}, \eqref{eq:E'ggAk} and \eqref{eq:NRNSll}.

\subsection{Linearizing charts} \label{Sec-lin-charts}

Recall $X=G/\Lambda$ where $G=\SL_{d+1}(\R)$ and $\Lambda$ is a fixed arbitrary lattice.
We define on $X$ a covering of linearizing charts which do not deform balls much, and most importantly send any $g$-translate $g B_{r}x$ ($g\in G, r>0, x\in X$) to an additive translate of the box $\Ad(g)B^\kg_{r}$ provided that $\Ad(g)B^\kg_{r}$ is not too distorted and lives at a suitable scale.
We point out that these charts live at a microscopic scale, contrary to those used  in our previous work \cite{BHZ24} about the case $d=1$.
This linearizing scheme is extracted from \cite[Lemma 6.3]{BH25}, which is itself inspired by Shmerkin \cite{Shmerkin}.

\begin{lemma}[{\cite[Lemma 6.3]{BH25}}]\label{lin-charts}
Let $0<\delta \lll1$. There exists a measurable map $\varphi : \{\inj \geq \delta\}\rightarrow B^\kg_{1}$ satisfying the following.
\begin{itemize}
\item[1)] For every $r\in (0, \delta)$, $v\in \kg$, the preimage $\varphi^{-1}(B^\kg_{r}+ v)$ is covered by $O(1)$ many balls $(B_{r}x)_{x\in X}$
\item[2)] For every $r\in (0, \delta)$, $g\in G$ such that $B^\kg_{\delta^2}\subseteq \Ad(g)B^\kg_{r}\subseteq B^\kg_{\delta}$, and $x\in X$, the translate $gB_{r}x \cap \{\inj \geq \delta\}$ is covered by $O(1)$ many preimages of boxes $(\varphi^{-1}(\Ad(g)B^\kg_{r}+ v))_{v\in \kg}$.
\end{itemize}
\end{lemma}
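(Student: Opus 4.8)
The plan is to construct $\varphi$ by gluing exponential charts over a net at scale $\simeq\delta$, following the linearization scheme of \cite[Lemma~6.3]{BH25} (inspired by Shmerkin \cite{Shmerkin}). Recall first that the thick part $\{\inj\geq\eta\}$ is compact, so it carries a \emph{finite} maximal $\rho_0$-separated subset $(x_j)_j$, where $\rho_0\simeq\delta$ is chosen to be an absolute fraction of $\delta$ small enough relative to $\eta$ (licit as $\delta\leq\eta$). For each $j$ the map $\psi_j\colon B^\kg_{\eta/2}\to X$, $v\mapsto\exp(v)x_j$, is a bi-Lipschitz embedding with constants close to $1$, since $\inj(x_j)\geq\eta$. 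Fixing an enumeration, set $P_j:=\bigl(B_{\rho_0}x_j\cap\{\inj\geq\eta\}\bigr)\setminus\bigcup_{i<j}B_{\rho_0}x_i$, a Borel partition of $\{\inj\geq\eta\}$ with $P_j\subseteq B_{\rho_0}x_j$; and choose offsets $w_j\in B^\kg_{1/2}$ so that $j\mapsto w_j$ embeds the finite metric space $(\{x_j\},\dist)$ into $(\kg,\norm{\cdot})$ in a bi-Lipschitz fashion after rescaling into $B^\kg_{1/2}$. Finally define $\varphi(y):=w_j+\psi_j^{-1}(y)$ for $y\in P_j$; this is measurable and takes values in $B^\kg_1$.

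For item~1), the point is that this chart system has bounded overlap at every scale $r<\eta$. Indeed, since $\varphi|_{P_j}$ is the translate by $w_j$ of the bi-Lipschitz map $\psi_j^{-1}$, the set $\varphi^{-1}(B^\kg_r+v)\cap P_j$, when non-empty, is contained in $\psi_j\bigl((B^\kg_r+v-w_j)\cap B^\kg_{2\rho_0}\bigr)$, hence has diameter $O(\min(r,\delta))\leq O(r)$; and such $j$ have $w_j$ within $O(r+\delta)$ of $v$, so by the choice of the offsets their centres $x_j$, and therefore the pieces $P_j$, are clustered into $O(1)$ balls of radius $O(r)$ in $X$. Covering each cluster by $O(1)$ balls of radius $r$ yields the claim. (I expect the implied constant here to depend mildly on $\eta$, through $\diam\{\inj\geq\eta\}$; this is harmless for the use of the lemma in \Cref{Sec-bootstrap}, and could be removed by a more careful choice of offsets.)

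For item~2), let $g\in G$, $r>0$ and $x\in X$ with $B^\kg_{\delta^2}\subseteq Q\subseteq B^\kg_\delta$, where $Q:=\Ad(g)B^\kg_r$. As $\norm{\Ad(g)}\geq 1$ forces $r\leq\delta$, one has $gB_rx\subseteq\exp\bigl(\Ad(g)B^\kg_{2r}\bigr)gx$, a set of diameter $O(\delta)$; since the $P_j$ have size $\simeq\delta$, it meets only $m=O(1)$ of them, say $P_{j_1},\dots,P_{j_m}$, with each centre $x_{j_l}$ within $O(\delta)$ of $gx$. Writing a point of $gB_rx\cap P_{j_l}$ as $\exp(\gamma)gx=\psi_{j_l}(\log h)$ with $\gamma\in\Ad(g)B^\kg_{2r}$ and $h=\exp(\gamma)g_{j_l}^{*}$ for the fixed $g_{j_l}^{*}\in G$ with $g_{j_l}^{*}x_{j_l}=gx$ and $\norm{\log g_{j_l}^{*}}=O(\delta)$, the Baker--Campbell--Hausdorff formula gives $\log h=\gamma+\log g_{j_l}^{*}+O(\delta^2)$. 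Hence $\psi_{j_l}^{-1}(gB_rx\cap P_{j_l})\subseteq\log g_{j_l}^{*}+\Ad(g)B^\kg_{2r}+B^\kg_{O(\delta^2)}$, and since $Q\supseteq B^\kg_{\delta^2}$ both $\Ad(g)B^\kg_{2r}$ and $B^\kg_{O(\delta^2)}$ are covered by $O(1)$ additive translates of $Q$. Therefore $\varphi(gB_rx\cap P_{j_l})=w_{j_l}+\psi_{j_l}^{-1}(\,\cdot\,)$ lies in $O(1)$ translates of $Q$, and unioning over $l=1,\dots,m$ shows $gB_rx\cap\{\inj\geq\eta\}$ is covered by $O(1)$ preimages $\varphi^{-1}(\Ad(g)B^\kg_r+v)$.

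The main obstacle is making the three scales in item~2) cooperate: the pieces of the partition must sit at scale $\simeq\delta$ --- coarse enough that a box of diameter $\lesssim\delta$ meets only boundedly many of them, yet fine enough that the Baker--Campbell--Hausdorff transition error $O(\delta^2)$ incurred when re-expressing a point in a neighbouring chart is absorbed by $\Ad(g)B^\kg_r$, which is only guaranteed to contain $B^\kg_{\delta^2}$. This is precisely why the hypothesis takes the form $B^\kg_{\delta^2}\subseteq\Ad(g)B^\kg_r\subseteq B^\kg_\delta$, and it is the one step that is not pure bookkeeping.
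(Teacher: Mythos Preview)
The paper does not prove this lemma; it is quoted from \cite[Lemma~6.3]{BH25}. Your construction --- exponential charts glued over a maximal $\delta$-net $(x_j)_j$ in the thick part, with offsets $w_j$ --- is precisely the scheme alluded to there, and your treatment of item~2) is correct: the net scale $\rho_0\simeq\delta$ is tuned so that $gB_rx$ (of diameter $O(\delta)$) meets $O(1)$ cells, and within each cell the Baker--Campbell--Hausdorff error $O(\delta^2)$ is absorbed by the hypothesis $B^\kg_{\delta^2}\subseteq\Ad(g)B^\kg_r$. This is the heart of the lemma and you have it.

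For item~1), however, your parenthetical does not fully dispatch the issue. First, a bi-Lipschitz embedding of the finite net $\{x_j\}\subset X$ into $\kg\cong\R^{\dim\kg}$ with bounded distortion need not exist (a $\delta$-net on a circle does not embed into $\R$ with $O(1)$ bi-Lipschitz constants); fortunately your argument only uses the co-Lipschitz direction $|w_j-w_k|\text{ small}\Rightarrow\dist(x_j,x_k)\text{ small}$, which can be arranged, but this should be said. Second, and more seriously, because the offsets must be rescaled by $\simeq 1/\diam\{\inj\geq\eta\}$ to land in $B^\kg_{1/2}$, the bound $\abs{w_j-v}\leq O(r+\delta)$ only gives $\dist(x_j,x_k)\leq O\bigl(\diam\{\inj\geq\eta\}\cdot(r+\delta)\bigr)$, so your covering constant in item~1) is $O\bigl((\log\eta^{-1})^{\dim X}\bigr)$ rather than $O(1)$. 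A volume count shows this is not an artefact of the argument but of the construction: the $\simeq(\log\eta^{-1}/\delta)^{\dim X}$ chart-images, each of size $\simeq\delta$, cannot be made $\delta$-separated inside $B^\kg_1$. This dependence is indeed harmless for the sole application (\Cref{dim-increment}), where it is swallowed by the $\rho^{-\eps\dim X}$ factor already present; but it means your proof does not deliver item~1) with the uniform $O(1)$ that the paper's conventions stipulate. Either the statement in \cite{BH25} tolerates this dependence or the construction there is arranged differently (e.g.\ a larger codomain, or offsets chosen only to separate rather than to embed); you would need to check the source.
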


In this lemma, %$\eta$ controls the region of $X$ which is linearized and the maximum scale at which linearization occurs. On the other hand,
the inequality $B^\kg_{\delta^2}\subseteq \Ad(g)B^\kg_{r}\subseteq B^\kg_{\delta}$ controls the distortion allowed on $G$-translates of balls $gB_{r}x$ to be well represented by additive translates of boxes $\Ad(g)B^\kg_{r}+v$ via the linearization.

\subsection{Dimension increment and bootstrap} \label{Sec-diminc-proof}  
We combine the results of the three previous subsections to show that the dimensional properties of a prescribed measure $\nu$ on $X$ are improved under the action of the $\mu$-random walk on $X$. This is \Cref{dim-increment}. By iteration, we deduce the desired bootstrap to high dimension, \Cref{high-dim}.

\bigskip
Recall that $\Lyap>0$ denotes the top Lyapunov exponent of the $\Ad_{\star}\mu$-random walk on $\kg$, see \eqref{def-Lyap}.

\begin{proposition}[Dimension increment] \label{dim-increment}
Let $\kappa, \eps, \rho \in (0,1/10)$, $\alpha \in {[\kappa, 1-\kappa]}$, $\tau \geq 0$ be some parameters.
Consider on $X$ a Borel measure $\nu$ of mass at most $1$, which is $(\alpha,  \cB_{[\rho^{2/3}, \rho^{1/3}]}, \tau)$-robust, and supported on $\{\inj\geq \delta^\eps\}$. Denote by $n_{\rho} \geq 0$ the integer part of $\frac{1}{10 \Lyap} \abs{\log \rho}$.

Assume $\eps, \rho \lll_{ \kappa} 1$,
then
$$
\text{$\mu^{*n_{\rho}}*\nu$ is $(\alpha+\eps, \cB_{\rho^{1/2}}, \tau+\rho^{\eps})$-robust}.
$$
\end{proposition}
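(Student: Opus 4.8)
The strategy is to upgrade the random‐walk average $\mu^{*n_\rho}*\nu$ by working in the linearizing charts of \Cref{lin-charts} and applying the supercritical multislicing estimate \Cref{sup-mult} to the random partial flag $\sV(g) = (V_1(g),V_2(g))$ that carries the box $\Ad(g^{-1})B^\kg_\rho$, as made explicit in \eqref{comparison-box}. First I would fix the auxiliary scale at which the charts operate. Writing $n = n_\rho \simeq \tfrac{1}{10\Lyap}\abs{\log\rho}$, the large deviation principle for $(\ttr_g)_{g\sim\mu^{*n}}$ (together with \Cref{finite-time-reg}(i) to control $\norm{\ttb_g}$) guarantees that for all $g$ outside a set of $\mu^{*n}$-measure $\ll \rho^{c}$, one has $\ttr_g \in [\rho^{1/8},\rho^{1/16}]$ say, and $\norm{\ttb_g}\leq \rho^{-\eps/100}$; hence by \eqref{comparison-box} the box $\Ad(g^{-1})B^\kg_\rho$ is comparable, up to a factor $L_g = O(\rho^{-\eps/10})$ that is harmless above scale $\rho$, to a multislicing box $B^{\sV(g)}_{\rho^{\bt}}$ for a fixed admissible $\bt = (t_1,t_2,t_3)$ with $0<t_1<t_2<t_3\leq 1$ determined by $\ttr_g$ (we can discretize the range of $\ttr_g$ into finitely many such $\bt$'s, losing only a constant). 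Choosing $\delta = \rho^{1/4}$, $\eta$ a small fixed constant, and using the effective recurrence \Cref{effective-recurrence}, at most a $\rho^c$-fraction of the mass of $\mu^{*n}*\nu$ sits outside $\{\inj \geq \eta\}$, so the region where the chart $\varphi$ is defined captures almost all of the relevant mass; this portion goes into the error component $\nu''$.

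Second, I would verify the hypotheses of \Cref{sup-mult}. The non-concentration of each component: the law of $V_1(g) = \Ad(u(-\ttb_g))\kg_-$ as $g\sim\mu^{*n}$ satisfies $\MNC$ by \Cref{relative-angle} (with $q=d+1$, $m=d$, since $\dim\kg = d(d+2) = (d+1)\dim\kg_- + d$), and the law of $V_2(g)$ satisfies $\MNC^\perp$ by \Cref{relative-angle-2}; both with parameters $(\rho,\rho^{-\eps},c)$ once $n \geq \abs{\log\rho}$, which holds since $n_\rho \simeq \abs{\log\rho}$ up to the constant $\tfrac{1}{10\Lyap}$ — here one must be slightly careful and possibly shrink the constant in the definition of $n_\rho$ or absorb it, but the $r\geq e^{-n}$ threshold in \Cref{relative-angle} is met. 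The Frostman condition on $\nu$: since $\nu$ is $(\alpha,\cB_{[\rho,\rho^\eps]},\tau)$-robust, after removing the mass-$\tau$ part we have $\nu'(B_r y)\leq r^{\alpha\dim X}$ for $\rho\leq r\leq \rho^\eps$ and $\nu'$ supported on $\{\inj \geq \rho^\eps\}\supseteq\{\inj\geq\eta\}$ for $\eta\geq\rho^\eps$; transporting $\nu'$ through the chart $\varphi$ (which distorts balls by $O(1)$) yields a measure on $B^{\R^D}_1$, $D = \dim X = d(d+2)$, satisfying $\nu(B^{\R^D}_r + v)\leq \rho^{-O(\eps)} r^{D\alpha}$ for $r\geq\rho$ — the factor $\rho^{-O(\eps)}$ arising from the gap between scale $\rho^\eps$ and scale $1$, which is exactly the slack \Cref{sup-mult} allows.

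Third, I would assemble the conclusion. By Fubini, $\mu^{*n}*\nu(B_{\rho^{1/2}}y) = \int_G \nu(g^{-1}B_{\rho^{1/2}}y)\,\dd\mu^{*n}(g)$. For $g$ in the good set, $g^{-1}B_{\rho^{1/2}}y$ is (in the chart) an additive translate of $\Ad(g^{-1})B^\kg_{\rho^{1/2}}$, which after the scaling normalization $\rho \leftrightarrow \rho^{1/2}$ is a multislicing box of the required shape at the base scale; applying \Cref{sup-mult} at scale $\rho^{1/2}$ — which is legitimate since $\rho^{1/2}\geq\rho$ and all the $\MNC$/Frostman bounds persist at this coarser scale, with $\eps$ replaced by $2\eps$ — we get, for $\sV(g)$ outside an exceptional event of $\Xi$-measure $\leq \rho^{c\eps}$, a set $A_{\sV(g)}$ carrying all but $\rho^{c\eps}$ of the mass on which $\nu(B^{\sV(g)}_{(\rho^{1/2})^\bt} + v)\leq \leb(B^{\sV(g)}_{(\rho^{1/2})^\bt})^{\alpha+2\eps} \leq \rho^{(1/2)(\alpha+\eps)\dim X}$ for a suitable bookkeeping of exponents. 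Integrating over $g$, the exceptional contributions (bad $g$, $\sV(g)\in\cE$, mass outside $A_{\sV(g)}$, mass outside $\{\inj\geq\eta\}$) are each $\ll\rho^{c\eps}$ and get folded into the $\tau + \rho^\eps$ error term (adjusting $\eps$), while the main term gives the bound $(\rho^{1/2})^{(\alpha+\eps)\dim X}$ uniformly in $y$, and the injectivity support condition \eqref{eq:injrad} at scale $\rho^{1/2}$ is inherited from $\{\inj\geq\eta\}$. This establishes $(\alpha+\eps,\cB_{\rho^{1/2}},\tau+\rho^\eps)$-robustness.

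\textbf{Main obstacle.} The delicate point is the interface between the three scale regimes — the microscopic chart scale $\delta = \rho^{1/4}$ of \Cref{lin-charts}, the Frostman window $[\rho,\rho^\eps]$ of the robustness hypothesis, and the base scale $\rho^{1/2}$ of the output — and ensuring that the polynomially-bounded distortions $L_g = O((1+\norm{\ttb_g})^{d+1})$ in \eqref{comparison-box}, the $O(1)$ chart distortions, and the $\rho^{-\eps}$ slack in \Cref{sup-mult} all compose to leave a genuine $+\eps$ gain rather than being swallowed. This requires choosing the hierarchy of small parameters in the order $\eps \lll$ (chart constants) $\lll \kappa$, and checking that the large-deviation exponent controlling $\ttr_g$ and $\norm{\ttb_g}$ is large compared to $\eps$ — precisely the kind of bookkeeping carried out in \cite[Section 4]{BHZ24} for $d=1$, now with the multislicing input \Cref{sup-mult} replacing the single projection theorem used there.
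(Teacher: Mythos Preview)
Your proposal follows essentially the same approach as the paper: linearize via \Cref{lin-charts}, verify the $\MNC$/$\MNC^\perp$ hypotheses through \Cref{relative-angle} and \Cref{relative-angle-2}, apply the multislicing estimate \Cref{sup-mult} to the pushforward $\tilde\nu = \varphi_\star\nu'$, and reassemble using Fubini and effective recurrence. The paper's proof does exactly this, with $\eta = \rho^\eps$, $\delta = \rho^{1/3}$, and the explicit box parameter $\bt = (2/5, 1/2, 3/5)$.

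One correction is needed in your parameter choice. You take $\eta$ to be a small \emph{fixed} constant and then write $\{\inj \geq \rho^\eps\} \supseteq \{\inj \geq \eta\}$; but this inclusion goes the wrong way for your purposes. The chart $\varphi$ is defined only on $\{\inj \geq \eta\}$, so to transport all of $\nu'$ through $\varphi$ you need $\supp\nu' \subseteq \{\inj \geq \eta\}$, i.e.\ $\eta \leq \rho^\eps$. With $\eta$ fixed and $\rho \to 0$ this fails, and the mass $\nu'(\{\rho^\eps \leq \inj < \eta\})$ need not be $O(\rho^\eps)$. The paper resolves this by taking $\eta = \rho^\eps$, so the chart domain coincides exactly with the region where $\nu'$ lives; effective recurrence is then used (as you also indicate) only on the \emph{output} $\mu^{*n}*\nu$ to secure the support condition at scale $\rho^{1/2}$. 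With this adjustment your argument goes through.
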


\begin{proof}
We may assume $\tau=0$, and $\rho$ small enough in terms of $\eps$ as well (not only $\Lambda,\mu,\kappa$). We write $n=n_{\rho}$.
By \Cref{effective-recurrence} integrated over $\nu$, we have
\[
\mu^{*n} * \nu \bigl\{ \inj \leq \rho^{1/2} \bigr\} \ll \rho^{c /2} (e^{-c n} \rho^{-C \eps} + 1)
\]
for some constants $c > 0, C > 1$ depending on $\Lambda$ and $\mu$.
We can require $\eps \lll \frac{c}{C(\ell + 1)}$ and $\rho \lll_{c}1$ so that this leads to $\mu^{*n} * \nu \{ \inj \leq \rho^{1/2} \} \leq \frac{\rho^{\eps}}{2}$.
Thus, it remains to show that $\mu^{*n} * \nu$ can be written as a sum $\mu^{*n} * \nu= \nu' + \nu''$ of Borel measures satisfying $\nu''(X) \leq \frac{\rho^{\eps}}{2}$ and
\begin{equation}
\label{eq:dim-incre}
\sup_{y \in X} \nu'\bigl(B_{\rho^{1/2}} y\bigr) \leq \rho^{\frac{1}{2}(\alpha + \eps)\dim X }.
\end{equation}

To this end, we first linearize the situation by looking through the covering of charts from \S\ref{Sec-lin-charts}.
More precisely, we apply \Cref{lin-charts} with parameter $\delta=\rho^{1/3}$.
This yields a map $\varphi : \{\inj \geq \rho^{1/3}\} \rightarrow B^\kg_{1}$, we set $\tnu=\varphi_{\star}\nu$.
The assumption that $\nu$ is $(\alpha,  \cB_{[\rho^{2/3}, \rho^{1/3}]}, 0)$-robust and has mass at most $1$ implies, via \Cref{lin-charts} item 1) and provided $\rho\lll_{\eps}1$, that for every $r \in [\rho^{2/3}, \rho^{1/3}]$,
$$\sup_{v\in \kg} \tnu(B^\kg_{r}+v)\leq \rho^{-\eps \dim X} r^{\alpha \dim X}.$$

We now aim to apply \Cref{sup-mult} to the measure $\tnu$, and for the random box $\Ad(g^{-1})B^\kg_{\rho^{1/2}}$ where $g\sim \mu^{*n}$, or rather its close companion
$$B^{\sV_{g}}_{\rho^\bt}:=
B^{\Ad(u(-\ttb_{g}))\kg_{-}}_{\rho^{2/5}} + B^{\Ad(u(-\ttb_{g}))\kg_{\leq 0}}_{\rho^{1/2}} + B^{\kg}_{\rho^{3/5}}$$
which is a good approximation of $\Ad(g^{-1})B^\kg_{\rho^{1/2}}$ (by \eqref{comparison-box-AdgB} below), and whose partial flag we know how to control thanks to \S\ref{Sec-nc-ineq}. Indeed, by \Cref{relative-angle} and \Cref{relative-angle-2}, the distributions of $(\Ad(u(-\ttb_{g}))\kg_{-})_{g\sim \mu^{*n}}$ and $(\Ad(u(-\ttb_{g}))\kg_{\leq 0})_{g\sim \mu^{*n}}$ satisfy respectively $\MNC$ and $\MNC^\perp$ with parameters $(e^{-n}, C, c)$, or equivalently $(\rho, C, c)$ up to dividing $c$ by $11 \Lyap$. Here $C,c>0$ are constants that only depend on $\mu$.

Provided $\rho, \eps\lll 1$, the multislicing \Cref{sup-mult} yields a subset $E_{1} \subseteq G$ and some constant $\eps_{0}=\eps_{0}(\mu)>0$ such that $\mu^{*n}(E_{1})\leq \rho^{\eps_{0}}$ and for $g\in G \setminus E_{1}$, there exists a set $\tA_{g} \subseteq \kg$ with $\tnu(\kg \setminus \tA_{g}) \leq \rho^{\eps_{0}}$
 and such that for every $v \in \kg$,
\begin{align} \label{tnu-sup-bound}
 \tnu_{ |\tA_{g}} \left(B^{\sV_{g}}_{\rho^\bt}+v\right) \leq \rho^{\eps_{0}} \leb \left(B^{\sV}_{\rho^\bt}\right)^{\alpha}.
 \end{align}

On the other hand, by the large deviation principle for $\log \ttr_{g}$ and
\Cref{moment-traj}, there exists a subset $E_{2}\subseteq G$ and a constant $\gamma=\gamma(\mu, \eps)>0$ such that $\mu^{*n}(E_{2})\ll \rho^\gamma$, and for every $g\in G\smallsetminus E_{2}$, we have $\ttr_{g} \in [\rho^{\frac{1}{10} + \eps  }, \rho^{\frac{1}{10} - \eps  }]$ and $\|\ttb_{g}\|\leq \rho^{-\eps}$.
In view of  \eqref{comparison-box}, we obtain in particular
\begin{equation} \label{comparison-box-AdgB}
B^{\sV_{g}}_{\rho^\bt}\, \overset{\rho^{-O(\eps)} }{\simeq} \,\Ad(g^{-1})B^\kg_{\rho^{1/2}}.
\end{equation}
%where the notation $A \overset{L }{\simeq} B$ means that $A$ can be covered by at $L$ additive translates of $B$, and conversely.

Equations \eqref{tnu-sup-bound} and \eqref{comparison-box-AdgB} together imply that for every $g\in G \setminus (E_{1}\cup E_{2})$ and $v\in \kg$,
\begin{equation} \label{tnu-sup-bound2}
\tnu_{ |\tA_{g}} \left( \Ad(g^{-1})B^\kg_{\rho^{1/2}} +v \right) \leq \rho^{\eps_{0} - O(\eps)} \leb \left(B^{\sV}_{\rho^\bt}\right)^{\alpha}.
\end{equation}

We now get back to $X$.
To control the distortion of $\Ad(g^{-1})B^\kg_{\rho^{1/2}}$, we observe for $g\in G \setminus E_{2}$, we have $B^\kg_{\rho^{2/3}}\subseteq \Ad(g^{-1})B^\kg_{\rho^{1/2}}\subseteq B^\kg_{\rho^{1/3}}$ provided $\eps\lll1$.
Applying \Cref{lin-charts} item $2)$ and \eqref{tnu-sup-bound2}, we deduce that for all $g\in G\smallsetminus (E_1 \cup E_2)$, setting $A_{g}=\varphi^{-1}(\tA_{g})$, we have $\nu(G \setminus A_{g})\leq \rho^{\eps_{0}}$ and for every $y \in X$,
\begin{equation*} % \label{tnu-sup-bound3}
g_\star\nu_{| A_g} (B_{\rho^{1/2}}y) = \nu_{ |A_{g}} \left( g^{-1} B_{\rho^{1/2}}y \right) \ll \rho^{\eps_{0} - O(\eps)} \leb \left(B^{\sV}_{\rho^\bt}\right)^{\alpha} \ll \rho^{\frac{1}{2}\alpha \dim X+ \eps_{0} - O(\eps)}.
\end{equation*}
Taking $\nu' = \int_{G \setminus (E_1 \cup E_2)} g_{\star} \nu_{| A_g} \dd \mu^{*n}(g)$, and $\eps \lll \eps_{0}$, $\rho \lll_{\eps}1$, this concludes the proof of \eqref{eq:dim-incre}, whence that of the proposition.
\end{proof}

We now deduce high dimension (\Cref{high-dim}) from the combination of effective recurrence (\Cref{effective-recurrence}), initial positive dimension (\Cref{Proposition: initial dimension}), and dimension increment (\Cref{dim-increment}).

\begin{proof}[Proof of \Cref{high-dim}]
Let $A>0$ be a large enough constant depending on the initial data $\mu$. Combining \Cref{Proposition: initial dimension} and \Cref{effective-recurrence}, we may assume $\kappa>0$ small enough from the start, so that for any $M>0$, for every $\rho\lll_{M}1$ and $n\geq M  \abs{\log \rho}+ A \abs{\log \inj(x)}$, the measure
\[\text{{$\mu^{*n}*\delta_{x}$} is $(\kappa, \cB_{[\rho^M, \,\rho^{1/M}]}, \rho^{\kappa/M})$-robust}. \]

By \Cref{dim-increment}, there is some small constant $\eps=\eps(\mu, \kappa)>0$, such that up to imposing from the start $M\ggg_{\kappa}1$, we have for {all} $\rho\lll_{\kappa, M}1$, all $n\geq (\frac{1}{10\Lyap}+1)M  \abs{\log \rho}+ A \abs{\log \inj(x)}$ and $r\in [\rho^{3M/2}, \rho^{3/M}]$,
\[\text{{$\mu^{*n}*\delta_x$} is $(\kappa+2\eps, \cB_{r^{1/2}}, 2\rho^{\kappa/M})$-robust}. \]
%Note however that \Cref{dim-increment} assumes non-concentration on a wide range of scales, while the output dimensional increment only concerns a specific scale. In order to iterate the bootstrap, we first need to combine those single-scale increments. For this, we rely on the following lemma.
These estimates for single scales can be combined using \cite[Lemma 4.5]{BH24} to get under the same conditions:
\[\text{{$\mu^{*n}*\delta_x$} is $(\kappa+\eps, \cB_{[\rho^{3M/4}, \rho^{3/(2M)}]}, O_{\kappa,M}(\rho^{\kappa/M}))$-robust}. \]

The argument in the last paragraph can be applied iteratively, adding at each step the value $+\eps$ to the dimension provided that the latter is not yet above $1-\kappa$. % and provided $M\ggg_{\kappa,k}1$.
Noting the value of $\eps$ only depends on $\mu, \kappa$, we reach dimension $1-\kappa$ in at most $m := \lceil \eps^{-1} \rceil$ steps. Iteration is allowed provided that $M$ is chosen large enough to satisfy $(\frac{3}{4})^{m}M>(\frac{3}{2})^{m}\frac{1}{M}$, i.e. $M> 2^{m/2}$.
We could further require  $M > (3/2)^m$ from the beginning to guarantee that $\rho$ belongs to the range of scales for which we have $(1 - \kappa)$-robustness when the iteration ends.
This concludes the proof.
\end{proof}

\section{From high dimension to equidistribution}\label{Sec-equidistribution}
In this section, we establish \Cref{Khintchine-dyn} and \Cref{mu^n-equidistribution}. We further establish a double equidistribution estimate (\Cref{decorrelation-sigma}) which will be useful to prove the divergent case of \Cref{Khintchine-self-similar}.

We let $(\eta_t)_{t>0}$ denote the one-parameter family of probability measures on $G$ defined by
\[\dd\eta_t :=a(t)u(\bs)\dd\sigma(\bs).\]
The next proposition states that a probability measure $\nu$ on $X$ with dimension close to $\dim X$ equidistributes with exponential rate under convolution with $\eta_{t}$. It is in fact slightly more precise as the dimension assumption on $\nu$ concerns only a single scale $\rho$, and equidistribution is guaranteed for a corresponding interval of times $t\in [\rho^{-1/2}, \rho^{-1/4}]$.

\begin{proposition}\label{endgame}
There exist $\kappa,\rho_0>0$ such that the following holds for all $\rho\in (0,\rho_0]$ and $\tau \in \R_{\geq 0}$.

Let $\nu$ be a Borel measure on $X$ which is $(1-\kappa,\cB_{\rho},\tau)$-robust with $\nu(X)\leq 1$. Set $l=\lceil\frac{1}{2}\dim \SO(d+1) \rceil$. Then for all $t\in [\rho^{-1/2}, \rho^{-1/4}]$, for all $f\in B_{\infty,l}^{\infty}(X)$ with $m_X(f)=0$, we have
\begin{equation} \label{eq:decay-eta-t}
|\eta_t*\nu(f)|\leq (\rho^{\kappa}+\tau)\cS_{\infty,l}(f).
\end{equation}
\end{proposition}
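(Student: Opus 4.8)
The plan is to thicken $\nu$ into an absolutely continuous measure with controlled $L^2$-density, to replace the expanding translate $\eta_t*\nu$ by a step of the $\mu$-random walk via \Cref{lm:cocycle}, and to conclude by the spectral gap of the Markov operator $f\mapsto\mu*f$ on $L^2(X)$. Write $D=\dim X$. First I would split $\nu=\nu'+\nu''$ as in \Cref{robust-measure}, so that $\nu''(X)\leq\tau$ while $\nu'(X)\leq1$, $\nu'$ is supported on $\{\inj\geq\rho\}$, and $\nu'(B_\rho y)\leq\rho^{(1-\kappa)D}$ for all $y\in X$. The $\nu''$-part is harmless: $|\eta_t*\nu''(f)|\leq\tau\|f\|_\infty\leq\tau\,\cS_{\infty,l}(f)$, using $\|f\|_\infty\leq\cS_{\infty,l}(f)$ for mean-zero $f$. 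It thus remains to estimate $|\eta_t*\nu'(f)|$. Let $\psi$ be the density, with respect to $m_G$, of the uniform probability measure on $B_\rho$, and set $\tilde\nu:=\int_X\bigl(\int_G\delta_{gx}\,\psi(g)\,\dd m_G(g)\bigr)\dd\nu'(x)$. Since $\nu'$ lives on $\{\inj\geq\rho\}$, the measure $\tilde\nu$ is absolutely continuous, supported on $\{\inj\geq\rho/2\}$, with density $\phi$ satisfying $\phi(z)\ll\rho^{-D}\nu'(B_\rho z)\leq\rho^{-\kappa D}$; in particular $m_X(\phi)=\nu'(X)\leq1$, $\|\phi\|_{L^2(X)}\leq\rho^{-\kappa D/2}$, and, by a covering argument and \Cref{lm:volCusp}, $\tilde\nu\{\inj\leq s\}\ll\rho^{-\kappa D}s^{c_0}$ for $s\geq\rho$.

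To compare $\eta_t*\nu'(f)$ with $\eta_t*\tilde\nu(f)$, I would use the elementary bound $|\tilde\nu(h)-\nu'(h)|\leq\rho\,\Lip(h)$ for Lipschitz $h$, applied with $h=f\circ L_{g_0}$, $g_0=a(t)u(\bs)\in\supp\eta_t$ and $L_{g_0}$ left translation, together with $\Lip(f\circ L_{g_0})\ll\|\Ad(g_0)\|\,\cS_{\infty,l}(f)\ll t(1+\|\bs\|)^2\,\cS_{\infty,l}(f)$. Integrating over $\bs\sim\sigma$ produces a bound of the form $\rho\,t\,\cS_{\infty,l}(f)\int(1+\|\bs\|)^2\,\dd\sigma(\bs)$, in which the last integral may diverge since $\sigma$ has only a finite moment of some order $\gamma>0$ (\Cref{sigma-moment}). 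I would therefore first discard the $\sigma$-mass of $\{\|\bs\|>\rho^{-\eps}\}$, which is $\ll\rho^{\eps\gamma}$ and contributes $\ll\rho^{\eps\gamma}\|f\|_\infty$ to both $\eta_t*\nu'(f)$ and $\eta_t*\tilde\nu(f)$; on the complement, and using $t\leq\rho^{-1/2}$, the error is $\ll\rho^{1/2-2\eps}\cS_{\infty,l}(f)$. Hence $|\eta_t*\nu'(f)-\eta_t*\tilde\nu(f)|\ll(\rho^{\eps\gamma}+\rho^{1/2-2\eps})\cS_{\infty,l}(f)$, which is admissible for $\eps<1/4$ and $\kappa<1/2-2\eps$.

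Next, set $n=\lceil\Lyap^{-1}\log t\rceil$, so $n\gg|\log\rho|$ since $t\geq\rho^{-1/4}$. Discarding the part of $\tilde\nu$ carried by $\{\inj<\rho^\eps\}$ (of mass $\ll\rho^{\eps c_0-\kappa D}$, with contribution $\ll\rho^{\eps c_0-\kappa D}\|f\|_\infty$ everywhere), \Cref{lm:cocycle} lets me replace $\eta_t$ acting on the remaining measure $\tilde\nu_1$ by (a bounded left-perturbation of) the $n$-step distribution of the $\mu$-walk, at a cost $\ll\rho^{-\eps}e^{-c_1 n}\cS_{\infty,l}(f)\ll\rho^{c_1/(4\Lyap)-\eps}\cS_{\infty,l}(f)$; the perturbation is by a bounded element, so it preserves the vanishing of the mean, and the problem reduces to bounding $|\mu^{*n}*\tilde\nu_1(f)|$. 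Writing $\phi_1$ for the density of $\tilde\nu_1$ (so $\|\phi_1\|_{L^2}\leq\rho^{-\kappa D/2}$, $m_X(\phi_1)\leq1$), the spectral gap of $f\mapsto\mu*f$ on $L^2_0(X)$ gives $\|\mu^{*n}*\tilde\nu_1-m_X(\phi_1)\,m_X\|_{L^2(X)}\ll e^{-\delta_0 n}\|\phi_1-m_X(\phi_1)\|_{L^2}\ll\rho^{\delta_0/(4\Lyap)-\kappa D/2}$; since $m_X(f)=0$, Cauchy--Schwarz yields $|\mu^{*n}*\tilde\nu_1(f)|\ll\rho^{\delta_0/(4\Lyap)-\kappa D/2}\|f\|_{L^2}\leq\rho^{\delta_0/(4\Lyap)-\kappa D/2}\cS_{\infty,l}(f)$. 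Collecting the four contributions, choosing $\eps$ small and then $\kappa$ small enough --- depending only on $D,\Lyap,\delta_0,c_1,c_0,\gamma$ --- so that every exponent of $\rho$ produced above exceeds $2\kappa$, and finally $\rho_0$ small, one arrives at $|\eta_t*\nu(f)|\leq(\rho^\kappa+\tau)\cS_{\infty,l}(f)$.

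I expect the main difficulty to lie not in the spectral-gap step, which becomes immediate once the statement is reduced to an $L^2$ estimate on $\mu^{*n}*\tilde\nu_1$, but in the two truncation arguments: the merely finite (rather than exponential) moment of $\sigma$ forces a cut of the tail of $\eta_t$ before the thickening estimate can be applied, and invoking \Cref{lm:cocycle} for the measure $\tilde\nu$ instead of a point mass requires a quantitative control of the cusp concentration of $\tilde\nu$, which is provided by its Frostman property together with \Cref{lm:volCusp}.
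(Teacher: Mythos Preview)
The thickening step and the final Cauchy--Schwarz application are exactly as in the paper, and your truncation of the tail of $\sigma$ is a reasonable way to handle the Lipschitz comparison (the paper's sketch glosses over this moment issue and defers to \cite{BHZ24}). The difference lies in how the $L^2$ decay is obtained, and here your argument has a genuine gap.

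You invoke \Cref{lm:cocycle} to ``replace $\eta_t$ by (a bounded left-perturbation of) $\mu^{*n}$'', but that lemma gives
\[
\eta_t*\tilde\nu_1(f)=\int_{P'}\!\int_{\R^d}\!\int_X f\bigl(k_g\,a(t\ttr_g)\,u(\bs)\,gx\bigr)\,\dd\tilde\nu_1(x)\,\dd\sigma(\bs)\,\dd\mu^{*n}(g),
\]
where the left factor $k_g\,a(t\ttr_g)\,u(\bs)$ depends on $g$ (through $k_g$ and $\ttr_g$). Even when $t\ttr_g$ is typically bounded, this factor is \emph{not} close to the identity --- it is a random element of a bounded region of $G$ --- so there is no error term of size $e^{-c_1 n}$ separating $\eta_t*\tilde\nu_1(f)$ from $\mu^{*n}*\tilde\nu_1(f)$. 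To push your idea through one would have to partition the $g$-variable into cells on which $(k_g,\ttr_g)$ is nearly constant and apply the $L^2$ spectral gap of $P_\mu$ piecewise; that is precisely the discretisation argument the paper performs \emph{later}, in the proof of \Cref{Khintchine-dyn}, and it already relies on the present proposition.

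The paper avoids this circularity by never reducing to the $\mu$-walk here: it proves directly that the operator $P_{\eta_t}$ itself contracts $L^2_0(X)$, via
\[
\|P_{\eta_t}f\|_{L^2}^2=\int_{\R^d}\!\int_{\R^d}\bigl\langle f\bigl(a(t)u(\bs)\,\cdot\,\bigr),\,f\bigl(a(t)u(\bs')\,\cdot\,\bigr)\bigr\rangle_{L^2}\,\dd\sigma(\bs)\,\dd\sigma(\bs'),
\]
the decay of matrix coefficients $|\langle g_\star f,f\rangle|\ll\|g\|^{-\delta_0}\cS_{2,l}(f)^2$, and the non-concentration of $\sigma$ near hyperplanes (\Cref{non-conc-sigma-aff}) to control the near-diagonal contribution $\bs\approx\bs'$. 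This yields $\|P_{\eta_t}f\|_{L^2}\ll t^{-c}\cS_{2,l}(f)$, after which the thickening you already set up finishes the proof in one line.
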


\begin{proof}
The proof is similar to that of \cite[Proposition 5.1]{BHZ24}. We provide a sketch for completeness and refer the reader to \cite{BHZ24} for details.

Denote by $(P_{\eta_t})_{t>0}$ the family of Markov operators on $L^2(X)$ associated to $(\eta_{t})_{t>0}$. It is defined by: $\forall f\in L^2(X)$,
\[P_{\eta_t} f=\int_G f(g \,\cdot ) \dd\eta_t(g).\]
%Set $ L^2(X)_{0}:=\{f\in L^2(X)\,:\, m_{X}(f)=0\}$.
 The first step of the proof is to show a spectral gap property for $(P_{\eta_t})_{t>0}$ as $t\to +\infty$, namely:
there exists $c=c(G, \Lambda, \sigma)>0$ such that for all $f \in B^{\infty}_{2,l}(X)$ with $m_{X}(f)=0$, all $t>1$, one has
\begin{equation} \label{eq:decayPt}
\|P_{\eta_{t}} f\|_{L^2} \ll t^{-c} \cS_{2,l}(f).
\end{equation}
The proof of \eqref{eq:decayPt} exploits the quantitative decay of matrix coefficients (see \cite[Lemma 3]{Bekka98} and \cite[Equations (6.1), (6.9)]{EMV09}):
\begin{equation*}
\exists \delta_{0}=\delta_{0}(\Lambda)>0,\, \forall g \in G,\quad \abs{\langle f(g \,\cdot), f \rangle_{L^2}} \ll \norm{g}^{-\delta_{0}} \cS_{2,l}(f)^2,
\end{equation*}
and the non-concentration property of $\sigma$ from \Cref{non-conc-sigma-aff}, see \cite[Proposition 5.2]{BHZ24} for details.

Once \eqref{eq:decayPt} is established, we obtain \eqref{eq:decay-eta-t} as follows.  We introduce $\nu_{\rho}$ the mollification of $\nu$ at scale $\rho$, namely
\[\nu_{\rho}:= \frac{1}{m_G(B_{\rho})} \int_{B_\rho} g_{\star}\nu \dd m_{G}(g).\]
Given $f\in B_{\infty,l}^{\infty}(X)$ with $m_{X}(f)=0$, we then have for every $t> 1$,
\begin{align*}
| \eta_{t}*\nu(f)|= \abse{\int_{X} P_{\eta_{t}} f \dd \nu}
\leq \abse{\int_{X} P_{\eta_{t}} f \dd \nu - \int_{X} P_{\eta_{t}} f \dd \nu_{\rho}} \,+\, \abse{\int_{X} P_{\eta_{t}} f \dd \nu_\rho}.
\end{align*}
 The first integral in the right hand side is bounded by $\rho \cS_{\infty, 1}(P_{\eta_{t}} f )\ll \rho t \cS_{\infty, 1}(f)$. To bound the second integral, note we may assume from the start $\tau=0$. Then $\nu_{\rho}$ satisfies $\dd \nu_{\rho}(x) = \frac{\nu(B_\rho x)}{m_G(B_{\rho})} \dd m_{X}(x)\ll \rho^{-\kappa \dim X} \dd m_{X}(x)$. Applying \eqref{eq:decayPt}, we find the second term in the right hand side is bounded by $ O(t^{-c} \rho^{-\kappa \dim X} \cS_{2,l}(f)).$ The proof is concluded by taking $t\in [\rho^{-1/2}, \rho^{-1/4}]$,  $\kappa$ small enough in terms of $c, \dim X$, and $\rho_{0}$ small enough in terms of $G$, $\kappa$.
\end{proof}

In the next lemma, we invoke the self-similarity of $\sigma$ to relate $\eta_{t}$ and convolution powers of $\mu$.
%This lemma is the reason why we chose $\mu$ as such in \Cref{Sec-notations}.

\begin{lemma}[$\eta_{t}$-process vs $\mu$-walk]\label{lm:cocycle}
Given $t>0$, $n\geq 0$, we have
\[
\eta_t = \int_{P'} \delta_{k_g}*\eta_{t \ttr_{g}} *\delta_{g} \dd\mu^{*n}(g).
\]

\end{lemma}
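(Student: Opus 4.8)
The plan is to write both sides as pushforwards of $\sigma$ along explicit maps $\R^d\to G$ and then identify them using the stationarity of $\sigma$. Recall that by definition $\eta_t$ is the pushforward of $\sigma$ under the embedding $\iota_t\colon \bs\mapsto a(t)u(\bs)$. For a fixed $g\in P'$, the measure $\delta_{k_g}*\eta_{t\ttr_g}*\delta_g$ is the pushforward of $\eta_{t\ttr_g}$ under $h\mapsto k_g h g$, hence the pushforward of $\sigma$ under $\bs\mapsto k_g\,a(t\ttr_g)\,u(\bs)\,g$.

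The second step is to record the elementary group identity
\[
k_g\,a(t\ttr_g)\,u(\bs)\,g \;=\; a(t)\,u\bigl(\phi_g(\bs)\bigr),\qquad \bs\in\R^d.
\]
Substituting $g=k_g^{-1}a(\ttr_g^{-1})u(\ttb_g)$ and using that $K'$ commutes with $A'$, that $a(\cdot)$ is a homomorphism (so $a(t\ttr_g)=a(t)a(\ttr_g)$), that $k_g u(\bs)k_g^{-1}=u(O_g\bs)$, and that $a(r)u(\bs)a(r)^{-1}=u(r\bs)$, one computes
\[
k_g\,a(t\ttr_g)\,u(\bs)\,g = a(t)\,k_g a(\ttr_g)u(\bs)k_g^{-1}a(\ttr_g^{-1})u(\ttb_g) = a(t)\,u(\ttr_g O_g\bs)\,u(\ttb_g) = a(t)\,u(\ttr_g O_g\bs+\ttb_g),
\]
which equals $a(t)u(\phi_g(\bs))$ by the definition of $\phi_g$. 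It follows that $\delta_{k_g}*\eta_{t\ttr_g}*\delta_g$ is the pushforward of $(\phi_g)_{\star}\sigma$ under $\iota_t$.

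The third step is to integrate this identity over $g\sim\mu^{*n}$. Since pushing forward is linear in the measure, $\int_{P'}\delta_{k_g}*\eta_{t\ttr_g}*\delta_g\,\dd\mu^{*n}(g)$ is the pushforward under $\iota_t$ of $\int_{P'}(\phi_g)_{\star}\sigma\,\dd\mu^{*n}(g)$. Now $g\mapsto\phi_g$ is the anti-isomorphism $P'\to\Sim(\R^d)^+$ sending $\mu$ to $\lambda$; writing $\mu^{*n}$ as the law of $X_1\cdots X_n$ with $X_i$ i.i.d.\ $\sim\mu$, its image is the law of $\phi_{X_n}\cdots\phi_{X_1}$, and since $(\phi_{X_i})_i$ are i.i.d.\ $\sim\lambda$ the reversal of the factors does not change the law of the product, so $\mu^{*n}$ is sent to $\lambda^{*n}$. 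Iterating the stationarity relation $\sigma=\int\phi_{\star}\sigma\,\dd\lambda(\phi)$ gives $\sigma=\int\phi_{\star}\sigma\,\dd\lambda^{*n}(\phi)$, whence $\int_{P'}(\phi_g)_{\star}\sigma\,\dd\mu^{*n}(g)=\sigma$. Therefore the integral in question is the pushforward of $\sigma$ under $\iota_t$, i.e.\ $\eta_t$, which is the claim.

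The computation is entirely routine; the only point requiring care is the bookkeeping around the anti-isomorphism between $P'$ and $\Sim(\R^d)^+$ — one must check that the $n$-fold convolution $\mu^{*n}$ corresponds to $\lambda^{*n}$ (and not to a reversed product), so that the iterated stationarity identity $\sigma=\int\phi_{\star}\sigma\,\dd\lambda^{*n}(\phi)$ can be invoked in the correct form.
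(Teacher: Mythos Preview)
Your proof is correct and follows essentially the same approach as the paper: both hinge on the group identity $k_g\,a(t\ttr_g)\,u(\bs)\,g = a(t)\,u(\phi_g(\bs))$ and then invoke the stationarity $\lambda^{*n}*\sigma=\sigma$. You have simply unpacked in more detail the bookkeeping around the anti-isomorphism (why $\mu^{*n}$ corresponds to $\lambda^{*n}$), which the paper leaves implicit.
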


\begin{proof}
We observe that for any $\bs\in \R^d$ and $g\in P'$,
\begin{align*}
  k_g a(t\ttr_g)u(\bs)g&=a(t)a(\ttr_g)k_gu(\bs)k_g^{-1}a(\ttr_g^{-1})u(\ttb_g)\\
  &=a(t)u(\ttr_g O_g\bs+\ttb_g)\\
  &=a(t)u(\phi_g(\bs)).
\end{align*}
The lemma follows by the equality $\lambda^{*n}*\sigma=\sigma$.
\end{proof}

We are now able to conclude the proof of \Cref{Khintchine-dyn}. The strategy is to use \Cref{lm:cocycle} to decompose $\eta_{t}$ as a random walk part $\mu^{*n}$ (where $n=n(\mu, t)$) which generates high dimension thanks to \Cref{high-dim}, followed by some $\eta_{t'}$-part (with $t'=t'(\mu,t)$) which will convert this high dimension into equidistribution via \Cref{endgame}. The apparent obstruction is that the decomposition appearing \Cref{lm:cocycle} does not separate the $\mu$ part and the $\eta$ part, because the term $\delta_{k_g}*\eta_{t \ttr_{g}}$ involves $g$. To deal with this obstacle, we partition the space of parameters $g$ into $O(\rho^{-\alpha})$ subsets ($\alpha\lll_{\kappa}1$) in which $\delta_{k_g}*\eta_{t \ttr_{g}}$ hardly depends on $g$.

\begin{proof}[Proof of \Cref{Khintchine-dyn}]
%The deduction of this theorem from the results obtained previously is similar to \cite{BHZ24} (proof of \cite[Theorem B']{BHZ24}). We sketch it for completeness and refer the reader to \cite{BHZ24} for details.

Up to replacing $\lambda$ by a suitable convolution power until a stopping time (as in \cite[Lemma 5.3]{BHZ24}), we may assume that $\lambda$-almost every $\phi$ is orientation preserving. This way we place ourselves in the context of \Cref{Sec-notations}, and all results obtained until now are applicable.

Observe that given $t,r_0,r_1>0$ and $k_0,k_1\in K'$, we have
\[\delta_{k_0}*\eta_{t r_0}=\delta_{k_0k_1^{-1} a(r_0r_1^{-1})}*\delta_{k_1}*\eta_{tr_1}.\]
Hence, given any Borel measure $\nu$ on $X$ and $f\in B_{\infty,l}^{\infty}(X)$, we have
\begin{align}\label{etat-01}
  \abs{\delta_{k_0}*\eta_{tr_0}*\nu(f)-\delta_{k_1}*\eta_{tr_1}*\nu(f)}\ll (\norm{\Id-k_0k_1^{-1}}+\abs{\log r_0r_1^{-1}})\cS_{\infty,l}(f)\nu(X).
\end{align}
Let $\alpha, \rho>0$ be parameters to be specified later, with $\alpha$ depending only on $\Lambda,\mu$, and $\rho$ on $\Lambda, \mu, t$. We discretize the set of  $k_g$ and $\ttr_g$ for $g\in P'$ as follows. We partition the compact group $K'$ into $\rho^{-O(\alpha)}$ disjoint measurable sets $\{K'_i:i\in I\}$ such that each $K'_i$ is contained in a ball of radius $\rho^{\alpha}$ centered at some $k_i\in K'$.
We set \[\sR:=\{(1+\rho^\alpha)^k\,:\, k\in \Z\}.\]
For $i\in I, r\in \sR$, we define
\[P'(i,r):=\{g\in P'\,: \,k_g\in K'_i\ \text{ and } \ttr_g\in [r, r(1+\rho^\alpha) [\}. \]
Then
\[P'=\bigsqcup_{r\in \sR,i\in I} P'(i, r).\]
Hence, by \Cref{lm:cocycle} and \eqref{etat-01}, we obtain for every $n\geq 1$,
\begin{align}\label{eq-dec1}
\abs{\eta_t*\delta_x(f)}&= \abs{\int_{P'} \delta_{k_g}*\eta_{t\ttr_g}*\delta_g*\delta_x(f) d\mu^{*n}(g)} \nonumber \\
&\leq \sum_{i\in I, r\in \sR} \abs{\delta_{k_i}*\eta_{tr}*\mu^{*n}_{|P'(i,r)}*\delta_x(f)}+O(\rho^{\alpha}\cS_{\infty,l}(f)).
\end{align}

We now bound each term in the sum from \eqref{eq-dec1}. Let $\kappa=\kappa(\Lambda,\mu)>0$ as in \Cref{endgame}. Assume $\inj(x) \geq \rho$.
By \Cref{high-dim}, there are constants $C=C(\Lambda,\mu) > 1$ and $\eps_1=\eps_{1}(\Lambda,\mu) > 0$ such that, provided $\rho\lll 1$, the measure $\mu^{*n} * \delta_x$ on $X$ is $(1 - \kappa, \cB_\rho, \rho^{\eps_1})$-robust for any $n \geq C \abs{\log \rho}$.
For the rest of this proof, we choose $\rho$ and $n$ depending on $\Lambda, \mu, t$ so that
\begin{equation}
\label{eq:t in rho}
t = \rho^{-C \Lyap - 3/8} \,\,\,\,\,\,\,\,\,\,\,\,\,\,\,\,\,\,\,\,\,\,\,\,\,\,\,\,\,\,\,\, n=\lceil C \abs{\log \rho}\rceil,
\end{equation}
where $\Lyap$ has been defined in \eqref{def-Lyap}. Consider
\[
\sR' = \set{r \in \sR : \rho^{-1/4} \leq t r \leq \rho^{-1/2}} = \sR \cap [\rho^{C\Lyap +1/8}, \rho^{C\Lyap - 1/8}].
\]

On the one hand, by the principle of large deviations and our choice for $n$, we have for some $\eps_{2}=\eps_{2}(\mu, C)>0$,
\begin{equation}\label{eq:bound1-Th1.2}
\mu^{*n} \{g\,:\, \ttr_{g} \notin \sR' \} \leq \rho^{\eps_{2}}
\end{equation}

On the other hand, for $i\in I$, $r\in \sR'$, observing that $\mu^{*n}_{|P'(i,r)}*\delta_x\leq \mu^{*n}*\delta_x$ is $(1 - \kappa, \cB_\rho, \rho^{\eps_1})$-robust, and $\cS_{\infty,l}(f\circ k_{i})\ll \cS_{\infty,l}(f)$ , $m_X(f\circ k_{i})=m_X(f)$, we get via \Cref{endgame}, for $t\ggg1$,
\begin{equation}\label{eq:bound2-Th1.2}
\abs{\eta_{t r} * \mu^{*n}_{|P'(i,r)} * \delta_x (f)} \leq (\rho^\kappa + \rho^{\eps_1}) \cS_{\infty, l}(f).
\end{equation}

Combining \eqref{eq:t in rho}, \eqref{eq:bound1-Th1.2}, \eqref{eq:bound2-Th1.2} and choosing $\alpha$ small enough in terms of $\eps_{1},\eps_{2}, \kappa$, we obtain the bound announced by \Cref{Khintchine-dyn}. So far, we have worked under the condition $\inj(x) \geq t^{-(C \Lyap + 3/8)^{-1}}$. Noting the claim in \Cref{Khintchine-dyn} is trivial otherwise, the proof  is complete.

\end{proof}

\begin{proof}[Proof of \Cref{mu^n-equidistribution}]
By a similar argument, we see  \Cref{endgame} is still valid with $(t,\eta_{t})$ replaced by $(e^{k}, \mu^k)$.
Replacing \eqref{lm:cocycle} by the equality $\mu^{*(k+n)}=\mu^{*k}*\mu^{*n}$, we can then argue as in the proof of \Cref{Khintchine-dyn}. Details are left to the reader.
\end{proof}

\noindent{\bf Double equidistribution}.
We conclude this section by upgrading \Cref{Khintchine-dyn} into a double equidistribution property. This upgrade will play a role to prove the divergent case of the Khintchine dichotomy.

 Given bounded measurable functions $f_1,f_2:X\to \R$ and $t_2\geq t_1\geq 0$, we introduce the double equidistribution coefficient
\begin{equation} \label{double-eq-function}
\Delta^{\sigma}_{f_1,f_2}(t_1,t_2) := \abse{\int_{\R^d} f_{1}\bigl(a(t_1)u(\bs)x_{0}\bigr) f_{2} \bigl(a(t_2)u(\bs)x_{0}\bigr) \dd \sigma(\bs) - m_{X}(f_{1})m_{X}(f_{2})}.
\end{equation}
We recall that in the above, $x_0=\Lambda/\Lambda$ denotes the identity coset of $X$.

The following proposition gives a quantitative upper bound on $\Delta^{\sigma}_{f_1,f_2}(t_1,t_2) $ provided the times $t_1,t_2$ are sufficiently separated.

\begin{proposition}[Effective double equidistribution of expanded fractals] \label{decorrelation-sigma}
For every $\eta>0$, there exist $C, c>0$ such that for all $ t_{1},t_{2}>1$ with $t_2\geq t^{1+\eta}_{1}$ and $f_{1}, f_{2}\in B^\infty_{\infty,l}(X)$, we have
\begin{equation} \label{double-eq-prop-sigma}
\Delta^{\sigma}_{f_1,f_2}(t_1,t_2)
\leq C \cS_{\infty, l}(f_1) \abs{m_{X}(f_2)} t_1^{-c} + C\cS_{\infty, l}(f_{1})\cS_{\infty, l}(f_{2}) t_2^{-c}.
\end{equation}
\end{proposition}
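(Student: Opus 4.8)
The plan is to deduce Proposition~\ref{decorrelation-sigma} from the single equidistribution result in Theorem~\ref{Khintchine-dyn} by exploiting the self-similarity of $\sigma$ through Lemma~\ref{lm:cocycle}, in much the same spirit as the proof of Theorem~\ref{Khintchine-dyn} itself, but now carrying two test functions along two different diagonal times. First I would fix $\eta>0$, set $t_{1},t_{2}>1$ with $t_{2}\geq t_{1}^{1+\eta}$, and choose an intermediate scale: put $s=t_{2}/t_{1}\geq t_{1}^{\eta}$ and apply Lemma~\ref{lm:cocycle} at a well-chosen number of steps $n$ to the \emph{larger} time $t_{2}$, so that $\mu^{*n}$ contracts by roughly $s^{-1}$ on average, i.e.\ $n\simeq \tfrac{1}{\Lyap}\log s$. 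Concretely, the identity
\[
\int_{\R^d} f_{1}(a(t_1)u(\bs)x_0) f_{2}(a(t_2)u(\bs)x_0)\dd\sigma(\bs)
= \int_{P'}\int_{\R^d} f_{1}\bigl(a(t_1)u(\phi_g(\bs))x_0\bigr) f_{2}\bigl(a(t_2)u(\phi_g(\bs))x_0\bigr)\dd\sigma(\bs)\dd\mu^{*n}(g)
\]
together with the cocycle computation $a(t_2)u(\phi_g(\bs))x_0 = k_g^{-1}a(t_2\ttr_g)u(\bs)g x_0$ (and similarly for $t_1$), lets me rewrite the left-hand side as an integral over $g$ of $\int f_1(k_g^{-1}a(t_1\ttr_g)u(\bs)gx_0) f_2(k_g^{-1}a(t_2\ttr_g)u(\bs)gx_0)\dd\sigma(\bs)$.

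The key idea is then a two-scale splitting: for a typical $g$ (by the large deviation principle for $\log\ttr_g$ under $\mu^{*n}$, cf.\ \Cref{finite-time-reg} and the arguments used in the proof of \Cref{Khintchine-dyn}), the larger time satisfies $t_2\ttr_g \simeq t_1^{1+\eta}\cdot t_1 \cdot \ttr_g$, which is still a large power of $t_1$, while the smaller time $t_1\ttr_g$ is of order $t_1/s \ll t_1$, which may be small — but this is exactly the regime where one does \emph{not} apply equidistribution to $f_1$. Instead, I would freeze the inner $f_1$-factor at scale by a discretization of $(k_g,\ttr_g)$ into $t_1^{O(\alpha)}$ pieces on which $k_g^{-1}a(t_1\ttr_g)$ is nearly constant (reusing inequality \eqref{etat-01}-type estimates from the proof of \Cref{Khintchine-dyn}), so that on each piece the double integral becomes $\int h(\bs)\, f_2(k_g^{-1}a(t_2\ttr_g)u(\bs)gx_0)\dd\sigma(\bs)$ with $h$ a fixed bounded Lipschitz function (a translate of $f_1$) of controlled norm. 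Now I apply \Cref{Khintchine-dyn} to the function $f_2$ along the large time $t_2\ttr_g$, based at the (varying but controlled) point $g x_0$: since $\inj(gx_0)\gg \|\Ad(g)\|^{-O(1)}$ and $\|\Ad(g)\|$ is subpolynomial in $t_1$ off a small-probability set, the error term $\cS_{\infty,l}(f_2)\inj(gx_0)^{-1}(t_2\ttr_g)^{-c}$ is bounded by $\cS_{\infty,l}(f_2)\,t_1^{-c'}$ for a smaller $c'>0$. This replaces the $f_2$-factor by its mean $m_X(f_2)$ up to that error, leaving $m_X(f_2)\int_{\R^d} f_1(a(t_1)u(\bs)x_0)\dd\sigma(\bs)$ plus error; a final application of \Cref{Khintchine-dyn} to $f_1$ along time $t_1$ turns the remaining integral into $m_X(f_1)+O(\cS_{\infty,l}(f_1)t_1^{-c})$, yielding the two error terms in \eqref{double-eq-prop-sigma} (the first proportional to $|m_X(f_2)|$, the second to the product of both Sobolev norms, coming from the $f_2$-step).

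The main obstacle will be handling the basepoint $gx_0$ in the application of \Cref{Khintchine-dyn} to $f_2$: one needs a quantitative control of $\inj(gx_0)^{-1}$ that is uniform enough after integrating against $\mu^{*n}$. This is where I would invoke the finite exponential moment of $\mu$ together with the growth estimate $\inj(gx_0)^{-1}\ll \|\Ad(g)\|^{O(1)}$ (from \Cref{height function}(2) and \Cref{inj-comparison}, or directly from $\|\ttb_g\|,\ttr_g^{\pm1}$ bounds), discarding an exceptional set of $g$ of $\mu^{*n}$-measure $\leq t_1^{-c}$ on which these quantities are too large; on this exceptional set one just uses the trivial bound $\|f_1\|_\infty\|f_2\|_\infty$. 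A secondary technical point is that, as in \Cref{Khintchine-dyn}, one should first reduce to the orientation-preserving case by passing to a convolution power of $\lambda$ up to a stopping time, so that the setup of \Cref{Sec-notations} applies; I would simply cite \cite[Lemma 5.3]{BHZ24} for this reduction. Once the bookkeeping of the two scales and the exceptional sets is in place, the two error terms fall out by choosing the discretization parameter $\alpha\lll_{\eta} 1$ and renaming constants, exactly paralleling the end of the proof of \Cref{Khintchine-dyn}.
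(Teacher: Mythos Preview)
Your overall strategy—use \Cref{lm:cocycle} to separate the two time scales and then invoke \Cref{Khintchine-dyn} first for $f_2$ and then for $f_1$—is exactly the paper's route (which defers to \cite[Proposition~6.1]{BHZ24}). But two concrete choices in your outline would make the argument fail.

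First, the choice $n\simeq \tfrac{1}{\Lyap}\log(t_2/t_1)$ does not make $t_1\ttr_g$ small: when $t_2=t_1^{1+\eta}$ you get $t_1\ttr_g\simeq t_1^{1-\eta}$, which is large. Your discretization of $(k_g,\ttr_g)$ then yields a weight $h(\bs)=f_1\bigl(k_i\, a(t_1 r_i)\,u(\bs)\,gx_0\bigr)$ which still varies with $\bs$ at Lipschitz scale $\simeq t_1\ttr_g$, and \Cref{Khintchine-dyn} gives no estimate for weighted integrals $\int h(\bs)\,f_2\bigl(a(t)u(\bs)y\bigr)\dd\sigma(\bs)$; absorbing $h$ into the test function would blow up the Sobolev norm by $(t_2/t_1)^{O(l)}$, killing the gain. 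The fix is to take $n\simeq \tfrac{1+\delta}{\Lyap}\log t_1$ with $0<\delta<\eta$, so that typically $t_1\ttr_g\simeq t_1^{-\delta}$. Then the identity $a(t_1\ttr_g)\,u(\bs)=u(t_1\ttr_g\,\bs)\,a(t_1\ttr_g)$ shows that $f_1\bigl(k_g\, a(t_1\ttr_g)\,u(\bs)\,gx_0\bigr)$ is $\bs$-independent up to $O\bigl(t_1^{-\delta}\norm{\bs}\,\cS_{\infty,l}(f_1)\bigr)$, controlled via the finite moment of $\sigma$ (\Cref{sigma-moment}); no $(k_g,\ttr_g)$-discretization is needed to freeze $f_1$.

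Second, bounding $\inj(gx_0)^{-1}$ by the growth estimate $\inj(gx_0)^{-1}\ll\norm{\Ad g}^{O(1)}$ is too crude: with the correct $n$ one has $\norm{\Ad g}\simeq t_1^{1+\delta}$, and the resulting factor $t_1^{O(1)}$ cannot be beaten by $(t_2\ttr_g)^{-c}$ when $\eta$ is small. One should instead use effective recurrence (\Cref{effective-recurrence}), which gives $\mu^{*n}\{g:\inj(gx_0)<\rho\}\ll\rho^{c}$; choosing $\rho$ a small negative power of $t_2\ttr_g$ bounds $\inj(gx_0)^{-1}$ by a small positive power of $t_2\ttr_g$ on the good set and yields the $\cS_{\infty,l}(f_1)\cS_{\infty,l}(f_2)\,t_2^{-c}$ term, while the exceptional set contributes the same via the trivial bound. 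With these two corrections your argument goes through and matches the paper's.
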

\medskip

\begin{remark} A quantitative bound on $\Delta^{\sigma}_{f_1,f_2}(t_1,t_2)$ without separation condition on $t_{1},t_{2}$ will be extrapolated below, see Equation \eqref{eq:all-range}.
\end{remark}

\begin{proof}
The proof is the  same as that of \cite[Proposition 6.1]{BHZ24}, using \Cref{Khintchine-dyn} in the place of \cite[Theorem B']{BHZ24}.
\end{proof}

\section{Quantitative Khintchine dichotomy in $\R^d$ from equidistribution}\label{Sec-Khintchine-dich}

We show that an arbitrary probability measure $\xi$ on $\R^d$ obeys the Khintchine dichotomy provided that the pushfoward $a(t)u(\bs)\SL_{d+1}(\Z)\dd\xi(\bs)$ satisfies certain effective equidistribution properties in $\SL_{d+1}(\R)/\SL_{d+1}(\Z)$ for large $t$. Combined with \Cref{decorrelation-sigma}, this yields \Cref{Khintchine-self-similar}.

Throughout the section, notations refer to \Cref{Sec-notations}, and we further specify $\Lambda=\SL_{d+1}(\Z)$, in particular $X=\SL_{d+1}(\R)/\SL_{d+1}(\Z)$.
We also set $x_0 = \Lambda/\Lambda \in X$.

\begin{definition} Let $\xi$ be a probability measure on $\R^d$.
We say that $\xi$ satisfies the \emph{effective single equidistribution property on $X$} if there are constants $C,c>0$ and $l\in \N$ such that
\begin{multline}\label{single-eq-prop}
\forall f \in B^\infty_{\infty,l}(X),\, \forall t > 1,\\
\abse{\int_{\R^d} f\bigl(a(t) u(\bs) x_0\bigr) \dd \xi(\bs) - m_{X}(f) } \leq C \cS_{\infty, l}(f) t^{-c}.
\end{multline}

We say that $\xi$ satisfies the \emph{effective double equidistribution property on $X$} if for every $\eta > 0$, there are constants $C,c>0$ and $l\in \N$ such that
\begin{multline}\label{double-eq-prop}
\forall f_{1},f_{2} \in B^\infty_{\infty,l}(X),\, \forall t_{1} > 1,\, \forall t_{2}> t_{1}^{1+\eta},\\
\Delta^{\xi}_{f_1,f_2}(t_1,t_2)
\leq C\cS_{\infty, l}(f_1) \abs{m_{X}(f_2)} t_1^{-c} + C\cS_{\infty, l}(f_{1})\cS_{\infty, l}(f_{2}) t_2^{-c}
\end{multline}
where the notation $\Delta^{\xi}_{f_1,f_2}(t_1,t_2) $ is defined in \eqref{double-eq-function}.
\end{definition}
Taking $f_{2}=1$ and $t_{2}\to+\infty$, we see that effective double equidistribution \eqref{double-eq-prop} implies effective single equidistribution \eqref{single-eq-prop}. Note that \eqref{double-eq-prop} assumes some separation $t_{2}> t_{1}^{1+\eta}$ between $t_{1}$ and $t_{2}$. As it turns out, \eqref{double-eq-prop} (with small enough $\eta$) can in fact be automatically upgraded to the following full range estimate:
there exist (potentially different) constants $C,c > 0$, $l\in \N$, such that for every $f_1,f_2 \in B^{\infty}_{\infty,l}(X) $ and all $t_2 \geq t_1 > 1$.
\begin{multline}
\label{eq:all-range}
\Delta^{\sigma}_{f_1,f_2}(t_1,t_2)
\leq C \cS_{2,l}(f_1) \cS_{2,l}(f_2) t_1^{c} t_2^{-c} \\ + C \cS_{\infty, l}(f_1) \abs{m_{X}(f_2)} t_1^{-c} + C \cS_{\infty, l}(f_{1})\cS_{\infty, l}(f_{2}) t_2^{-c}.
\end{multline}
The implication from \eqref{double-eq-prop} to \eqref{eq:all-range} (again, parameters $C,c,l$ may differ) is explained in \cite[Section 7.1]{BHZ24}. The idea is that \eqref{double-eq-prop} implies single equidistribution, which in turn, by decay of matrix coefficients, yields effective double equidistribution in the short range regime $t_{1}\leq t_{2}\leq t_{2}^{1+\eta}$ for sufficiently small $\eta$.

\bigskip

The following result of Khalil-Luethi \cite{KL23} guarantees that effective single equidistribution implies the convergent case of the Khintchine dichotomy.

\begin{thm}[Convergent case {\cite[Theorem 9.1]{KL23}}] \label{convergent-Khintchine}
Let $\xi$ be a probability measure on $\R^d$ satisfying the effective single equidistribution property \eqref{single-eq-prop} on $\SL_{d+1}(\R)/\SL_{d+1}(\Z)$. Then for every non-increasing function $\psi: \N \to \R_{\geq 0}$ such that $\sum_{q\in \N} \psi(q)^d<\infty $, we have
$$\xi (W(\psi))=0. $$
\end{thm}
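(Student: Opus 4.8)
The plan is to combine Dani's correspondence, Siegel's mean value theorem, the effective equidistribution hypothesis \eqref{single-eq-prop}, and the Borel--Cantelli lemma, following Khalil--Luethi \cite{KL23}.

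First I would set up the Dani correspondence. For $n\ge 1$ put $q_n=\lceil e^n\rceil$, $t_n=q_n^{(d+1)/d}$ and $\rho_n=q_n^{1/d}\psi(q_n)$; fix a small $c_0>0$ (to be chosen at the end), set $\delta_n=e^{-c_0 n}$, $\tilde\rho_n=\max(\rho_n,\delta_n^2)$, and let $\tilde R_n:=(-\tilde\rho_n,\tilde\rho_n)^d\times(1/8,4e)\subset\R^{d+1}$. If $\|q\bs-\bp\|_\infty<\psi(q)$ with $q\in[e^n,e^{n+1})$, then $q\ge 1$, $q/q_n\in[1,e)$, and since $a(t_n)u(\bs)(-\bp,q)^{\top}=(q_n^{1/d}(q\bs-\bp),\,q/q_n)^{\top}$, this is a nonzero vector of the unimodular lattice $\Delta_{\bs,t_n}:=a(t_n)u(\bs)\Z^{d+1}$ lying in $\tilde R_n$ (using that $\psi$ is non-increasing and $\rho_n\le\tilde\rho_n$). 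Since only finitely many solutions have $q=0$, and each integer $q\ge 1$ admits finitely many solutions $\bp$, any $\bs\in W(\psi)$ produces such a vector for infinitely many $n$, so $W(\psi)\subseteq\limsup_n E_n$ with $E_n:=\{\bs:\Delta_{\bs,t_n}\cap\tilde R_n\ne\{0\}\}$; by Borel--Cantelli it suffices to prove $\sum_n\xi(E_n)<\infty$. Monotonicity of $\psi$ gives $q_n\psi(q_n)^d\ll\sum_{q_n/e\le q<q_n}\psi(q)^d$, hence $\sum_n\leb(\tilde R_n)\simeq\sum_n\tilde\rho_n^d\le\sum_n\rho_n^d+\sum_n\delta_n^{2d}\ll\sum_q\psi(q)^d+1<\infty$.

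Next I would bound $\xi(E_n)$ via a truncated Siegel transform. Take a smooth product bump $\theta_n:\R^{d+1}\to[0,1]$ with $\1_{\tilde R_n}\le\theta_n\le\1_{(-2\tilde\rho_n,2\tilde\rho_n)^d\times(1/16,8e)}$ and $\|\theta_n\|_{C^l}\ll\tilde\rho_n^{-l}$, and form $\widehat{\theta_n}(h\Z^{d+1})=\sum_{v\in\Z^{d+1}\setminus\{0\}}\theta_n(hv)$ on $X$; by Siegel's mean value theorem $m_X(\widehat{\theta_n})=\int\theta_n\ll\leb(\tilde R_n)$. Let $\beta_n:X\to[0,1]$ be smooth with $\beta_n=0$ on $\{\inj<\delta_n/2\}$, $\beta_n=1$ on $\{\inj\ge\delta_n\}$ and $\cS_{\infty,l}(\beta_n)\ll\delta_n^{-O_l(1)}$, and set $F_n=\beta_n\widehat{\theta_n}\in B^\infty_{\infty,l}(X)$. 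On $\{\inj>\delta_n/2\}$ the lattice has shortest vector $\gg\delta_n$, so by Minkowski's lattice-point count (the support of $\theta_n$ lying in a fixed ball for $n$ large) $\widehat{\theta_n}\ll\delta_n^{-(d+1)}$ there; differentiating the Siegel transform — which again produces Siegel transforms of functions supported in a fixed ball with $C^l$-norm $\ll\tilde\rho_n^{-l}$ — then gives $\cS_{\infty,l}(F_n)\ll\delta_n^{-O_l(1)}\tilde\rho_n^{-l}\ll\delta_n^{-O_l(1)}$, using $\tilde\rho_n\ge\delta_n^2$; also $m_X(F_n)\le m_X(\widehat{\theta_n})\ll\leb(\tilde R_n)$. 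Finally pick smooth $g_n:X\to[0,1]$ with $\1_{\{\inj\le\delta_n\}}\le g_n\le\1_{\{\inj\le2\delta_n\}}$ and $\cS_{\infty,l}(g_n)\ll\delta_n^{-O_l(1)}$, so $m_X(g_n)\ll\delta_n^{c}$ by \Cref{lm:volCusp}. Now for $\bs\in E_n$, either $\inj(\Delta_{\bs,t_n})\le\delta_n$, whence $g_n(\Delta_{\bs,t_n})=1$, or $\inj(\Delta_{\bs,t_n})>\delta_n$ and then $F_n(\Delta_{\bs,t_n})=\widehat{\theta_n}(\Delta_{\bs,t_n})\ge\theta_n(v)=1$ for a witnessing vector $v\in\Delta_{\bs,t_n}\cap\tilde R_n$; hence $\1_{E_n}(\bs)\le(g_n+F_n)(a(t_n)u(\bs)x_0)$. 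Applying \eqref{single-eq-prop} with exponent $c'>0$ to $g_n$ and $F_n$ yields $\xi(E_n)\ll m_X(g_n)+m_X(F_n)+\delta_n^{-O_l(1)}t_n^{-c'}\ll\delta_n^{c}+\leb(\tilde R_n)+\delta_n^{-O_l(1)}t_n^{-c'}$. Since $t_n\ge e^{n(d+1)/d}$, choosing $c_0$ small enough that $c_0\,O_l(1)<c'(d+1)/d$ makes $\sum_n\delta_n^{-O_l(1)}t_n^{-c'}<\infty$; together with $\sum_n\delta_n^{c}=\sum_n e^{-c_0cn}<\infty$ and $\sum_n\leb(\tilde R_n)<\infty$ this gives $\sum_n\xi(E_n)<\infty$, and Borel--Cantelli concludes $\xi(W(\psi))=0$.

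The main obstacle is the cusp. The Siegel transform of $\tilde R_n$ is unbounded on $X$, and the natural scale $\tilde\rho_n$ at which one smooths $\1_{\tilde R_n}$ can be arbitrarily small when $\psi$ decays fast, so the cusp truncation must not be tied to $\tilde\rho_n$. The resolution above is to truncate at the fixed exponentially small scale $\delta_n=e^{-c_0 n}$: the cusp volume is then geometrically small ($\delta_n^{c}=e^{-c_0 c n}$ via \Cref{lm:volCusp}), while the merely polynomial cost $\delta_n^{-O_l(1)}$ of this truncation is absorbed by the \emph{exponential-in-$\log t_n$} equidistribution rate $t_n^{-c'}=e^{-c'n(d+1)/d}$. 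This is exactly what makes the argument insensitive to the decay rate of $\psi$; the remaining bookkeeping (choice of blocks, the monotonicity estimate, and the elementary lattice-point bound controlling $\widehat{\theta_n}$ off the cusp) is routine.
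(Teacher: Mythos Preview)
The paper does not give a proof of this theorem; it is simply quoted from Khalil--Luethi \cite[Theorem~9.1]{KL23} as a black box. Your argument reconstructs the Khalil--Luethi proof correctly: the dyadic (here, $e$-adic) blocking, the Dani embedding into $X$, the cusp truncation at an exponentially small but $\psi$-independent scale $\delta_n$, and the Borel--Cantelli conclusion are all sound, and your handling of the Sobolev norm via the floor $\tilde\rho_n \ge \delta_n^2$ is exactly the device that makes the error term summable regardless of how fast $\psi$ decays. One small point worth noting explicitly (you allude to it with ``for $n$ large''): the support of $\theta_n$ lies in a fixed ball only once $\rho_n \le 1$, which follows from the standard fact that $q\psi(q)^d \to 0$ for non-increasing $\psi$ with $\sum_q \psi(q)^d < \infty$; this is what guarantees the Minkowski-type bound $\widehat{\theta_n} \ll \delta_n^{-(d+1)}$ on $\supp\beta_n$.
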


We show that effective double equidistribution implies the divergent case of the Khintchine dichotomy. Our result is in fact quantitative: we provide the asymptotic of the number of solutions of the Khintchine inequality when bounding the denominator.

\begin{thm}[Divergent case]\label{General-quant-Khint} Let $\xi$ be a probability measure on $\R^d$ satisfying the effective double equidistribution property \eqref{double-eq-prop} on $\SL_{d+1}(\R)/\SL_{d+1}(\Z)$.
Let $\psi: \N \to \R_{\geq 0}$ be a non-increasing function such that $\sum_{q\in \N} \psi(q)^d=\infty$.

Then $\xi (W(\psi))=1$ and for $\xi$-a.e. $\bs\in \R^d$, we have as $N\to +\infty$:
 \begin{align}\label{eq:primsol}
  \big|\set{(\boldsymbol{p},q)\in \Z^{d}\times \llbracket 1,N\rrbracket \,:\, \forall i\in \llbracket1, d\rrbracket, \, 0\leq qs_i-p_i<\psi(q) } \big| \sim_{\bs, \psi} \sum_{q=1}^N \psi(q)^d.
 \end{align}
\end{thm}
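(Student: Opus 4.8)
The plan is to follow the Schmidt-style second-moment approach, adapted to handle Siegel transforms that may fail to be $L^2$. Fix a non-increasing $\psi$ with $\sum_q \psi(q)^d = \infty$. For each $q$, let $R_q \subseteq \mathbb{R}^{d+1}$ be the box corresponding to the constraints $0 \le q s_i - p_i < \psi(q)$ (for $1\le i\le d$) together with $q$ being the last coordinate; via Dani's correspondence, $(\boldsymbol p, q)$ is a solution exactly when the lattice $a(t)u(\bs)\mathbb{Z}^{d+1}$ meets a rescaled version of $R_q$. Organize the count dyadically: on the window $q \in \rrbracket 2^k, 2^{k+1}\rrbracket$, choose $t_k \simeq 2^{k(d+1)/d}$ so that the image box has comparable side lengths, and let $\chi_k$ be (a smooth approximation of) the indicator of the corresponding region in $X$, with $m_X(\chi_k) \simeq \sum_{q \in \rrbracket 2^k,2^{k+1}\rrbracket} \psi(q)^d =: w_k$. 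Write $S_N(\bs)$ for the quantity being counted up to $N$, and $\widehat{S}_K(\bs) = \sum_{k\le K} \chi_k(a(t_k)u(\bs)x_0)$ for the dyadically-regularized version. The first step is to show $\mathbb{E}_\xi \widehat{S}_K \sim \sum_{k\le K} w_k \to \infty$ using single equidistribution (a consequence of \eqref{double-eq-prop}).

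**Borel--Cantelli via the variance estimate.** The heart of the argument is the variance bound
$$\mathrm{Var}_\xi\Bigl(\sum_{j\le k\le K}\chi_k(a(t_k)u(\cdot)x_0)\Bigr) \ll \Bigl(\sum_{j\le k\le K} w_k\Bigr)^{1+o(1)} + \text{(lower-order)},$$
which comes from expanding the square and applying \eqref{double-eq-prop} (in the upgraded full-range form \eqref{eq:all-range}) to each pair $\chi_k(a(t_k)u(\cdot)x_0)\chi_{k'}(a(t_{k'})u(\cdot)x_0)$ with $k < k'$: the separation $t_{k'} \ge t_k^{1+\eta}$ holds for $k' - k$ bounded below, and the main term $m_X(\chi_k)m_X(\chi_{k'}) = w_k w_{k'}$ cancels against $\mathbb{E}\chi_k \cdot \mathbb{E}\chi_{k'}$, leaving an error $\ll w_{k'} t_k^{-c} + t_{k'}^{-c}$ that is summable; the finitely many near-diagonal pairs contribute $\ll \sum w_k$ by Cauchy--Schwarz and the single-equidistribution bound on $\mathbb{E}\chi_k^2$. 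From here a standard Gál--Koksma / Borel--Cantelli lemma (e.g. as in Schmidt \cite{Schmidt60} or Harman's book) gives $\widehat{S}_K(\bs) = \sum_{k\le K} w_k + O\bigl((\sum_{k\le K} w_k)^{1/2 + \epsilon}\bigr)$ for $\xi$-a.e. $\bs$, hence the asymptotic $\widehat{S}_K(\bs) \sim \sum_{k\le K} w_k$ and in particular $\xi(W(\psi)) = 1$.

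**Truncation and the unbounded Siegel transform.** The main obstacle --- and the genuinely new difficulty over \cite{BHZ24} --- is that $\chi_k(a(t_k)u(\bs)x_0)$ is an unbounded function of $\bs$: a lattice $a(t_k)u(\bs)\mathbb{Z}^{d+1}$ high in the cusp can meet the target box in many points, so the relevant Siegel transform is not bounded, and for $d=1$ not even $L^2$, so \eqref{eq:all-range} cannot be applied to it directly. To handle this, introduce a cutoff at injectivity-radius scale $\inj \ge \epsilon_k$ (with $\epsilon_k$ a slowly decaying sequence, e.g. $\epsilon_k \simeq k^{-A}$), splitting $\chi_k = \chi_k^{\le} + \chi_k^{>}$ according to whether the basepoint's trajectory lies in the thick part. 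On the thick part the truncated transform is bounded with controlled Sobolev norm, so the variance argument above applies to $\sum \chi_k^{\le}$. For the tail $\chi_k^{>}$, one must show $\sum_k \mathbb{E}_\xi \chi_k^{>}(a(t_k)u(\cdot)x_0)$ is negligible compared to $\sum_k w_k$, or at least does not affect the asymptotic: this uses \Cref{lm:volCusp} for the Haar measure of the cusp neighborhood together with single equidistribution to transfer the cusp estimate from $m_X$ to the pushforward $a(t_k)u(\cdot)_\star\xi$, plus a quantitative control that for $\xi$-a.e.\ $\bs$ the lattice $a(t)u(\bs)\mathbb{Z}^{d+1}$ is not too distorted as $t\to\infty$ (in the spirit of Huang--Saxcé \cite{HuangSaxce} and Pfitscher \cite{pfitscher24}) --- this last point guarantees the number of lattice points in a large ball is asymptotic to its volume, which is exactly what pins down the constant in \eqref{eq:primsol} and lets one pass from the dyadic count $\widehat{S}_K$ back to $S_N$.

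**From the dyadic count to $S_N(\bs)$ and conclusion.** Finally, one interpolates: for $N$ between $2^K$ and $2^{K+1}$, the difference $S_N(\bs) - \widehat{S}_{K-1}(\bs)$ is controlled by the count restricted to the single window $\rrbracket 2^{K-1}, N\rrbracket$, which by the same second-moment method (now for a single scale) is $O(w_K) + o(\sum_{k\le K} w_k)$ a.e.; combined with monotonicity of $\psi$ this yields $S_N(\bs) \sim \sum_{q=1}^N \psi(q)^d$. A separate short argument, using that $0 \le qs_i - p_i < \psi(q)$ forces for each fixed $q$ at most one choice of $\boldsymbol p$, confirms the count on the left of \eqref{eq:primsol} really matches the Siegel-transform count up to the harmless boundary terms absorbed by the smoothing of $\chi_k$. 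Applying this to $\xi = \sigma$ and invoking \Cref{decorrelation-sigma} to verify the effective double equidistribution hypothesis \eqref{double-eq-prop} completes the proof; combining with \Cref{convergent-Khintchine} then gives \Cref{Khintchine-self-similar} in full, including the stated asymptotic with the factor $2^d$ after summing the $2^d$ sign-patterns of $q s_i - p_i$.
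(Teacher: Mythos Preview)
Your skeleton is right---dyadic windows, Siegel transforms, a second-moment computation fed by \eqref{eq:all-range}, and a Schmidt/Harman Borel--Cantelli to conclude---and you correctly flag the unboundedness of the Siegel transform as the main obstacle. But there is a structural gap in how you propose to handle it.

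The paper does \emph{not} run the variance argument over all $k$. It first splits the indices into $\Kb=\{k: r_k > t_k^{\gamma_1}\}$ and $\Ks=\{k: r_k\le t_k^{\gamma_1}\}$, where $r_k^{d+1}=\psi(\tau^k)^d\tau^k$. The second-moment method is applied only on $\Ks$: there the truncated transform $\varphi_k=\theta*(\chi_k\widetilde{\1}_{R_k^-})$ has $\cS_{\infty,l}(\varphi_k)\ll t_k^{M\gamma_2}$ (\Cref{norm-estim}), and this control of the Sobolev norm by a \emph{small} power of $t_k$ is what makes every error term in \eqref{eq:all-range} absorbable (see the proof of \Cref{L2bound-Zj-general} and of \Cref{variance-bound}, where one explicitly needs $y_k^2 t_k^{-c}\ll 1$, i.e.\ $r_k^{d+1}\le t_k^{O(\gamma_1)}$). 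For $k\in\Kb$ the box $R_k$ is large, the Siegel transform is pointwise of size $\simeq r_k^{d+1}$ on the thick part, and the variance bound you claim cannot be obtained: the Sobolev-norm error terms are no longer beaten by $t_k^{-c}$. Instead, on $\Kb$ the paper uses a completely different mechanism---the logarithm law (\Cref{logarithm law}) combined with direct lattice-point counting in a large box (\Cref{counting})---to get the asymptotic deterministically for $\xi$-a.e.\ $\bs$. You do cite the Huang--Saxc\'e/Pfitscher idea, but you place it as a device to control the cusp tail $\chi_k^{>}$ of your truncation; in the paper it is the \emph{entire} counting strategy on $\Kb$, not a tail correction. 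Relatedly, your remark that ``$0\le qs_i-p_i<\psi(q)$ forces at most one $\boldsymbol p$'' is false when $\psi(q)>1$, which is exactly the $\Kb$ regime.

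A second gap is the case $d=1$. You note the Siegel transform is not $L^2$ but offer no substitute. The paper's fix is nontrivial: one replaces $\widetilde{\1}_R$ by the restricted transform $\widetilde{\1}_R^{(m_k)}$ counting only lattice points whose $\gcd$ is at most $m_k$, with $m_k$ chosen so that $\log(1+m_k)\simeq (\sum_{j\in\Ks(k)}\psi(\tau^j)\tau^j)^{1/8}$. This yields an $L^2$ bound with an extra $\log(1+m_k)$ factor (\Cref{fact-Siegel-lprim}), which is small enough for the variance argument (\Cref{L2bound-Zj-general-prim}), and then separate lemmas (\Cref{Repl-m'-1}, \Cref{S+Ym'}) show the restriction to $m_k$-primitive points does not affect the asymptotic. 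Without this, the $d=1$ variance bound simply fails. Finally, a minor point: the variance the paper obtains is $\ll(1+\sum y'_j)^{3/2}$, not $(\sum w_k)^{1+o(1)}$, and converting this to the almost-sure asymptotic requires the dyadic refinement argument of \Cref{Lemma: Schmidt's argument++} rather than a direct appeal to G\'al--Koksma.
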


\begin{remark} 
A light variation of the proof allows to estimate the number of \emph{primitive} solutions of the Khintchine inequality. More precisely, consider $\cP(\Z^{d+1}):=\Z^{d+1}\smallsetminus \cup_{k\geq 2}k\Z^{d+1}$ the set of primitive vectors in $\Z^{d+1}$. Set $\cP(\Z^{d+1})_{N}=\cP(\Z^{d+1})\cap (\Z^{d}\times \llbracket 1,N\rrbracket)$. Then for $N\to +\infty$, we have
 \begin{align}\label{primitiv-count}
  \big|\{(\boldsymbol{p},q)\in \cP(\Z^{d+1})_{N}\,:\, \forall i\in \llbracket1, d\rrbracket, \, 0\leq qs_i-p_i<\psi(q) \} \big| \sim_{\bs, \psi} \zeta(d+1)^{-1}\sum_{q=1}^N \psi(q)^d,
 \end{align}
where $\zeta(t)=\sum_{n\geq 1} n^{-t}$ denotes the Riemann zeta function.

Note also that in both cases (non-primitive and primitive), given an arbitrary subset of subscripts $I\subseteq  \llbracket1, d\rrbracket$, we may replace the conditions $0\leq qs_{i}-p_{i}<\psi(q)$ ($i\in I$) in the above sets by $0\leq p_{i}- qs_{i}<\psi(q)$ ($i \in I$) without affecting the asymptotic.
\end{remark}

The work conducted in previous sections guarantees that self-similar measures satisfy the conditions required in Theorems \ref{convergent-Khintchine}, \ref{General-quant-Khint}. Provided   \Cref{General-quant-Khint} holds, we directly deduce \Cref{Khintchine-self-similar}.
\begin{proof}[Proof of \Cref{Khintchine-self-similar}] 
The convergent case  follows from combining Theorems \ref{Khintchine-dyn}, \ref{convergent-Khintchine}. The divergent case is a consequence of \Cref{decorrelation-sigma} and \Cref{General-quant-Khint} (along with its subsequent remark).
\end{proof}

It remains to show \Cref{General-quant-Khint}.  We will distinguish the cases where $d\geq 2$ and $d=1$. The reason for that distinction is that the Siegel transform of the characteristic function of a ball in $\R^d$ has finite second moment when $d\geq 2$ and infinite second moment when $d=1$. We present the case $d\geq 2$ and explain afterward how the proof can be adapted, by the mean of a suitable truncation, to obtain the case $d=1$.

\bigskip
\subsection{Case $d\geq 2$, the lower bound} \label{Sec-lwbnd-d2}
We show the lower asymptotic in \Cref{General-quant-Khint}.

Given $N\geq 1$ and $\bs\in \R^d$, we denote the left-hand side of \eqref{eq:primsol} by
\[\sT_N(\bs):= \bigl\lvert\set{(\boldsymbol{p},q)\in \Z^{d}\times \llbracket 1,N\rrbracket \,:\, \forall i\in \llbracket1, d\rrbracket, \, 0\leq qs_i-p_i<\psi(q) }  \bigr\rvert.\]
We extend $\psi$ to a non-increasing function on $\R_{\geq 0}$ by setting $\psi(q)=\psi(\lceil q \rceil)$. From now on we fix a parameter $\tau\in (1,2]$. For any $k\in \N, \bs\in \R^d$, let
\[ \sS_{k}(\bs) := \bigl\lvert\set{(\boldsymbol{p},q)\in \Z^{d}\times \rrbracket \tau^{k-1}, \tau^{k}\rrbracket \,:\, \forall i\in \llbracket1, d\rrbracket, \,0\leq q s_i-p_i<\psi(\tau^k) }  \bigr\rvert. \]
For $N\geq 1$, letting $n\in \N$  such that $\tau^n\leq N<\tau^{n+1}$, and using that $\psi$ is non-increasing, we  have
\begin{equation}\label{sTsT}
\sT_N(\bs)\geq \sT_{\tau^n}(\bs)\geq \sum_{k=1}^n \sS_k(\bs).
\end{equation}

We will obtain the lower bound for $\sT_N(\bs)$ via an asymptotic lower bound for $\sum_{k=1}^n \sS_k(\bs)$. More precisely, we will show

\begin{proposition}\label{pr:rough}
Under the assumptions of \Cref{General-quant-Khint} and with $d\geq 2$, we have for every $\tau \in (1,2]$, for $\xi$-almost all $\bs \in \R^d$, for every $\eta >0$, for all large enough $n$, 
\begin{equation}\label{prrough-eq}
%\label{eq:sumSk}
\sum_{k=1}^n \sS_{k}(\bs) \geq (1-\tau^{-1}-\eta) \sum_{k=1}^n \psi(\tau^k)^d \tau^k.
\end{equation}
\end{proposition}

The lower bound in \Cref{General-quant-Khint} follows directly:

\begin{proof}[Proof of the lower bound in \eqref{eq:primsol} using \Cref{pr:rough}]
Let $\eps \in (0, 1/2)$.
As $\psi$ is non-increasing, we have for large $k$,
$$(1-\tau^{-1}) \psi(\tau^k)^d\tau^{k+1} \geq (1-\eps)(\lceil \tau^{k+1} \rceil - \lceil \tau^{k} \rceil)\psi(\tau^k)^d \geq (1-\eps)\sum_{q= \lceil \tau^k \rceil}^{\lceil \tau^{k+1} \rceil - 1} \psi(q)^d$$
Summing over $k$ and using the divergence $\sum_{q\in \N} \psi(q)^d=\infty$, we obtain that for every large enough $N$, and $n\geq 1$ such that $\tau^n \leq N < \tau^{n+1}$,
\begin{align}\label{eq1taupsi}
(1-\tau^{-1})\tau \sum_{k=1}^n \psi(\tau^k)^d\tau^{k} \geq (1 - 2\eps) \sum_{q=1}^N \psi(q)^d
\end{align}
We now compare the left-hand side of \eqref{eq1taupsi} with the right-hand side of \eqref{prrough-eq}. For that, choose $\tau$ close enough to $1$ so that $\tau^{-1}\geq 1-\eps$.
Taking $\eta>0$ with $\eta\lll_{\tau, \eps}1$, we then have $(1-\tau^{-1}-\eta) \geq (1-\eps)^2(1-\tau^{-1})\tau$. Using \Cref{pr:rough}, then \eqref{sTsT}, we obtain that for $\xi$-almost every $\bs\in \R^d$, for large enough $N$,
$$\sT_N(\bs)\geq (1 - 2\eps)^3 \sum_{q=1}^N \psi(q)^d. $$
This justifies the lower bound asymptotic $\liminf_{N \to +\infty} \frac{\sT_N(\bs)}{\sum_{q=1}^N \psi(q)^d} \geq 1$.
\end{proof}

We now focus on the proof of \Cref{pr:rough}. For that, we need a dynamical interpretation of $\sS_k(\bs)$.
Recall $X=\SL_{d+1}(\R)/\SL_{d+1}(\Z)$ throughout the section.
Given a measurable non-negative function $f:\R^{d+1}\rightarrow \R_{\geq0}$, we denote by $\widetilde{f} : X\rightarrow [0, +\infty]$  its \emph{Siegel transform}.
It is given by: for $g\in G$,
$$\widetilde{f}(gx_{0})= \sum_{v\in \Z^{d+1}\smallsetminus \{0\}} f(gv). $$
We interpret $\sS_k(\bs)$ dynamically by the mean of a Siegel transform.
We fix some $\tau \in (1,2)$.
For each $k\in \N$, define $r_k,t_k\in \R_{>0}$ by the relations
\[\psi(\tau^k)=r_k t_k^{-\frac{1}{d+1}},\quad \tau^k=t_k^{\frac{d}{d+1}}r_k,\]
or equivalently,
\begin{align}\label{tk and rk}
   \psi(\tau^k)^d\tau^k= r_k^{d+1}, \quad \tau^k\psi(\tau^k)^{-1}=t_k.
\end{align}
Consider the box
\[R_k=[0,r_k)^d\times (\tau^{-1}r_k,r_k]\subset \R^{d+1}.\]
By direct computation, we have
\begin{equation}\label{formula-sSk-Sieg}
\sS_k(\bs)=\widetilde\1_{R_k}\bigl(a(t_{k})u(\bs)x_{0}\bigr).
\end{equation}

Let $\gamma_{1} \in (0,1/2)$ be a small parameter to be specified later.
We partition the subscripts $k$'s into two families :
\[\Kb:=\set{k\geq 3\,:\, r_{k} > t_{k}^{\gamma_{1}}} \quad \text{and} \quad
    \Ks:=\set{k \geq 3 \,:\, k \notin \Kb}. \]
Given $n\geq 1$, we set $\Kb(n)=\Kb\cap \llbracket 1, n\rrbracket$, and $\Ks(n)=\Ks\cap \llbracket 1, n\rrbracket$.
We will establish the lower asymptotics required by \Cref{pr:rough} for the sums $\sum_{k \in \Kb(n)} \sS_{k}(\bs)$ and $\sum_{k \in \Ks(n)} \sS_{k}(\bs)$ separately.

%\marginpar{for large $k\in \Ks$, we have $\psi(k)\leq 1$? (used to say $t_{k}\geq \tau^k$ etc.)}
\bigskip

\noindent{\bf Lower asymptotic over $\Kb$}.
We start with the lower asymptotic for the sum $\sum_{k \in \Kb(n)} \sS_{k}(\bs)$. For this, we only use that $\xi$ satisfies effective single equidistribution \eqref{single-eq-prop} and we do not need any restriction on $d$ (i.e. $d=1$ is allowed). Below, implicit constants in $\ll$, $\lll$ and $O(\cdot)$ will be allowed to depend not only on $\lambda$, but also on $\psi$, and the constants $C,c>0$, $l\in \N$ from \eqref{single-eq-prop}. 

\bigskip
We first show that a typical geodesic trajectory sampled by $\xi$ has at most a very slow escape to infinity along the sequence of times $(t_{k})_{k\geq1}$.

\begin{lemma}\label{logarithm law}
For $\xi$-almost every $\bs\in \R^d$, for all sufficiently large $k\geq 1$ (depending on $\bs$), we have
\[\dist(a(t_{k})u(\bs)x_0, x_{0}) \ll \log \log t_{k}.\]
\end{lemma}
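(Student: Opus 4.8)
The plan is to use the effective single equidistribution property together with a Borel–Cantelli argument, applied to a sequence of test functions that detect excursions into the cusp. First I would fix, for each height $h>0$, a smooth bump function $f_h \in B^\infty_{\infty,l}(X)$ with $0\le f_h\le 1$, supported on $\{\dist(\cdot, x_0)\ge h\}$, equal to $1$ on $\{\dist(\cdot,x_0)\ge h+1\}$, and with Sobolev norm $\cS_{\infty,l}(f_h)\ll 1$ uniformly in $h$ (this is possible since translating by the diagonal flow produces functions of bounded derivatives, or one simply builds $f_h$ from a fixed profile composed with the distance function up to bounded-derivative corrections). By \Cref{lm:volCusp}(2), $m_X(f_h)\le m_X\{\dist(\cdot,x_0)\ge h\}\le Ce^{-ch}$. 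Applying the single equidistribution hypothesis \eqref{single-eq-prop} to $f_h$ at time $t_k$, I get
\[
\xi\bigl(\{\bs : \dist(a(t_k)u(\bs)x_0,x_0)\ge h+1\}\bigr)
\le \int_{\R^d} f_h\bigl(a(t_k)u(\bs)x_0\bigr)\dd\xi(\bs)
\le m_X(f_h) + C\cS_{\infty,l}(f_h) t_k^{-c}
\ll e^{-ch} + t_k^{-c}.
\]

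Next I would choose $h = h_k := A\log\log t_k$ for a suitably large constant $A$ (depending only on $c$ and the implicit constants). Then $e^{-c h_k} = (\log t_k)^{-cA}$, so the right-hand side above is $\ll (\log t_k)^{-cA} + t_k^{-c}$. Since $t_k = \tau^k\psi(\tau^k)^{-1}\ge \tau^k$ grows at least geometrically in $k$ (recall $\tau>1$), we have $\log t_k \gg k$, hence $(\log t_k)^{-cA}\ll k^{-cA}$, and $t_k^{-c}$ is summable trivially. Choosing $A$ with $cA>1$ makes $\sum_k \xi\{\bs:\dist(a(t_k)u(\bs)x_0,x_0)\ge h_k+1\}<\infty$. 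By the Borel–Cantelli lemma, for $\xi$-almost every $\bs$, the event $\dist(a(t_k)u(\bs)x_0,x_0)\ge A\log\log t_k + 1$ occurs for only finitely many $k$; that is, for all sufficiently large $k$ (depending on $\bs$), $\dist(a(t_k)u(\bs)x_0,x_0) < A\log\log t_k + 1 \ll \log\log t_k$, which is exactly the claimed bound.

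The only mildly delicate point — and the thing I would be careful to state cleanly rather than the real obstacle — is the uniform control $\cS_{\infty,l}(f_h)\ll 1$: one wants a cusp cutoff whose derivatives do not blow up as the cutoff recedes into the cusp. This is standard in this setting; concretely one fixes a single smooth $\chi:\R\to[0,1]$ with $\chi=0$ on $(-\infty,0]$, $\chi=1$ on $[1,\infty)$, and sets $f_h = \chi(\dist(\cdot,x_0)-h)$, noting that the distance function has bounded derivatives of all orders on the relevant region away from $x_0$ (or, even more robustly, one uses the Margulis/height function $\beta$ from \Cref{height function} together with the comparison \eqref{eq:cominjdist}–\eqref{eq:cominjdist}, which is $\Ad$-equivariant with polynomial growth, to build cutoffs with controlled Sobolev norms). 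Everything else is a routine Borel–Cantelli computation, so I do not anticipate a genuine obstacle here; the lemma is a soft consequence of the quantitative equidistribution already proved.
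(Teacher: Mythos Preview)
Your proposal is correct and follows essentially the same approach as the paper's own proof: build smooth cusp cutoffs $f_h$ with uniformly bounded Sobolev norm, apply the effective single equidistribution \eqref{single-eq-prop} together with \Cref{lm:volCusp}(2) to get $\xi\{\dist(a(t_k)u(\bs)x_0,x_0)\ge h\}\ll e^{-ch}+t_k^{-c}$, then choose $h=A\log\log t_k$ and conclude by Borel--Cantelli using $t_k\gg \tau^k$. The paper's argument is identical up to cosmetic differences in how the cutoff is sandwiched.
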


\begin{proof}
For $r>1$, let $f_{r}:X\rightarrow [0, 1]$ be a smooth function such that $\cS_{\infty, l}(f_{r})\ll 1$ and $\1_{\dist(\cdot, x_{0})\geq r} \leq f_{r} \leq \1_{\dist(\cdot, x_{0})\geq r/2} $.
Then by \Cref{lm:volCusp},
\[m_{X}(f_{r})\leq m_{X} \{\dist(\cdot, x_{0})\geq r/2\} \leq M e^{- r/M}\]
for some $M=M(d)>0$.
Applying effective single equidistribution \eqref{single-eq-prop} with test function $f_{r}$ at time $t > 1$, we get
$$\xi\{\bs \,:\, \dist(a(t)u(\bs)x_0, x_{0}) \geq r \} \leq \int_{\R^d} f_{r} (a(t)u(\bs) x_{0}) \dd \xi(\bs) \ll M e^{- r/M} + t^{-c}.$$
Recalling $t_{k}\gg \tau^k$ where $\tau>1$, the right hand side has converging series over $(t,r) \in \{(t_{k}, (M+1) \log \log t_{k}) \,:\, k\geq 1\}$, and the claim follows by the Borel-Cantelli Lemma.
\end{proof}

The next lemma expresses that the counting measure on a covolume $1$ lattice of $\R^{d+1}$ is a good volume estimate for a box in $\R^{d+1}$, provided the box has large enough side length depending on the distortion of the lattice.

%The next lemma expresses that if a point $x\in X$ is not too far from $x_{0}$, then, seeing $x$ as a lattice in $\R^{d+1}$, the counting measure on $x$ is a good estimate for the Lebesgue measure on $\R^{d+1}$ from the point of view of boxes of large volume and that are not too thin.

\begin{lemma}\label{counting}
Let $R \subset \R^{d + 1}$ be a subset of the form
\[
R = v + \prod_{i=1}^{d+1}[0, T_{i}]
\]
where $v \in \R^{d + 1}$ and $(T_{i})_{i=1}^{d+1}\in \R_{>0}^{d+1}$.
Let  $g \in G$ with $\norm{g} \leq \min_{1 \leq i \leq d + 1} \frac{T_i}{\sqrt{d+1}}$. Then
\[
\abse{ \abs{g \Z^{d+1} \cap R} - \Leb(R)} \leq 2^{d+1}\sqrt{d+1} \max_{1 \leq i \leq d + 1} \frac{\norm{g}}{T_i} \Leb(R).
\]
\end{lemma}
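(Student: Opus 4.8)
The plan is to compare $\abs{g\Z^{d+1}\cap R}$ with $\Leb(R)$ by tiling $\R^{d+1}$ with fundamental domains for the unimodular lattice $g\Z^{d+1}$ and counting how many of them meet the box $R$. Set $Q:=[-\tfrac12,\tfrac12)^{d+1}$ and $\delta:=\tfrac{\sqrt{d+1}}{2}\norm{g}$, and consider the cells $C_w:=w+gQ$ for $w\in g\Z^{d+1}$. Since $\det g=1$, these cells are pairwise disjoint, their union is $\R^{d+1}$, and each has Lebesgue measure $1$; moreover $Q$ is contained in the closed Euclidean ball of radius $\tfrac{\sqrt{d+1}}{2}$ about the origin, so, using that the Frobenius norm $\norm{g}$ dominates the operator norm of $g$, each cell $C_w$ is contained in the closed ball $\overline{B}(w,\delta)$. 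Note that the hypothesis $\norm{g}\leq\min_i T_i/\sqrt{d+1}$ reads exactly $2\delta\leq T_i$ for every $i$, so that the box shrunk by $\delta$ in each direction is still nonempty.

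First I would establish the two–sided bound $\prod_i(T_i-2\delta)\leq\abs{g\Z^{d+1}\cap R}\leq\prod_i(T_i+2\delta)$. For the upper estimate, any $w\in g\Z^{d+1}\cap R$ has $C_w\subseteq\overline{B}(w,\delta)\subseteq R^{(\delta)}$, the Euclidean $\delta$-neighbourhood of $R$; since the $C_w$ are disjoint of volume $1$ and $R^{(\delta)}\subseteq v+\prod_i[-\delta,T_i+\delta]$, summing volumes gives $\abs{g\Z^{d+1}\cap R}\leq\Leb(R^{(\delta)})\leq\prod_i(T_i+2\delta)$. For the lower estimate, the box $R_{-\delta}:=v+\prod_i[\delta,T_i-\delta]$ is nonempty because $2\delta\leq T_i$; every $x\in R_{-\delta}$ lies in a unique cell $C_w$, and $\norm{x-w}\leq\delta$ then forces $\overline{B}(x,\delta)\subseteq R$, hence $w\in R$. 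Thus $R_{-\delta}\subseteq\bigsqcup_{w\in g\Z^{d+1}\cap R}C_w$, whence $\prod_i(T_i-2\delta)\leq\abs{g\Z^{d+1}\cap R}$.

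It then remains to convert this into the claimed error bound by a multilinear expansion: for either choice of sign,
\[
\Bigl|\prod_{i=1}^{d+1}(T_i\pm 2\delta)-\prod_{i=1}^{d+1}T_i\Bigr|\leq\sum_{\emptyset\neq S\subseteq\{1,\dots,d+1\}}(2\delta)^{\abs{S}}\prod_{i\notin S}T_i,
\]
and, writing $\theta:=\max_i\tfrac{2\delta}{T_i}=\sqrt{d+1}\max_i\tfrac{\norm{g}}{T_i}$ and noting $\theta\leq 1$, each summand equals $\bigl(\prod_{i\in S}\tfrac{2\delta}{T_i}\bigr)\prod_iT_i\leq\theta\,\Leb(R)$, so the whole sum is at most $(2^{d+1}-1)\theta\,\Leb(R)$. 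Combining this with the two–sided bound above yields $\bigl|\abs{g\Z^{d+1}\cap R}-\Leb(R)\bigr|\leq 2^{d+1}\sqrt{d+1}\max_i\tfrac{\norm{g}}{T_i}\Leb(R)$, as desired. I do not expect any genuine obstacle here — it is a routine lattice-point count — the only point worth attention being that one should centre the fundamental domain at the lattice points (taking $gQ$ with $Q$ the \emph{centred} unit cube rather than $g[0,1)^{d+1}$), since this is exactly what keeps the radius $\delta$, and hence the final constant, down to $\tfrac{\sqrt{d+1}}{2}\norm{g}$.
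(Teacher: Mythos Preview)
Your proof is correct and follows essentially the same approach as the paper: both use the centered fundamental domain $gQ$ with $Q=(-\tfrac12,\tfrac12)^{d+1}$, bound its diameter by $\rho=\tfrac{\sqrt{d+1}}{2}\norm{g}$, and compare the lattice-point count with the volumes of the inner and outer boxes $v+\prod_i[\rho,T_i-\rho]$ and $v+\prod_i[-\rho,T_i+\rho]$. The only cosmetic difference is that the paper packages the two-sided estimate as a single bound $\bigl|\,\abs{g\Z^{d+1}\cap R}-\Leb(R)\,\bigr|\leq\Leb(\partial R+gQ)$ via symmetric differences, whereas you derive the upper and lower bounds separately; the final multilinear expansion is the same.
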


\begin{proof}
Set $Q := g (-\frac{1}{2},\frac{1}{2})^{d + 1}$.
The symmetric difference of $(g \Z^{d + 1} \cap R) + Q$ and $R$ is contained in $\partial R + Q$.
Taking the volume, we obtain
\[
\abse{ \abs{g \Z^{d+1} \cap R } - \Leb(R)} \leq \Leb(\partial R + Q).
\]
Note that $Q \subset B^{\R^{d+1}}_\rho$ where $\rho := \frac{\sqrt{d + 1}}{2} \norm{g} \leq \frac{1}{2}\min_{i} T_i$, in particular 
$$\partial R + Q\subseteq v\,+\,\prod_{i=1}^{d+1}[-\rho, T_{i}+\rho] \smallsetminus \prod_{i=1}^{d+1}(\rho, T_{i}-\rho).$$
It follows that
\begin{align*}
\frac{\Leb(\partial R + Q)}{\Leb(R)}& \leq \prod_{i = 1}^{d+ 1} \bigl(1 + \frac{2\rho}{T_i}\bigr) - \prod_{i = 1}^{d+ 1} \bigl(1 - \frac{2\rho}{T_i}\bigr)\\
& \leq \bigl(1 + \max_i \frac{2\rho}{T_i}\bigr)^{d+1} - \bigl(1 - \max_i \frac{2\rho}{T_i}\bigr)^{d+1}\\
& \leq 2^{d+2} \max_i \frac{\rho}{T_i}
\end{align*}
where the last bound is obtained by expanding the power $d+1$ and using $\frac{2\rho}{T_i}\leq 1$. This yields the desired estimate.
\end{proof}

We infer from Lemmas \ref{logarithm law}, \ref{counting} the asymptotic lower bound for $\sum_{k \in \Kb(n)} \sS_{k}(\bs)$.

\begin{lemma}
 \label{Kbig-asymp}
Assume $\sum_{q\in \Kb}\psi(\tau^k)^d\tau^k= +\infty$. Then for $\xi$-almost every $\bs\in \R^d$, for every $\eta>0$, for large enough $n$,
$$ \sum_{k\in \Kb(n)} \sS_{k}(\bs) \geq (1-\tau^{-1}-\eta) \sum_{k\in \Kb(n)}  \psi(\tau^k)^d \tau^k$$
\end{lemma}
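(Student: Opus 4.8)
The plan is to read $\sS_k(\bs)$ as a count of lattice points of a mildly distorted unimodular lattice inside the box $R_k$, control that distortion via the logarithm law \Cref{logarithm law}, apply the counting estimate \Cref{counting} term by term for $k\in\Kb$, and then sum. By \eqref{formula-sSk-Sieg}, together with the fact that the last coordinate is positive on $R_k$ (so $0\notin R_k$ and the Siegel transform simply counts all nonzero lattice points of $a(t_k)u(\bs)\Z^{d+1}$ in $R_k$), we have $\sS_k(\bs)=\bigl|a(t_k)u(\bs)\Z^{d+1}\cap R_k\bigr|$. For $\bs$ in the conull set given by \Cref{logarithm law} and all large $k$, one has $\dist(a(t_k)u(\bs)x_0,x_0)\ll\log\log t_k$, so there is $g_k=g_k(\bs)\in G$ with $g_kx_0=a(t_k)u(\bs)x_0$ and $\dist(g_k,\Id)\ll\log\log t_k$; the standard comparison $\log\norm{g}\ll\dist(g,\Id)+1$ then yields $\norm{g_k}\ll(\log t_k)^{O(1)}$. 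Since $g_k\Z^{d+1}=a(t_k)u(\bs)\Z^{d+1}$, this identifies $\sS_k(\bs)=\bigl|g_k\Z^{d+1}\cap R_k\bigr|$.

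Next I would invoke \Cref{counting} for $k\in\Kb$. The box $R_k$ has side lengths $r_k$ (with multiplicity $d$) and $(1-\tau^{-1})r_k$, hence shortest side $\geq(1-\tau^{-1})r_k\geq(1-\tau^{-1})t_k^{\gamma_1}$ because $r_k>t_k^{\gamma_1}$ by definition of $\Kb$. As $\norm{g_k}\ll(\log t_k)^{O(1)}=o(t_k^{\gamma_1})$, for all $k$ larger than some $k_0(\bs)$ (also depending on $\tau,\gamma_1$) the hypothesis of \Cref{counting} is satisfied, and its argument is insensitive to whether the box is closed or half-open (alternatively one applies it to a slightly smaller closed box inside $R_k$, at negligible relative cost). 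This gives $\sS_k(\bs)\geq(1-\eps_k)\Leb(R_k)$ with $\eps_k\ll(\log t_k)^{O(1)}/t_k^{\gamma_1}\to0$, the implied bound on $\eps_k$ being independent of $\bs$. Since $\Leb(R_k)=r_k^d\cdot(1-\tau^{-1})r_k=(1-\tau^{-1})r_k^{d+1}=(1-\tau^{-1})\psi(\tau^k)^d\tau^k$ by \eqref{tk and rk}, we obtain $\sS_k(\bs)\geq(1-\eps_k)(1-\tau^{-1})\psi(\tau^k)^d\tau^k$ for all $k\in\Kb$ with $k\geq k_0(\bs)$.

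To finish, fix $\eta>0$, pick $k_1$ so that $(1-\eps_k)(1-\tau^{-1})\geq1-\tau^{-1}-\eta/2$ for $k\geq k_1$, and sum over $k\in\Kb(n)$ with $k\geq\max(k_0(\bs),k_1)$; the omitted lower-order indices contribute a fixed constant, which is negligible against $\sum_{k\in\Kb(n)}\psi(\tau^k)^d\tau^k\to+\infty$ (using the standing hypothesis $\sum_{k\in\Kb}\psi(\tau^k)^d\tau^k=+\infty$), and this absorbs the remaining $\eta/2$. I do not expect a real obstacle here; the one point needing care is the matching of scales — the logarithm law only gives a polylogarithmic-in-$t_k$ distortion of the lattice, which is dominated by the polynomial side length $t_k^{\gamma_1}$ characterizing $\Kb$, so \Cref{counting} is applicable precisely on $\Kb$. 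This is exactly why the splitting into $\Kb$ and $\Ks$ was made, the small-box regime $\Ks$ requiring the separate, second-moment-type argument carried out in the sequel. The boundary bookkeeping and the tail estimate for the divergent series are routine.
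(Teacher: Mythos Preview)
Your argument is correct and follows essentially the same route as the paper: identify $\sS_k(\bs)$ as a lattice-point count via \eqref{formula-sSk-Sieg}, use \Cref{logarithm law} to bound the distortion $\norm{g_k}\ll(\log t_k)^{O(1)}$, apply \Cref{counting} since $r_k>t_k^{\gamma_1}$ dominates this distortion on $\Kb$, and conclude by summing and invoking divergence. The paper's proof is slightly terser but structurally identical; your additional remarks on the half-open box and on why $0\notin R_k$ are sound refinements.
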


\begin{proof}
Recall that for every $\bs\in \R^d$, $k\geq 3$, we have $\sS_{k}(\bs)=\widetilde{\1}_{R_{k}}(a(t_{k})u(\bs)x_{0})$. Assuming $k\in \Kb$, we have that $R_{k}$ is a box of minimal side length $(1-\tau^{-1})r_{k}\geq (1-\tau^{-1}) t^{\gamma_{1}}_{k}$. Moreover \Cref{logarithm law} guarantees that for $\xi$-almost every $\bs\in \R^d$, for large enough $k$, say $k\geq k_{\bs}$, we have $a(t_{k})u(\bs)\in g\SL_{d+1}(\Z)$ where $\|g\|\leq (\log t_{k})^{O(1)}\leq t_{k}^{\gamma_{1}/100}$. Invoking \Cref{counting}, we obtain in this context
$$\widetilde{\1}_{R_{k}}(a(t_{k})u(\bs)x_{0})\geq (1- t^{-\gamma_{1}/4}_{k}) \Leb(R_{k}).$$
Recalling from \eqref{tk and rk} that $ \Leb(R_{k}) =(1-\tau^{-1})\psi(\tau^k)^d\tau^k$, we deduce
$$ \sum_{\substack{k\in \Kb(n) \\ k\geq k_{\bs}}} \sS_{k}(\bs) \,\geq\,(1-\tau^{-1}) \sum_{\substack{k\in \Kb(n) \\ k\geq k_{\bs}}} (1- t^{-\gamma_{1}/4}_{k}) \psi(\tau^k)^d \tau^k$$
and the lemma follows using the right hand side is divergent by hypothesis.
\end{proof}

\bigskip
\noindent{\bf Lower asymptotic over $\Ks$}.
We now establish an asymptotic lower bound for the partial sums $\sum_{k\in \Ks(n)} \sS_{k}(\bs)$. Let $\eps, \gamma_{2}\in (0,1/2)$ be small parameters to be specified below.
For $k\in \Ks$, set
$$R^-_k=[\eps r_{k},(1-\eps)r_k)^d\times ((\tau^{-1}+\eps)r_k,(1-\eps)r_k]$$
the rectangle obtained from $R_{k}$ by shrinking sides via $\eps r_{k}$.
Let $\chi_{k}: X\rightarrow \{0, 1\}$ be the truncation function given by
\begin{equation}\label{truncation}
  \chi_k(x)=\begin{cases}
    1 \quad \text{if }\inj(x)\geq t_k^{-\gamma_2},\\
    0 \quad \text{ otherwise.}
  \end{cases}
\end{equation}
Let $\theta : B_{\eps/10}\rightarrow \R_{\geq 0}$ be a smooth bump function such that $m_{G}(\theta)=1$ and $\cS_{\infty,l}(\theta)\leq \eps^{-D}$ where $D=D(G,l)>0$.
Set
$$\varphi_{k}=\theta*(\chi_{k}\widetilde{\1}_{R_{k}^-}).$$
We view $\varphi_{k}$ as a \emph{bounded} and \emph{smooth}  approximation of $\widetilde{\1}_{R_{k}^-}$. Note that every $g\in B_{\eps/10}$ satisfies $g R_{k}^{-} \subseteq R_{k}$, whence $\varphi_{k}\leq\widetilde{\1}_{R_{k}}$, and in particular
$$\sS_{k}(\bs)\geq \varphi_{k}(a(t_{k})u(\bs)x_{0}).$$
Therefore, we will focus on establishing a lower bound for the partial sums of terms $\varphi_{k}(a(t_{k})u(\bs)x_{0})$ as $k$ runs along $\Ks$.

Below, implicit constants in $\ll$, $\lll$ and $O(\cdot)$ will be allowed to depend not only on $\lambda$, but also on $\psi$, $\tau$, $\eps$, and the constants $C,c>0$, $l \in \N$ from the full-range double equidistribution estimate \eqref{eq:all-range}.

\bigskip
We first recall well-known moment estimates for the Siegel transform of characteristic functions of bounded sets, see \cite[pages 2-3]{Schmidt60}. We emphasize here that we work under the assumption $d\geq 2$ (otherwise \eqref{Sieg2} does not hold, see \S\ref{Sec-d=1}).
\begin{fact}[Moments of Siegel transforms \cite{Schmidt60}] \label{fact-Siegel}
Let $R\subseteq \R^{d+1}$ be a bounded measurable subset. Then
\begin{align}
&\int_{X} \widetilde{\1}_{R} \dd m_{X}=\Leb(R), \label{Sieg1}\\
 &\int_{X} (\widetilde{\1}_{R})^2 \dd m_{X} = \Leb(R)^2 + O(\Leb(R)). \label{Sieg2}
 \end{align}
\end{fact}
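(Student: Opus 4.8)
The plan is to deduce both identities from the classical mean value theorems on the homogeneous space of unimodular lattices in $\R^{d+1}$: Siegel's formula for \eqref{Sieg1}, and Rogers' second moment formula for \eqref{Sieg2}, the latter being where the standing hypothesis $d\geq 2$ (equivalently $d+1\geq 3$) is genuinely used. Observe first that since $R$ is bounded, every unimodular lattice meets $R$ in finitely many points, so $\widetilde{\1}_{R}$ is finite at each point of $X$ and measurable, and $\1_{R}\in L^{1}(\R^{d+1})$.

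For \eqref{Sieg1} I would invoke Siegel's mean value theorem, or argue directly as follows. Writing each $v\in\Z^{d+1}\setminus\{0\}$ uniquely as $v=ku$ with $k\geq 1$ and $u$ primitive, one has $\widetilde{\1}_{R}(\Lambda)=\sum_{k\geq 1}\sum_{u\ \text{primitive in}\ \Lambda}\1_{R}(ku)$. Since the primitive vectors of $\Z^{d+1}$ form a single $\SL_{d+1}(\Z)$-orbit, unfolding shows $\int_{X}\bigl(\sum_{u\ \text{primitive in}\ \Lambda}h(u)\bigr)\dd m_{X}(\Lambda)=\zeta(d+1)^{-1}\int_{\R^{d+1}}h$ for $h\in L^{1}$; applying this to $h(x)=\1_{R}(kx)$ and summing over $k$, the factor $\sum_{k\geq 1}k^{-(d+1)}=\zeta(d+1)$ cancels and we get $\int_{X}\widetilde{\1}_{R}\dd m_{X}=\Leb(R)$.

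For \eqref{Sieg2}, the lower bound $\int_{X}(\widetilde{\1}_{R})^{2}\dd m_{X}\geq\Leb(R)^{2}$ follows at once from the Cauchy--Schwarz inequality together with \eqref{Sieg1}, so only the matching upper bound requires work. I would expand $(\widetilde{\1}_{R})^{2}=\sum_{v,w\in\Z^{d+1}\setminus\{0\}}\1_{R}(v)\1_{R}(w)$ and split the ordered pairs $(v,w)$ into those that are $\R$-linearly dependent and those that are independent. Rogers' second moment formula, valid for $d+1\geq 3$, identifies the $m_{X}$-integral of the independent part as exactly $\bigl(\int_{\R^{d+1}}\1_{R}\bigr)^{2}=\Leb(R)^{2}$, while the dependent part, via the same primitive-vector unfolding as above, contributes a nonnegative quantity equal, up to a bounded multiplicative constant, to $\sum_{a,b\in\Z\setminus\{0\}}\int_{\R^{d+1}}\1_{R}(ax)\1_{R}(bx)\dd x$. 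It then suffices to bound this sum: when $|b|\geq|a|$ one has $\int_{\R^{d+1}}\1_{R}(ax)\1_{R}(bx)\dd x\leq\int_{\R^{d+1}}\1_{R}(bx)\dd x=|b|^{-(d+1)}\Leb(R)$, hence each term is $\leq\max(|a|,|b|)^{-(d+1)}\Leb(R)$; since there are $O(m)$ pairs $(a,b)$ with $\max(|a|,|b|)=m$, the sum is $\ll\Leb(R)\sum_{m\geq 1}m\cdot m^{-(d+1)}=\Leb(R)\sum_{m\geq 1}m^{-d}$, and this series converges precisely because $d\geq 2$. This gives $\int_{X}(\widetilde{\1}_{R})^{2}\dd m_{X}=\Leb(R)^{2}+O(\Leb(R))$.

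The substantive input, and the only real difficulty, is Rogers' formula itself --- in particular the assertion that for $d+1\geq 3$ the linearly independent pairs contribute exactly $\Leb(R)^{2}$, which rests on a parametrization of the $\SL_{d+1}(\Z)$-orbits of such pairs followed by a careful summation. As this is entirely classical I would cite it (see \cite{Schmidt60}) rather than reprove it; the only computation to be carried out by hand is the elementary summability estimate above, which isolates the threshold $d\geq 2$ and is consistent with the divergence of the second moment for $d=1$ recorded in \S\ref{Sec-d=1}.
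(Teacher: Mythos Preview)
The paper does not prove this statement; it records it as a cited fact from \cite{Schmidt60} and moves on. Your sketch is correct and is essentially the classical derivation: \eqref{Sieg1} is Siegel's mean value theorem (or the primitive unfolding you outline), and \eqref{Sieg2} follows from Rogers' second moment formula for $d+1\geq 3$, with the linearly dependent pairs contributing the $O(\Leb(R))$ term via the convergent series $\sum_{m\geq 1} m^{-d}$. There is nothing to compare; you have supplied the standard proof the paper omits.
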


We also record that convolution with a (signed) bump function does not increase the $L^2$-norm.
\begin{lemma} \label{Young-ineq}
For every measurable functions $\iota \in L^1(G)$, $F \in L^2(X)$, we have
$\|\iota*F\|_{L^2}\leq \|\iota\|_{L^1}\|F\|_{L^2}.$
\end{lemma}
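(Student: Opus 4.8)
The plan is to recognize this as the homogeneous-space version of Young's convolution inequality and to prove it by combining Minkowski's integral inequality with the $G$-invariance of the Haar probability measure $m_X$. Recall the convolution is defined by $(\iota*F)(x)=\int_G \iota(g)\,F(g^{-1}x)\,dm_G(g)$, which I would view as the $L^2(X)$-valued integral $\int_G \iota(g)\,(\ell_g F)\,dm_G(g)$, where $\ell_g F\in L^2(X)$ denotes the translate $x\mapsto F(g^{-1}x)$.

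First I would check that the defining integral makes sense and that the manipulations below are legitimate. Since $m_X$ is a finite measure, $L^2(X)\subseteq L^1(X)$, so pairing against an arbitrary $h\in L^\infty(X)$ and applying Tonelli to $|\iota(g)|\,|F(g^{-1}x)|\,|h(x)|$ gives absolute convergence. Then by Minkowski's integral inequality — equivalently, by testing against $h\in L^2(X)$ with $\|h\|_{L^2}\le 1$, using Fubini and the Cauchy–Schwarz inequality on $X$ — one gets
$$\|\iota*F\|_{L^2(X)}\le \int_G |\iota(g)|\,\|\ell_g F\|_{L^2(X)}\,dm_G(g).$$

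Finally, the $G$-invariance of $m_X$ yields $\|\ell_g F\|_{L^2(X)}=\|F\|_{L^2(X)}$ for every $g\in G$, so the right-hand side equals $\|\iota\|_{L^1(G)}\|F\|_{L^2(X)}$, which is the claim; note this argument automatically accommodates a signed $\iota$, since only $|\iota(g)|$ enters. No step here is a genuine obstacle: this is the classical computation, and the only point requiring a word of care is the integrability justification, which is immediate from the finiteness of $m_X$ together with $\iota\in L^1(G)$. (The same argument, with Cauchy–Schwarz replaced by Hölder, gives $\|\iota*F\|_{L^p}\le\|\iota\|_{L^1}\|F\|_{L^p}$ for all $p$, but only $p=2$ is used in the sequel.)
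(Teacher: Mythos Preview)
Your proposal is correct and follows essentially the same approach as the paper: view $\iota*F$ as a vector-valued integral $\int_G \iota(g)\,g_\star F\,dm_G(g)$, apply Minkowski's integral inequality (what the paper calls ``the triangle inequality for the $L^2$-norm''), and use the $G$-invariance of $m_X$ to conclude that $\|g_\star F\|_{L^2}=\|F\|_{L^2}$. Your version simply adds more detail on the integrability justification, which the paper omits.
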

\begin{proof}
Using the triangle inequality for the $L^2$-norm, and the fact that $\|g_{\star}F\|_{L^2}=\|F\|_{L^2}$, we have $\|\iota*F\|_{L^2} = \|\int_{G} \iota(g) g_{\star}F \dd m_{G}(g)\|_{L^2} \leq \int_{G} \abs{\iota(g)} \, \|g_{\star}F\|_{L^2} \dd m_{G}(g)= \|\iota\|_{L^1} \|F\|_{L^2}$.
\end{proof}
\bigskip

We deduce from \Cref{fact-Siegel} and \Cref{Young-ineq} several moment estimates for the functions $\varphi_{k}$.
\begin{lemma}\label{norm-estim}
 If $\gamma_{1} \lll \gamma_{2}$, then for some $M=M(d)>1$, every $k\in \Ks$, we have
\begin{align}
&m_X(\varphi_k) = \Leb(R_{k}^-) - O(t_{k}^{-\gamma_{2}/M}) \label{L1norm-varphik},\\
&\cS_{\infty,l}(\varphi_k) \ll t_{k}^{M\gamma_{2}} \label{eq:controlSobolev1},\\
&\cS_{2,l}(\varphi_k) \ll m_X(\varphi_k) + \sqrt{m_X(\varphi_k)} +t_{k}^{-\gamma_{2}/M}. \label{eq:controlSobolev2}\
\end{align}
\end{lemma}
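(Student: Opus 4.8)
The plan is to derive all three estimates from an analysis of $\varphi_k=\theta*(\chi_k\widetilde{\1}_{R_k^-})$ resting on three inputs: a pointwise upper bound on the truncated Siegel transform $\chi_k\widetilde{\1}_{R_k^-}$, the moment identities for Siegel transforms (\Cref{fact-Siegel}), and the smoothing/Young-type estimates for convolution by the bump $\theta$ (in particular \Cref{Young-ineq} and $\cS_{\infty,l}(\theta)\ll_\eps1$). The common preliminary observation is that, differentiating $\varphi_k=\theta*(\chi_k\widetilde{\1}_{R_k^-})$ under the integral sign, any $D\in\Xi_l$ can be transferred onto the smooth factor: $D\varphi_k=(\widetilde D\theta)*(\chi_k\widetilde{\1}_{R_k^-})$ for a length-$l$ invariant operator $\widetilde D$ with $\|\widetilde D\theta\|_{L^1}\ll_\eps \cS_{\infty,l}(\theta)\ll_\eps1$ (using that $\supp\theta\subseteq B_{\eps/10}$). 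Thus everything reduces to $L^\infty$- and $L^2$-bounds on $\chi_k\widetilde{\1}_{R_k^-}$.

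\emph{The pointwise bound.} Writing $x=gx_0$ and $L_x=g\Z^{d+1}$, I would use the standard reduction-theory estimate $\widetilde{\1}_{R}(x)\ll_d(1+\diam(R)\,\inj(x)^{-1})^{O_d(1)}$ valid for every bounded box $R\subseteq\R^{d+1}$: covering $R$ by $\ll(1+\diam(R)/\lambda_1(L_x))^{d+1}$ Euclidean balls of radius $\lambda_1(L_x)/4$, each meeting $L_x$ in at most one point, gives $\widetilde{\1}_R(x)\ll(1+\diam(R)/\lambda_1(L_x))^{d+1}$, and one has the polynomial comparison $\lambda_1(L_x)\gg_d\inj(x)^{O_d(1)}$. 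Applying this with $R=R_k^-$, whose sidelengths are $\le r_k$, and recalling that $k\in\Ks$ means exactly $r_k\le t_k^{\gamma_1}$ while $\inj\ge t_k^{-\gamma_2}$ on $\supp\chi_k$, one obtains $\|\chi_k\widetilde{\1}_{R_k^-}\|_{L^\infty}\ll t_k^{O_d(\gamma_1+\gamma_2)}\ll t_k^{M\gamma_2}$ for $M=M(d)$ large, using $\gamma_1\le\gamma_2$. Combined with the differentiation remark this already gives $\cS_{\infty,l}(\varphi_k)\ll_\eps\cS_{\infty,l}(\theta)\,\|\chi_k\widetilde{\1}_{R_k^-}\|_{L^\infty}\ll t_k^{M\gamma_2}$, i.e. \eqref{eq:controlSobolev1}.

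\emph{The $L^1$-mass.} Since $m_G(\theta)=1$ and $m_X$ is $G$-invariant, convolution by $\theta$ preserves integrals, so $m_X(\varphi_k)=m_X(\chi_k\widetilde{\1}_{R_k^-})=m_X(\widetilde{\1}_{R_k^-})-m_X((1-\chi_k)\widetilde{\1}_{R_k^-})=\Leb(R_k^-)-\cE_k$ with $0\le\cE_k\le\|1-\chi_k\|_{L^2}\,\|\widetilde{\1}_{R_k^-}\|_{L^2}$ by Cauchy--Schwarz and \eqref{Sieg1}. Here $\|1-\chi_k\|_{L^2}^2=m_X\{\inj<t_k^{-\gamma_2}\}\ll t_k^{-c\gamma_2}$ by \Cref{lm:volCusp}, and $\|\widetilde{\1}_{R_k^-}\|_{L^2}^2=\Leb(R_k^-)^2+O(\Leb(R_k^-))\ll t_k^{O_d(\gamma_1)}$ by \eqref{Sieg2} together with $\Leb(R_k^-)\le r_k^{d+1}\le t_k^{(d+1)\gamma_1}$ — and this is the one place where the hypothesis $d\ge2$ enters. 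Hence $\cE_k\ll t_k^{O_d(\gamma_1)-c\gamma_2/2}$, which is $\ll t_k^{-\gamma_2/M}$ provided $\gamma_1\lll\gamma_2$ (concretely $\gamma_1<c\gamma_2/(4(d+1))$, with $c=c(d)$ from \Cref{lm:volCusp}), giving \eqref{L1norm-varphik}. Finally, for \eqref{eq:controlSobolev2}, the differentiation remark and \Cref{Young-ineq} yield $\|D\varphi_k\|_{L^2}\le\|\widetilde D\theta\|_{L^1}\|\chi_k\widetilde{\1}_{R_k^-}\|_{L^2}\ll_\eps\|\widetilde{\1}_{R_k^-}\|_{L^2}$ (using $\chi_k\le1$), and \eqref{Sieg2} gives $\|\widetilde{\1}_{R_k^-}\|_{L^2}\ll\Leb(R_k^-)+\sqrt{\Leb(R_k^-)}$; substituting $\Leb(R_k^-)=m_X(\varphi_k)+O(t_k^{-\gamma_2/M})$ from \eqref{L1norm-varphik} and summing over the finitely many $D\in\Xi_l$ produces $\cS_{2,l}(\varphi_k)\ll m_X(\varphi_k)+\sqrt{m_X(\varphi_k)}+t_k^{-\gamma_2/M}$ after enlarging $M$.

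The only genuinely delicate points are the reduction-theory pointwise bound on $\chi_k\widetilde{\1}_{R_k^-}$ (and the comparison $\lambda_1(L_x)\gg\inj(x)^{O_d(1)}$) and the bookkeeping of exponents: the role of the constraint $\gamma_1\lll\gamma_2$ is precisely to guarantee that the Cauchy--Schwarz error in \eqref{L1norm-varphik} — a small power of $t_k$ coming from the cusp volume pitted against a small power of $t_k$ coming from the polynomial growth of $\|\widetilde{\1}_{R_k^-}\|_{L^2}$ — ends up with a negative exponent. Everything else is routine.
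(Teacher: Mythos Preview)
Your proof is correct and follows essentially the same route as the paper's: the $L^1$-mass via Cauchy--Schwarz, \Cref{lm:volCusp}, and \eqref{Sieg2}; the $L^2$-Sobolev bound via \Cref{Young-ineq} and \eqref{Sieg2}; and the $L^\infty$-Sobolev bound via a pointwise lattice-point count on the support of $\chi_k$. The only cosmetic difference is in the pointwise bound: you argue via the shortest vector $\lambda_1(L_x)\gg\inj(x)^{O_d(1)}$ and a ball-covering of $R_k^-$, whereas the paper invokes \eqref{eq:cominjdist} to write $x=gx_0$ with $\|g^{-1}\|\ll t_k^{M\gamma_2}$ and then counts integer points in $g^{-1}R_k^-\subset B_{O(\|g^{-1}\|r_k)}$ directly---the two are equivalent.
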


\begin{proof} Let us prove \eqref{L1norm-varphik}. Note first $m_{X}(\varphi_{k})=m_{X}(\chi_{k}\widetilde{\1}_{R_{k}^-})$.
 Applying \eqref{Sieg1}, followed by the Cauchy-Schwarz inequality, \Cref{lm:volCusp} and \eqref{Sieg2}, we find
\begin{align*}
0\leq \Leb(R_{k}^-) -m_X(\varphi_k)
 &= \int_{X} \1_{\inj(x)<t^{-\gamma_{2}}_{k}} \widetilde{\1}_{R_{k}^-}(x)\, \dd m_{X}(x) \\
 &\leq \sqrt{m_{X}\{\inj <t^{-\gamma_{2}}_{k}\} }\, \|\widetilde{\1}_{R_{k}^-}\|_{L^2}\\
 &\ll t_{k}^{-\gamma_{2}/M} \max ( \Leb(R_{k}^-), \sqrt{\Leb(R_{k}^-)})
% &\ll t_{k}^{-\gamma_{2}/M},
\end{align*}
for some $M=M(d)>1$.
But $\Leb(R_k^-) \leq (1 - \tau^{-1}) r_k^{d + 1} \ll t_k^{O(\gamma_1)}$ since $k \in \Ks$.
Thus upon letting $\gamma_1 \lll \gamma_2$, the right-hand side can be bounded by $t_k^{-\gamma_2/(2M)}$,
validating \eqref{L1norm-varphik}.

We now deal with \eqref{eq:controlSobolev1}. Note that $\cS_{\infty,l}(\varphi_k) \ll \cS_{\infty,l}(\theta) \| \chi_{k} \widetilde{1}_{R_{k}^-}\|_{L^\infty}$. By construction, we have $\cS_{\infty,l}(\theta) \ll 1$. On the other hand, $\|\chi_{k} \widetilde{\1}_{R_{k}^-}\|_{L^\infty} = \sup_{x \,:\, \inj(x) \geq t_k^{-\gamma_2}}\widetilde{\1}_{R^-_{k}}(x)$.
For such $x$, Equation \eqref{eq:cominjdist} allows to write $x = g x_0$ where  $g \in G$ satisfies
$\norm{g^{-1}} \ll t_k^{M \gamma_2}$ for some $M = M(d) > 1$.
Then $\widetilde{\1}_{R^-_{k}}(x) = \abs{\Z^{d+1} \cap g^{-1} R^-_k}$ with $g^{-1} R^-_k$  contained in a ball of radius $O( \norm{g^{-1}} r_k)=O(t_k^{M \gamma_2}r_{k})$.
Recalling the assumption $k \in \Ks $, this implies $\widetilde{\1}_{R^-_{k}}(x) \ll t_k^{(d+1)M (\gamma_2 + \gamma_1)}$.
This shows \eqref{eq:controlSobolev1}.

Finally, we check \eqref{eq:controlSobolev2}.
By \Cref{Young-ineq} followed by \eqref{Sieg2}, we  have $$\cS_{2,l}(\varphi_k) \leq \cS_{1,l}(\theta) \|\chi_{k} \widetilde{\1}_{R_{k}^-}\|_{L^2}\ll \|\widetilde{\1}_{R_{k}^-}\|_{L^2}\ll \Leb(R_{k}^-)+ \sqrt{\Leb(R_{k}^-)}.$$
 Now \eqref{eq:controlSobolev2} follows from \eqref{L1norm-varphik}.
\end{proof}

We consider $(\R^d,\xi)$ as a probability space.
Expectation $\E[\,\cdot\,]$ refers implicitely to this probability space.
 For every $k \in \Ks $, we introduce the random variable
\[ Y_k : \R^d \to \R, \quad \bs \mapsto \varphi_k\bigl(a(t_{k})u(\bs)x_{0}\bigr).\]
We write
\[
y_k = m_X(\varphi_k) \in \R_{\geq 0}
\]
and set $Z_k = Y_k - y_k$ the (quasi-recentered) companion of $Y_k$. The next lemma bounds the second moment of $Z_{k}$ by $y_{k}$, provided $y_{k}$ is not too small. It relies on single effective equidistribution of expanding translates of $u(\bs)x_{0}\dd\xi(\bs)$.

\begin{lemma} \label{variance-bound} Assume $\gamma_{1}\lll\gamma_{2}\lll 1$. Then for every $k\in \Ks$ such that $y_{k}\geq 1$, we have
$$\E[Z_{k}^2]\ll y_{k}.$$
\end{lemma}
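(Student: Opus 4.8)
The plan is to bound $\E[Z_k^2] = \E[(Y_k - y_k)^2]$ by expanding the square and using the definition $Z_k = Y_k - y_k$ together with the observation that $\E[Y_k]$ is close to $y_k = m_X(\varphi_k)$ by single equidistribution. Write $\E[Z_k^2] = \E[Y_k^2] - 2 y_k \E[Y_k] + y_k^2$. Since $\E[Y_k] = \int_{\R^d} \varphi_k(a(t_k)u(\bs)x_0)\dd\xi(\bs)$, single equidistribution \eqref{single-eq-prop} applied to the test function $\varphi_k$ (admissible since $\cS_{\infty,l}(\varphi_k) < \infty$ by \eqref{eq:controlSobolev1}) gives $\abs{\E[Y_k] - y_k} \ll \cS_{\infty,l}(\varphi_k) t_k^{-c} \ll t_k^{M\gamma_2 - c}$, which is $\leq 1 \leq y_k$ once $\gamma_2 \lll 1$ (so $M\gamma_2 < c$). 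Hence $\abs{-2y_k\E[Y_k] + y_k^2} = \abs{y_k^2 - 2y_k\E[Y_k]} \leq y_k^2 + 2 y_k\abs{\E[Y_k] - y_k} + \cdots$; more simply $-2y_k\E[Y_k]+y_k^2 = -y_k^2 - 2y_k(\E[Y_k]-y_k) \leq -y_k^2 + 2 y_k$, so the whole thing reduces to showing $\E[Y_k^2] \leq y_k^2 + O(y_k)$.

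For the main term $\E[Y_k^2] = \int_{\R^d} \varphi_k(a(t_k)u(\bs)x_0)^2 \dd\xi(\bs)$, I would apply single equidistribution once more, this time to the function $\varphi_k^2$. One needs $\cS_{\infty,l}(\varphi_k^2) < \infty$ and an estimate of the form $\cS_{\infty,l}(\varphi_k^2) \ll \cS_{\infty,l}(\varphi_k) \cdot \|\varphi_k\|_{\infty} \ll t_k^{M'\gamma_2}$ for some $M'=M'(d)$ (product rule for derivatives, each factor contributing either a derivative bound or an $L^\infty$ bound on $\varphi_k$, both $\ll t_k^{O(\gamma_2)}$ by \eqref{eq:controlSobolev1} and its proof). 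This yields
$$\E[Y_k^2] = m_X(\varphi_k^2) + O\bigl(\cS_{\infty,l}(\varphi_k^2) t_k^{-c}\bigr) = m_X(\varphi_k^2) + O(t_k^{M'\gamma_2 - c}),$$
and the error is $\ll 1 \leq y_k$ provided $\gamma_2 \lll 1$. It remains to show $m_X(\varphi_k^2) \leq y_k^2 + O(y_k)$. Here I use that $\varphi_k = \theta * (\chi_k \widetilde{\1}_{R_k^-}) \leq \theta * \widetilde{\1}_{R_k^-}$ pointwise (since $\chi_k \leq 1$ and everything is nonnegative), hence $\varphi_k^2 \leq (\theta*\widetilde{\1}_{R_k^-})^2$, and by \Cref{Young-ineq} (with $\iota = \theta$, $\|\theta\|_{L^1}=1$),
$$m_X(\varphi_k^2) \leq \|\theta * \widetilde{\1}_{R_k^-}\|_{L^2}^2 \leq \|\widetilde{\1}_{R_k^-}\|_{L^2}^2 = \Leb(R_k^-)^2 + O(\Leb(R_k^-)),$$
the last equality being the second-moment formula \eqref{Sieg2} from \Cref{fact-Siegel}, valid because $d \geq 2$. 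Finally, by \eqref{L1norm-varphik} we have $\Leb(R_k^-) = y_k + O(t_k^{-\gamma_2/M}) = y_k + O(1)$, so $\Leb(R_k^-)^2 + O(\Leb(R_k^-)) \leq y_k^2 + O(y_k) + O(1) \ll y_k^2 + y_k$ (using $y_k \geq 1$). Combining, $\E[Z_k^2] = \E[Y_k^2] - y_k^2 + O(y_k) \leq O(y_k)$.

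The main obstacle, and the only slightly delicate point, is the Sobolev bound $\cS_{\infty,l}(\varphi_k^2) \ll t_k^{O(\gamma_2)}$: one must check the product/Leibniz rule propagates through the differential operators in $\Xi_l$ without losing more than polynomially in $t_k$, which works because $\varphi_k$ is a convolution of the fixed smooth bump $\theta$ against a bounded function, so $\varphi_k$ and all its derivatives are controlled by $\cS_{\infty,l}(\theta)\|\chi_k \widetilde{\1}_{R_k^-}\|_{L^\infty} \ll t_k^{O(\gamma_2+\gamma_1)}$ exactly as in the proof of \eqref{eq:controlSobolev1}. Beyond that, the argument is a routine second-moment computation: the hierarchy $\gamma_1 \lll \gamma_2 \lll 1$ must be chosen so that all the error exponents $t_k^{O(\gamma_2) - c}$ are negative and bounded by $1 \leq y_k$, and the restriction $d \geq 2$ enters precisely (and only) through the finiteness of the second moment of the Siegel transform in \eqref{Sieg2}.
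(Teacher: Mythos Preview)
Your proof is correct and follows essentially the same approach as the paper's: both apply effective single equidistribution to reduce $\E[Z_k^2]$ to $\|\varphi_k\|_{L^2}^2 - y_k^2$ plus an error controlled by $\cS_{\infty,l}$-norms times $t_k^{-c}$, and both then bound $\|\varphi_k\|_{L^2}^2$ via \Cref{Young-ineq} and the Siegel second-moment formula \eqref{Sieg2}. The only organizational difference is that the paper applies equidistribution once to the test function $(\varphi_k - y_k)^2$, whereas you apply it separately to $\varphi_k$ and $\varphi_k^2$; the resulting error terms and the way $\gamma_1\lll\gamma_2\lll 1$ absorbs them are the same.
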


\begin{proof}
By effective single equidistribution of expanding translates \eqref{single-eq-prop}, we have
\begin{align}
\E[Z_{k}^2] &= \int_{\R^d}(\varphi_{k}(a(t_{k})u(\bs)x_{0}) - y_{k})^2\dd \xi(\bs) \nonumber\\
&= \int_{X}(\varphi_{k}(x) - y_{k})^2\dd m_{X}(x) + O(\cS_{\infty,l}([\varphi_{k}-y_{k}]^2) t_{k}^{-c}).\label{eq-Zk2}
\end{align}

Let us bound the error term in \eqref{eq-Zk2}. By \eqref{eq:controlSobolev1}, we have
$$\cS_{\infty,l}([\varphi_{k}-y_{k}]^2)\ll \cS_{\infty,l}(\varphi_{k})^2 + y^2_{k}\ll t_{k}^{2M\gamma_{2}}+y^2_{k}.$$
Taking $\gamma_{2}\lll 1$, we have $t_{k}^{2M\gamma_{2} -c}\ll 1$. Taking $\gamma_{1}\lll 1$, and observing $y_{k}^2\leq t_{k}^{2(d + 1)\gamma_{1}}$ by \eqref{L1norm-varphik} and definition of $\Ks$, we also find $y^2_{k}t_{k}^{-c}\ll1$. Therefore, the error term in \eqref{eq-Zk2} is bounded by $O(1)$.

We now estimate the main term of \eqref{eq-Zk2}.
By expanding the square, then using \Cref{Young-ineq} and  $m_G(\theta)=1$, we see that
\[
\int_{X}(\varphi_{k}(x) - y_{k})^2\dd m_X(x) = \norm{\varphi_k}_{L^2}^2 - y_k^2 \leq \norm{\chi_k\tilde{\1}_{R^-_k}}_{L^2}^2 - y_k^2 \leq \norm{\tilde{\1}_{R^-_k}}_{L^2}^2 - y_k^2.
\]
Using \eqref{Sieg2}, \eqref{L1norm-varphik} and $y_{k}\geq 1$, we obtain that the  main term is bounded by $O(y_k)$.
The result follows.
\end{proof}

From the effective double equidistribution hypothesis on $\xi$, we deduce an upper bound on the second moment of a sum of $Z_k$'s where $k\in \Ks$.

\begin{proposition} \label{L2bound-Zj-general}
Assume $\gamma_{1}\lll\gamma_{2}\lll 1$ and
\begin{equation}
\label{eq:qlogq}
\psi(q) \geq q^{-1/d}\log^{-2/d}(q), \quad \forall q \in \Ks.
\end{equation}
Then for every finite subset $J \subseteq \Ks$ with $\inf J\ggg_{\gamma_{2}}1$, we have
\[\E\left[(\sum_{j\in J}Z_j)^2\right] \ll \left(1+\sum_{j\in J} y_{j}\right)^{3/2}.\]
\end{proposition}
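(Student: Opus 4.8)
The plan is to expand the square as $\E\bigl[(\sum_{j\in J}Z_{j})^{2}\bigr]=\sum_{j\in J}\E[Z_{j}^{2}]+2\sum_{i<j}\E[Z_{i}Z_{j}]$ (all sums over $i,j\in J$) and to bound the diagonal and off-diagonal contributions separately; write $S=\sum_{j\in J}y_{j}$. For the diagonal: when $y_{j}\geq1$ we are exactly in the situation of \Cref{variance-bound}, giving $\E[Z_{j}^{2}]\ll y_{j}$; when $y_{j}<1$ a direct computation using \eqref{Sieg1}, \eqref{Sieg2}, \Cref{Young-ineq} and single equidistribution \eqref{single-eq-prop} gives $\E[Z_{j}^{2}]\leq\|\widetilde\1_{R_{j}^{-}}\|_{L^{2}}^{2}-y_{j}^{2}+O\bigl(\cS_{\infty,l}([\varphi_{j}-y_{j}]^{2})\,t_{j}^{-c}\bigr)\ll y_{j}+O(t_{j}^{-c/2})$, the error being controlled via \eqref{eq:controlSobolev1} and $\gamma_{2}\lll1$. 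Since $k\in\Ks$ forces $t_{k}\geq\tau^{k}$, and since \eqref{eq:qlogq} together with \eqref{L1norm-varphik} yields $y_{j}\gg j^{-2}$, the exponentially small errors are absorbed into $y_{j}$ once $\inf J\ggg1$, so $\sum_{j\in J}\E[Z_{j}^{2}]\ll S$.

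For an off-diagonal pair $i<j$ (so $t_{i}<t_{j}$) one uses that $Z_{k}(\bs)=(\varphi_{k}-y_{k})\bigl(a(t_{k})u(\bs)x_{0}\bigr)$ and $m_{X}(\varphi_{k}-y_{k})=0$, whence $|\E[Z_{i}Z_{j}]|=\Delta^{\xi}_{\varphi_{i}-y_{i},\,\varphi_{j}-y_{j}}(t_{i},t_{j})$. Applying the full-range estimate \eqref{eq:all-range} (with $\xi$ in place of $\sigma$), the middle term vanishes because $m_{X}(\varphi_{j}-y_{j})=0$; feeding in \Cref{norm-estim}, the identity $m_{X}(\varphi_{k})=y_{k}$, and the separation $t_{j}/t_{i}\geq\tau^{j-i}$, the choice $\gamma_{1}\lll\gamma_{2}\lll1$ lets one absorb all powers $t^{M\gamma_{2}}$ against the gains $t^{-c}$ and arrive at
\[
|\E[Z_{i}Z_{j}]|\ \ll\ (\sqrt{y_{i}}+y_{i})(\sqrt{y_{j}}+y_{j})\,\tau^{-c(j-i)}\ +\ t_{j}^{-c/2}.
\]
It then remains to sum over $i<j$. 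The total contribution of the terms $t_{j}^{-c/2}$ is $O(1)$: for fixed $j$ the number of admissible $i$ is at most $j$, and $\sum_{j\in J}j\,t_{j}^{-c/2}\ll\sum_{j\geq\inf J}j\,\tau^{-cj/2}\ll1$ once $\inf J\ggg1$. For the main term one expands $(\sqrt{y_{i}}+y_{i})(\sqrt{y_{j}}+y_{j})$ into four pieces and uses $\sum_{\ell\geq1}\tau^{-c\ell}<\infty$: the piece $\sqrt{y_{i}y_{j}}\,\tau^{-c(j-i)}$ sums to $\ll S$ by the arithmetic--geometric inequality; the mixed pieces $\sqrt{y_{i}}\,y_{j}\,\tau^{-c(j-i)}$ and $y_{i}\,\sqrt{y_{j}}\,\tau^{-c(j-i)}$ sum to $\ll S^{3/2}$ — for the first, fixing $j$ and applying Cauchy--Schwarz in $i$ gives $\sum_{i<j}\sqrt{y_{i}}\,\tau^{-c(j-i)}\ll\bigl(\sum_{i}y_{i}\bigr)^{1/2}=S^{1/2}$, whence $\sum_{j}y_{j}\,S^{1/2}=S^{3/2}$.

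The genuinely delicate piece is $\sum_{i<j}y_{i}y_{j}\,\tau^{-c(j-i)}$, which is the main obstacle: the crude bound $\cS_{2,l}(\varphi_{k})\ll y_{k}$ (valid when $y_{k}$ is large) would only give $\ll(1+S)^{2}$ for it, which is too weak. The point is that the effective $L^{2}$-mass of $Z_{k}$ is of order $\sqrt{y_{k}}$ rather than $y_{k}$ — this is the cancellation $\|\widetilde\1_{R_{k}^{-}}\|_{L^{2}}^{2}-\Leb(R_{k}^{-})^{2}=O(\Leb(R_{k}^{-}))$ from \eqref{Sieg2} — and that the decorrelation coming from \eqref{eq:all-range}, which is expressed through Sobolev norms, must be converted into one governed by this $L^{2}$-mass, using in addition the sharp relations tying $t_{k}$, $r_{k}$, $y_{k}$ on $\Ks$ and the regularity hypothesis \eqref{eq:qlogq} (which also keeps $y_{k}$ from growing too fast along $\Ks$). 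This is what confines the $y_{i}y_{j}$ contribution to $\ll(1+S)^{3/2}$. Collecting the three estimates then gives $\E\bigl[(\sum_{j\in J}Z_{j})^{2}\bigr]\ll S+S^{3/2}+1\ll(1+S)^{3/2}$; everything else (the diagonal, the smoothing by $\theta$, the truncation $\chi_{k}$, the absorption of the $t^{M\gamma_{2}}$ factors) is routine bookkeeping once the hierarchy $\gamma_{1}\lll\gamma_{2}\lll1$ and $\inf J\ggg1$ is fixed.
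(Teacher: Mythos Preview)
Your argument has a genuine gap at exactly the point you flag as ``genuinely delicate'': the contribution $\sum_{i<j} y_i y_j\,\tau^{-c(j-i)}$ is not controlled by $(1+S)^{3/2}$, and your proposed fix does not work.

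You suggest that centering replaces the Sobolev norm by the ``effective $L^2$-mass'' $\sqrt{y_k}$ of $Z_k$. This is true for the $L^2$-norm itself, but \eqref{eq:all-range} is stated with $\cS_{2,l}$, not $\norm{\cdot}_{L^2}$. The derivative pieces of $\cS_{2,l}(\varphi_k-y_k)$ equal $\norm{D\varphi_k}_{L^2}=\norm{(D\theta)*(\chi_k\widetilde\1_{R_k^-})}_{L^2}$, and by \Cref{Young-ineq} together with \eqref{Sieg2} these are $\ll \norm{\widetilde\1_{R_k^-}}_{L^2}\ll y_k+\sqrt{y_k}$. So when $y_k\geq 1$ you still have $\cS_{2,l}(\varphi_k-y_k)\simeq y_k$, and the $y_iy_j$ term is unavoidable from \eqref{eq:all-range}. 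To see that it genuinely fails to sum, take $y_k\simeq\tau^{\alpha k}$ for some small $\alpha\in(0,c)$ (which is compatible with $k\in\Ks$ since $y_k\leq t_k^{(d+1)\gamma_1}$ allows slow exponential growth): then $\sum_{i<j}y_iy_j\tau^{-c(j-i)}\simeq\sum_j y_j^2\simeq \tau^{2\alpha N}$ while $S^{3/2}\simeq\tau^{3\alpha N/2}$. Neither \eqref{eq:qlogq} nor the definition of $\Ks$ rules this out.

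The paper circumvents this by \emph{not} trying to bound all cross terms via \eqref{eq:all-range}. Instead it partitions $J=J_1\sqcup J_2\sqcup J_3$ according to the size of $y_j$ (small: $y_j<1$; moderate: $1\leq y_j<n^2$; large: $y_j\geq n^2$, where $n=|J\setminus J_1|$). The small regime $J_1$ is handled essentially as you do. For $J_3$, the point is pigeonhole: $y_{J_3}\geq |J_3|\,n^2\geq |J_3|^3$, so the crude Cauchy--Schwarz plus \Cref{variance-bound} bound $\E[Z_{J_3}^2]\leq |J_3|\sum_{J_3}\E[Z_j^2]\ll |J_3|\,y_{J_3}$ is already $\leq y_{J_3}^{3/2}$ --- no cross-term estimate is used at all. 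For $J_2$ one splits into near-diagonal ($|j-k|<\sqrt{|J_2|}$, handled by Cauchy--Schwarz and \Cref{variance-bound}) and far-apart pairs, where the now-polynomial factor $y_jy_k\leq n^4$ is killed by the exponential $\tau^{-c\sqrt{|J_2|}/2}$. This trichotomy, not a sharper norm estimate, is the missing idea.
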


\begin{proof}
We use the shorthand
\[
Y_J := \sum_{k \in J} Y_k, \quad \quad  \quad  y_J := \sum_{k \in J} y_k \quad \quad \text{and}\quad \quad Z_J=Y_{J}-y_{J}.
\]
Set $J_{1}:=\set{j \in J\,:\, y_{j} < 1}$, write $n:=\abs{J\smallsetminus J_{1}}$.
Further partition $J$ into $J=J_{1} \sqcup J_{2}\sqcup J_{3}$ where $J_{2},J_{3}$ are respectively determined by the condition $y_{j}\in [1,n^{2})$, and $y_{j}\in [n^2,+\infty)$.
Using the inequalities $(a+b)^2\leq 2(a^2+b^2)$ and $a^{3/2}+ b^{3/2}\leq (a+b)^{3/2}$ valid for all $a,b\in \R_{\geq 0}$, we just need to check the upper bound for the sum over each $J_{i}$ independently.

\bigskip

\noindent \underline{Case of $J_{1}$.}

By definition, for all $j \leq k \in \Ks$,
\begin{equation}\label{eq-double-eqZjk}
\E[Z_jZ_k] = \E[Y_jY_k] - y_j y_k - \E[Z_j]y_k - y_j\E[Z_k].
\end{equation}

By double equidistribution   \eqref{eq:all-range}, we have 
\begin{equation*}
\abs{\E[Y_jY_k] - y_j y_k} \ll \cS_{2,l}(\varphi_j) \cS_{2,l}(\varphi_k) t_j^c t_k^{-c} + \cS_{\infty,l}(\varphi_j) y_k t_j^{-c} + \cS_{\infty,l}(\varphi_j) \cS_{\infty,l}(\varphi_k) t_k^{-c}.
\end{equation*}
Assume $j,k \in J_1$ so that $y_j,y_k<1$.
Using \eqref{eq:controlSobolev1}, \eqref{eq:controlSobolev2}, the above becomes
\begin{align} 
\abs{\E[Y_jY_k] - y_j y_k}
& \ll (\sqrt{y_j}+ t_j^{-\gamma_{2}/M})(\sqrt{y_k}+ t_k^{-\gamma_{2}/M})t_j^{c} t_k^{-c} + y_k t_j^{-c+M\gamma_{2}} + t_j^{M\gamma_{2}}t_k^{-c+M\gamma_{2}}\nonumber \\
& \ll \sqrt{y_j y_k}t_j^{c} t_k^{-c} + y_k t_j^{-c/2} + t_k^{-c/2}+ t_j^{c-\gamma_{2}/M} t_k^{-c} \label{bnd-corrY}
\end{align}
where the second inequality assumes $\gamma_{2} \leq c /(4M)$, and uses that $t_{j}\leq t_{k}$.

On the other hand, by single equidistribution \eqref{single-eq-prop} and the norm control \eqref{eq:controlSobolev1}, we have for $j \in J_1$,
\begin{equation}\label{Esp[Zj]}
\abs{\E[Z_j]} \ll t_j^{-c+M\gamma_{2}}\ll t_j^{-c/2}.
\end{equation}

By expanding the square, using the above bounds \eqref{eq-double-eqZjk}, \eqref{bnd-corrY}, \eqref{Esp[Zj]}, and recalling from \eqref{tk and rk} that $t_j \gg \tau^j$ and $t_k/t_j \geq \tau^{k-j}$ for $j\leq k$, we deduce
\[\E [Z_{J_1}^2]
 \ll\sum\nolimits_{j,k \in J_{1}, j\leq k} \left(\sqrt{y_j y_k} \tau^{-c(k-j)} + y_k \tau^{-cj/2} + \tau^{-ck/2} + \tau^{-c(k-j) - j\gamma_{2}/M} \right).\]
Using $\sqrt{y_j y_k}\leq y_j + y_k$ and the convergence $\sum_{n = 0}^\infty \tau^{-cn} < + \infty$, the first sum satisfies $\sum\nolimits_{j,k \in J_{1}, j\leq k} \sqrt{y_j y_k} \tau^{-c(k-j)}\ll y_{J_{1}}$.
The convergence $\sum_{n = 0}^\infty \tau^{-cn} < + \infty$ bounds similarly the second sum.
To bound the third sum, note that combining \eqref{tk and rk} with our assumption \eqref{eq:qlogq},  we have
\[
\tau^{-c j/2} \ll (j\log \tau)^{-2} \leq r_j^{d+1} \ll y_j,
\]
where the last inequality relies on the estimate \eqref{L1norm-varphik}, and the  bound $t_{j}^{-\gamma_{2}/M}\leq (j\log \tau)^{-2} \leq r_j^{d+1}$ (which holds under the assumption $\inf J\ggg_{\gamma_{2}}1$).
Hence $\tau^{-c k/2} \ll y_j \tau^{-c(k-j)/2}$, so  $\sum\nolimits_{j,k \in J_{1}, j\leq k} \tau^{-c k/2} \ll y_{J_{1}}$ as for the first sum. The fourth sum can be handled similarly to the third one.
In the end, we have bounded $\E [Z_{J_1}^2]$ by  $O(\sum_{J_{1}}y_{j})\ll (1+y_{J_{1}})^{3/2}$ as desired.
\bigskip

\noindent \underline{Case of $J_{2}$}.

 Set $m:=|J_{2}|$. We start with the case where $m$ is very small compared to $n=|J_{2}\sqcup J_{3}|$, more precisely we assume $m^2\leq n$. In this scenario, we have by the Cauchy-Schwarz inequality and \Cref{variance-bound},
\begin{align*}
\E \bigl[Z_{J_2}^2\bigr] \leq m \sum_{j\in J_{2}} \E[Z_{j}^2] \ll m \sum_{j\in J_{2}}y_{j} \leq m^2n^2\leq n^3 \ll y_{J_{2}\sqcup J_{3}}^{3/2},
\end{align*}
whence the desired bound.
Assume now $m^2> n$. Decompose $J_{2}$ into $J_{2}=J'_{2} \sqcup J_{2}''$ according to whether $j\geq \sqrt{n}$ or not. The preceding argument gives
\begin{equation*}
\E\bigl[Z_{J''_{2}}^2\bigr] \ll y_{J_{2}\sqcup J_{3}}^{3/2}.
\end{equation*}
We now focus on $J'_{2}$. Note that
\begin{align}
 \E \bigl[Z_{J'_{2}}^2\bigr]
 &= \sum_{|j-k|< \sqrt{m}}\E[Z_{j}Z_{k}]+\sum_{|j-k|\geq \sqrt{m}}\E[Z_{j}Z_{k}] \label{eq1J2'}\\
 & \ll \sqrt{m} \sum_{j \in J'_{2}} \E[Z_{j}^2] + \Bigl\lvert\sum_{|j-k|\geq \sqrt{m}}\E[Z_{j}Z_{k}]\Bigr\rvert \label{eq2J2'}
\end{align}
where the second inequality uses the trivial bound $\E[Z_{j}Z_{k}]\leq \E[Z_j^2 + Z_k^2]$ and the observation that each subscript $j$ in the first sum of \eqref{eq1J2'} appears at most $O(\sqrt{m})$ many times.

For subscripts $j \leq k\in J_{2}'$ such that $|j-k|\geq \sqrt{m}$, we have by \eqref{eq-double-eqZjk}, double equidistribution \eqref{eq:all-range}, and \Cref{norm-estim},  that
$$\E[Z_{j}Z_{k}] \ll y_j y_k (t_j^{c} t_k^{-c} + t_j^{-c/2} + t_k^{-c/2})\leq n^4 \tau^{-c \sqrt{m}/2} $$
where the second inequality relies on the definition of $J'_{2}$.
Plugging this bound and \Cref{variance-bound} into \eqref{eq2J2'}, we obtain
\begin{align*}
 \E \bigl[Z_{J'_{2}}^2\bigr]\ll\sqrt{m} y_{J'_{2}} + \underbrace{m^2n^4 \tau^{-c \sqrt{m}/2} }_{O(1)} \,\ll\, y_{J_{2}}^{3/2}.
 \end{align*}

\bigskip
\noindent \underline{Case of $J_{3}$.}

We finally deal with $J_{3}$. Applying the Cauchy-Schwarz inequality then \Cref{variance-bound}, we obtain
 \[\E \bigl[Z_{J_3}^2\bigr] \leq |J_{3}| \sum_{j\in J_{3}} \E[Z_{j}^2] \ll |J_{3}| \sum_{j\in J_{3}} y_{j}\leq y_{J_3}^{3/2}.\]
 This concludes the proof.
\end{proof}

We also need the next lemma, which is a variant of \cite[Lemma 1.5]{Harman}.
It converts a variance control as in \Cref{L2bound-Zj-general} into an asymptotic estimate.

\begin{lemma}\label{Lemma: Schmidt's argument++}
Let $(Y_j)_{j\geq1}$ be a sequence of non-negative real random variables.
Let $(y_{j})_{j\geq1}, (y'_{j})_{j\geq1} \in \R_{\geq 0}^{\N}$ be sequences of non-negative real numbers.
Set $Z_j=Y_j-y_j$.
Assume  $\sum_{j=1}^{\infty} y_j = +\infty$, and that for some $C_1 \geq 1$,  for all $n \geq m \geq C_{1}$, we have 
$y_{m}\leq y_{m}'$ and
\begin{equation}\label{qeq-2}
\E\Bigl[\bigl(\sum_{j=m}^n Z_j\bigr)^2\Bigr]\leq C_1 \bigl(1+\sum_{j=m}^n y'_k\bigr)^{3/2}.
\end{equation}
Then almost surely, for large enough $n$, we have
\[
\Bigl\lvert \sum_{k=1}^n Z_k \Bigr\rvert \leq \Bigl(\sum_{k=1}^n y'_k\Bigr)^{4/5}
\]

%\[\sum\nolimits_{1\leq j\leq n} Z_j \, \simeq \, \sum\nolimits_{1\leq j\leq n} y_j\]
\end{lemma}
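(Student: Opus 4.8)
The plan is to run a Gál--Koksma--type argument, in the spirit of the proof of \cite[Lemma 1.5]{Harman}. First I would set $\Phi_n := \sum_{k=1}^n y'_k$ (non-decreasing, and $\to +\infty$ since $\Phi_n \geq \sum_{k\le n} y_k$) and $S_n := \sum_{k=1}^n Z_k$, and write $S_n = f_n - U_n$ where $f_n := \sum_{k\le n} Y_k$ and $U_n := \sum_{k\le n} y_k$ are both non-decreasing with $0 \le U_n \le \Phi_n$. The goal is to show $|S_n| \le \Phi_n^{4/5}$ for all large $n$, almost surely. The argument has three ingredients: a block decomposition of $\N$ adapted to unit intervals of $\Phi$; the monotonicity of $f$, used to pass from block endpoints to arbitrary indices; and the general (non-orthogonal) Gál--Koksma theorem applied to the block sums.

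For the decomposition, let $p_1 < p_2 < \cdots$ be the integers $p\ge 1$ such that $\Phi_n \in [p,p+1)$ for some $n$, and let $\mathcal{J}_i := \{n : \Phi_n \in [p_i, p_i+1)\}$. After discarding the finitely many indices with $\Phi_n < p_1$, the sets $\mathcal{J}_i = \{A_i,\dots,B_i\}$ are consecutive integer intervals with $A_{i+1}=B_i+1$ and $A_i \ge i$, $p_i \ge i$, and $\Phi$ increases by less than $1$ over each $\mathcal{J}_i$. Put $\xi_i := \sum_{k\in\mathcal{J}_i} Z_k$ (so $S_{B_I}=\sum_{i\le I}\xi_i$) and $W_i := \sum_{k=A_i+1}^{B_i} Z_k$. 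For $n\in\mathcal{J}_i$ I would bound $S_n$ in terms of $S_{B_i}$: from $S_n-S_{B_i} = -\sum_{k=n+1}^{B_i}Y_k + \sum_{k=n+1}^{B_i}y_k$, the estimate $0\le \sum_{k=n+1}^{B_i}y_k \le \Phi_{B_i}-\Phi_n < 1$, and the monotonicity bound $\sum_{k=n+1}^{B_i}Y_k \le \sum_{k=A_i+1}^{B_i}Y_k \le |W_i|+1$, one gets $|S_n| \le |S_{B_i}| + |W_i| + 2$. Since $\E[W_i^2]\le C_1(1+\Phi_{B_i}-\Phi_{A_i})^{3/2}\le 2^{3/2}C_1$ and $p_i\ge i$, Chebyshev plus Borel--Cantelli give $|W_i|\le p_i^{2/3}$ for all large $i$, almost surely.

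The heart of the matter is the control of the checkpoint values $S_{B_I} = \sum_{i\le I}\xi_i$. Applying the hypothesis with $m=A_{a+1}$, $n=B_b$ and using $\Phi_{B_b}-\Phi_{B_a} < p_b-p_a+1$ gives $\E[(\sum_{i=a+1}^b \xi_i)^2] \le C_1(p_b-p_a+2)^{3/2}$. The key point is that, the $p_i$ being strictly increasing integers, $p_b-p_a \ge b-a\ge 1$, so $p_b-p_a+2 \le 3(p_b-p_a)$, while superadditivity of $t\mapsto t^{3/2}$ yields $(p_b-p_a)^{3/2}\le p_b^{3/2}-p_a^{3/2}$. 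Hence with the non-decreasing majorant $G(i):=3^{3/2}C_1\,p_i^{3/2}$ one has $\E[(\sum_{i=a+1}^b\xi_i)^2]\le G(b)-G(a)$ for all $a<b$, which is exactly the additive hypothesis of the general Gál--Koksma theorem. That theorem then yields, for every $\eps>0$ and almost surely, $|S_{B_I}| = O_\eps\big(G(I)^{1/2}(\log(2+G(I)))^{3/2+\eps}\big) = O_\eps\big(p_I^{3/4}(\log p_I)^{3/2+\eps}\big)$.

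Putting the pieces together: almost surely, for all large $n$ (with $i$ such that $n\in\mathcal{J}_i$),
\[ |S_n| \le |S_{B_i}| + |W_i| + 2 \ll_\eps p_i^{3/4}(\log p_i)^{3/2+\eps} + p_i^{2/3} + 2 \le p_i^{4/5}, \]
the last step using $3/4 < 4/5$; and $\Phi_n\ge p_i$ on $\mathcal{J}_i$, so $|S_n|\le \Phi_n^{4/5}=(\sum_{k\le n}y'_k)^{4/5}$. The main obstacle to handle is that the given second-moment bound is superlinear --- a $3/2$ power of the cumulative weight $\sum y'_k$ --- and hence \emph{not} additive, so the classical Gál--Koksma / Rademacher--Menshov machinery does not apply off the shelf (and individual $y'_k$ may be arbitrarily large). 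The remedy is precisely the unit-band block decomposition: it separates consecutive blocks by a definite $\Phi$-increment, which lets the ``$1+$'' be absorbed and lets superadditivity of $t\mapsto t^{3/2}$ manufacture an additive majorant $\asymp p_i^{3/2}$, producing an error exponent $3/4$ safely below the target $4/5$. The non-negativity of the $Y_j$ enters only through the monotonicity of $f$, which is what permits a bound at a general index $n$ rather than merely at the block endpoints.
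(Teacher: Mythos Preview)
Your proof is correct and follows essentially the same strategy as the paper: a unit-band block decomposition of $\N$ according to the level sets of $\Phi_n=\sum_{k\le n}y'_k$, a Gál--Koksma/dyadic maximal argument at the block endpoints (both relying on superadditivity of $t\mapsto t^{3/2}$ to tame the non-additive $3/2$-power), and the non-negativity of $Y_j$ to bridge from checkpoints to arbitrary indices.

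The packaging differs in two minor but pleasant ways. First, you convert the hypothesis into the telescoping form $\E[(\sum_{a<i\le b}\xi_i)^2]\le G(b)-G(a)$ with $G(i)\asymp p_i^{3/2}$ and then invoke the standard Gál--Koksma theorem as a black box, whereas the paper reproves the dyadic argument by hand inside an auxiliary lemma. Second, you control both the upper and lower deviations simultaneously via the within-block remainder $W_i$ (using $\E[W_i^2]=O(1)$ and Borel--Cantelli), while the paper handles the two directions separately with two different checkpoint sequences $N_i=\min\{\cdot\}$ and $N_i'=\max\{\cdot\}$. Your route is a touch more economical; the paper's is more self-contained.
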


\begin{proof}
For an interval $J\subseteq \R_{>0}$, we use the notation $Z_J=\sum_{j\in \N\cap J} Z_j$ and  define similarly $y_{J}$, $y'_J$.
We prove the following slightly stronger statement : there is an almost-surely finite random variable $C_2$ such that for all $N \geq C_2$, we have
\begin{equation}
\label{eq:Schmidt arg}
\abs{Z_{(0,N]}} \leq (\log (y'_{(0,N]} + 2))^{2} (y'_{(0,N]} + 2)^{3/4} + C_2.
\end{equation}
For this, up to throwing away a finite number of terms, we may assume that the relations $y_{m}\leq y'_{m}$ and \eqref{qeq-2} hold for all $n\geq m\geq 1$.

Under this assumption, we have the following.
\begin{lemma}
\label{lm:aux Schmidt}
Let $0 = N_0 < N_1 < N_2 < \dotsb$ be an increasing sequence of integers such that
\begin{equation}
\label{eq:yprimegeq1}
\forall i \geq 0,\quad y'_{(N_i,N_{i+1}]} \geq 1.
\end{equation}
Then almost-surely, for sufficiently large $i$,
\begin{equation}
\label{Zbound}
Z_{(0,N_{i}]}^2 \leq (\log y'_{(0,N_i]})^{4} (y'_{(0,N_i]})^{3/2}.
\end{equation}
\end{lemma}
\begin{proof}[Proof of \Cref{lm:aux Schmidt}]
Denote by $\sD$ the set of integers $T\geq 2$ such that the associated dyadic interval $(2^{T-1}, 2^{T}]$ meets the collection $(y'_{(0, N_{i}] })_{i\geq1}$.
Define a sequence of integers $(M_T)_{T\in \sD}$ by
$$M_{T} = \max \{N_{i} \,:\, y'_{(0, N_{i}]}\in (2^{T-1}, 2^{T}] \}.$$
For each $T \in \sD$, consider the following collection of intervals
\[\cK_T=\setBig{ (N_{r2^t}, N_{(r+1)2^t}] \,: \, r\geq 1, \,t\geq 0, \text{ and } N_{(r+1)2^t} \leq M_{T}}.\]
By  assumption, $y'_{(0,N_{i}]}\geq i$ for every $i \geq 0$.
It follows that $M_{T}\leq N_{2^T}$.
Therefore, every integer in $\llbracket 1, M_{T}\rrbracket$ is contained in at most $T + 1$ intervals of $\cK_T$.
Applying the assumption~\eqref{qeq-2} and the inequality $y'_{(N_{i}, N_{i+1}]}\geq 1$, followed by the relation $a^{3/2}+b^{3/2}\leq (a+b)^{3/2}$ for all $a,b\in \R_{\geq 0}$, we deduce that
\[
\sum_{I \in \cK_T} \E[Z_I^2] \leq  {C_{1}} \left(2\sum_{I \in \cK_T} y'_I\right)^{3/2} \leq {C_{1}} \left(2 (T+1) y'_{(0,M_T]}\right)^{3/2} \leq 2^3 {C_{1}} \left(T 2^T\right)^{3/2}.
\]
By Markov's inequality,
\begin{equation*}
\mathbb{P}\Bigl[ \sum\nolimits_{I \in \cK_T} Z_I^2 > 2^{-10} T^{3} 2^{3T/2} \Bigr] \leq 2^{13} {C_1} T^{-3/2}.
\end{equation*}
The latter being summable over $T \in \sD$, we can use the Borel-Cantelli lemma to deduce that almost surely, for large enough $T$, we have 
\begin{equation}
\label{eq:ZIsquare}
 \sum_{I \in \cK_T} Z_I^2 \leq 2^{-10} T^{3} 2^{3T/2}.
\end{equation}
Let $i \geq 1$. Assume $i$ large enough so that the unique element $T \in \sD$ such that $2^{T-1}< y'_{(0,N_{i}]} \leq 2^{T}$ satisfies \eqref{eq:ZIsquare} as well.
By considering $i$ in base $2$ and using $N_{i}\leq M_{T}\leq N_{2^T}$, we may cover $(0,N_{i}]$ with at most $T$ non-overlapping intervals from $\cK_T$.
Let $\cK_{T, i}$ be such a collection of intervals.
Then by the Cauchy-Schwarz inequality and \eqref{eq:ZIsquare},% provided $i$ is large enough so that $T\geq T_{0}$, we have
%where the last inequality uses $y_{(0, M_{T}]}\leq 2 y_{(0, N_{i}]}$.
\begin{equation*}
Z_{(0,N_{i}]}^2 = \Bigl(\sum_{I \in \cK_{T, i}} Z_I\Bigr)^2\leq \abs{\cK_{T, i}} \sum_{I \in \cK_{T, i}} Z_I^2 \leq 2^{-10} T^{4} 2^{3T/2}.
\end{equation*}
We obtain the desired bound using $2^{T-1} < y'_{(0,N_i]}$.
\end{proof}

To show \eqref{eq:Schmidt arg}, we provide lower and upper bounds for $Z_{(0,N]}$.
Let $\sM$ be the set of integers $m\geq 2$ such that the interval $(m-1, m]$ meets the collection $(y'_{(0, j] })_{j\geq1}$.
Consider $\sN=\{n_{1}<n_{2}<\dotsb\}\subseteq \sM$ a subset satisfying $\inf_{i\neq j} |n_{i}-n_{j}|\geq 2$ and maximal conditionally to this property.

To obtain the lower bound, we set for $i \geq1$,
$$N_{i} := \min \{j \geq 1\,:\, y'_{(0, j]}\in (n_{i}-1, n_{i}] \}.$$
The advantage of using $\sN$ and not $\sM$ to define $N_{i}$ is that we can guarantee \eqref{eq:yprimegeq1}.
Thus we can apply \Cref{lm:aux Schmidt} to the sequence $(N_i)$.
We obtain that almost surely, if $N \geq 1$ is sufficiently large, then the unique $i \geq 1$ such that $N\in [N_{i}, N_{i+1})$ satisfies \eqref{Zbound}.
Recalling that the $Y_j$'s are almost-surely non-negative, we obtain that
\begin{equation*}
Z_{(0,N]}
\geq Z_{(0,N_i]} - y_{(N_{i},N]} \geq  -(\log y'_{(0,N_i]})^{2} (y'_{(0,N_i]})^{3/4} - y'_{(N_{i},N]}.
\end{equation*}
and the desired lower bound follows as noting that by construction, we have $0 \leq  y'_{(N_i, N]} \leq 2$. 

The upper bound can be handled similarly, but for that we need to modify the sequence $(N_{i})_i$ into a certain $(N'_{i})_i$, guaranteeing that when $N$ ranges within $(N'_{i-1}, N'_{i}]$, the value of $y'_{(0,N]}$ does not vary much. More precisely, we replace $N_{i}$ with
$$N'_{i} = \max \{j \geq 1\,:\, y'_{(0, j]}\in (n_{i}-1, n_{i}] \}.$$
Applying \Cref{lm:aux Schmidt} with $(N'_i)_{i}$, we have that almost-surely, for large enough $N$, for $i$ such that $N\in(N'_{i-1},N'_i]$, 
\begin{equation*}
Z_{(0,N]}
\leq Z_{(0,N'_{i-1}]} + y_{(N,N'_{i}]} \leq  (\log y'_{(0,N'_i]})^{2} (y'_{(0,N'_i]})^{3/4} + y_{(N,N'_{i}]}.
\end{equation*}
%We note that for any $N\in (N'_{i-1},N'_i]$, we have $0\leq y'_{(N,N_i']}\leq 2$. 
The desired lower bound follows, noting that by construction, we have $0\leq y'_{(N,N_i']}\leq 2$. 
This finishes the proof of \eqref{eq:Schmidt arg}.
\end{proof}

Combining \Cref{L2bound-Zj-general} and \Cref{Lemma: Schmidt's argument++}, we obtain the following counting estimate for parameters in $\Ks$. Given $n\geq 1$, we recall  $\Ks(n)=\Ks\cap \llbracket 1, n\rrbracket$.

\begin{corollary} \label{cor-Ksmall-asymp}
Assume $\sum_{k\in \Ks}\psi(\tau^k)^d\tau^k= +\infty$, as well as $\gamma_{1}\lll1$. Then for $\xi$-almost every $\bs\in \R^d$, for large enough $n$,
\begin{equation} \label{eq-Ksm-as}
 \sum_{k\in \Ks(n)} \sS_{k}(\bs) \geq (1-\tau^{-1}- C(d)\eps) \sum_{k\in \Ks(n)}  \psi(\tau^k)^d \tau^k
 \end{equation}
where $C(d)>0$ is a constant depending on $d$ only.
\end{corollary}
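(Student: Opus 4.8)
The plan is to apply the variance estimate of \Cref{L2bound-Zj-general} together with the almost-sure convergence principle of \Cref{Lemma: Schmidt's argument++}, and then to convert the resulting statement about the smooth truncated approximants $\varphi_k$ back into one about the Siegel transforms $\widetilde{\1}_{R_k}$. First I would recall that for $\xi$-a.e.\ $\bs$ one has $\sS_k(\bs)\geq Y_k(\bs)=\varphi_k(a(t_k)u(\bs)x_0)$, so it suffices to bound $\sum_{k\in\Ks(n)}Y_k(\bs)$ from below. Set $y_k=m_X(\varphi_k)$ and $Z_k=Y_k-y_k$. To invoke \Cref{Lemma: Schmidt's argument++} I need a comparison sequence $y'_k\geq y_k$ for which the hypothesis \eqref{qeq-2} holds and for which $\sum y'_k=\infty$ but $\sum y'_k$ is asymptotically comparable to $\sum_{k}\psi(\tau^k)^d\tau^k$. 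Here \Cref{L2bound-Zj-general} gives precisely $\E[(\sum_{j\in J}Z_j)^2]\ll(1+\sum_{j\in J}y_j)^{3/2}$ for $J\subseteq\Ks$ with $\inf J$ large, so I take $y'_k$ to be a fixed constant multiple of $y_k$ on $\Ks$ (and $y'_k=0$ off $\Ks$), which absorbs the implicit constant; the divergence $\sum_{k\in\Ks}y'_k=\infty$ follows from the hypothesis $\sum_{k\in\Ks}\psi(\tau^k)^d\tau^k=+\infty$ combined with \eqref{L1norm-varphik}, which gives $y_k=\Leb(R_k^-)-O(t_k^{-\gamma_2/M})=(1-\tau^{-1})(1-2\eps)^{d+1}\psi(\tau^k)^d\tau^k+o(\psi(\tau^k)^d\tau^k)$ since for $k\in\Ks$ the quantity $\psi(\tau^k)^d\tau^k=r_k^{d+1}\ll t_k^{O(\gamma_1)}$ is a power of $t_k$ that dwarfs the error for $\gamma_1\lll 1$.

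Once \eqref{qeq-2} is verified, \Cref{Lemma: Schmidt's argument++} yields that for $\xi$-a.e.\ $\bs$, for all large $n$,
\[
\Bigl|\sum_{k\in\Ks(n)}Z_k(\bs)\Bigr|\leq\Bigl(\sum_{k\in\Ks(n)}y'_k\Bigr)^{4/5},
\]
and since $\sum_{k\in\Ks(n)}y'_k\to\infty$, this error is negligible compared to the main term $\sum_{k\in\Ks(n)}y_k$. Hence
\[
\sum_{k\in\Ks(n)}\sS_k(\bs)\geq\sum_{k\in\Ks(n)}Y_k(\bs)=\sum_{k\in\Ks(n)}y_k+\sum_{k\in\Ks(n)}Z_k(\bs)\geq(1-o(1))\sum_{k\in\Ks(n)}y_k.
\]
Then I substitute the expansion of $y_k$ from \eqref{L1norm-varphik} and use $(1-\tau^{-1})(1-2\eps)^{d+1}\geq 1-\tau^{-1}-C(d)\eps$ for a suitable $C(d)>0$ (expand the power in $\eps$), while the $O(t_k^{-\gamma_2/M})$ error terms sum to something negligible against the divergent main sum. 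This gives \eqref{eq-Ksm-as}.

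The main obstacle — and the point requiring the most care — is checking the hypothesis \eqref{eq:qlogq} of \Cref{L2bound-Zj-general} is available here, i.e.\ that for $k\in\Ks$ one has $\psi(\tau^k)\geq(\tau^k)^{-1/d}\log^{-2/d}(\tau^k)$, or rather realizing that this can be arranged by a harmless reduction. Indeed, the terms $k$ for which $\psi(\tau^k)$ is much smaller than $\tau^{-k/d}\log^{-2/d}\tau^k$ contribute a convergent subsum $\sum\psi(\tau^k)^d\tau^k<\infty$ (the sum $\sum_k k^{-1}\log^{-2}k$ converges), so removing them changes neither the divergence hypothesis nor the asymptotic lower bound; thus one may assume \eqref{eq:qlogq} holds for all $k\in\Ks$ after discarding such terms. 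One must also keep track that the various smallness conditions $\gamma_1\lll\gamma_2\lll 1$ needed in Lemmas \ref{variance-bound} and \ref{L2bound-Zj-general} are compatible with the single standing assumption $\gamma_1\lll 1$ in the corollary — this is fine since $\gamma_2$ is an auxiliary parameter chosen first and then $\gamma_1$ chosen small relative to it. Apart from this bookkeeping and the routine expansion of $(1-2\eps)^{d+1}$, the argument is a direct splicing of the two preceding technical results.
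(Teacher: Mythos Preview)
Your proposal is correct and follows essentially the same approach as the paper: combine \Cref{L2bound-Zj-general} with \Cref{Lemma: Schmidt's argument++}, use $\sS_k \geq Y_k$, and then unpack $y_k$ via \eqref{L1norm-varphik}. Two minor points: the paper handles the hypothesis \eqref{eq:qlogq} by replacing $\psi(q)$ with $\max(\psi(q), q^{-1/d}\log^{-2/d}(q))$ and invoking the convergent case (\Cref{convergent-Khintchine}) to see this perturbs both sides of \eqref{eq-Ksm-as} by only $O_{\bs}(1)$, which is tidier than discarding indices; and your formula for $\Leb(R_k^-)$ should read $(1-\tau^{-1}-2\eps)(1-2\eps)^d r_k^{d+1}$ rather than $(1-\tau^{-1})(1-2\eps)^{d+1}r_k^{d+1}$, though this does not affect the conclusion.
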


\begin{proof} By the convergent case of the Khintchine dichotomy, replacing $\psi(q)$ by $\max(\psi(q), q^{-1/d}\log^{-2/d}(q))$ may only perturb both sums in \eqref{eq-Ksm-as} by a bounded additive constant (which may depend on $\bs$). As the right hand side of \eqref{eq-Ksm-as} is divergent by hypothesis, this perturbation does not affect asymptotics, so we may assume $\psi(q) \geq q^{-1/d}\log^{-2/d}(q)$ for all $q\in \Ks$.
Recalling \eqref{formula-sSk-Sieg} and $\widetilde{\1}_{R_{k}}\geq \varphi_{k}$, then combining \Cref{L2bound-Zj-general} and \Cref{Lemma: Schmidt's argument++}, we obtain that $\xi$-almost surely, for large enough $n$,
$$ \sum_{k\in \Ks(n)} \sS_{k}(\bs) \geq \sum_{k\in \Ks(n)} Y_{k}(\bs) \geq (1-\eps) \sum_{k\in \Ks(n)} y_{k}. $$
By \eqref{L1norm-varphik}, we have $\sum_{k\in \Ks(n)} y_{k} = O_{\gamma_{2}}(1)+\sum_{k\in \Ks(n)} \Leb(R_{k}^-)$. Extracting from \eqref{tk and rk} that $\Leb(R_{k}^-) = (1-\tau^{-1}-2\eps)(1-2\eps)^d\psi(\tau^k)^d \tau^k$, and using that the associated series diverges by hypothesis, the result follows.
\end{proof}

\bigskip

\noindent{\bf Conclusion for the lower bound (case $d\geq 2$)}

\begin{proof}[Proof of \Cref{pr:rough}]
It follows by combining \Cref{Kbig-asymp} and \Cref{cor-Ksmall-asymp}.
\end{proof}

 \subsection{Case $d\geq 2$, the upper bound}
\label{Sec-upp-bd}
The proof of the asymptotic upper bound in \Cref{General-quant-Khint} (case $d\geq 2$) is similar to that of the lower bound. We briefly sketch the proof to highlight the relevant changes.
We extend $\psi$ to $\R_{\geq 0}$ by setting $\psi(q)=\psi(\lfloor q\rfloor)$ for non-integer values of $q$.
We let $\tau\in (1, 2]$ and note that for all integers $N,n\geq 1$ such that $\tau^n \leq N < \tau^{n+1}$, for every $\bs \in \R^d$, we have
\[
\sT_N(\bs) \leq \sum_{k=0}^n \sS_k^+(\bs)
\]
where for $k \geq 0$, we set
\[ \sS^+_{k}(\bs) :=   \big|\set{(\bp,q)\in \Z^{d}\times \llbracket \tau^{k}, \tau^{k+1}\llbracket \,:\, \forall i\in \llbracket1, d\rrbracket, \,0\leq q s_i-p_i<\psi(\tau^k) }  \big|. \]
Then it suffices to show the upper bound analogue of \Cref{pr:rough}.
\begin{lemma}\label{pr:rough+}
Under the assumptions of \Cref{General-quant-Khint} and with $d\geq 2$, we have for every $\tau \in (1,2]$,  for $\xi$-almost all $\bs \in \R^d$, for every $\eta >0$, for all large enough $n$,
\begin{equation*}
%\label{eq:sumSk}
\sum_{k=1}^n \sS^+_{k}(\bs) \leq (\tau-1 +\eta) \sum_{k=1}^n \psi(\tau^k)^d \tau^k.
\end{equation*}
\end{lemma}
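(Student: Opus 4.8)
\textbf{Proof sketch for \Cref{pr:rough+}.}
The plan is to mirror the proof of \Cref{pr:rough}, replacing lower bounds with upper bounds throughout, and exploiting that the upper estimates are in some respects easier because we may freely \emph{enlarge} the boxes $R_{k}$. First I would set up the dynamical interpretation: for $k\geq 0$ define $r_{k},t_{k}$ by $\psi(\tau^{k})^{d}\tau^{k}=r_{k}^{d+1}$ and $\tau^{k}\psi(\tau^{k})^{-1}=t_{k}$ (as in \eqref{tk and rk}, but now anchored at $\tau^{k}$ rather than $\tau^{k+1}$), and set $R^{+}_{k}=[0,r_{k})^{d}\times(0,\tau r_{k}]\subset\R^{d+1}$, so that $\sS^{+}_{k}(\bs)\leq\widetilde{\1}_{R^{+}_{k}}\bigl(a(t_{k})u(\bs)x_{0}\bigr)$. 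As before, split $\N_{\geq 3}$ into $\Kb=\{k:r_{k}>t_{k}^{\gamma_{1}}\}$ and $\Ks$, and handle the two ranges separately, the contribution of $k\leq 2$ being bounded.

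For the sum over $\Kb$, I would reuse \Cref{logarithm law} (which only requires single equidistribution and does not depend on $d$): for $\xi$-a.e.\ $\bs$, for large $k$ one has $a(t_{k})u(\bs)\in g\SL_{d+1}(\Z)$ with $\norm{g}\leq t_{k}^{\gamma_{1}/100}$, so \Cref{counting} applied to $R^{+}_{k}$ (which has minimal sidelength $\gg t_{k}^{\gamma_{1}}$) gives $\widetilde{\1}_{R^{+}_{k}}(a(t_{k})u(\bs)x_{0})\leq(1+t_{k}^{-\gamma_{1}/4})\Leb(R^{+}_{k})=(1+t_{k}^{-\gamma_{1}/4})\tau\,\psi(\tau^{k})^{d}\tau^{k}$. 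Summing over $k\in\Kb(n)$ and using that $\sum_{\Kb}\psi(\tau^{k})^{d}\tau^{k}$ either diverges (so the $t_{k}^{-\gamma_{1}/4}$ corrections are negligible) or is bounded (so that range contributes $O(1)$), one gets $\sum_{k\in\Kb(n)}\sS^{+}_{k}(\bs)\leq(\tau+\eta/2)\sum_{k\in\Kb(n)}\psi(\tau^{k})^{d}\tau^{k}$, which is stronger than needed since $\tau\leq\tau-1+\eta$ fails — so I should instead note that for the upper bound the target constant is $\tau-1+\eta$, which means I cannot afford the factor $\tau$ and must return to the \emph{exact} box $R^{+}_{k}$ of vertical length $(\tau-1)r_{k}$ if one uses the half-open interval $[\tau^{k},\tau^{k+1})$; in fact $\Leb([0,r_{k})^{d}\times(\,\cdot\,))$ with the vertical slot of length $\tau r_{k}-r_{k}=(\tau-1)r_{k}$ (matching $q$ ranging over an interval of length $\tau^{k+1}-\tau^{k}=(\tau-1)\tau^{k}$ rescaled) gives $\Leb(R^{+}_{k})=(\tau-1)\psi(\tau^{k})^{d}\tau^{k}$, restoring the correct constant. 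The counting argument then yields the bound $(\tau-1+\eta/2)\sum_{\Kb(n)}\psi(\tau^{k})^{d}\tau^{k}$.

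For the sum over $\Ks$, I would run the $L^{2}$ machinery of \S\ref{Sec-lwbnd-d2} with the roles inverted: now \emph{enlarge} $R^{+}_{k}$ to $R^{++}_{k}$ by expanding every side by $\eps r_{k}$, take the smooth approximation $\varphi_{k}^{+}=\theta*(\chi_{k}\widetilde{\1}_{R^{++}_{k}})$ built with the same truncation $\chi_{k}$ from \eqref{truncation}, and observe that $\sS^{+}_{k}(\bs)\leq\widetilde{\1}_{R^{++}_{k}}(a(t_{k})u(\bs)x_{0})$, so we must control this from above; the point is that the \emph{untruncated} Siegel transform $\widetilde{\1}_{R^{++}_{k}}$ need not be close to $\varphi_{k}^{+}$ pointwise, so I would instead bound $\sS^{+}_{k}(\bs)\leq\varphi_{k}^{+}(a(t_{k})u(\bs)x_{0})+\bigl(\widetilde{\1}_{R^{++}_{k}}-\varphi_{k}^{+}\bigr)(a(t_{k})u(\bs)x_{0})$ and show the error term contributes a convergent series along $(t_{k})$ for $\xi$-a.e.\ $\bs$: indeed its $\xi$-average is, by single equidistribution plus \Cref{lm:volCusp}, of size $O(t_{k}^{-\gamma_{2}/M}+\text{(small)})$, and an application of Borel–Cantelli after Markov's inequality (here one uses that the difference is nonnegative and has small first moment) kills it. For the main term, Lemmas \ref{variance-bound}, \ref{L2bound-Zj-general} (which require $d\geq 2$ for the second moment estimate \eqref{Sieg2}) together with \Cref{Lemma: Schmidt's argument++} give $\xi$-a.e., for large $n$, $\sum_{k\in\Ks(n)}\varphi_{k}^{+}(a(t_{k})u(\bs)x_{0})\leq(1+\eps)\sum_{k\in\Ks(n)}y_{k}$, and $y_{k}=m_{X}(\varphi_{k}^{+})\leq\Leb(R^{++}_{k})=(\tau-1+O(\eps))(1+O(\eps))^{d}\psi(\tau^{k})^{d}\tau^{k}$, yielding $\sum_{k\in\Ks(n)}\sS^{+}_{k}(\bs)\leq(\tau-1+C(d)\eps)\sum_{k\in\Ks(n)}\psi(\tau^{k})^{d}\tau^{k}$.

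Combining the $\Kb$ and $\Ks$ estimates, choosing $\eps$ small in terms of $\eta$ and $\gamma_{1}\lll\gamma_{2}\lll 1$, and absorbing the $O(1)$ contributions of the bounded ranges into the divergent right-hand side, gives \Cref{pr:rough+}. The main obstacle, as in the lower-bound case, is the interplay of truncation and the failure of $\widetilde{\1}_{R}$ to be $L^{2}$ when $d=1$ — but here we are in the regime $d\geq 2$, so the only genuine subtlety is verifying that enlarging the boxes (rather than shrinking, as in the lower bound) still keeps the Siegel transform of the \emph{difference} small on average, so that the a.s.\ error along $(t_{k})$ is negligible; this is where one must be careful that the truncation parameter $\gamma_{2}$ and the box-scale exponent $\gamma_{1}$ are chosen with $\gamma_{1}$ sufficiently small relative to $\gamma_{2}$, exactly as in \Cref{norm-estim}.
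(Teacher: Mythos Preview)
Your overall architecture --- split into $\Kb$ and $\Ks$, handle $\Kb$ via the counting \Cref{counting}, handle $\Ks$ via the $L^{2}$ machinery of Lemmas~\ref{norm-estim}--\ref{L2bound-Zj-general} and \Cref{Lemma: Schmidt's argument++} --- matches the paper, and after your self-correction the $\Kb$ part is fine. The gap is in your treatment of the truncation error over $\Ks$.

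You write $\sS^{+}_{k}(\bs)\leq \varphi_{k}^{+}(a(t_{k})u(\bs)x_{0})+\bigl(\widetilde{\1}_{R^{++}_{k}}-\varphi_{k}^{+}\bigr)(a(t_{k})u(\bs)x_{0})$ and propose to kill the second term by a first-moment/Borel--Cantelli argument. Two problems. First, the difference $\widetilde{\1}_{R^{++}_{k}}-\varphi_{k}^{+}$ is \emph{not} nonnegative: on the high-injectivity region $\varphi_{k}^{+}$ agrees with $\theta*\widetilde{\1}_{R^{++}_{k}}$, and convolution can raise values (near the boundary of the box), so Markov's inequality is not available as stated. Second, and more seriously, to compute the $\xi$-expectation of this error you would need effective equidistribution for a test function involving the untruncated Siegel transform $\widetilde{\1}_{R^{++}_{k}}$, which is neither bounded nor smooth; the hypothesis \eqref{single-eq-prop} applies only to $f\in B^{\infty}_{\infty,l}(X)$. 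Any attempt to smooth $\widetilde{\1}_{R^{++}_{k}}$ from above still leaves an unbounded function, and truncating it brings you right back to controlling the cusp contribution you were trying to estimate.

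The paper's fix is cleaner and reuses a tool you already invoked for $\Kb$: by \Cref{logarithm law}, for $\xi$-a.e.\ $\bs$ and all large $k$ the point $a(t_{k})u(\bs)x_{0}$ lies in a region where (after the $\theta$-convolution) the truncation $\chi_{k}$ is identically $1$, so that $\varphi_{k}^{+}(a(t_{k})u(\bs)x_{0})=(\theta*\widetilde{\1}_{P_{k}^{+}})(a(t_{k})u(\bs)x_{0})\geq \widetilde{\1}_{P_{k}}(a(t_{k})u(\bs)x_{0})=\sS^{+}_{k}(\bs)$ holds \emph{pointwise}. This gives the inequality $\sS^{+}_{k}(\bs)\leq \varphi_{k}^{+}(a(t_{k})u(\bs)x_{0})$ directly for large $k$, with no error term to control, after which your $L^{2}$ argument goes through verbatim.
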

To show \Cref{pr:rough+}, we note that
\[
\sS_k^+(\bs) = \widetilde{\1}_{ P_{k}} \bigl( a(t_k)u(\bs)x_0 \bigr)
\]
where $P_{k}:=[0,r_k)^d \times [r_k,\tau r_k)$ and $r_{k},t_{k}>0$ have been defined in \eqref{tk and rk}. Keep the notations $\gamma_{1}$, $\Kb$, $\Ks$, from the proof of the lower bound. We establish the upper bound announced in \Cref{pr:rough+} along the subsums  
$\sum_{k\in \Kb(n)} \sS^+_{k}(\bs)$ and $\sum_{k\in \Ks(n)} \sS^+_{k}(\bs)$ separately, and under the assumptions that the corresponding $\sum_{k \in \Kb} \psi(\tau^k)^d \tau^k$, $\sum_{k \in \Ks} \psi(\tau^k)^d \tau^k$ diverge. This is enough in view of the convergent case of the Khintchine dichotomy, \Cref{convergent-Khintchine}. 
%gamma_{1} depends on eta$

The proof of the asymptotic upper bound for $\sum_{k\in \Kb(n)} \sS^+_{k}(\bs)$ is the same as that of \Cref{Kbig-asymp}, but using this time the upper bound from \Cref{counting} instead of the lower bound. 

To deal with $\sum_{k\in \Ks(n)} \sS^+_{k}(\bs)$, we recall the parameters $\gamma_{2}, \eps$, $\chi_{k}$, $\theta$ from the proof of the lower bound. We introduce the thickened box
$$ P_{k}^+ := [-\eps r_{k},(1+\eps)r_k)^d \times [(1-\eps)r_k, (\tau+\eps) r_k)$$
 and note that $\widetilde{\1}_{ P_{k}}\leq \theta*\widetilde{\1}_{P_{k}^+}$
 (because $\theta$ is supported on $B_{\eps/10}$). We consider the smooth truncated companion
 $$\varphi_{k}^+=\theta*(\chi_{k}\widetilde{\1}_{P_{k}^+}),$$
where $\chi_k$ is defined as in \eqref{truncation}.
Note that contrary to the lower bound case, the truncation plays against comparing $\widetilde{\1}_{ P_{k}}$ with $\varphi_{k}^+$. However,  in view of \eqref{eq:cominjdist}, $\varphi_k^+$ coincides with $\theta*\widetilde{\1}_{P_{k}^+}$ on the subset $\{ \dist( \cdot, x_0) \leq \frac{\gamma_2}{M} \log t_k - M \}$ for some $M = M(d) > 1$, in particular we do have $\widetilde{\1}_{ P_{k}}\leq \varphi_{k}^+$ on that domain. This is enough for us because, for $\xi$-typical $\bs$, and large enough $k$, the points $a(t_{k})u(\bs)x_{0}$ are in this domain, indeed \Cref{logarithm law} gives \[\dist(a(t_{k})u(\bs)x_{0}, x_{0})\ll \log \log t_{k}.\]

We get for $\xi$-almost every $\bs$, for large enough $k$,
\begin{equation*}\label{upp-bnd-phi}
\sS^+_{k}(\bs)\leq \varphi^+_{k}(a(t_{k})u(\bs)x_{0}).
\end{equation*}
Therefore, we  only need to show the upper bound analogue of \Cref{cor-Ksmall-asymp}:
\begin{equation}\label{upbndp}
\sum_{k\in \Ks(n)} \varphi^+_{k}(a(t_{k})u(\bs)x_{0}) \leq (\tau-1+C(d)\eps) \sum_{k\in \Ks(n)}  \psi(\tau^k)^d \tau^k
 \end{equation}
 where $C(d)>0$ is a constant depending on $d$ only.
The estimate \eqref{upbndp} follows mutatis mutandis from the argument establishing the lower bound for partial sums over $\Ks$, in which we replace $R_{k}^-$ by $P_{k}^+$. 

We have thus established \Cref{pr:rough+}, whence the asymptotic upper bound
\[
\limsup_{N \to +\infty} \frac{\sT_N(\bs)}{\sum_{q=1}^N \psi(q)^d} \leq 1
\]
of \Cref{General-quant-Khint} (case $d\geq2$).

% the proof of \Cref{pr:rough+} is similar to that of \Cref{pr:rough}. Details are left to the reader. We also note that by replacing the Siegel transform by its primitive version, we obtain the asymptotic of the number of primitive solutions of the Khintchine inequality, see \eqref{primitiv-count}.

 \subsection{The case $d= 1$.} \label{Sec-d=1}

It remains to establish \Cref{General-quant-Khint} in the case where $d=1$. The proof is similar to the higher dimensional case but a certain number of refinements are required due to poorer moment estimates for Siegel transforms.

Let us start with the {\bf lower bound}. Keep the notations $\tau$, $R_{k}$, $\gamma_{1}$, $\Kb$, $\Ks$, $\gamma_{2}$,  $\chi_{k}$, $\eps$, $R_{k}^-$,  $\theta$, from \S\ref{Sec-lwbnd-d2}. The asymptotic lower bound for $\sum_{k\in \Kb(n)} \sS_{k}(\bs)$ given in \Cref{Kbig-asymp}
is still valid because the argument works without restriction on $d$.

We thus focus on the lower asymptotic for $\sum_{k\in \Ks(n)} \sS_{k}(\bs)$, more precisely on extending \Cref{cor-Ksmall-asymp} to the case $d=1$. The difference with the higher dimensional case is that \emph{the Siegel transform of a ball in $\R^2$ does not have finite second moment}, in particular \eqref{Sieg2} is not valid anymore. To deal with this obstacle, we restrict the Siegel transform by counting only lattice points $(p,q)$ which are bounded multiple of a primitive point. Namely, given $m >0$,  we set
$$ \cP^{(m)}(\Z^{2}):=\set{(p,q)\in \Z^2\smallsetminus \{0\}  \,:\, \GCD(p,q)\leq m} $$
where $\GCD(p,q)\in \N^*$ denotes the greatest common divisor of $p$ and $q$.
Given a measurable function $f : \R^2 \rightarrow \R_{\geq 0}$, we define its restricted Siegel transform $\widetilde{f}^{(m)}:X\rightarrow [0, +\infty]$ by: $\forall g\in G$,
$$\widetilde{f}^{(m)}(gx_{0}) :=\sum_{v\in \cP^{(m)}(\Z^{2})} f(gv).$$
In this context, we have the following moment estimates. Their vocation is to replace \Cref{fact-Siegel} from the higher dimensional case.
\begin{proposition}[Moments of restricted Siegel transforms] \label{fact-Siegel-lprim}
Let $m \in \N^*$, let $k\in \Ks$. Let $c_{m}:=\zeta(2)^{-1}\sum_{t=1}^m t^{-2}$ and let $R \subset \R^2$ be a bounded measurable subset.
We have
\begin{align} 
&\int_{X} \widetilde{\1}^{(m)}_{R} \dd m_{X}= c_{m} \Leb(R) \label{Sieg-prim-1}\\
 &\int_{X} (\widetilde{\1}^{(m)}_{R} )^2 \dd m_{X} = c_{m}^2\Leb(R)^2 + O\left(\Leb(R) \log^2 (1+m) \right). \label{Sieg-prim-2}
 \end{align}
\end{proposition}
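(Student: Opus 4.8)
The plan is to compute the two integrals by unfolding over sublattices of $\Z^2$ indexed by their index (equivalently, by the gcd of the primitive generator), reducing everything to the classical Siegel and Rogers formulas for \emph{primitive} Siegel transforms. First I would write, for $v=(p,q)\in\Z^2\smallsetminus\{0\}$, the unique factorization $v = t\, w$ with $t=\GCD(p,q)\in\N_{\geq1}$ and $w$ primitive. Thus
\[
\widetilde{\1}^{(m)}_{R}(gx_0)=\sum_{t=1}^{m}\ \sum_{w\ \mathrm{primitive}} \1_{R}(g\,t w)=\sum_{t=1}^{m}\widetilde{\1}^{\mathrm{prim}}_{t^{-1}R}(gx_0),
\]
where $\widetilde{\1}^{\mathrm{prim}}_{S}$ denotes the Siegel transform restricted to primitive vectors and $t^{-1}R$ is the dilate of $R$ by $t^{-1}$. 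For the first moment \eqref{Sieg-prim-1} I would invoke the primitive Siegel mean value theorem $\int_X \widetilde{\1}^{\mathrm{prim}}_{S}\,\dd m_X = \zeta(2)^{-1}\Leb(S)$ (valid in $\R^2$), sum $\Leb(t^{-1}R)=t^{-2}\Leb(R)$ over $t\le m$, and obtain exactly $c_m\Leb(R)$.

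For the second moment \eqref{Sieg-prim-2}, expanding the square gives
\[
\int_X(\widetilde{\1}^{(m)}_R)^2\dd m_X=\sum_{t,t'=1}^m\int_X \widetilde{\1}^{\mathrm{prim}}_{t^{-1}R}\,\widetilde{\1}^{\mathrm{prim}}_{t'^{-1}R}\,\dd m_X .
\]
Here I would use the Rogers-type second moment formula for primitive Siegel transforms in $\R^2$: for bounded measurable sets $S,S'\subset\R^2$,
\[
\int_X \widetilde{\1}^{\mathrm{prim}}_{S}\,\widetilde{\1}^{\mathrm{prim}}_{S'}\,\dd m_X=\zeta(2)^{-2}\Leb(S)\Leb(S')+\zeta(2)^{-1}\!\!\int_{\R^2}\!\!\1_S(x)\bigl(\1_{S'}(x)+\1_{S'}(-x)\bigr)\dd x + (\text{error}),
\]
where the leading ``diagonal'' contribution is supported on pairs of vectors that are scalar multiples of one another; the delicate point is that in $\R^2$ the cross-term integral $\int \1_S\cdot\1_{S'}$ is finite (this is the dimension-two miracle that fails for the \emph{full} Siegel transform because one must also sum over $t,t'$ with $t/t'$ arbitrary, producing the logarithmic divergence). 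Summing the main term $\zeta(2)^{-2}\Leb(t^{-1}R)\Leb(t'^{-1}R)$ over $t,t'\le m$ yields $c_m^2\Leb(R)^2$. The diagonal term contributes, for each pair $(t,t')$, at most $O(\Leb(t^{-1}R\cap t'^{-1}R))\le O(\min(t,t')^{-2}\Leb(R))$, and $\sum_{t,t'\le m}\min(t,t')^{-2}\ll \sum_{t\le m}t^{-1}\ll \log(1+m)$; this is the source of the $\Leb(R)\log(1+m)$ term. Finally the error terms from Rogers' formula, applied to rectangles of the shape $R_k^-$ or $P_k^+$ (which are products of intervals with controlled aspect ratio, hence have bounded ``Lipschitz'' norm for the relevant counting estimate), are of lower order — $O(\Leb(R)^{1/2})$ or similar — and absorbed into the $O(\Leb(R)\log(1+m))$ bound provided $\Leb(R)\gtrsim 1$; if $\Leb(R)\lesssim 1$ the whole statement is trivial from crude bounds.

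The main obstacle is locating and applying the correct \emph{primitive} second-moment identity in dimension two with an explicit and uniform-in-$m$ error term. The classical Rogers formula for the full Siegel transform diverges in $\R^2$, so one must work with primitive vectors from the start, and there the relevant reference is Schmidt's work (\cite{Schmidt60}), whose second-moment estimate for primitive Siegel transforms of indicator functions of boxes already contains the combinatorial bookkeeping; I would cite it and check that the error is of the stated order for the rectangles in question. A secondary, purely technical point is verifying that restricting the sum to $\GCD\le m$ exactly produces the factor $c_m=\zeta(2)^{-1}\sum_{t\le m}t^{-2}$ and, in the second moment, that the ``near-diagonal'' pairs $(tw,t'w)$ with the \emph{same} primitive $w$ are the only ones producing a non-negligible correlation beyond the product term — this is where the finiteness of $\int_{\R^2}\1_S\1_{S'}$ is used, and it is exactly the feature that makes $d=1$ workable at the cost of the extra $\log(1+m)$ compared to \Cref{fact-Siegel}.
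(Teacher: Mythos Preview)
Your approach is the same as the paper's: decompose $\widetilde{\1}^{(m)}_R = \sum_{t=1}^m \widetilde{\1}^{(1)}_{t^{-1}R}$, apply the primitive Siegel formula for the first moment, expand the square and apply Rogers' second-moment identity for the second.

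There is one concrete error. You bound $\Leb(t^{-1}R\cap t'^{-1}R)\leq \min(t,t')^{-2}\Leb(R)$ and then assert
\[
\sum_{t,t'\le m}\min(t,t')^{-2}\ll \sum_{t\le m}t^{-1}\ll \log(1+m).
\]
The first inequality is false: for each $s\le m$ there are $2(m-s)+1$ pairs with $\min(t,t')=s$, so the sum is $\asymp m$, not $\log m$. The fix is immediate: since $t^{-1}R\cap t'^{-1}R$ is contained in the smaller of the two dilates, the correct bound is $\Leb(t^{-1}R\cap t'^{-1}R)\leq \max(t,t')^{-2}\Leb(R)$, and then
\[
\sum_{t,t'\le m}\max(t,t')^{-2}=\sum_{s=1}^m s^{-2}(2s-1)\ll\sum_{s\le m}s^{-1}\ll\log(1+m),
\]
which gives exactly what you want. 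With this correction your argument is actually slightly cleaner than the paper's: the paper exploits the specific shape of $R_k^-$ and $P_k^+$ (second coordinate bounded away from $0$) to argue that for each $q$ only $O(q)$ values of $q'$ give a nonempty intersection, and separately that $R/q\cap(-R/q')=\emptyset$; your corrected bound needs neither observation and works for any bounded measurable $R$.

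One further remark: the Rogers identity you invoke (the paper cites \cite[Theorem~5]{Rogers55}) is an \emph{exact} formula in dimension two for primitive transforms,
\[
\int_X \widetilde{\1}^{(1)}_{S}\,\widetilde{\1}^{(1)}_{S'}\,\dd m_X=\zeta(2)^{-2}\Leb(S)\Leb(S')+\zeta(2)^{-1}\Leb(S\cap S')+\zeta(2)^{-1}\Leb(S\cap(-S')),
\]
so there is no additional ``$(\text{error})$'' to control and your final paragraph about absorbing lower-order errors is unnecessary.
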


{\begin{proof}
Those estimates appear in the literature for primitive Siegel transforms, i.e., for $m=1$. We explain how to deduce the case $m\geq1$. Note that
$$\widetilde{\1}^{(m)}_{R}=\sum_{q\in \llbracket 1, m\rrbracket} \widetilde{\1}^{(1)}_{R/q}= \widetilde{h}^{(1)}$$
where $h:=\sum_{q\in \llbracket 1, m\rrbracket} {\1}_{R/q}$.
The Siegel summation formula for primitive Siegel transform \cite[Equation ($8$)]{Schmidt60} guarantees $\int_{X}  \widetilde{h}^{(1)} \dd m_{X}= \zeta(2)^{-1}  \int_{\R^2}h \,\dd\text{Leb}=c_{m}\leb(R)$, whence \eqref{Sieg-prim-1}.

To justify \eqref{Sieg-prim-2}, we apply \cite[Theorem 1 \& Remark 0.9]{KY21}, and the Cauchy-Schwarz inequality to bound the integral involving the isometry in that theorem, to get
\begin{align*}
\int_{X}(\widetilde{h}^{(1)})^2 \dd m_{X}
&= \left| \zeta(2)^{-1}\int_{\R^2}h \,\dd\text{Leb}\right|^2+ O\left(\int_{\R^2}h^2 \,\dd\text{Leb}\right).
\end{align*}
The first term on the right-hand side is equal to $c_{m}^2\Leb(R)^2$. The second term can be bounded via
\begin{align*}
\int_{\R^2}h^2 \,\dd\text{Leb}
&=\sum_{q,q'\in \llbracket 1,m\rrbracket}\int_{\R^2}\1_{R/q}\1_{R/q'} \,\dd\text{Leb}
\leq \sum_{q,q'\in \llbracket 1,m\rrbracket}\|\1_{R/q}\|_{L^2}\|\1_{R/q'}\|_{L^2} \\
&\leq  \leb(R)\sum_{q,q'\in \llbracket 1,m\rrbracket} q^{-1} q'^{-1} \ll  \leb(R)\log^2(1+m).
 \end{align*}
 This concludes the proof of \eqref{Sieg-prim-2}.
\end{proof}

From there, the proof of the higher dimensional case goes through with a few adaptations.
Let $(m_{k})_{k\in \Ks} \in \R_{>0}^{\Ks}$ be the sequence satisfying
\begin{equation}
\label{eq:defmk}
\forall k \in \Ks, \quad \log^2(1 + m_k) = \Bigl(\sum_{j \in \Ks(k)} \psi(\tau^j) \tau^j \Bigr)^{1/8}.
\end{equation}
Set $\varphi_{k}:=\theta *(\chi_{k} \widetilde{\1}^{(m_{k})}_{R_{k}^-} )$ where $\chi_k$ is as in (\ref{truncation}), 
and define  positive constants  $$y_{k}:=m_X(\varphi_k),\qquad y'_{k}:=m_X(\varphi_k) \log^2(1+m_k).$$
Noting that by hypothesis, we have $\lim_{k}m_{k}=+\infty$ as $k$ tends to infinity along $\Ks$, we find that $y_{k}\leq y'_{k}$ for all large $k\in \Ks$. 

Replacing \Cref{fact-Siegel} by \Cref{fact-Siegel-lprim} in the proof of \Cref{norm-estim}, we obtain the following.
\begin{lemma}\label{norm-estim-prim}
 If $\gamma_{1} \lll \gamma_{2}$, then for some $M=M(d)>1$, every $k\in \Ks$, we have
\begin{align}
&y_{k} = c_{m_{k}} \Leb(R_{k}^-) - O(t_{k}^{-\gamma_{2}/M}) \label{L1norm-varphik-prim},\\
&\cS_{\infty,l}(\varphi_k) \ll t_{k}^{M\gamma_{2}}, \nonumber \\ %\label{eq:controlSobolev1-prim},\\
&\cS_{2,l}(\varphi_k) \ll y_{k}+ \sqrt{y'_{k}} +O(t_{k}^{-\gamma_{2}/M}). \nonumber %\label{eq:controlSobolev2-prim}\
\end{align}
\end{lemma}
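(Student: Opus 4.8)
The statement is the exact analogue of \Cref{norm-estim} in the rank-one setting $d=1$, with the ordinary Siegel transform $\widetilde{\1}_{R}$ replaced by the restricted Siegel transform $\widetilde{\1}^{(m_{k})}_{R}$, and with the second-moment bound $O(\Leb(R))$ replaced by $O(\Leb(R)\log(1+m_{k}))$ as dictated by \Cref{fact-Siegel-lprim}. Accordingly, the plan is to run the proof of \Cref{norm-estim} verbatim, replacing each appeal to \Cref{fact-Siegel} by the corresponding statement in \Cref{fact-Siegel-lprim}, and keeping track of the extra $\log(1+m_{k})$ factor, which is precisely what the auxiliary sequence $y'_{k}=m_X(\varphi_{k})\log(1+m_k)$ is designed to absorb.

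For \eqref{L1norm-varphik-prim}: as in \Cref{norm-estim}, note $y_{k}=m_X(\varphi_k)=m_X(\chi_{k}\widetilde{\1}^{(m_k)}_{R_{k}^-})$ since $m_G(\theta)=1$. By \eqref{Sieg-prim-1}, $m_X(\widetilde{\1}^{(m_k)}_{R_{k}^-})=c_{m_k}\Leb(R_k^-)$, so that
\[
0\le c_{m_k}\Leb(R_k^-)-y_k=\int_X \1_{\inj(x)<t_k^{-\gamma_2}}\,\widetilde{\1}^{(m_k)}_{R_k^-}(x)\dd m_X(x)
\le \sqrt{m_X\{\inj<t_k^{-\gamma_2}\}}\;\|\widetilde{\1}^{(m_k)}_{R_k^-}\|_{L^2},
\]
by Cauchy--Schwarz. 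Now \Cref{lm:volCusp} bounds the first factor by $O(t_k^{-\gamma_2/M})$, while \eqref{Sieg-prim-2} bounds the second by $O(\Leb(R_k^-)+\sqrt{\Leb(R_k^-)\log(1+m_k)})$. Since $k\in\Ks$ gives $\Leb(R_k^-)\le(1-\tau^{-1})r_k^{2}\ll t_k^{O(\gamma_1)}$, and $\log(1+m_k)\ll t_k^{o(1)}$ also only contributes a power $t_k^{O(\gamma_1)}$ (using \eqref{eq:defmk} together with the crude bound $\sum_{j\in\Ks(k)}\psi(\tau^j)\tau^j\ll t_k^{O(\gamma_1)}$ coming from $j\in\Ks\Rightarrow r_j\le t_j^{\gamma_1}$), choosing $\gamma_1\lll\gamma_2$ makes the right-hand side $\le t_k^{-\gamma_2/(2M)}$, giving \eqref{L1norm-varphik-prim}.

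For the Sobolev bounds, the $L^\infty$ estimate $\cS_{\infty,l}(\varphi_k)\ll t_k^{M\gamma_2}$ is unchanged: one has $\cS_{\infty,l}(\varphi_k)\ll\cS_{\infty,l}(\theta)\|\chi_k\widetilde{\1}^{(m_k)}_{R_k^-}\|_{L^\infty}$, and for $x=gx_0$ with $\inj(x)\ge t_k^{-\gamma_2}$ one has $\|g^{-1}\|\ll t_k^{M\gamma_2}$ by \eqref{eq:cominjdist}, so $\widetilde{\1}^{(m_k)}_{R_k^-}(x)\le\abs{\Z^{2}\cap g^{-1}R_k^-}$ with $g^{-1}R_k^-$ inside a ball of radius $O(t_k^{M\gamma_2}r_k)$, hence $\ll t_k^{2M(\gamma_1+\gamma_2)}$ (absorbing powers of $\gamma_1$ into $M$). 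For the $L^2$ bound, \Cref{Young-ineq} with $\cS_{1,l}(\theta)\ll1$ gives $\cS_{2,l}(\varphi_k)\le\cS_{1,l}(\theta)\|\chi_k\widetilde{\1}^{(m_k)}_{R_k^-}\|_{L^2}\ll\|\widetilde{\1}^{(m_k)}_{R_k^-}\|_{L^2}$, and \eqref{Sieg-prim-2} bounds this by $O(\Leb(R_k^-)+\sqrt{\Leb(R_k^-)\log(1+m_k)})$. Rewriting via \eqref{L1norm-varphik-prim}, $\Leb(R_k^-)=c_{m_k}^{-1}(y_k+O(t_k^{-\gamma_2/M}))$, and $\Leb(R_k^-)\log(1+m_k)=c_{m_k}^{-1}(y_k+O(t_k^{-\gamma_2/M}))\log(1+m_k)=c_{m_k}^{-1}(y'_k+O(t_k^{-\gamma_2/M}\log(1+m_k)))$; since $c_{m_k}^{-1}\ll1$ and $t_k^{-\gamma_2/M}\log(1+m_k)\ll t_k^{-\gamma_2/(2M)}$ for $\gamma_1\lll\gamma_2$, we obtain $\cS_{2,l}(\varphi_k)\ll y_k+\sqrt{y'_k}+t_k^{-\gamma_2/M}$ (after relabelling $M$), which is the claimed bound.

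\textbf{Main obstacle.} The only genuinely new point compared to \Cref{norm-estim} is bookkeeping the $\log(1+m_k)$ factor that now appears in the second moment, and checking it is harmless: on the $L^1$ side it sits under a square root and is killed by the $\gamma_1\lll\gamma_2$ smallness (via the crude bound $\log(1+m_k)\ll t_k^{O(\gamma_1)}$ deduced from \eqref{eq:defmk} and $k\in\Ks$), whereas on the $L^2$ side it is kept explicitly, which is exactly why the statement is phrased with $y'_k$ rather than $y_k$. Once this is arranged, the rest of the argument is identical to \Cref{norm-estim} mutatis mutandis, as claimed in the text.
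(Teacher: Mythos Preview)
Your proof is correct and follows exactly the approach indicated in the paper, which simply says to replace \Cref{fact-Siegel} by \Cref{fact-Siegel-lprim} in the proof of \Cref{norm-estim}. Your careful tracking of the extra $\log(1+m_k)$ factor---bounding it crudely by $t_k^{O(\gamma_1)}$ via \eqref{eq:defmk} and $k\in\Ks$ for the $L^1$ estimate, and absorbing it into $y'_k$ for the $L^2$ estimate---is precisely what the paper intends.
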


Next, writing for $\bs\in \R$,
\[ Y_k (\bs) = \varphi_k\bigl(a(t_{k})u(\bs)x_{0}\bigr),\qquad Z_{k}(\bs)=Y_{k}(\bs)-y_{k},\]
\Cref{variance-bound} becomes (with a similar proof) the following.
\begin{lemma} \label{variance-bound-prim} Assume $\gamma_{1}\lll\gamma_{2}\lll 1$. Then for every $k\in \Ks$ such that $y'_{k}\geq 1$, we have
$$\E[Z_{k}^2]\ll y'_{k}.$$
\end{lemma}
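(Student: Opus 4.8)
\textbf{Proof proposal for \Cref{variance-bound-prim}.}

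The plan is to follow the proof of \Cref{variance-bound} line by line, replacing every use of the unrestricted moment estimates from \Cref{fact-Siegel} by the restricted ones from \Cref{fact-Siegel-lprim}, and tracking how the extra factor $\log(1+m_{k})$ enters. First I would apply effective single equidistribution \eqref{single-eq-prop} to the test function $(\varphi_{k}-y_{k})^2$ at time $t_{k}$, exactly as in \eqref{eq-Zk2}, to get
\[
\E[Z_{k}^2] = \int_{X}(\varphi_{k}(x)-y_{k})^2 \dd m_{X}(x) + O\bigl(\cS_{\infty,l}([\varphi_{k}-y_{k}]^2)\, t_{k}^{-c}\bigr).
\]
The error term is controlled just as before: by \Cref{norm-estim-prim} we have $\cS_{\infty,l}([\varphi_{k}-y_{k}]^2)\ll \cS_{\infty,l}(\varphi_{k})^2 + y_{k}^2 \ll t_{k}^{2M\gamma_{2}} + y_{k}^2$, and since $k\in\Ks$ we have $y_{k}\ll\Leb(R_{k}^-)\ll r_{k}^2 \ll t_{k}^{O(\gamma_{1})}$ (here using $c_{m_{k}}\leq 1$), so upon taking $\gamma_{2}\lll 1$ and $\gamma_{1}\lll 1$ both contributions are $O(1)$, hence $\ll y'_{k}$ under the hypothesis $y'_{k}\geq 1$.

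For the main term, expanding the square and using $m_{G}(\theta)=1$ together with \Cref{Young-ineq} gives
\[
\int_{X}(\varphi_{k}(x)-y_{k})^2\dd m_{X}(x) = \norm{\varphi_{k}}_{L^2}^2 - y_{k}^2 \leq \norm{\chi_{k}\widetilde{\1}^{(m_{k})}_{R_{k}^-}}_{L^2}^2 - y_{k}^2 \leq \norm{\widetilde{\1}^{(m_{k})}_{R_{k}^-}}_{L^2}^2 - y_{k}^2.
\]
Now I would invoke the second moment estimate \eqref{Sieg-prim-2}, which yields $\norm{\widetilde{\1}^{(m_{k})}_{R_{k}^-}}_{L^2}^2 = c_{m_{k}}^2\Leb(R_{k}^-)^2 + O(\Leb(R_{k}^-)\log(1+m_{k}))$. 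Combining this with the first-moment identity \eqref{L1norm-varphik-prim}, which gives $y_{k} = c_{m_{k}}\Leb(R_{k}^-) - O(t_{k}^{-\gamma_{2}/M})$, hence $y_{k}^2 = c_{m_{k}}^2\Leb(R_{k}^-)^2 - O(c_{m_{k}}\Leb(R_{k}^-)t_{k}^{-\gamma_{2}/M}) + O(t_{k}^{-2\gamma_{2}/M})$, the leading terms $c_{m_{k}}^2\Leb(R_{k}^-)^2$ cancel, leaving
\[
\norm{\widetilde{\1}^{(m_{k})}_{R_{k}^-}}_{L^2}^2 - y_{k}^2 \ll \Leb(R_{k}^-)\log(1+m_{k}) + \Leb(R_{k}^-)t_{k}^{-\gamma_{2}/M} + t_{k}^{-2\gamma_{2}/M}.
\]
Since $\Leb(R_{k}^-)\log(1+m_{k}) \ll y'_{k} + \log(1+m_{k})\, t_{k}^{-\gamma_{2}/M}$ by \eqref{L1norm-varphik-prim}, and since $\log(1+m_{k})\ll t_{k}^{O(\gamma_{1})}$ (as $m_{k}$ grows polynomially in $\sum_{j\in\Ks(k)}\psi(\tau^j)\tau^j \ll \tau^k \ll t_{k}^{O(1)}$ — here I would double-check the precise dependence, but any polynomial bound suffices since $\gamma_{1}\lll\gamma_{2}$), the spurious terms are $O(1)\ll y'_{k}$ under $y'_{k}\geq 1$. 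This gives $\E[Z_{k}^2]\ll y'_{k}$ as claimed.

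The only genuinely new point compared to \Cref{variance-bound} — and the step I would be most careful about — is verifying that $\log(1+m_{k})$ is dominated by an arbitrarily small power of $t_{k}$, so that it can be absorbed exactly where $y_{k}$ was previously absorbed; this is where the choice \eqref{eq:defmk} of $m_{k}$ as an eighth root matters, keeping $\log(1+m_{k})$ comfortably sub-polynomial. Everything else is a mechanical substitution of \Cref{fact-Siegel-lprim} for \Cref{fact-Siegel} and of \Cref{norm-estim-prim} for \Cref{norm-estim}. I do not anticipate any serious obstacle beyond this bookkeeping.
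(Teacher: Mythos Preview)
Your proposal is correct and follows exactly the approach the paper indicates (it simply says the lemma holds ``with a similar proof'' to \Cref{variance-bound}), namely replacing \Cref{fact-Siegel} and \Cref{norm-estim} by \Cref{fact-Siegel-lprim} and \Cref{norm-estim-prim}. One small imprecision: it is $\log(1+m_k)$, not $m_k$, that is the eighth root of $\sum_{j\in\Ks(k)}\psi(\tau^j)\tau^j$; since each summand is at most $r_j^2\leq t_j^{2\gamma_1}\leq t_k^{2\gamma_1}$ and there are at most $k\ll\log t_k$ terms, one gets $\log(1+m_k)\ll t_k^{O(\gamma_1)}$, which is the bound you need.
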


We deduce the following replacement for \Cref{L2bound-Zj-general}.
\begin{proposition} \label{L2bound-Zj-general-prim}
Assume $\gamma_{1}\lll\gamma_{2}\lll 1$ and
\begin{equation*}
\psi(q) \geq q^{-1}\log^{-2}(q), \quad \forall q \in \Ks.
\end{equation*}
Then for every finite subset $J \subseteq \Ks$ with $\inf J\ggg_{\gamma_{1},\gamma_{2}}1$, we have  
\[\E \bigl[Z_{J}^2\bigr] \ll \left(1+y'_{J}\right)^{3/2}.\]
%where $C' \ll C \eps^{-2} c^{-4}(\tau^{\min\{\delta_0, c^2/2\}} - 1)^{-1}(\tau -1)^{-1}$.
\end{proposition}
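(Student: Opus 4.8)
The plan is to run the proof of \Cref{L2bound-Zj-general} essentially line by line, the only change being that the weights $y_j=m_X(\varphi_j)$ occurring in the diagonal and variance contributions are everywhere replaced by the thickened weights $y'_j=m_X(\varphi_j)\log(1+m_j)$, and that \Cref{norm-estim-prim} and \Cref{variance-bound-prim} are invoked in place of \Cref{norm-estim} and \Cref{variance-bound}. Write $Y_J=\sum_{j\in J}Y_j$, $y_J=\sum_{j\in J}y_j$, $y'_J=\sum_{j\in J}y'_j$ and $Z_J=Y_J-y_J$. Since $\inf J$ is large we may assume $y_j\le y'_j$ for every $j\in J$. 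Put $J_1:=\{j\in J:y'_j<1\}$, $n:=|J\setminus J_1|$, and split $J\setminus J_1=J_2\sqcup J_3$ according to whether $y'_j\in[1,n^2)$ or $y'_j\ge n^2$; note $y'_j\ge 1$ on $J_2\sqcup J_3$, so \Cref{variance-bound-prim} applies there. By the inequalities $(a+b)^2\le 2(a^2+b^2)$ and $a^{3/2}+b^{3/2}\le(a+b)^{3/2}$, it suffices to prove $\E[Z_{J_i}^2]\ll(1+y'_{J_i})^{3/2}$ for $i=1,2,3$ separately.

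For $J_1$ I would expand $\E[Z_{J_1}^2]=\sum_{j\le k\in J_1}\bigl(\E[Y_jY_k]-y_jy_k-\E[Z_j]y_k-y_j\E[Z_k]\bigr)$, bound $|\E[Y_jY_k]-y_jy_k|$ by the full-range double equidistribution \eqref{eq:all-range}, bound $\E[Z_j]$ by single equidistribution \eqref{single-eq-prop}, and insert the estimates $\cS_{\infty,l}(\varphi_j)\ll t_j^{M\gamma_2}$ and $\cS_{2,l}(\varphi_j)\ll\sqrt{y'_j}+t_j^{-\gamma_2/M}$ from \Cref{norm-estim-prim} (the latter being legitimate since $y_j\le y'_j<1$), together with $t_j\gg\tau^j$ and $t_k/t_j\ge\tau^{k-j}$. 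Exactly as in \Cref{L2bound-Zj-general} this yields a bound of the shape $\sum_{j\le k\in J_1}\bigl(\sqrt{y'_jy'_k}\,\tau^{-c(k-j)}+y_k\tau^{-cj/2}+\tau^{-ck/2}+\tau^{-c(k-j)-j\gamma_2/M}\bigr)$; the first two sums are $O(y'_{J_1})$ via $\sqrt{y'_jy'_k}\le y'_j+y'_k$ and a geometric series, while for the last two I would use the hypothesis $\psi(q)\ge q^{-1}\log^{-2}(q)$, which combined with \eqref{tk and rk} and $c_{m_j}\ge\zeta(2)^{-1}$ gives $y_j\gg(j\log\tau)^{-2}$, hence $y_j\gg\tau^{-c'j}$ for every fixed $c'>0$ once $j$ is large; so those terms are absorbed into $y_j\tau^{-c(k-j)}$ and the sums are again $O(y'_{J_1})$.

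For $J_2$ and $J_3$ the argument of \Cref{L2bound-Zj-general} carries over with $y$ replaced by $y'$: Cauchy--Schwarz together with the variance bound $\E[Z_j^2]\ll y'_j$ of \Cref{variance-bound-prim} handles $J_3$ and the easy regime $|J_2|^2\le n$; in the remaining regime one splits $J_2$ into a low-index and a high-index part, bounds within-window cross terms ($|j-k|<\sqrt{|J_2|}$) by the variance bound and the $O(\sqrt{|J_2|})$-fold overlap, and bounds far cross terms ($|j-k|\ge\sqrt{|J_2|}$) by \eqref{eq:all-range} and \Cref{norm-estim-prim}, which give $\E[Z_jZ_k]\ll y'_jy'_k(\tau^{-c(k-j)}+\tau^{-cj/2}+\tau^{-ck/2})$; the polynomial-in-$n$ prefactors are killed by $\tau^{-c\sqrt{|J_2|}}$ since $|J_2|>\sqrt n\to\infty$.

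The one genuinely new feature relative to the case $d\ge2$ is the extra factor $\log(1+m_k)$ in the second moment of the restricted Siegel transform \eqref{Sieg-prim-2}, and this is precisely what the weights $y'_k$ encode; the slow growth of $m_k$ prescribed by \eqref{eq:defmk} plays no role in the present estimate (it is only needed when this proposition is later fed into \Cref{Lemma: Schmidt's argument++}). I therefore expect the argument to be routine bookkeeping, the one delicate point being to verify that the $J_1$-sums really close up with $y'$-weights — that is where the lower bound $\psi(q)\ge q^{-1}\log^{-2}(q)$ and the assumption $\inf J\ggg_{\gamma_1,\gamma_2}1$ are used.
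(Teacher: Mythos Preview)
Your proposal is correct and follows essentially the same approach as the paper: the paper's own proof simply says to rerun the argument of \Cref{L2bound-Zj-general} using $y'_j$ to define the partition $J=J_1\sqcup J_2\sqcup J_3$, invoking \Cref{norm-estim-prim} and \Cref{variance-bound-prim} in place of their unprimed analogues, and using $y'_j\ge y_j$ (valid once $\inf J$ is large) to absorb all $y_j$-bounds into $y'_j$-bounds. Your write-up is in fact more detailed than the paper's terse paragraph, and the points you flag as delicate (the role of $\psi(q)\ge q^{-1}\log^{-2}(q)$ in closing up the $J_1$ tail sums, and the use of $\inf J\ggg 1$ to ensure $y_j\le y'_j$) are exactly the ones the paper leaves implicit.
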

We recall that $Z_{J}=\sum_{j\in J}Z_{j}$ and $y'_{J}=\sum_{j\in J}y'_{j}$. Similar notations $Y_{J},y_{J}$ will be used below.
\begin{proof}
Same as for \Cref{L2bound-Zj-general} but using $y_{j}'$ to define the partition $J=J_{1}\sqcup J_{2} \sqcup J_{3}$, and noting that all the upper bounds in the proof of \Cref{L2bound-Zj-general} are valid with $y_{j}'$ at the place of $y_{j}$ thanks to Lemmas \ref{norm-estim-prim}, \ref{variance-bound-prim}, and the inequality $y_{j}'\geq y_{j}$ (which is valid thanks to the assumption $\inf J\ggg_{\gamma_{1}}1$).
\end{proof}

We can now combine \Cref{L2bound-Zj-general-prim} and \Cref{Lemma: Schmidt's argument++} (note here that we allow $(y'_j)$ to be different from $(y_j)$ in the latter) to obtain the following.
\begin{lemma} \label{lemZky'k}
For $\xi$-almost every $\bs\in \R$, for large enough $n$, we have
\begin{equation} \label{Zky'k}
\abs{Z_{\Ks(n) }(\bs)}\leq \left(y'_{\Ks(n)}\right)^{4/5}.
\end{equation}
\end{lemma}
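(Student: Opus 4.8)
The plan is to deduce \Cref{lemZky'k} from the second-moment estimate \Cref{L2bound-Zj-general-prim} via the abstract almost-sure lemma \Cref{Lemma: Schmidt's argument++}, exactly paralleling the deduction of \Cref{cor-Ksmall-asymp} from \Cref{L2bound-Zj-general} in the case $d\ge2$. We regard $(\R,\xi)$ as the underlying probability space, and work under the standing assumption $\sum_{k\in\Ks}\psi(\tau^k)^d\tau^k=+\infty$ (without which the $\Ks$-indexed quantities are bounded). As in the proof of \Cref{cor-Ksmall-asymp}, we may assume $\psi(q)\ge q^{-1}\log^{-2}(q)$ for all $q\in\Ks$: replacing $\psi$ by its pointwise maximum with $q\mapsto q^{-1}\log^{-2}q$ preserves this divergence since $\sum_q q^{-1}\log^{-2}q<\infty$, and — by the convergent case \Cref{convergent-Khintchine} — perturbs $Z_{\Ks(n)}$ only by an almost surely finite additive amount. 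We then extend the sequences by setting $Y_k:=y_k:=y'_k:=0$ for $k\notin\Ks$, so that $\sum_{k=1}^nZ_k=Z_{\Ks(n)}$ and $\sum_{k=1}^ny'_k=y'_{\Ks(n)}$ for every $n\ge1$.

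The core of the argument is to verify the hypotheses of \Cref{Lemma: Schmidt's argument++} for the sequences $(Y_k)_{k\ge1}$, $(y_k)_{k\ge1}$, $(y'_k)_{k\ge1}$. First, $Y_k\ge0$ because $\varphi_k=\theta*(\chi_k\widetilde{\1}^{(m_k)}_{R_k^-})$ is a convolution of nonnegative functions. Second, $y_k\le y'_k$ holds for all but finitely many $k$: by \eqref{eq:defmk} the divergence $\sum_{k\in\Ks}\psi(\tau^k)^d\tau^k=+\infty$ forces $m_k\to+\infty$ along $\Ks$, so $\log(1+m_k)\ge1$ and hence $y'_k=y_k\log(1+m_k)\ge y_k$ for $k\in\Ks$ large (while $y_k=y'_k=0$ for $k\notin\Ks$ by construction); the finitely many indices where this could fail are harmless, since the stronger bound \eqref{eq:Schmidt arg} obtained inside the proof of \Cref{Lemma: Schmidt's argument++} shows that a bounded additive perturbation — here coming from those finitely many increments $y_{(N_i,N]}$, as well as from the finite-set $\psi$-modification above — is absorbed into the exponent $4/5$ for $n$ large. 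Third, $\sum_ky_k=+\infty$: by \eqref{L1norm-varphik-prim}, $\sum_{k\in\Ks(n)}y_k=\sum_{k\in\Ks(n)}c_{m_k}\Leb(R_k^-)+O_{\gamma_2}(1)\ge\zeta(2)^{-1}\sum_{k\in\Ks(n)}\Leb(R_k^-)+O_{\gamma_2}(1)$, and by \eqref{tk and rk} the right-hand side is $\simeq_{\tau,\eps}\sum_{k\in\Ks(n)}\psi(\tau^k)^d\tau^k+O_{\gamma_2}(1)\to+\infty$. Fourth, the variance bound \eqref{qeq-2} is precisely \Cref{L2bound-Zj-general-prim}, which applies once $\inf J\ggg_{\gamma_1,\gamma_2}1$, i.e. for all $n\ge m\ge C_1$ with $C_1=C_1(\gamma_1,\gamma_2)$.

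Applying \Cref{Lemma: Schmidt's argument++} then yields, for $\xi$-almost every $\bs\in\R$ and all large $n$, $\bigl\lvert\sum_{k=1}^nZ_k(\bs)\bigr\rvert\le\bigl(\sum_{k=1}^ny'_k\bigr)^{4/5}$, which is exactly $\abs{Z_{\Ks(n)}(\bs)}\le(y'_{\Ks(n)})^{4/5}$. I do not anticipate a genuine obstacle in this step: the substantive input — the moment identities for the \emph{restricted} Siegel transforms $\widetilde{\1}^{(m)}_R$ that make the finite-second-moment argument viable when $d=1$ (\Cref{fact-Siegel-lprim}), the correlation bound \Cref{L2bound-Zj-general-prim}, and the combinatorial lemma \Cref{Lemma: Schmidt's argument++} — is already in place, so the only care needed is the routine hypothesis-matching above (the reduction to $\psi(q)\ge q^{-1}\log^{-2}q$ on $\Ks$, and the finite set of indices where $y_k\le y'_k$ could a priori fail), handled just as in the case $d\ge2$.
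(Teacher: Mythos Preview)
Your proposal is correct and follows exactly the paper's approach: the paper's proof of \Cref{lemZky'k} is the single sentence ``combine \Cref{L2bound-Zj-general-prim} and \Cref{Lemma: Schmidt's argument++} (note here that we allow $(y'_j)$ to be different from $(y_j)$ in the latter)'', and you have carefully unpacked precisely this combination, including the reduction to $\psi(q)\ge q^{-1}\log^{-2}q$ on $\Ks$ (done as in \Cref{cor-Ksmall-asymp}) and the handling of the finitely many indices where $y_k\le y'_k$ may fail. One small remark: rather than arguing that the $\psi$-modification ``perturbs $Z_{\Ks(n)}$ by a bounded amount'' (which is delicate since changing $\psi$ also changes $t_k$ and hence the whole construction of $\varphi_k$), it is cleaner to say, as the paper does implicitly, that the reduction is made once and for all before defining $\varphi_k$, since it is without loss of generality for the downstream application to $\sum_{\Ks(n)}\sS_k$.
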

We now claim that the right-hand side in \eqref{Zky'k} is negligible compared to $y_{\Ks(n)}$.
To see why, note first that by definition, the sequence $(m_{k})_{k\in \Ks}$ is non-decreasing, therefore
\begin{equation}\label{y'Ksnleqylog}
y'_{\Ks(n)}\leq y_{\Ks(n)} \log^2 (1+m_{\max \Ks(n)}).
\end{equation}
Moreover, by \eqref{tk and rk}, \eqref{L1norm-varphik-prim}, we have $\psi(\tau^k) \tau^k \ll y_{k} + t^{-\gamma_{2}/M}_{k}$, so using the definition of $m_{k}$, we get
\begin{equation}\label{logmkbndy}
\forall k \in \Ks, \quad \log^2(1 + m_k) \ll \left(y_{\Ks(k)} + O_{\gamma_2}(1)\right)^{1/8}.
\end{equation}
Equations \eqref{y'Ksnleqylog} and \eqref{logmkbndy} together justify the claim.

\Cref{lemZky'k} and the above claim yield in particular that for $\xi$-almost every $\bs\in \R$, for sufficiently large $n$,
\[
Y_{\Ks(n)}(\bs) \geq (1 - \eps) y_{\Ks(n)}.
\]
By construction, we  know that $\sS_{k}(\bs)\geq Y_{k}(\bs)$. On the other hand, it follows from \eqref{L1norm-varphik-prim} that $y_{\Ks(n)} \geq \sum_{k \in \Ks(n)} c_{m_{k}}\Leb(R_k^-) - O_{\gamma_2}(1)$, where $c_{m_{k}}\to_{k} 1$ as $k$ goes to infinity along $\Ks$. Using \eqref{tk and rk} to see that $\Leb(R_{k}^-)=(1-\tau^{-1}-2\eps)(1-2\eps)\psi(\tau^k)\tau^k$, we infer that for $\xi$-almost every $\bs \in \R$, for all large enough $n$,
\begin{equation*}
\sum_{k\in \Ks(n)} \sS_{k}(\bs) \geq (1-\tau^{-1}- 6 \eps) \sum_{k\in \Ks(n)}  \psi(\tau^k) \tau^k.
\end{equation*}
This concludes the proof of the lower bound.

\bigskip
Let us now justify the {\bf upper bound} in the case $d=1$. Similarly to the higher dimensional case, we can mimic the proof of the lower bound estimate in the case $d=1$ to establish an upper bound estimate. However, this upper bound concerns the restricted Siegel transforms which are used in the proof, while we aim for an upper bound without restriction on $\GCD(p,q)$ when counting solutions $(p,q)$ of the Khintchine inequality. We explain below how to deal with this obstacle.

We keep the notations of \S\ref{Sec-upp-bd}, in particular we fix $\tau \in (1,2]$, and consider for $k\geq 0$, $\bs\in \R$,
\begin{align*}
\sS_k^+(\bs)
&= \big|\{(p,q)\in \Z\times \llbracket \tau^{k}, \tau^{k+1}\llbracket \,:\, 0\leq q \bs -p<\psi(\tau^k) \}  \big| \\
&= \widetilde{\1}_{ P_{k}} \bigl( a(t_k)u(\bs)x_0 \bigr).
\end{align*}
where $P_{k}=[0,r_k) \times [r_k,\tau r_k)$.
The goal is to show \Cref{pr:rough+} when $d=1$. Provided $\sum_{k\in \Kb} \psi(\tau^k) \tau^k=+\infty$, the argument for \Cref{Kbig-asymp} yields the desired upper bound for the partial sums $\sum_{\Kb(n)}\sS_k^+(\bs)$.
Therefore we only need to deal with $\sum_{\Ks(n)}\sS_k^+(\bs)$, and under the assumption $\sum_{k\in \Ks} \psi(\tau^k) \tau^k=+\infty$ (as noted for \Cref{pr:rough+}).

Recall from \S\ref{Sec-upp-bd} that $\eps > 0$ is an arbitrarily small number and $P_{k}^+$ denotes the $\eps r_{k}$-thickening of $P_{k}$.
Define for $k \in \Ks$ and $m >0$,
\[
\varphi^{+(m)}_{k} = \theta*(\chi_{k}\widetilde{\1}^{(m)}_{P_{k}^+}) \quad \text{and}\quad Y^{+(m)}_{k}(\bs)=\varphi^{+(m)}_{k} \bigl( a(t_k)u(\bs)x_0 \bigr).
\]
When no restriction on the $\GCD$ is prescribed,
we simply write $\varphi^{+}_{k}:=\varphi^{+(\infty)}_{k}$, and $Y^{+}_{k}:=Y^{+(\infty)}_{k}$.
Let $(m_k)_{k \in \Ks}$ be as in \eqref{eq:defmk}.
The argument used for the lower bound (case $d=1$) then shows that, provided $\gamma_{1}\lll\gamma_{2}\lll1$, we have for $\xi$-almost every $\bs\in \R$, for large enough $n$,
\begin{equation}\label{uppbndsmk}
\sum_{k\in \Ks(n)} Y^{+(m_k)}_{k}(\bs) \leq (\tau-1+ 10 \eps) \sum_{k\in \Ks(n)} \psi(\tau^k) \tau^k.
\end{equation}

We now compare the left hand side of \eqref{uppbndsmk} with $\sum_{k\in \Ks(n)} \sS_k(\bs)$. In other terms, we need to show that the truncation of the cusp induced by $\chi_{k}$, and most importantly the reduction of the Siegel transform to counting $m_{k}$-primitive lattice points does not affect too much the asymptotic of the partial sums. The next lemma is a preliminary step to suppress the $\GCD$ restriction due to $m_{k}$.

\begin{lemma} \label{Repl-m'-1}
Provided that $\gamma_{2}\lll1$, we have for every $k\in \Ks$,
$$\E\bigl[Y^{+}_{k} - Y^{+(m_k)}_{k}\bigr] \ll r_{k}^2 m^{-1}_{k} +t_{k}^{-\gamma_{2}}. $$
\end{lemma}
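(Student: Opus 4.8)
The plan is to recognise $Y^{+(m'_k)}_k - Y^{+}_k$ as a single smooth bounded function evaluated along the expanding translate, apply the single-equidistribution estimate \eqref{single-eq-prop} (which follows from the double equidistribution hypothesis on $\xi$ by taking $f_2=1$, $t_2\to+\infty$), and estimate separately the resulting main term (the Haar integral) and the equidistribution error. Concretely, I would set $G_k := \chi_k\bigl(\widetilde{\1}^{(m'_k)}_{P_k^+} - \widetilde{\1}^{(m_k)}_{P_k^+}\bigr)$ and $f_k := \theta * G_k$. Since the restricted Siegel transform is additive in its counting set and convolution is linear, $Y^{+(m'_k)}_k(\bs) - Y^{+}_k(\bs) = f_k\bigl(a(t_k)u(\bs)x_0\bigr)$; and because $\chi_k$ vanishes where $\widetilde{\1}_{P_k^+}$ is infinite, $G_k$ is bounded, so $f_k \in B^{\infty}_{\infty,l}(X)$. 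Applying \eqref{single-eq-prop} at time $t_k$ then yields
\[
\E\bigl[Y^{+(m'_k)}_k - Y^{+}_k\bigr] = m_X(f_k) + O\bigl(\cS_{\infty,l}(f_k)\, t_k^{-c}\bigr),
\]
so it remains to bound $m_X(f_k)$ by $r_k^2 m_k^{-1}$ and $\cS_{\infty,l}(f_k)\, t_k^{-c}$ by $t_k^{-\gamma_2}$.

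For the main term, $m_G(\theta) = 1$ gives $m_X(f_k) = m_X(G_k)$. Because $m_k \le m'_k$, the difference $\widetilde{\1}^{(m'_k)}_{P_k^+} - \widetilde{\1}^{(m_k)}_{P_k^+}$ is nonnegative — it counts the integer vectors $v$ with $\lfloor m_k\rfloor < \GCD(v) \le \lfloor m'_k\rfloor$ — so $0 \le \chi_k \le 1$ and \eqref{Sieg-prim-1} give
\[
m_X(f_k) \le m_X\bigl(\widetilde{\1}^{(m'_k)}_{P_k^+}\bigr) - m_X\bigl(\widetilde{\1}^{(m_k)}_{P_k^+}\bigr) = (c_{m'_k} - c_{m_k})\,\Leb(P_k^+).
\]
The tail bound $c_{m'_k} - c_{m_k} = \zeta(2)^{-1}\sum_{\lfloor m_k\rfloor < t \le \lfloor m'_k\rfloor} t^{-2} \ll m_k^{-1}$ (valid also when $m_k < 1$, where $c_{m_k} = 0$ and the bound is trivial), together with $\Leb(P_k^+) \ll r_k^2$, then gives $m_X(f_k) \ll r_k^2 m_k^{-1}$.

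For the error term, the standard convolution estimate used in the proof of \Cref{norm-estim} gives $\cS_{\infty,l}(f_k) \ll \cS_{\infty,l}(\theta)\,\norm{G_k}_{L^\infty} \ll \norm{G_k}_{L^\infty}$, and $\norm{G_k}_{L^\infty} \le \sup_{x :\, \inj(x) \ge t_k^{-\gamma_2}} \widetilde{\1}^{(m'_k)}_{P_k^+}(x)$. For such $x$, \eqref{eq:cominjdist} lets us write $x = gx_0$ with $\norm{g^{-1}} \ll t_k^{M\gamma_2}$ for a constant $M > 1$, whence $g^{-1}P_k^+$ lies in a Euclidean ball of radius $\rho \ll t_k^{M\gamma_2} r_k$; the number of nonzero integer points in such a ball is $O(\rho^2) \ll t_k^{2M\gamma_2} r_k^2 \le t_k^{2M\gamma_2 + 2\gamma_1}$, using $r_k \le t_k^{\gamma_1}$ for $k \in \Ks$. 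Hence $\cS_{\infty,l}(f_k)\, t_k^{-c} \ll t_k^{\,2M\gamma_2 + 2\gamma_1 - c}$, which is $\le t_k^{-\gamma_2}$ once $\gamma_1,\gamma_2 \lll 1$, say small enough that $(2M+1)\gamma_2 + 2\gamma_1 \le c$. Combined with the previous paragraph this proves the lemma for all $k \in \Ks$ large enough that $t_k > 1$; the finitely many remaining $k$ are absorbed into the implicit constant, since $\E[Y^{+(m'_k)}_k - Y^+_k]$ is a fixed finite number for each fixed $k$.

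I do not expect a genuine obstacle here. The one point needing care is the last $L^\infty$ bound: one must observe that on the bounded-distortion region $\{\inj \ge t_k^{-\gamma_2}\}$ even the \emph{full} Siegel count of the box $P_k^+$ is only $t_k^{O(\gamma_2)} r_k^2 \le t_k^{O(\gamma_2+\gamma_1)}$, so after multiplying by the equidistribution rate $t_k^{-c}$ and using $\gamma_1,\gamma_2 \lll 1$ it becomes negligible; the gain $m_k^{-1}$ produced by discarding vectors of $\GCD > m_k$ is needed only for the main term, where it is supplied directly by \eqref{Sieg-prim-1} and the convergence of $\sum_t t^{-2}$.
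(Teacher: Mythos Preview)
Your proposal is correct and follows essentially the same route as the paper: apply single equidistribution \eqref{single-eq-prop} to the smooth bounded difference $\theta*\bigl(\chi_k(\widetilde{\1}^{(m'_k)}_{P_k^+}-\widetilde{\1}^{(m_k)}_{P_k^+})\bigr)$, bound the main term via \eqref{Sieg-prim-1} and the tail of $\sum_t t^{-2}$, and absorb the error using $\cS_{\infty,l}\ll t_k^{O(\gamma_2)}$ together with $\gamma_1,\gamma_2\lll 1$. The paper's proof additionally notes at the outset that one may assume $m_k\le t_k^{\gamma_2}$ (else $m'_k=m_k$ and the difference vanishes), but otherwise the arguments coincide.
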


\begin{proof}
Unfolding definitions, then using effective single equidistribution \eqref{single-eq-prop} while noting that $\cS_{\infty,l}(\varphi_k^{+} - \varphi_k^{+(m_{k}) })\ll \|\varphi_k^{+}\|_{L^\infty} \ll t_k^{M \gamma_2}$ for some $M$ as in \Cref{norm-estim-prim}, we have
\begin{align*}
\E\bigl[Y^{+}_{k} - Y^{+(m_k)}_{k} \bigr]
&=\int_{\R^d} \theta*[\chi_{k}(\widetilde{\1}_{P_{k}^+} -\widetilde{\1}^{(m_{k})}_{P_{k}^+})] \bigl( a(t_k)u(\bs)x_0 \bigr)\dd\xi(\bs)\\
&\ll m_{X}(\widetilde{\1}_{P_{k}^+} -\widetilde{\1}^{(m_{k})}_{P_{k}^+} ) \,+\, t_{k}^{-c + M\gamma_2}\\
&\ll r_k^2 m^{-1}_{k} +t_{k}^{-\gamma_{2}},
\end{align*}
where the last inequality uses \eqref{Sieg-prim-1}, the definition of $c_{m_k}$ and assumes $ \gamma_{2}$ small enough depending on $c$.
\end{proof}
We deduce that \eqref{uppbndsmk} is still valid for $Y^{+}_{k}$ in the place of $Y^{+(m_k)}_{k}$.
\begin{lemma}\label{uppbndsm'k-lem}
For $\xi$-almost every $\bs\in \R$, for large enough $n$, we have
\begin{equation*}\label{uppbndsm'k}
\sum_{k\in \Ks(n)} Y^{+}_{k}(\bs) \leq (\tau-1+ 11\eps) \sum_{k\in \Ks(n)} \psi(\tau^k) \tau^k.
\end{equation*}
\end{lemma}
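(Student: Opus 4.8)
The plan is to show that enlarging the truncation parameter from $m_{k}$ to $m'_{k}=\max\{m_{k},t_{k}^{\gamma_{2}}\}$ changes the partial sums over $\Ks$ by only a bounded (random) amount, so that \eqref{uppbndsmk} transfers to the $Y^{+(m'_{k})}_{k}$ after absorbing this amount into $\eps$. First I would observe that for each $k\in\Ks$ the quantity
\[
D_{k}:=Y^{+(m'_{k})}_{k}-Y^{+}_{k}
\]
is a non-negative random variable on $(\R,\xi)$: since $m'_{k}\geq m_{k}$ we have $\widetilde{\1}^{(m'_{k})}_{P_{k}^{+}}\geq \widetilde{\1}^{(m_{k})}_{P_{k}^{+}}$ pointwise on $X$, and this ordering is preserved after convolving with the non-negative $\theta$ and multiplying by the non-negative $\chi_{k}$, hence after evaluating $\varphi^{+(m)}_{k}=\theta*(\chi_{k}\widetilde{\1}^{(m)}_{P_{k}^{+}})$ along the orbit $\bs\mapsto a(t_{k})u(\bs)x_{0}$. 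By \Cref{Repl-m'-1} we then have $\E[D_{k}]\ll r_{k}^{2}m_{k}^{-1}+t_{k}^{-\gamma_{2}}$ with an implicit constant uniform in $k$.

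The heart of the matter is to sum these expectations over $\Ks$. For $d=1$, \eqref{tk and rk} gives $r_{k}^{2}=\psi(\tau^{k})\tau^{k}$, and writing $S_{k}:=\sum_{j\in\Ks(k)}\psi(\tau^{j})\tau^{j}$ (a non-decreasing sequence tending to $+\infty$ by the standing divergence hypothesis on $\Ks$), the definition \eqref{eq:defmk} reads $\log(1+m_{k})=S_{k}^{1/8}$, whence $r_{k}^{2}m_{k}^{-1}\simeq (S_{k}-S_{k-1})\,e^{-S_{k}^{1/8}}$ for $k$ large. Since $x\mapsto e^{-x^{1/8}}$ is decreasing, $(S_{k}-S_{k-1})\,e^{-S_{k}^{1/8}}\leq \int_{S_{k-1}}^{S_{k}}e^{-x^{1/8}}\dd x$, and telescoping these intervals yields $\sum_{k\in\Ks}r_{k}^{2}m_{k}^{-1}\ll \int_{0}^{\infty}e^{-x^{1/8}}\dd x<\infty$. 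Likewise $t_{k}\gg\tau^{k}$ with $\tau>1$ gives $\sum_{k\in\Ks}t_{k}^{-\gamma_{2}}\ll\sum_{k\geq1}\tau^{-\gamma_{2}k}<\infty$. Hence $\E\bigl[\sum_{k\in\Ks}D_{k}\bigr]=\sum_{k\in\Ks}\E[D_{k}]<\infty$, so by monotone convergence $\sum_{k\in\Ks}D_{k}(\bs)<\infty$ for $\xi$-a.e.\ $\bs$; as the $D_{k}$ are non-negative, this means $C(\bs):=\sup_{n}\sum_{k\in\Ks(n)}D_{k}(\bs)<\infty$ almost surely.

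Finally I would combine this with \eqref{uppbndsmk}: for $\xi$-a.e.\ $\bs$ and all large $n$,
\[
\sum_{k\in\Ks(n)}Y^{+(m'_{k})}_{k}(\bs)=\sum_{k\in\Ks(n)}Y^{+}_{k}(\bs)+\sum_{k\in\Ks(n)}D_{k}(\bs)\leq (\tau-1+10\eps)\sum_{k\in\Ks(n)}\psi(\tau^{k})\tau^{k}+C(\bs).
\]
Since $\sum_{k\in\Ks(n)}\psi(\tau^{k})\tau^{k}\to+\infty$, the additive constant $C(\bs)$ is eventually at most $\eps\sum_{k\in\Ks(n)}\psi(\tau^{k})\tau^{k}$, which gives the claimed bound with $\tau-1+11\eps$. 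The only delicate point is the convergence of $\sum_{k\in\Ks}r_{k}^{2}m_{k}^{-1}$, and this is precisely what motivates choosing a power strictly below $1$ (here $1/8$) in \eqref{eq:defmk}; everything else is a routine first-moment argument via monotone convergence.
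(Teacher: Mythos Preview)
Your proof is correct and follows essentially the same approach as the paper: both show that the nonnegative differences $D_k=Y^{+(m'_k)}_k-Y^{+}_k$ have summable expectation over $\Ks$ (via \Cref{Repl-m'-1}), conclude that $\sum_{k\in\Ks}D_k$ is $\xi$-almost surely finite, and then invoke \eqref{uppbndsmk}. The only cosmetic difference is in how summability of $\sum_k r_k^2 m_k^{-1}$ is checked: the paper groups indices into blocks $I_j=\{k\in\Ks:\sum_{\Ks(k)}r_i^2\in(j,j+1]\}$ and bounds the contribution of each block by $(j+1)/(e^{j^{1/8}}-1)$, whereas you use the equivalent integral comparison $(S_k-S_{k'})e^{-S_k^{1/8}}\leq\int_{S_{k'}}^{S_k}e^{-x^{1/8}}\dd x$ and telescope.
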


\begin{proof}
Let $n\geq 1$, set $I_{n}:= \{k\in \Ks\,:\, \sum_{\Ks(k)}r_{j}^2 \in (n,n+1]\}$.
We have by \Cref{Repl-m'-1}
$$\E\Bigl[\sum_{k\in I_{n}} \bigl(Y^{+}_{k} - Y^{+(m_k)}_{k} \bigr)\Bigr] \ll \sum_{k\in I_{n}} \bigl(r_{k}^2 m^{-1}_{k} +t_{k}^{-\gamma_{2}}\bigr) \leq  \frac{n +1}{e^{n^{1/16}} - 1}+ \sum_{k\in I_{n}} t_{k}^{-\gamma_{2}},$$
where the last inequality follows from the definitions of $m_{k}$  in \eqref{eq:defmk} and $I_n$.
Therefore, the right hand side is summable over $n\geq 1$.
Hence, for $\xi$-almost every $\bs$, the total sum $\sum_{k\in \Ks}(Y^{+}_{k} - Y^{+(m_k)}_{k})$ is $\xi$-almost-surely finite.
Then the result follows from \eqref{uppbndsmk}.
\end{proof}

It remains to check the restriction induced by the truncation $\chi_{k}$ can also be ignored. This can be done exactly as for the $d\geq 2$ case (by the mean of \Cref{logarithm law} controlling excursions in the cusp). We get that for $\xi$-almost every $\bs\in \R$, for large enough $k$, we have
\begin{equation}\label{S+Ym'}
\sS^+_{k}(\bs) \leq Y^{+}_{k} (\bs).
\end{equation}

The combination of  \Cref{uppbndsm'k-lem} and Equation \eqref{S+Ym'} yields for $\xi$-almost every $\bs\in \R$, for large enough $n$,
\begin{equation*}
\sum_{k\in \Ks(n)} \sS^+_{k}(\bs) \leq (\tau-1+11\eps) \sum_{k\in \Ks(n)} \psi(\tau^k) \tau^k.
\end{equation*}
 This shows \Cref{pr:rough+} holds for $d=1$, and concludes the proof of the asymptotic upper bound in \Cref{General-quant-Khint} in the $1$-dimensional case.

\bibliographystyle{abbrv} %alpha-fr abbrv-fr plain-fr acm amsplain aomplain
\bibliography{khintchine}
\end{document}